\newtheorem{theorem}{Theorem}[section]
\newtheorem{lemma}[theorem]{Lemma}
\theoremstyle{definition}
\newtheorem{definition}[theorem]{Definition}
\theoremstyle{proposition}
\newtheorem{proposition}[theorem]{Proposition}
\theoremstyle{remark}
\newtheorem{remark}[theorem]{Remark}
\theoremstyle{notation}
\newtheorem{notation}[theorem]{Notation}
\theoremstyle{construction}
\newtheorem{construction}[theorem]{Construction}
\renewcommand\appendix{\setcounter{secnumdepth}{-2}}
\numberwithin{equation}{section}
\journal{Journal of Mathematical Logic}
\begin{document}

\begin{frontmatter}



\title{Density Elimination for Semilinear Substructural Logics\tnoteref{mytitlenote}}
\tnotetext[mytitlenote]{This work is supported by the National Foundation of Natural Sciences of China (Grant No: 61379018 \&61662044\& 11571013)}

\author{SanMin Wang}

\address{Faculty of Science, Zhejiang Sci-Tech University, Hangzhou 310018, P.R. China

Email:wangsanmin@hotmail.com}

\begin{abstract}
We present a uniform method of density elimination for several semilinear substructural logics. Especially, the density elimination for the
involutive uninorm logic \textbf{IUL} is proved. Then the standard
completeness of \textbf{IUL} follows as a lemma by virtue of previous work by
Metcalfe and Montagna.
\end{abstract}

\begin{keyword}Density elimination\sep Involutive uninorm logic\sep Standard
completeness of IUL\sep Semilinear substructural logics\sep Fuzzy logic


 \MSC {03B50, 03F05, 03B52, 03B47}

\end{keyword}

\end{frontmatter}
\setlength\linenumbersep{1cm}
\renewcommand\linenumberfont{\normalfont}

\begin{center} \bf Contents  \end{center}
\rm 1.  Introduction  \dotfill   2\\
2. Preliminaries   \dotfill  5\\
3. Proof of the main  theorem: A computational example   \dotfill  7\\
4. Preprocessing of Proof Tree   \dotfill  15\\
5. The generalized density rule $(\mathcal {D})$ for $ {\bf GL_{\Omega}}$   \dotfill  22\\
6. Extraction of Elimination Rules   \dotfill  26\\
7. Separation of one branch   \dotfill  31\\
8. Separation algorithm of multiple branches    \dotfill  35\\
9. The proof of Main  theorem    \dotfill  45\\
10. Final remarks and open problems   \dotfill  46\\
\rm References   \dotfill  47\\
Appendices   \dotfill  47\\
A.1.  Why do we adopt Avron-style hypersequent calculi?   \dotfill  47\\
A.2. Why do we need the constrained external contraction rule?   \dotfill  48\\
A.3. Why do we need the separation of branches?   \dotfill  49\\
A.4. Some questions about Theorem 8.2   \dotfill  49\\
A.5. Illustrations  of   notations and algorithms   \dotfill  50\\
A.5.1. Illustration of two cases of $(COM)$ in the proof of Lemma 5.6   \dotfill  50\\
A.5.2 Illustration of  Construction 6.1\dotfill  51\\
A.5.3. Illustration of Notation 6.10 and Construction 6.11   \dotfill  51\\
A.5.4. Illustration of Theorem 8.2   \dotfill  52
\begin{center}
\bf {Notation}
\end{center}
\rm
$ G_1 \equiv G_2$   \dotfill  The symbol $ G_1  $  denotes  a complex hypersequent $G_2$  temporarily for convenience.\\
$X\coloneq Y $   \dotfill  Define  $X$  as  $ Y$  for two hypersequents (sets or derivations)  $X $  and  $Y$.\\
$G_0$  \dotfill The upper hypersequent of  strong density rule in Theorem 1.1, Page 4\\
$\tau $ \dotfill A cut-free proof  of  $ G_0  $  in  $ {\rm {\bf GL}}$,  in Theorem 1.1,  Page 4 \\
$\mathcal{P}(H)$ \dotfill  The position of  $H\in\tau$, Def. 2.13, Construction 6.1,  Page 7,26\\
$\left\langle {H_k } \right\rangle _{H:H'} $ and $\tau _{H:H'}^{2}
(\left\langle {H_k } \right\rangle _{H:H'} )$ \dotfill Construction 4.7,  Page 17\\
$G_{H:H'}^{2} $ and $\tau _{H:H'}^{2} $  \dotfill Notation 4.10,  Page 18\\
$\tau ^ *$   \dotfill   The proof of  $ G\vert G^ *  $  in  $ {\rm {\bf GL}}_\Omega  $
resulting from preprocessing of  $ \tau$, Notation 4.13,  Page 20\\
$G\vert G^ * $   \dotfill  The root of  $\tau ^ *$ corresponding to the root $G_0$ of $\tau$, Notation 4.13,  Page 20\\
$ H_i^c  $   \dotfill  The $i$-th $(pEC)$-node in  $ \tau^*$,  the superscript $'c'$  means contraction, Notation 4.14,  Page 20\\
$ S_{i1}^c $    \dotfill  The focus sequent of $ H_i^c$, Notation 4.14,  Page 20\\
$ S_{i}^c$ or $ S_{iu}^c $    \dotfill   $ S_{i1}^c$ or  one copy of $ S_{i1}^c$, Notation 4.14,  Page 20\\
$\{H_{1}^{c},\cdots, H_{N}^{c} \} $  \dotfill   The set of  all  $(pEC)$-nodes in  $ \tau^*$, Notation 4.14,  Page 20\\
${\rm {\bf GL}}_\Omega  $   \dotfill  A restricted subsystem of ${\rm {\bf GL}}$, Definition 4.16,  Page 20\\
$\left[ S \right]_G ,\left[ {G'} \right]_G $   \dotfill  The minimal closed unit of $S$ and $G'$ in $G$, respectively, Definition 5.1,  Page 22\\
$(\mathcal{D})$  \dotfill  The generalized density rule of ${\rm {\bf GL}}_\Omega $,
Definition 5.4,  Page 22\\
$\tau _{S_{i1}^c }^\ast $ and $G_{S_{i1}^c }^\ast $   \dotfill  Notation 6.5. ,  Page 28\\
$H_i^c \rightsquigarrow H_j^c, H_i^c \leftrightsquigarrow H_j^c$   \dotfill  Definition 6.8,  Page 28\\
$I=  \{ {H_{i_1 }^c, \cdots,H_{i_m }^c } \}  $   \dotfill   A subset of $\{H_{1}^{c},\cdots, H_{N}^{c} \}$, Notation  6.10,  Page 29\\
  $ H_{I}^V$, $ H_{ij}^V$    \dotfill  The intersection nodes of  $I$ and,  that of  $ H_{i }^c$ and $H_{j}^c$,  Notation  6.10,  Page 29\\
$ \mathcal{I'}=  \{ {S_{i_1 u_1}^c, \cdots, S_{i_m u_m}^c } \} $   \dotfill   A subset of $(pEC)$-sequents to $I$, Definition   6.14,  Page 30\\
 $ {\rm {\bf I'}}= \{ {G_{b_1} \vert S_{i_1 u_1}^c, \cdots, G_{b_m} \vert S_{i_m u_m}^c } \} $   \dotfill  A set of closed hypersequents  to  $I$, Def. 6.14,  Page 30 \\
$\left\langle H \right\rangle _\mathcal{I}, \tau _\mathcal{I}^\ast $ and $G_\mathcal{I}^\ast $   \dotfill The elimination derivation,  Construction 6.11,  Lemma 6.13,  Page 29,  30\\
$\tau _{\rm {\bf I}'}^\ast$  \dotfill  The elimination rule, Definition 6.14, ,  Page 30\\
$\left\lceil {S_{i_k }^c } \right\rceil _I $  \dotfill  A branch of $H_{i_k }^c $
to $I$, Definition 7.2,  Page 31\\
$G_{\rm {\bf I}}^{\medstar (q)} ,G_{\rm {\bf I}}^{\medstar( J_{\rm {\bf I}} )} , \tau _{\rm {\bf I}}^{\medstar (q)}  $   \dotfill Construction 7.3,  Page 31\\
$G_{H:H_1 }^{\medstar (J)} ,\tau _{H:H_1 }^{\medstar (J)} $  \dotfill  Construction 7.5,  Page 33\\
$G_{\rm {\bf I}}^\medstar,\tau _{\rm {\bf I}}^\medstar$  \dotfill  Construction 7.7,  Theorem 8.2,  Page 34, 36\\
$\overline \tau _{\rm {\bf I}}^\medstar$  \dotfill  The skeleton of $\tau _{\rm {\bf I}}^\medstar$, Definition 7.13, Page 35\\
$\partial _{\tau _{\rm {\bf I}}^\medstar} (H)$  \dotfill  Theorem 8.2 (ii),  Page 36\\
$\tau _{{\rm {\bf I}}:G_2 }^\medstar$  \dotfill The module of $\tau _{\rm {\bf I}}^\medstar$ at $G_2$, Definition 8.7,  Page 38\\

\section {Introduction}
The problem of the completeness of {\L}ukasiewicz infinite-valued logic
(\textbf{\L} for short)  was posed by {\L}ukasiewicz and Tarski in the 1930s. It
was twenty-eight years later that it was syntactically solved by Rose and
Rosser [20].  Chang [4] developed at almost the same time a theory of
algebraic systems for \textbf{\L}, which is called \textbf{MV}-algebras,
with an attempt to make \textbf{MV}-algebras correspond to
\textbf{\L} as
Boolean algebras to the classical two-valued logic.  Chang [5] subsequently
finished another proof for the completeness of \textbf{\L} by virtue of his
\textbf{MV}-algebras.

It was Chang who observed that the key role in the structure theory of
\textbf{MV}-algebras is not locally finite \textbf{MV}-algebras but
 linearly ordered ones.  The observation was formalized  by H\'{a}jek [12] who showing the completeness for his basic fuzzy logic (\textbf{BL} for short) with respect to linearly ordered \textbf{BL}-algebras. Starting with the structure
of \textbf{BL}-algebras, H\'{a}jek [13] reduced the problem of the standard
completeness of \textbf{BL} to two formulas to be provable in
\textbf{BL}. Here and thereafter, by the standard completeness we mean that logics are complete with respect to algebras with lattice reduct [0, 1].  Cignoli et al. [6]  subsequently  proved the standard
completeness of \textbf{BL}, i.e., \textbf{BL} is the logic of continuous
t-norms and their residua.

Hajek's approach toward fuzzy logic has been extended
by Esteva and Godo in [9], where the authors introduced the logic
\textbf{MTL} which aims at capturing the tautologies of left-continuous
t-norms and their residua. The standard completeness of \textbf{MTL} was
proved by Jenei and Montagna in [15], where the major step is to embed
linearly ordered \textbf{MTL}-algebras into the dense ones under the
situation that the structure of \textbf{MTL}-algebras has been unknown as
yet.

Esteva and Godo's work  was  further promoted by Metcalfe and Montagna [16]  who
introduced the uninorm logic \textbf{UL} and involutive uninorm logic
\textbf{IUL} which aim at capturing tautologies of left-continuous uninorms and their residua and those of involutive left-continuous ones, respectively.  Recently, Cintula and Noguera [8] introduced semilinear substructural logics which are substructural logics complete with respect to linearly ordered models.
Almost all well-known families of fuzzy logics such as
 \textbf{{\L}}, \textbf{BL}, \textbf{MTL}, \textbf{UL} and \textbf{IUL} belong to the class of semilinear substructural logics.

Metcalfe and Montagna's method to prove standard completeness for \textbf{UL
}and its extensions is of proof theory in nature and  consists of two key
steps.  Firstly, they extended\textbf{ UL } with the
density rule of Takeuti and Titani [21]:
\[
\cfrac{\Gamma \vdash (A \to p) \vee (p \to B) \vee C}{ \Gamma
\vdash (A \to B) \vee C}   (D)
\]
\noindent
where  $ p $  does not occur in  $ \Gamma,A, B $  or  $ C $,   and then prove the logics
with $(D)$  are complete with respect to algebras with lattice reduct [0,1].
Secondly,  they give a syntactic elimination of $(D)$  that was formulated as a
rule of the corresponding hypersequent calculus.

 Hypersequents are a natural generalization of sequents which were
introduced independently by Avron [1] and Pottinger [19] and   have  proved to be particularly
suitable for logics with prelinearity [2, 16].  Following the spirit of
Gentzen's cut elimination, Metcalfe and Montagna succeeded to eliminate the density rule for \textbf{GUL} and several extensions of \textbf{GUL} by induction on the height
of a derivation of the premise and shifting applications of the rule
upwards, but failed for\textbf{ GIUL} and therefore left it as an open problem.

There are several relevant works about the standard completeness of
\textbf{IUL} as follows. With an attempt to prove the standard completeness
of \textbf{IUL}, we generalized Jenei and Montagna's method for
\textbf{IMTL} in [22], but our effort was only partially successful. It seems
that the subtle reason why it does not work for \textbf{UL }and \textbf{IUL}
is the failure of FMP of these systems [23].  Jenei [14] constructed several classes
of involutive FL$ _{e}$-algebras, as he said, in order to gain a better
insight into the algebraic semantic of the substructural logic \textbf{IUL}, and also to the long-standing open problem about its standard completeness.  Ciabattoni and Metcalfe [7] introduced the method of density
elimination by substitutions which is applicable to a general classes of
(first-order) hypersequent calculi but fails to the case of \textbf{GIUL}.

We reconsidered Metcalfe and Montagna's proof-theoretic method to
investigate the standard completeness of \textbf{IUL}, because they have proved the standard
completeness of \textbf{UL} by their method and we can't prove such a result
by the Jenei and Montagna's model-theoretic method. In order to prove the
density elimination for \textbf{GUL}, they prove that the following
generalized density rule $(\mathcal{D}_1 )$:

$$\cfrac{G_0 \equiv \{\Gamma _i,\lambda _i p\Rightarrow \Delta _i
\}_{i=1\cdots n} \vert \{\Sigma _k,(\mu _k \mbox{+}1)p\Rightarrow
p\}_{k=1\cdots o} \vert \{\Pi _j \Rightarrow p\}_{j=1\cdots m}
}{\mathcal{D}_1 (G_0 )\equiv \{\Gamma _i,\lambda _i \Pi _j \Rightarrow
\Delta _i \}_{i=1\cdots n}^{j=1\cdots m} \vert \{\Sigma _k,\mu _k \Pi _j
\Rightarrow t\}_{k=1\cdots o}^{j=1\cdots m} }(\mathcal{D}_1 )$$
is admissible for \textbf{GUL}, where they set two constraints to the form of $G_0 $:
(i) $n,m\geqslant 1$ and $\lambda _i {\kern 1pt}\geqslant 1$ for some $1\leqslant i\leqslant n$;
(ii) $p$ does not occur in $\Gamma _i $, $\Delta _i $, $\Pi _j $, $\Sigma _k $ for $i=1\cdots n$, $j=1\cdots m$, $k=1\cdots o$.

  We may regard  $ (\mathcal{D}_1)$  as a procedure whose input and output are the
premise and conclusion of  $ ( {\mathcal{D}_1} ) $,   respectively.
We denote the conclusion of  $ ( {\mathcal{D}_1} ) $  by $\mathcal{D}_1( {G_0 })$  when its premise is $G_0$.
Observe that Metcalfe and Montagna had succeeded to define the suitable
conclusion for almost arbitrary premise in $(\mathcal{D}_1 )$, but it seems
impossible for \textbf{GIUL} (See Section 3 for an example).  We then define the following generalized density rule
$(\mathcal{D}_0 )$ for $$ {\rm {\bf GL}} \in \{{\rm {\bf GUL}},{\rm {\bf
GIUL}},{\rm {\bf GMTL}},{\rm {\bf GIMTL}}\}$$ and prove its admissibility in Section
9.

\begin{theorem}[\textbf{Main theorem}]
 Let $n,m\geqslant 1$, $p$ does not occur in $G',\Gamma _i
,\Delta _i,\Pi _j $ or ${\kern 1pt}\Sigma _j $ for all $1\leqslant i\leqslant
n$,$1\leqslant j\leqslant m$.  Then the strong density rule
$$\cfrac{G_0 \equiv
G'\vert \left\{ {\Gamma _i,p\Rightarrow \Delta _i } \right\}_{i=1\cdots n}
\vert \left\{ {\Pi _j \Rightarrow p,\Sigma _j {\kern 1pt}}
\right\}_{j=1\cdots m} }{\mathcal{D}_0 \left( {G_0 } \right)\equiv G'\vert
\{\Gamma _i,\Pi _j \Rightarrow \Delta _i,\Sigma _j \}{\kern
1pt}_{i=1\cdots n;j=1\cdots m} }(\mathcal{D}_0 )$$
is admissible in ${\rm {\bf GL}}$.
\end{theorem}

In proving the admissibility of $(\mathcal{D}_1)$, Metcalfe and Montagna made
some restriction on the proof $\tau $ of $G_0 $, i.e., converted $\tau $
into an r-proof.  The reason why they need an r-proof is that
they set the constraint (i) to $G_0 $. We may imagine the restriction on
$\tau $ and the constraints to $G_0 $ as two pallets of a balance, i.e., one
is strong if another is weak and vice versa. Observe that we select the
weakest form of $G_0 $ in $(\mathcal{D}_0 )$ that guarantees the validity of $(D)$. Then it is natural that we need make the strongest
restriction on the proof $\tau $ of $G_0 $. But it seems extremely
difficult to follow such a way to prove the admissibility of $(\mathcal{D}_0
)$.

In order to overcome  such a difficulty,  we first of all choose Avron-style
hypersequent calculi as the underlying systems (See A.1).
Let $\tau $ be a cut-free proof of $G_0 $ in ${\rm {\bf GL}}$. Starting with $\tau $,  we construct a proof $\tau ^\ast $ of
$G\vert G^\ast $ in a restricted subsystem  $ {\rm {\bf GL}}_\Omega  $  of
 $ {\rm {\bf GL}} $   by a systematic novel manipulations in Section 4. Roughly speaking,  each sequent of $G$ is a copy of some sequent of $G_0 $,  and each sequent of $G^\ast $ is a copy of some contraction sequent in $\tau$.
  In Section 5,  we define the generalized density rule $(\mathcal{D})$
in ${\rm {\bf GL}}_\Omega $ and prove that it is admissible.

Now,  starting with $G\vert G^\ast $ and its proof $\tau ^\ast $,  we
construct a proof $\tau ^{\medstar}$ of $G^{\medstar}$  in ${\rm {\bf GL}}_\Omega $ such that each sequent of $G^{\medstar}$  is a copy of some sequent of $G$. Then $\vdash_{{\rm {\bf GL}}_\Omega } \mathcal{D}(G^{\medstar})$ by the admissibility of  $\mathcal{(D)}$.  Then $\vdash _{{\rm {\bf GL}} } \mathcal{D}_0
(G_0 )$ by Lemma 9.1.  Hence the density elimination theorem holds in ${\rm {\bf GL}}$.  Especially,  the standard completeness of \textbf{IUL} follows from Theorem 62 of [16].

$G^{\medstar}$  is constructed by eliminating $(pEC)$-sequents in $G\vert G^\ast $ one by one. In order to control the process,  we introduce the set $I=\{H_{i_1 }^c, \cdots, H_{i_m }^c \}$ of  $(pEC)$-nodes of $\tau^\ast $  and the set ${\rm {\bf I}}$ of the branches relative to $I$ and construct $G_{\rm {\bf I}}^{\medstar}$ such that $G_{\rm {\bf I}}^{\medstar}$ doesn't contain $(pEC)$-sequents lower than any node in $I$, i.e.,  $S_j^c \in G_{\rm {\bf I}}^{\medstar}$ implies $H_j^c \vert \vert H_i^c $ for all $H_i^c \in I$. The procedure is called the separation algorithm of
branches  in  which we  introduce another novel  manipulation and call it derivation-grafting operation in Section 7, 8.
\section{Preliminaries}

In this section, we recall the basic definitions and results involved, which are mainly from [16].
Substructural fuzzy logics are based on a countable propositional language
with formulas FOR built inductively as usual from a set of propositional
variables VAR, binary connectives  $  \odot, \to, \wedge,   \vee,  $  and
constants  $ \bot,  \top,  t,  f  $  with definable connective $\neg A\coloneq A \to f.$

\begin{definition}([1, 16]) A sequent is an ordered pair  $ (\Gamma,\Delta ) $
of finite multisets (possibly empty) of formulas, which we denote by  $ \Gamma
\Rightarrow \Delta$.   $ \Gamma  $  and  $ \Delta  $  are called the antecedent and
succedents, respectively, of the sequent and each formula in  $ \Gamma  $  and
 $ \Delta  $  is called a sequent-formula. A hypersequent $G$ is a finite multiset
of the form  $ \Gamma _1 \Rightarrow \Delta _1 \vert \cdots \vert \Gamma _n
\Rightarrow \Delta _n  $,   where each $ \Gamma _i \Rightarrow \Delta _i  $  is a
sequent and is called a component of $G$ for each  $ 1 \leqslant i \leqslant n$.
 If  $ \Delta _i  $  contains at
most one formula for  $ i = 1 \cdots n $,   then the hypersequent is
single-conclusion, otherwise it is multiple-conclusion.
\end{definition}

\begin{definition}
 Let  $ S $  be a sequent and  $ G=S_1 \vert \cdots \vert S_m$  a hypersequent. We say that  $ S
\in G $  if  $ S $  is  one of  $ S_1, \cdots,  S_m $.
\end{definition}

\begin{notation}
 Let  $ G_1  $  and  $ G_2  $  be two hypersequents.  We will assume from now on that all set terminology refers to multisets, adopting the conventions of writing
$\Gamma, \Delta$ for the multiset union of $\Gamma$ and $\Delta$, $A$ for the singleton multiset $\{A\}$, and $\lambda\Gamma$ for the multiset union of $\lambda$ copies of $\Gamma$ for $\lambda \in \mathbf{N}$.   By  $ G_1
\subseteq G_2  $  we mean that  $ S \in G_2  $  for all  $ S \in G_1  $  and the
multiplicity of  $ S $  in  $ G_1  $  is not~more~than~that of  $ S $  in  $ G_2$.  We
will use $ G_1 = G_2  $,    $ G_1 \bigcap G_2  $,   $ G_1 \bigcup G_2  $,    $ G_1 \backslash G_2  $
by their standard meaning for multisets by default  and we will declare when we use them for sets. We sometimes write  $ S_1 \vert \cdots \vert S_m$ and $G\vert  \overbrace {S\vert  \cdots \vert S}^{n \,\,\, copies}$ as  $ \{S_1, \cdots,S_m \} $,  $G\vert  S^n$(or $G\vert  \{S\}^n$), respectively.
\end{notation}

\begin{definition}([1])
 A hypersequent rule is an ordered pair consisting of
a sequence of hypersequents  $ G_1,\cdots,G_n  $
called the premises (upper hypersequents) of the rule, and a hypersequent  $ G $  called the
conclusion (lower  hypersequent), written by $ \cfrac{G_1 \cdots G_n
}{G}. $  If  $ n=0, $  then the rule has no premise and is called an initial sequent. The single-conclusion version of a rule adds the
restriction that both the premises and conclusion must be single-conclusion;
otherwise the rule is multiple-conclusion.
\end{definition}

\begin{definition} ([16])
\textbf{GUL} and  \textbf{GIUL} consist of the
single-conclusion and multiple-conclusion versions of the following
initial sequents and rules, respectively:

\noindent\textbf{Initial Sequents }

\[
\cfrac{}{A \Rightarrow A}
(ID)\quad
\cfrac{}{\Gamma \Rightarrow \top,\Delta  }
    ( \top _r )\quad
\cfrac{}{\Gamma, \bot \Rightarrow \Delta}( \bot _l )\quad
\cfrac{}{ \Rightarrow t} (t_r )\quad
\cfrac{}{f \Rightarrow }(f_l )
\]

\noindent\textbf{Structural Rules}

\[
\cfrac{G\vert \Gamma \Rightarrow A\vert \Gamma \Rightarrow A}{G\vert \Gamma
\Rightarrow A}(EC)\quad
\cfrac{G}{G\vert \Gamma \Rightarrow A}(EW)
\]
\[
 \cfrac{G_1\vert \Gamma _1,\Pi _1 \Rightarrow \Sigma _1,\Delta _1 \quad G_2\vert\Gamma _2
,\Pi _2 \Rightarrow \Sigma _2,\Delta _2 }{G_1\vert G_2\vert \Gamma _1,\Gamma _2
\Rightarrow \Delta _1,\Delta _2 \vert \Pi _1,\Pi _2 \Rightarrow \Sigma
_1,\Sigma_2}(COM)
\]
\noindent\textbf{Logical Rules}\\
\begin{minipage}[l]{0.49\linewidth}
\begin{align*}
\cfrac{G\vert \Gamma \Rightarrow \Delta }{G\vert \Gamma,t, \Rightarrow
\Delta } (t_l)&\\
\cfrac{G_1\vert \Gamma _1 \Rightarrow A,\Delta _1 \quad G_2\vert \Gamma _2,B
\Rightarrow \Delta _2 }{G_1\vert G_2\vert\Gamma _1,\Gamma _2,A \to B \Rightarrow
\Delta _1,\Delta _2 }(\to _l)&\\
\cfrac{G\vert \Gamma,A,B \Rightarrow \Delta }{G\vert \Gamma,A\odot  B
\Rightarrow \Delta }(\odot _l )&\\
\cfrac{G\vert \Gamma,A \Rightarrow \Delta }{G\vert \Gamma,A \wedge B
\Rightarrow \Delta }(\wedge_{lr})&\\
\cfrac{G_1\vert\Gamma \Rightarrow A,\Delta \quad G_2\vert \Gamma \Rightarrow
B,\Delta }{G_1\vert G_2\vert\Gamma \Rightarrow A \wedge B,\Delta }(\wedge_r)\\
\cfrac{G\vert \Gamma \Rightarrow B,\Delta }{G\vert \Gamma \Rightarrow A \vee
B,\Delta }(\vee_{rl})&\\
\end{align*}
\end{minipage}
\begin{minipage}[r]{0.49\linewidth}
\begin{align*}
\cfrac{G\vert \Gamma \Rightarrow \Delta }{G\vert \Gamma \Rightarrow f,\Delta}
 (\mathop f\nolimits_r)&\\
\cfrac{G\vert \Gamma,A \Rightarrow B,\Delta }{G\vert \Gamma \Rightarrow A
\to B,\Delta }( \to _r )&\\
\cfrac{G_1\vert \Gamma _1 \Rightarrow A,\Delta _1 \,\,\, G_2\vert\Gamma _2
\Rightarrow B,\Delta _2 }{G_1\vert G_2\vert \Gamma _1,\Gamma _2 \Rightarrow A\odot
B,\Delta _1,\Delta _2 }(\odot _r )&\\
\cfrac{G\vert \Gamma,B \Rightarrow \Delta }{G\vert \Gamma,A \wedge B
\Rightarrow \Delta }( \wedge_{ll})&\\
  \cfrac{G\vert \Gamma \Rightarrow A,\Delta }{G\vert
\Gamma \Rightarrow A \vee B,\Delta }(\vee _{rr})&\\
\cfrac{G_1\vert \Gamma,A \Rightarrow \Delta \quad G_2\vert \Gamma,B \Rightarrow
\Delta }{G_1\vert G_2\vert\Gamma,A \vee B \Rightarrow \Delta }(\vee_l)
\end{align*}
\end{minipage}

\noindent\textbf{Cut Rule }

\[\cfrac{G_1\vert\Gamma _1,A \Rightarrow \Delta _1 \quad
G_2\vert\Gamma _2 \Rightarrow A,\Delta _2 }{G_1\vert G_2\vert\Gamma _1,\Gamma _2
\Rightarrow \Delta _1,\Delta _2 }(CUT)\]

\end{definition}

\begin{definition} ([16])
\textbf{ GMTL} and \textbf{GIMTL} are \textbf{GUL} and \textbf{GIUL} plus the single conclusion and multiple-conclusion versions, respectively, of:
\[\cfrac{G\vert\Gamma\Rightarrow \Delta}{G\vert\Gamma, A\Rightarrow \Delta}(WL),
\,\,\, \cfrac{G\vert\Gamma\Rightarrow \Delta}{G\vert\Gamma\Rightarrow A, \Delta}(WR).
\]
\end{definition}

\begin{definition}
(i)  $ (I) \in \{(t_l ),(f_r ),( \to _r ),(\odot _l ),(
\wedge_{lr} ),( \wedge_{ll} ), ( \vee _{rr} ),( \vee _{rl}), (WL), (WR)\} $ and \\
$ (II) \in \{( \to_l ),(\odot  _r ),( \wedge_r ),( \vee_l ),
 (COM)\}$;

(ii) By  $ \cfrac{G'\vert S'  \quad G''\vert S'' }{G'\vert G''\vert  H'}(II) $  (or  $ \cfrac{G'\vert S'}{G'\vert H'}(I)) $  we denote
 an instance of a  two-premise  rule $ (II) $ (or one-premise rule
 $ (I) $) of  $ {\rm {\bf GL}} $,   where  $ S' $  and  $ S'' $  are its focus sequents and  $ H' $  is its principle sequent (for  $ (\to_l) $,    $ (\odot _r) $,   $ (\wedge_r) $  and  $ (\vee_l )) $  or hypersequent (for  $ (COM) $,    $ (\wedge_{rw} ) $  and  $ ( \vee_{lw} ) $,   see Definition 4.2).
\end{definition}

\begin{definition} ([16])  $ \mathbf{GL}^{\mathrm{D}} $  is \textbf{GL} extended with
the following density rule:

\[\cfrac{G\vert \Gamma _1,p \Rightarrow \Delta _1 \vert \Gamma _2 \Rightarrow
p,\Delta _2 }{ G\vert \Gamma _1,\Gamma _2 \Rightarrow \Delta _1
,\Delta _2 }(D)
\] where  $ p $  does not occur in  $ G, \Gamma _1, \Gamma _2, \Delta _1
 $  or  $ \Delta _2$.
\end{definition}
\begin{definition}([1]) A derivation  $ \tau  $  of a hypersequent  $ G $  from
hypersequents  $ G_1, \cdots,G_n  $  in a hypersequent calculus  $ {\rm
{\bf GL}} $  is a labeled tree with the root labeled by  $ G $,  leaves labeled initial sequents or some $ G_1, \cdots,G_n  $,   and for each
node labeled  $ G_0 ' $  with parent nodes labeled  $ G_1 ', \cdots,G_m
' $  (where possibly  $ m = 0) $,    $ \cfrac{G_1'  \cdots  G_m '}{G_0'} $  is an instance of a rule of  $ {\rm {\bf GL}}$.
\end{definition}

\begin{notation}
(i)$ \cfrac{\underline{G_1  \cdots G_n }}{G_{0}}\left\langle \tau
\right\rangle  $ denotes that $ \tau  $  is a derivation of  $G_{0}$  from  $ G_1, \cdots, G_n$;

(ii) Let  $ H $  be a hypersequent.  $ H \in \tau  $  denotes that  $ H $  is a node of
 $ \tau  $.  We call  $ H $  a leaf hypersequent if  $ H $  is a leaf of  $ \tau  $,   the root hypersequent if it is the root of  $ \tau $. $ \cfrac{G_1 ' \cdots G_m '}{G_0'}\in \tau  $  denotes that   $ G_0' \in \tau  $
and its parent nodes are  $ G_1', \cdots, G_m' $;

(iii) Let $ H \in \tau  $ then $ \tau (H) $ denotes  the subtree of  $ \tau  $  rooted at $ H $;

(iv)  $ \tau  $  determines a partial order  $  \leqslant _\tau  $  with the root  as the least
element.  $ H_1  \| H_2  $  denotes  $ H_1 \nleqslant_\tau H_2  $  and  $ H_2
\nleqslant_\tau H_1  $ for any $ H_1,H_2 \in \tau$.  By  $ H_1 = _\tau H_2
 $  we mean that  $ H_1  $  is the same node as  $ H_2  $  in  $ \tau  $.   We sometimes write  $  \leqslant _\tau  $   as  $\leqslant$;

(v) An inference of the form $\cfrac{G'\vert S^n}{G'\vert S} \in \tau $ is called the full external contraction and denoted by $(EC^{ \ast})$,  if  $n\geqslant2$,  $G'\vert S^n$ is not a lower hypersequent of an application of  $ (EC) $ whose contraction sequent is $S$,  and $G'\vert S$  not an upper one in $ \tau $.
\end{notation}

\begin{definition}
Let  $ \tau  $  be a derivation of  $ G $ and   $ H \in \tau$.
The thread  $ Th_\tau (H) $  of  $ \tau  $  at  $ H $  is a sequence  $ H_0, \cdots, H_n $  of node hypersequents of  $ \tau  $  such that  $ H_0 = _\tau H $,   $ H_n = _\tau
G $,    $ \cfrac{H_k }{H_{k + 1} } \in \tau  $  or there exists  $ G' \in \tau  $
such that  $ \cfrac{H_k  \quad G'}{H_{k + 1} } $  or   $ \cfrac{ G'\quad H_k }{H_{k + 1} } $ in $\tau$ for all  $ 0 \leqslant k \leqslant n - 1$.
\end{definition}

\begin{proposition}
Let $ H_1,H_2 \in \tau$.  Then \\
 (i) $ H_1 \leqslant  H_2  $  if and only if
  $  H_1 \in Th_\tau ( H_2)$;\\
  (ii) $ H_1  \| H_2  $ and $ H_1 \leqslant  H_3  $ imply $ H_2  \| H_3  $;\\
 (iii) $ H_1 \leqslant  H_3  $ and $ H_2 \leqslant  H_3  $ imply $ H_1 \nparallel H_2 $.

\end{proposition}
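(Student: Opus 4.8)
The plan is to reduce all three parts to two structural facts about the labelled tree $\tau$. First, I would make explicit that the order $\leqslant_\tau$ of Notation 2.9(iv) is the reflexive--transitive closure of the covering relation placing every conclusion strictly below each of its premises; thus $H\leqslant_\tau H'$ whenever $\cfrac{\cdots H'\cdots}{H}\in\tau$, and, equivalently, $H\leqslant_\tau H'$ iff $H'\in\tau(H)$ in the sense of Notation 2.9(iii), the root $G$ being least because $\tau(G)=\tau$. Second, since $\tau$ is a tree, every non-root node is a premise of exactly one rule instance and hence has a unique immediate $\leqslant_\tau$-predecessor; this uniqueness of the downward edge is precisely what makes the sequence $Th_\tau(H)$ of Definition 2.10 well defined, and it forces $Th_\tau(H)$ to be the unique descending path from $H$ to $G$.

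For (i) I would read both implications off the thread $Th_\tau(H_2)=H_0,\ldots,H_n$ with $H_0=_\tau H_2$ and $H_n=_\tau G$. Each defining clause $\cfrac{H_k}{H_{k+1}}$, $\cfrac{H_k\ G'}{H_{k+1}}$, or $\cfrac{G'\ H_k}{H_{k+1}}$ of Definition 2.10 exhibits $H_{k+1}$ as the conclusion of a rule having $H_k$ among its premises, so $H_{k+1}<_\tau H_k$; by transitivity $H_k\leqslant_\tau H_0=_\tau H_2$ for every $k$, so that any $H_1=_\tau H_k\in Th_\tau(H_2)$ satisfies $H_1\leqslant_\tau H_2$, settling the $(\Leftarrow)$ direction. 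For $(\Rightarrow)$, from $H_1\leqslant_\tau H_2$ and the generation of $\leqslant_\tau$ I obtain a finite descending chain $H_2=K_0>_\tau K_1>_\tau\cdots>_\tau K_j=H_1$; the uniqueness of the downward edge forces $K_i=_\tau H_i$ for each $i$, whence $H_1=K_j=_\tau H_j\in Th_\tau(H_2)$. A short induction on $j$, the number of covering steps from $H_2$ down to $H_1$, packages this cleanly.

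Parts (ii) and (iii) then follow by pure order-theoretic bookkeeping. For (iii), the steps $H_{k+1}<_\tau H_k$ noted above make $Th_\tau(H_3)$ a strictly decreasing, hence linearly ordered, chain; by (i) both $H_1$ and $H_2$ lie on it, so they are $\leqslant_\tau$-comparable, i.e.\ $H_1\nparallel H_2$. For (ii) I would argue by contradiction: if $H_2\nparallel H_3$, then either $H_2\leqslant_\tau H_3$, in which case $H_1\leqslant_\tau H_3$ and $H_2\leqslant_\tau H_3$ give $H_1\nparallel H_2$ by (iii), or $H_3\leqslant_\tau H_2$, in which case $H_1\leqslant_\tau H_3\leqslant_\tau H_2$ gives $H_1\leqslant_\tau H_2$ by transitivity; either case contradicts $H_1\|H_2$, so $H_2\|H_3$, using the definition $H\|H'\Leftrightarrow H\nleq_\tau H'\wedge H'\nleq_\tau H$ of Notation 2.9(iv).

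The only genuine obstacle is the $(\Rightarrow)$ direction of (i): everything there rests on the uniqueness of the immediate $\leqslant_\tau$-predecessor in the tree $\tau$, which is what pins the chain witnessing $H_1\leqslant_\tau H_2$ to the thread. Once that single structural fact is isolated, (ii) and (iii) reduce to transitivity, antisymmetry, and the definition of $\|$, so I expect no further difficulty there.
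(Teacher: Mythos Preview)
Your proposal is correct. The paper states this proposition without proof, treating it as an elementary fact about labelled trees and the partial order $\leqslant_\tau$; your argument spells out exactly the structural content the paper takes for granted (uniqueness of the downward edge making $Th_\tau(H)$ a chain, then transitivity and the definition of $\|$), so there is nothing to compare against beyond noting that your write-up is more detailed than anything the authors supply.
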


We need the following definition to give each node of   $\tau$  an identification number, which is used in Construction 6.1 to differentiate  sequents in a hypersequent in a proof.

\begin{definition}  ([A.5.2])
Let $H\in \tau$ and $Th(H)=(H_{0},\cdots,H_{n})$.   Let $b_{n}\coloneq 1$,
\[
b_k  \coloneq \left\{ \begin{array}{l}
 1 \qquad \qquad\mathrm{ if}\,\,\,\cfrac{G'\,\,\,H_{k}}{H_{k+1}}\in \tau,  \\
 0 \qquad \qquad \mathrm{if}  \,\,\,\cfrac{H_{k}}{H_{k+1}}\in \tau\,\,\, \mathrm{or} \,\,\, \cfrac{H_{k}\,\,\,G'}{H_{k+1}}\in \tau\\
 \end{array} \right.
\]
 for all $0\leqslant  k\leqslant  n-1$. Then  $\mathcal{P}(H)\coloneq \sum_{k=0}^{k=n}2^{k}b_{k}$ and  call it the position of  $H$ in $ \tau$.
\end{definition}

\begin{definition} A rule is admissible for a calculus \textbf{GL} if
whenever its premises are derivable in \textbf{GL}, then so is its
conclusion.
\end{definition}

\begin{lemma} $\mathrm{([16]) }$
Cut-elimination holds for  $ {\rm {\bf GL}} $,   i.e., proofs using  $ (CUT) $  can be transformed syntactically
into proofs not using  $ (CUT)$.
\end{lemma}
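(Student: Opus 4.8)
The plan is to adapt Gentzen's cut-elimination argument to the Avron-style hypersequent setting, following [16]. First I would reduce to the case of a single topmost cut: it suffices to show that if $\mathcal{D}$ is a proof in $\mathbf{GL}$ whose last inference is an application of $(CUT)$ and whose two immediate subproofs are cut-free, then $\mathcal{D}$ can be transformed into a cut-free proof of the same end-hypersequent; applying this repeatedly to an uppermost cut removes all of them. To make the induction go through in the presence of external contraction, I would actually prove the statement for a generalized ``mix'' rule — cutting simultaneously on every occurrence of the cut formula $A$ across all components of the right premise, and dually on the left — and attach to each mix the lexicographically ordered measure $(|A|,h)$, where $|A|$ is the complexity of $A$ and $h$ is the sum of the heights of the two subproofs standing above the mix.

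The induction then splits into the usual two families of cases. If $A$ is principal in the last inference of both premises, I would apply the appropriate \emph{logical reduction}: a mix on $B \to C$ is replaced by mixes on $B$ and on $C$; mixes on $B \odot C$, on $B \wedge C$, on $B \vee C$, on $t$ and on $f$ are handled analogously; in every such case the cut formula strictly decreases in complexity, so the outer induction hypothesis applies. If $A$ is not principal in the last inference of the left or the right premise, I would \emph{permute the mix upward} past that inference. For the initial sequents and the one-premise logical rules $(I)$ this is immediate; for the two-premise logical rules $(II)$ the side hypersequents and side formulas are simply carried along, and the induction hypothesis applies to the one or two resulting mixes, which have the same cut-complexity but strictly smaller height.

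The delicate part — and the reason the hypersequent proof is genuinely more involved than the sequent one — is the treatment of the structural rules $(EW)$, $(EC)$ and $(COM)$. Permuting past $(EW)$ is trivial (the mix may even disappear). Permuting past $(EC)$ is the crux: the cut formula may sit inside the component being externally contracted, so a naive permutation would create two mixes of the \emph{same} complexity and break the measure; this is precisely why the mix is taken over all copies of $A$ at once, so that after permuting past $(EC)$ one still has a single mix of strictly smaller height. Permuting past $(COM)$ requires recombining the premises with fresh applications of $(COM)$ and, occasionally, $(EW)$, but again the height strictly drops. The extra weakening rules $(WL),(WR)$ present in $\mathbf{GMTL}$ and $\mathbf{GIMTL}$ add only trivial permutation cases, and the multiple-conclusion versions of the rules for $\mathbf{GIUL}$ and $\mathbf{GIMTL}$ are handled symmetrically, so a single argument covers all four calculi $\mathbf{GL} \in \{\mathbf{GUL},\mathbf{GIUL},\mathbf{GMTL},\mathbf{GIMTL}\}$. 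I expect the main obstacle to be the bookkeeping of the $(EC)$/$(COM)$ interaction, namely checking that the chosen well-founded measure decreases in every subcase; once that is arranged the remaining verification is routine, and the complete details are those of [16].
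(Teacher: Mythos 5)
The paper does not prove this lemma at all: it is imported verbatim from [16] (Metcalfe and Montagna, \emph{Substructural fuzzy logics}), and the present paper relies on it only as a black box to obtain a cut-free proof $\tau$ of $G_0$ before the preprocessing of Section 4. Your sketch is essentially the standard argument used in [16]: a double induction on cut-formula complexity and derivation height, with the cut generalized to act on occurrences of $A$ spread over several components so that permutation past $(EC)$ (and past the context-sharing rules $(\wedge_r)$, $(\vee_l)$, which duplicate the cut formula into both premises) does not break the measure, and with $(COM)$ handled by reassembling the split contexts. So the proposal is consistent with the cited source; the only caution I would add is that in these internally contraction-free calculi the generalization must be over the \emph{external} (component) structure rather than a Gentzen-style mix collapsing multiplicities inside a single sequent, since there is no internal contraction to justify the latter.
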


\section{Proof of the main  theorem: A computational example}

In this section, we present an example to illustrate the proof of Main theorem.

Let  $ G_0 \equiv  \Rightarrow p,B\vert B \Rightarrow p,\neg A\odot  \neg
A \vert p \Rightarrow C\vert C,p \Rightarrow A\odot  A$.  $ G_0
 $  is a theorem of  $ {\rm {\bf IUL}} $  and a cut-free proof  $\tau$  of  $ G_0
 $  is shown in Figure 1, where we use an additional rule  $
\cfrac{{\Gamma ,A \Rightarrow \Delta }}
{{\Gamma  \Rightarrow \neg A,\Delta }}(\neg _r )
$ for simplicity. Note that we denote  three applications of $(EC)$ in $\tau$ respectively  by  $(EC)_1,(EC)_2,(EC)_3$  and three $(\odot  _r )$
 by $(\odot  _r )_1,(\odot  _r )_2$ and $(\odot  _r )_3$.
 \[
 \boxed {
\cfrac{\cfrac{\cfrac{\cfrac{p \Rightarrow p \quad A\Rightarrow A}{A \Rightarrow p \vert p \Rightarrow A}(COM)  \cfrac{p \Rightarrow p\quad A \Rightarrow A}{A \Rightarrow p \vert p \Rightarrow
A}(COM)}{ A \Rightarrow p \vert  A \Rightarrow p \vert   p,p \Rightarrow A\odot  A}(\odot  _r )_1}{A \Rightarrow p \vert p,p\Rightarrow A\odot  A}(EC)_1}{ \Rightarrow p,\neg A \vert p,p \Rightarrow A\odot
A}(\neg _{r})}
\]

\[\boxed {
\cfrac{\cfrac{\cfrac{\cfrac{p \Rightarrow p \quad A\Rightarrow A}{A \Rightarrow p \vert p \Rightarrow A}(COM)  \cfrac{p \Rightarrow p\quad A \Rightarrow A}{A \Rightarrow p \vert p \Rightarrow
A}(COM)}{A \Rightarrow p \vert A \Rightarrow p \vert p,p \Rightarrow A\odot  A}(\odot _r )_2}{A \Rightarrow p \vert p,p\Rightarrow A\odot  A}(EC)_2}{ \Rightarrow p,\neg A \vert p,p \Rightarrow A\odot
A}(\neg _{r})}
\]
(continued)
\small
\[
\boxed {
\cfrac{\cfrac{}{C \Rightarrow C}\cfrac{\cfrac{}{B \Rightarrow B}\cfrac{\cfrac{
\begin{array}{*{20}c}
   \ddots  \vdots  {\mathinner{\mkern0.5mu\raise0.5pt\hbox{.}\mkern0.0mu
 \raise1.9pt\hbox{.}\mkern0.0mu\raise3.9pt\hbox{.}}}\\
 \Rightarrow p,\neg A \vert p,p
\Rightarrow A\odot  A \\
 \end{array}
\quad
\begin{array}{*{20}c}
   \ddots  \vdots  {\mathinner{\mkern0.5mu\raise0.5pt\hbox{.}\mkern0.0mu
 \raise1.9pt\hbox{.}\mkern0.0mu\raise3.9pt\hbox{.}}}\\
 \Rightarrow p,\neg A \vert p,p
\Rightarrow A\odot  A \\
 \end{array} }{
H_{\times\times}\equiv \Rightarrow p,p,\neg A\odot  \neg A \vert p,p \Rightarrow A\odot
A \vert p,p \Rightarrow A\odot  A}(\odot _r )_3}{ H_{\times}\equiv \Rightarrow p,p,\neg
A\odot  \neg A \vert p,p \Rightarrow A\odot  A}(EC)_3 }{
\Rightarrow p,B\vert B \Rightarrow p,\neg A\odot  \neg A \vert p,p
\Rightarrow A\odot  A}(COM)}{\Rightarrow
p,B\vert B \Rightarrow p,\neg A\odot  \neg A \vert p \Rightarrow C\vert C,p \Rightarrow A\odot  A}(COM)}
\]
\normalsize
\begin{center}
 \footnotesize FIGURE 1\normalsize\quad A proof $\tau$ of $G_0$
\end{center}

By applying (D) to free combinations of all sequents in $\Rightarrow
p,B\vert B\Rightarrow p,\neg A\odot \neg A$ and in $p\Rightarrow C\vert
C,p\Rightarrow A\odot A$, we get that $H_0 \equiv  \Rightarrow B,C\vert
C\Rightarrow A\odot A,B\vert B\Rightarrow C,\neg A\odot \neg A\vert
C,B\Rightarrow A\odot A,\neg A\odot \neg A$.  $H_0 $ is a theorem of
${\rm {\bf IUL}}$ and a cut-free proof  $\rho$  of $H_0 $ is shown in
Figure 2.  It supports the validity of  the generalized density rule $( \mathcal{D}_0)$ in Section 1,  as an  instance of $( \mathcal{D}_0)$.

\[
\boxed {\cfrac{\cfrac{}{C\Rightarrow C}\cfrac{\cfrac{}{B\Rightarrow
B}\cfrac{\cfrac{\cfrac{A\Rightarrow A{\kern 1pt}{\kern 1pt}{\kern 1pt}{\kern
1pt}{\kern 1pt}{\kern 1pt}{\kern 1pt}{\kern 1pt}{\kern 1pt}{\kern 1pt}{\kern
1pt}{\kern 1pt}{\kern 1pt}{\kern 1pt}{\kern 1pt}{\kern 1pt}{\kern 1pt}{\kern
1pt}{\kern 1pt}{\kern 1pt}{\kern 1pt}{\kern 1pt}{\kern 1pt}{\kern 1pt}{\kern
1pt}A\Rightarrow A}{A,A\Rightarrow A\odot A}{\kern 1pt}}{A\Rightarrow \neg
A,A\odot A}{\kern 1pt}{\kern 1pt}{\kern 1pt}{\kern 1pt}{\kern 1pt}{\kern
1pt}{\kern 1pt}{\kern 1pt}{\kern 1pt}{\kern 1pt}{\kern
1pt}\cfrac{\cfrac{A\Rightarrow A{\kern 1pt}{\kern 1pt}{\kern 1pt}{\kern
1pt}{\kern 1pt}{\kern 1pt}{\kern 1pt}{\kern 1pt}{\kern 1pt}{\kern 1pt}{\kern
1pt}{\kern 1pt}{\kern 1pt}{\kern 1pt}{\kern 1pt}{\kern 1pt}{\kern 1pt}{\kern
1pt}{\kern 1pt}{\kern 1pt}{\kern 1pt}{\kern 1pt}{\kern 1pt}{\kern 1pt}{\kern
1pt}A\Rightarrow A}{A,A\Rightarrow A\odot A}{\kern 1pt}}{A\Rightarrow \neg
A,A\odot A}}{A,A\Rightarrow \neg A\odot \neg A,A\odot A,A\odot
A}}{A,B\Rightarrow A\odot A,\neg A\odot \neg A\vert A\Rightarrow A\odot
A,B}}{H_1 \equiv  A\Rightarrow C\vert A,B\Rightarrow A\odot A,\neg
A\odot \neg A\vert C\Rightarrow A\odot A,B}}
\]
\scriptsize
\[
\boxed {\cfrac{\cfrac{\cfrac{}{C\Rightarrow C}\cfrac{\cfrac{}{C\Rightarrow
C}\cfrac{\cfrac{}{B\Rightarrow B}\cfrac{\cfrac{\cfrac{A\Rightarrow A{\kern
1pt}{\kern 1pt}{\kern 1pt}{\kern 1pt}{\kern 1pt}{\kern 1pt}{\kern 1pt}{\kern
1pt}{\kern 1pt}{\kern 1pt}{\kern 1pt}{\kern 1pt}{\kern 1pt}{\kern 1pt}{\kern
1pt}{\kern 1pt}{\kern 1pt}{\kern 1pt}{\kern 1pt}{\kern 1pt}{\kern 1pt}{\kern
1pt}{\kern 1pt}{\kern 1pt}{\kern 1pt}A\Rightarrow A}{A,A\Rightarrow A\odot
A}{\kern 1pt}}{A\Rightarrow \neg A,A\odot A}{\kern 1pt}{\kern 1pt}{\kern
1pt}{\kern 1pt}{\kern 1pt}{\kern 1pt}{\kern 1pt}{\kern 1pt}{\kern 1pt}{\kern
1pt}{\kern 1pt}\cfrac{\begin{array}{*{20}c}
   \ddots  \vdots  {\mathinner{\mkern0.5mu\raise0.5pt\hbox{.}\mkern0.0mu
 \raise1.5pt\hbox{.}\mkern0.0mu\raise2.9pt\hbox{.}}}  \\
H_1 =A\Rightarrow C\vert A,B\Rightarrow A\odot A,\neg
A\odot \neg A\vert C\Rightarrow A\odot A,B \\
 \end{array}
}{\Rightarrow \neg A,C\vert
A,B\Rightarrow A\odot A,\neg A\odot \neg A\vert C\Rightarrow A\odot
A,B}}{A\Rightarrow \neg A\odot \neg A,A\odot A,C\vert A,B\Rightarrow A\odot
A,\neg A\odot \neg A\vert C\Rightarrow A\odot A,B}}{\Rightarrow B,C\vert
A,B\Rightarrow \neg A\odot \neg A,A\odot A\vert A,B\Rightarrow A\odot A,\neg
A\odot \neg A\vert C\Rightarrow A\odot A,B}}{A\Rightarrow C\vert \Rightarrow
B,C\vert C,B\Rightarrow \neg A\odot \neg A,A\odot A\vert A,B\Rightarrow
A\odot A,\neg A\odot \neg A\vert C\Rightarrow A\odot A,B}}{A\Rightarrow
C\vert A\Rightarrow C\vert \Rightarrow B,C\vert C,B\Rightarrow \neg A\odot
\neg A,A\odot A\vert C,B\Rightarrow A\odot A,\neg A\odot \neg A\vert
C\Rightarrow A\odot A,B}}{H_2 \equiv  A\Rightarrow C\vert \Rightarrow
B,C\vert C,B\Rightarrow A\odot A,\neg A\odot \neg A\vert C\Rightarrow A\odot
A,B}}
\]

\[
\boxed {\cfrac{\cfrac{\cfrac{}{C\Rightarrow C}\cfrac{\cfrac{}{C\Rightarrow
C}\cfrac{\cfrac{}{B\Rightarrow B}\cfrac{\cfrac{\cfrac{A\Rightarrow A{\kern
1pt}{\kern 1pt}{\kern 1pt}{\kern 1pt}{\kern 1pt}{\kern 1pt}{\kern 1pt}{\kern
1pt}{\kern 1pt}{\kern 1pt}{\kern 1pt}{\kern 1pt}{\kern 1pt}{\kern 1pt}{\kern
1pt}{\kern 1pt}{\kern 1pt}{\kern 1pt}{\kern 1pt}{\kern 1pt}{\kern 1pt}{\kern
1pt}{\kern 1pt}{\kern 1pt}{\kern 1pt}A\Rightarrow A}{A,A\Rightarrow A\odot
A}{\kern 1pt}}{A\Rightarrow \neg A,A\odot A}{\kern 1pt}{\kern 1pt}{\kern
1pt}{\kern 1pt}{\kern 1pt}{\kern 1pt}{\kern 1pt}{\kern 1pt}{\kern 1pt}{\kern
1pt}{\kern 1pt}\cfrac{\begin{array}{*{20}c}
   \ddots  \vdots  {\mathinner{\mkern0.5mu\raise0.5pt\hbox{.}\mkern0.0mu
 \raise1.5pt\hbox{.}\mkern0.0mu\raise2.9pt\hbox{.}}}\\
H_1 =A\Rightarrow C\vert A,B\Rightarrow A\odot A,\neg
A\odot \neg A\vert C\Rightarrow A\odot A,B \\
 \end{array} }{A\Rightarrow C\vert B\Rightarrow
\neg A,A\odot A,\neg A\odot \neg A\vert C\Rightarrow A\odot
A,B}}{A\Rightarrow C\vert A,B\Rightarrow \neg A\odot \neg A,A\odot A,A\odot
A,\neg A\odot \neg A\vert C\Rightarrow A\odot A,B}}{B,B\Rightarrow \neg
A\odot \neg A,A\odot A,\neg A\odot \neg A\vert A\Rightarrow C\vert
A\Rightarrow A\odot A,B\vert C\Rightarrow A\odot A,B}}{A\Rightarrow C\vert
B,B\Rightarrow \neg A\odot \neg A,A\odot A,\neg A\odot \neg A\vert
A\Rightarrow C\vert C\Rightarrow A\odot A,B\vert C\Rightarrow A\odot
A,B}}{{\kern 1pt}A\Rightarrow C\vert C,B\Rightarrow A\odot A,\neg A\odot
\neg A\vert A\Rightarrow C\vert B\Rightarrow C,\neg A\odot \neg A\vert
C\Rightarrow A\odot A,B\vert C\Rightarrow A\odot A,B}}{H_3 \equiv
A\Rightarrow C\vert C\Rightarrow A\odot A,B\vert B\Rightarrow C,\neg A\odot
\neg A\vert C,B\Rightarrow A\odot A,\neg A\odot \neg A}}
\]

\[
\boxed {\cfrac{\cfrac{\cfrac{}{B\Rightarrow B}\cfrac{{\begin{array}{l}
 \left( {\cfrac{\begin{array}{*{20}c}
   \ddots  \vdots  {\mathinner{\mkern0.5mu\raise0.5pt\hbox{.}\mkern0.0mu
 \raise1.9pt\hbox{.}\mkern0.0mu\raise3.9pt\hbox{.}}}\\
 H_2 =A\Rightarrow C\vert \Rightarrow B,C\vert C,B\Rightarrow
A\odot A,\neg A\odot \neg A\vert C\Rightarrow A\odot A,B \\
 \end{array}
}{\Rightarrow \neg
A,C\vert \Rightarrow B,C\vert C,B\Rightarrow A\odot A,\neg A\odot \neg
A\vert C\Rightarrow A\odot A,B}} \right) \\
 \left( {\cfrac{\begin{array}{*{20}c}
   \ddots  \vdots  {\mathinner{\mkern0.5mu\raise0.5pt\hbox{.}\mkern0.0mu
 \raise1.9pt\hbox{.}\mkern0.0mu\raise3.9pt\hbox{.}}}\\
 H_3 =A\Rightarrow C\vert C\Rightarrow A\odot A,B\vert
B\Rightarrow C,\neg A\odot \neg A\vert C,B\Rightarrow A\odot A,\neg A\odot
\neg A \\
 \end{array}
 }{\Rightarrow \neg A,C\vert C\Rightarrow A\odot A,B\vert B\Rightarrow
C,\neg A\odot \neg A\vert C,B\Rightarrow A\odot A,\neg A\odot \neg A}}
\right) \\
 \end{array}}}{{\begin{array}{l}
 \Rightarrow \neg A\odot \neg A,C,C\vert \Rightarrow B,C\vert C,B\Rightarrow
A\odot A,\neg A\odot \neg A\vert C\Rightarrow A\odot A,B\vert \\
 C\Rightarrow A\odot A,B\vert B\Rightarrow C,\neg A\odot \neg A\vert
C,B\Rightarrow A\odot A,\neg A\odot \neg A \\
 \end{array}}}(\odot_r)}{{\begin{array}{l}
 B\Rightarrow \neg A\odot \neg A,C\vert \Rightarrow B,C\vert \Rightarrow
B,C\vert C,B\Rightarrow A\odot A,\neg A\odot \neg A\vert C\Rightarrow A\odot
A,B\vert \\
 C\Rightarrow A\odot A,B\vert B\Rightarrow C,\neg A\odot \neg A\vert
C,B\Rightarrow A\odot A,\neg A\odot \neg A \\
 \end{array}}}(COM)}{H_0 =\Rightarrow B,C\vert C\Rightarrow A\odot A,B\vert
B\Rightarrow C,\neg A\odot \neg A\vert C,B\Rightarrow A\odot A,\neg A\odot
\neg A}(EC^{\ast})}{\kern 1pt}
\]
\normalsize
\begin{center}
Figure 2 a  proof  $\rho$  of $H_0 $
\end{center}

Our task is to construct $\rho$, starting from $\tau $. The tree
structure of $\rho$  is more complicated than that of $\tau $.  Compared
with \textbf{UL}, \textbf{MTL} and\textbf{ IMTL}, there is no one-to-one
correspondence between nodes in $\tau $ and $\rho$.

Following the method given by G. Metcalfe and F. Montagna, we need to define
a generalized density rule for \textbf{IUL}.  We denote such an expected unknown rule by $(\mathcal{D}_x )$ for convenience.  Then  $\mathcal{D}_x (H)$  must be definable for all $H\in \tau $.  Naturally, $\mathcal{D}_x (p\Rightarrow p)=\Rightarrow t$;$\mathcal{D}_x
(A\Rightarrow p{\kern 1pt}\vert p\Rightarrow A)=A\Rightarrow
A$;$\mathcal{D}_x (\Rightarrow p,\neg A{\kern 1pt}\vert p,p\Rightarrow
A\odot A)=\Rightarrow \neg A,\neg A,A\odot A$;
$\mathcal{D}_x (\Rightarrow p,B\vert B\Rightarrow p,\neg A\odot \neg A{\kern
1pt}\vert p,p\Rightarrow A\odot A)=\Rightarrow B,B,A\odot A{\kern 1pt}\vert
B,B\Rightarrow A\odot A,\neg A\odot \neg A,\neg A\odot \neg A\vert
B\Rightarrow A\odot A,B,\neg A\odot \neg A;
\mathcal{D}_x (G_0 )=\mathcal{D}_0 (G_0 )=H_0.
$

However, we couldn't find  a suitable way to define $\mathcal{D}_x (
H_{\times\times})$ and $\mathcal{D}_x (H_{\times})$  for
$H_{\times\times}$¡¡and  $H_{\times}$  in $\tau$, see Figure 1.  This is the biggest difficulty we encounter  in the case of  $ {\rm {\bf IUL}}$ such that it is hard to prove density elimination for \textbf{IUL}.   A possible way is to define  $ \mathcal{D}_x (
\Rightarrow p,p,\neg A\odot  \neg A
\vert p,p \Rightarrow A\odot  A ) $  as  $  \Rightarrow t, A\odot  A,\neg A\odot  \neg A$.  Unfortunately,
it  is not a theorem of  $ {\rm {\bf IUL}}$.

Notice that  two upper hypersequents $\Rightarrow p,\neg A \vert p,p
\Rightarrow A\odot  A$ of $ (\odot _r )_3$ are permissible inputs of
$(\mathcal{D}_x)$.    Why is $ H_{\times\times} $   an  invalid input?  One reason is  that,  two applications $(EC)_1$  and $(EC)_2$  cut off  two sequents $A\Rightarrow p$ such that two $p's$ disappear in all nodes lower than upper hypersequent of $(EC)_1$  or  $(EC)_2$,
including  $H_{\times\times}$.  These make  occurrences of  $p's$ to be incomplete in $ H_{\times\times} $.  We  then perform the following operation in order to get complete occurrences of  $p's$  in $ H_{\times\times} $.

{\bf Step 1 (Preprocessing of $\tau$).}
Firstly,  we replace $H$  with $ H\vert S'$  for all $\cfrac{G'\vert S'\vert S'}{G'\vert S'}(EC)_k\in \tau$,  $ H\leqslant  G'\vert S'$
then replace $\cfrac{G'\vert S'\vert S'}{G'\vert S'\vert S'}(EC)_k$  with
$ G'\vert S'\vert S'$ for all $k=1,2,3$.  Then we construct a proof without
$(EC)$,  which we denote by  $\tau _1$, as shown in Figure 3.  We call  such
 manipulations  sequent-inserting operations,  which  eliminate applications of $(EC)$ in $\tau$.
\[
\boxed{ \cfrac{C \Rightarrow C\cfrac{B \Rightarrow B\cfrac{
  \cfrac{\cfrac{\cfrac{p \Rightarrow p  \,\,A \Rightarrow A}{A \Rightarrow p  \vert
p \Rightarrow A} \,\,
\cfrac{p \Rightarrow p\,\,
     A \Rightarrow
A}{A \Rightarrow p  \vert p \Rightarrow A}}{A \Rightarrow p
 \vert A \Rightarrow p  \vert p,p \Rightarrow
A\odot  A}}{A \Rightarrow p
\vert \Rightarrow p,\neg A \vert p,p \Rightarrow A\odot
A} \,\,   \cfrac{\cfrac{\cfrac{p\Rightarrow p\,\,A \Rightarrow A}
{A\Rightarrow p  \vert p \Rightarrow A}\,\,
    \cfrac{p \Rightarrow p\,\,A \Rightarrow A}
    {A \Rightarrow p \vert p \Rightarrow A} }
    {A \Rightarrow p \vert A \Rightarrow p  \vert p,p \Rightarrow A\odot A}}{A \Rightarrow
p  \vert \Rightarrow p,\neg A \vert p,p
\Rightarrow A\odot  A}}{ H_{\times\times}'\equiv A \Rightarrow p  \vert
\Rightarrow p,p,\neg A\odot  \neg A \vert p,p
\Rightarrow A\odot  A \vert A \Rightarrow p  \vert p,p \Rightarrow A\odot  A} }{A \Rightarrow p  \vert
\Rightarrow p,B\vert B \Rightarrow p,\neg A\odot  \neg A
\vert p,p \Rightarrow A\odot  A \vert A \Rightarrow p\vert p,p \Rightarrow A\odot  A}
}{A \Rightarrow p  \vert \Rightarrow p
,B\vert B \Rightarrow p,\neg A\odot  \neg A \vert
p\Rightarrow C\vert C,p \Rightarrow A\odot  A \vert A \Rightarrow
p  \vert p,p \Rightarrow A\odot  A}} \]
\normalsize
\begin{center}
 \footnotesize  FIGURE 3\normalsize\quad A proof  $\tau _1$
\end{center}

 However,   we also can't define $ \mathcal{D}_x(H_{\times\times}')$  for $H_{\times\times}'  \in \tau _1$  in  that $\Rightarrow p,p,\neg A\odot  \neg A \vert p,p
\Rightarrow A\odot  A\subseteq H_{\times\times}'$.  The reason is that the origins of  $p's$  in  $H_{\times\times}'$ are indistinguishable if  we regard all leaves in the form $p\Rightarrow p$ as the origins of   $p's$ which occur in the inner node.   For example,  we don't know which $p$  comes from the left subtree of  $\tau _1(H_{\times\times}')$  and which from the right subtree in  two occurrences of $p's$  in $\Rightarrow p,p,\neg A\odot  \neg A\in H_{\times\times}'$.  We  then perform the following operation in order to make all occurrences of  $p's$  in $ H_{\times\times}' $ distinguishable.

We assign the unique identification number to each leaf  in the form   $ p\Rightarrow p\in\tau _1 $ and transfer these identification numbers  from  leaves to the  root,  as shown in Figure 4.  We
denote the proof of  $G\vert G^{\ast} $ resulting from this step by  $ \tau ^ * $, where $G \equiv  \Rightarrow p_2
,B\vert B \Rightarrow p_4,\neg A\odot  \neg A \vert
p_1\Rightarrow C\vert C,p_2 \Rightarrow A\odot  A $  in which each sequent is a copy of some sequent in $G_0$ and
$G^{\ast} \equiv  A \Rightarrow p_1  \vert A \Rightarrow p_3  \vert p_3,p_4 \Rightarrow A\odot  A$   in which each sequent is a copy of some external contraction  sequent in  $(EC)$-node of $\tau$.  We call  such manipulations
eigenvariable-labeling operations, which make us  to trace eigenvariables in $\tau$.
$$\boxed { \cfrac{C \Rightarrow C\cfrac{B \Rightarrow B\cfrac{
  \cfrac{\cfrac{\cfrac{p_1 \Rightarrow p_1  \,\,A \Rightarrow A}{A \Rightarrow p_1  \vert
p_1 \Rightarrow A} \,\,
\cfrac{p_2 \Rightarrow p_2\,\,
     A \Rightarrow
A}{A \Rightarrow p_2  \vert p_2 \Rightarrow A}}
{H_{1}^{c}\equiv  A \Rightarrow p_1 \vert A \Rightarrow p_2  \vert p_1,p_2 \Rightarrow
A\odot  A}}{A \Rightarrow p_1
\vert \Rightarrow p_2,\neg A \vert p_1,p_2 \Rightarrow A\odot
A}    \cfrac{\cfrac{\cfrac{p_3\Rightarrow p _3\,\,A \Rightarrow A}
{A\Rightarrow p_3  \vert p_3 \Rightarrow A}
    \cfrac{p_4 \Rightarrow p_4\,\,A \Rightarrow A}
    {A \Rightarrow p_4 \vert p_4 \Rightarrow A} }
    {H_{2}^{c}\equiv  A \Rightarrow p_3 \vert A \Rightarrow p_4  \vert p_3,p_4 \Rightarrow A\odot A}}{A \Rightarrow
p_3  \vert \Rightarrow p_4,\neg A \vert p_3,p_4
\Rightarrow A\odot  A}}{A \Rightarrow p_1  \vert
\Rightarrow p_2,p_4,\neg A\odot  \neg A \vert p_1,p_2
\Rightarrow A\odot  A \vert A \Rightarrow p_3  \vert p_3
,p_4 \Rightarrow A\odot  A} }{H_{3}^{c}\equiv  A \Rightarrow p_1  \vert
\Rightarrow p_2,B\vert B \Rightarrow p_4,\neg A\odot  \neg A
\vert p_1,p_2 \Rightarrow A\odot  A \vert A \Rightarrow p_3\vert p_3,p_4 \Rightarrow A\odot  A}
}{A \Rightarrow p_1  \vert \Rightarrow p_2
,B\vert B \Rightarrow p_4,\neg A\odot  \neg A \vert
p_1\Rightarrow C\vert C,p_2 \Rightarrow A\odot  A \vert A \Rightarrow p_3  \vert p_3,p_4 \Rightarrow A\odot  A}} $$
\normalsize
\begin{center}
 \footnotesize  FIGURE 4\normalsize\quad A proof $\tau ^\ast$ of $G \vert G^{\ast}$
\end{center}

Then all occurrences of $ p $  in  $ \tau^\ast  $ are distinguishable and we regard them as
distinct eigenvariables (See Definition 4.16 (i)). Firstly,  by selecting $p_1$ as the eigenvariable and applying $(D)$ to $ G \vert G^{\ast}$, we get
$$G'\equiv  A \Rightarrow C  \vert \Rightarrow p_2
,B\vert B \Rightarrow p_4,\neg A\odot  \neg A \vert
C,p_2 \Rightarrow A\odot  A \vert A \Rightarrow
p_3  \vert p_3,p_4 \Rightarrow A\odot  A.$$
Secondly, by selecting $p_2$ and applying $(D)$ to $ G' $, we get
$$G''\equiv  A \Rightarrow C  \vert B \Rightarrow p_4,\neg A\odot  \neg A \vert
 C \Rightarrow B,A\odot  A \vert A \Rightarrow
p_3  \vert p_3,p_4 \Rightarrow A\odot  A.$$ Repeatedly, we get
$$ G''''\equiv A \Rightarrow C\vert A,B \Rightarrow A\odot  A,\neg A\odot  \neg A\vert C \Rightarrow A\odot
A,B.$$ We define such iterative applications of $(D)$ as $\mathcal{D}$-rule (See Definition 5.4).  Lemma 5.8  shows that $\vdash _{{\rm {\bf GIUL}}} \mathcal{D}( {G} \vert G^{\ast}) $ if $  \vdash _{\rm {\bf GIUL}}G\vert G^{\ast}$. Then we obtain  $  \vdash _{{\rm {\bf GIUL}}} \mathcal{D}
({G \vert G^{\ast}}) $,   i.e., $  \vdash _{{\rm {\bf GIUL}}} G''''$.

A miracle happens here!  The difficulty that we encountered in \textbf{GIUL} is overcome by  converting $H_{\times\times}'=A \Rightarrow p  \vert
\Rightarrow p,p,\neg A\odot  \neg A \vert p,p
\Rightarrow A\odot  A \vert A \Rightarrow p  \vert p
,p \Rightarrow A\odot  A$ into $A \Rightarrow p_1  \vert
\Rightarrow p_2,p_4,\neg A\odot  \neg A \vert p_1,p_2
\Rightarrow A\odot  A \vert A \Rightarrow p_3  \vert p_3
,p_4 \Rightarrow A\odot  A$ and  using $(\mathcal{D})$  to replace $(\mathcal{D}_{x})$.

Why do we assign the unique identification number to each  $ p\Rightarrow p\in\tau _1 $?
We would return back to the same situation as that of $\tau _1 $ if we assign the  same indices  to all  $ p\Rightarrow p\in\tau _1 $ or,
replace $p_3\Rightarrow p_3$ and $p_4\Rightarrow p_4$  by $p_2\Rightarrow p_2$ in $\tau^\ast$.

Note that $ \mathcal{D}({G \vert G^{\ast}})=H_1 $.  So we have built up a  one-one correspondence between
the proof $\tau^{\ast}$ of $G \vert G^{\ast}$ and that of $H_1$.
Observe that  each sequent in  $ G^{\ast}$  is not a copy of any sequent in  $ G_0$.  In the following  steps,  we work on eliminating these sequents in $ G^{\ast}$.

\textbf{Step 2  (Extraction of Elimination Rules).}
We select $A \Rightarrow p_2$   as the focus sequent in $H_{1}^{c}$  in $\tau^{\ast}$ and keep  $A \Rightarrow p_1$ unchanged from  $H_{1}^{c}$  downward to $G \vert G^{\ast}$ (See Figure 4).
    So we extract  a  derivation  from  $A \Rightarrow p_2$  by
pruning some sequents (or hypersequents) in  $ \tau ^ *  $,    which we denote by  $ \tau _{H_{1}^{c}:A \Rightarrow p_2}^ {*} $, as shown in Figure 5.

\[\boxed {
\cfrac{B \Rightarrow B\cfrac{
\cfrac{A \Rightarrow p_2 }{ \Rightarrow p_2
,\neg A}
\cfrac{\cfrac{\cfrac{p_3 \Rightarrow p _3  \,\,\,\,    A
\Rightarrow A}{A \Rightarrow p_3  \vert p_3 \Rightarrow A}
\,\,\,\, \cfrac{p_4 \Rightarrow p_4\,\,\,\,
 A \Rightarrow A}{A \Rightarrow p_4  \vert p_4
\Rightarrow A} }{A \Rightarrow p_3  \vert A
\Rightarrow p_4  \vert p_3,p_4 \Rightarrow A\odot  A}
}{A \Rightarrow p_3
\vert \Rightarrow p_4,\neg A \vert p_3,p_4 \Rightarrow A\odot
A} }{ \Rightarrow p_2,p_4,\neg A\odot  \neg A{\kern
1pt} \vert A \Rightarrow p_3  \vert p_3,p_4 \Rightarrow A\odot
A}  }{ \Rightarrow p_2,B\vert B \Rightarrow p_4,\neg
A\odot  \neg A \vert A \Rightarrow p_3  \vert p_3,p_4
\Rightarrow A\odot  A}}
\]

\begin{center}
 \footnotesize  FIGURE 5\normalsize\quad A derivation $ \tau _{H_{1}^{c}:A \Rightarrow p_2}^ {*}  $ from  $ A \Rightarrow p_2  $
\end{center}
A derivation  $ \tau _{H_{1}^{c}:A \Rightarrow p_1}^ {*} $  from  $ A \Rightarrow p_1
 $  is constructed by replacing  $ p_2  $  with  $ p_1  $, $ p_3 $  with  $ p_5  $  and $ p_4 $  with  $ p_6 $ in  $ \tau _{H_{1}^{c}:A \Rightarrow p_2 }^ {*}$, as shown in Figure 6.  Notice that we assign new identification numbers to new
occurrences of  $ p $ in  $ \tau _{H_{1}^{c}:A\Rightarrow p_1}^ {*}$.
\[\boxed {
\cfrac{B \Rightarrow B\cfrac{
\cfrac{A \Rightarrow p_1}{ \Rightarrow p_1
,\neg A}
\cfrac{\cfrac{\cfrac{p_5 \Rightarrow p _5  \,\,\,\,    A
\Rightarrow A}{A \Rightarrow p_5  \vert p_5 \Rightarrow A}
\,\,\,\, \cfrac{p_6 \Rightarrow p_6\,\,\,\,
 A \Rightarrow A}{A \Rightarrow p_6  \vert p_6
\Rightarrow A} }{A \Rightarrow p_5  \vert A
\Rightarrow p_6  \vert p_5,p_6 \Rightarrow A\odot  A}
}{A \Rightarrow p_5
\vert \Rightarrow p_6,\neg A \vert p_5,p_6 \Rightarrow A\odot
A} }{ \Rightarrow p_1,p_6,\neg A\odot  \neg A{\kern
1pt} \vert A \Rightarrow p_5  \vert p_5,p_6 \Rightarrow A\odot
A}  }{ \Rightarrow p_1,B\vert B \Rightarrow p_6,\neg
A\odot  \neg A \vert A \Rightarrow p_5  \vert p_5,p_6
\Rightarrow A\odot  A}}
\]

\begin{center}
 \footnotesize  FIGURE 6\normalsize \quad A derivation $ \tau _{H_{1}^{c}:A \Rightarrow p_1}^ {*}  $ from  $ A \Rightarrow p_1  $
\end{center}

Next,  we apply $ \tau _{H_{1}^{c}:A \Rightarrow p_1}^{*}$  to  $ A \Rightarrow p_1  $  in  $G\vert G^{\ast}$.  Then we construct  a proof $ \tau _{{H_{1}^{c}:G\vert G^{\ast}}}^{\medstar(1)}  $,  as
shown in Figure 7,  where $G'\equiv   G\vert G^{\ast} \backslash \{A \Rightarrow p_1\}$.

\[\boxed {
\cfrac{B \Rightarrow B\cfrac{
\cfrac{G'\vert A \Rightarrow p_1}{G'\vert
\Rightarrow p_1,\neg A}\quad
 \cfrac{\cfrac{\cfrac{p_5 \Rightarrow p _5\,\,\,\,
 A \Rightarrow A}{A \Rightarrow p_5  \vert p_5
\Rightarrow A}\,\,\,\,
    \cfrac{p_6 \Rightarrow p_6\,\,\,\,
   A \Rightarrow A}{A \Rightarrow p_6
 \vert p_6 \Rightarrow A} }{A \Rightarrow p_5
 \vert A \Rightarrow p_6  \vert p_5,p_6 \Rightarrow A\odot
A}}{A \Rightarrow p_5
 \vert \Rightarrow p_6,\neg A \vert p_5,p_6 \Rightarrow
A\odot  A}}{G' \vert
\Rightarrow p_1,p_6,\neg A\odot  \neg A \vert A \Rightarrow p_5
 \vert p_5,p_6 \Rightarrow A\odot  A} }{ G_{H_{1}^{c}:G\vert G^{\ast}}^{\medstar(1)}\equiv   G' \vert \Rightarrow
p_1,B\vert B \Rightarrow p_6,\neg A\odot  \neg A \vert A
\Rightarrow p_5  \vert p_5,p_6 \Rightarrow A\odot  A}}
\]

\begin{center}
 \footnotesize  FIGURE 7\quad \normalsize A proof $\tau _{H_{1}^{c}:G\vert G^{\ast}}^{\medstar(1)}$  of   $ G_{H_{1}^{c}:G\vert G^{\ast}}^{\medstar(1)}$
\end{center}

However,  $ G_{H_{1}^{c}:G\vert G^{\ast}}^{\medstar(1)}=
\Rightarrow p_2
,B\vert B \Rightarrow p_4,\neg A\odot  \neg A \vert
p_1\Rightarrow C\vert C,p_2 \Rightarrow A\odot  A \vert A \Rightarrow p_3  \vert p_3,p_4 \Rightarrow A\odot  A
\vert \Rightarrow
p_1,B\vert B \Rightarrow p_6,\neg A\odot  \neg A \vert A
\Rightarrow p_5  \vert p_5,p_6 \Rightarrow A\odot  A
$  contains more copies of sequents from $G^{\ast}$ and
seems more complex than
$G\vert G^{\ast}$.  We will present  a unified method to tackle with it in the following steps.   Other derivations are shown in Figures $ 8,9,10,11$.

\[\boxed {
\cfrac{C \Rightarrow C\cfrac{B \Rightarrow B\cfrac{
  A \Rightarrow p_1  \vert \Rightarrow p_2
,\neg A \vert p_1,p_2 \Rightarrow A\odot  A \,\,\,\,
    \cfrac{A \Rightarrow
p_4}{ \Rightarrow p_4,\neg A} {\kern
1pt} }{ A \Rightarrow p_1  \vert \Rightarrow p_2,p_4
,\neg A\odot  \neg A \vert p_1,p_2 \Rightarrow A\odot  A
} }{A \Rightarrow p_1  \vert \Rightarrow p_2,B\vert B
\Rightarrow p_4,\neg A\odot  \neg A \vert p_1,p_2 \Rightarrow
A\odot  A }   }{A \Rightarrow p_1
 \vert \Rightarrow p_2,B\vert B \Rightarrow p_4,\neg A\odot  \neg
A \vert p_1 \Rightarrow C\vert C,p_2 \Rightarrow A\odot  A}}
\]
\begin{center}
 \footnotesize  FIGURE 8\normalsize\quad A derivation $ \tau _{H_{2}^{c}:A \Rightarrow p_4}^ {*} $ from  $ A \Rightarrow p_4  $
\end{center}
\[\boxed {
\cfrac{C \Rightarrow C\cfrac{B \Rightarrow B\cfrac{
  A \Rightarrow p_5  \vert \Rightarrow p_6
,\neg A \vert p_5,p_6 \Rightarrow A\odot  A\,\,\,\,
    \cfrac{A \Rightarrow
p_3}{ \Rightarrow p_3,\neg A} {\kern
1pt} }{ A \Rightarrow p_5  \vert \Rightarrow p_6,p_3
,\neg A\odot  \neg A \vert p_5,p_6 \Rightarrow A\odot  A
} }{A \Rightarrow p_5  \vert \Rightarrow p_6,B\vert B
\Rightarrow p_3,\neg A\odot  \neg A \vert p_5,p_6 \Rightarrow
A\odot  A }   }{A \Rightarrow p_5
 \vert \Rightarrow p_6,B\vert B \Rightarrow p_3,\neg A\odot  \neg
A \vert p_5 \Rightarrow C\vert C,p_6 \Rightarrow A\odot  A{\kern
1pt} }}
\]
\begin{center}
 \footnotesize  FIGURE 9\quad\normalsize A derivation $ \tau _{H_{2}^{c}:A \Rightarrow p_3}^ {*}$ from  $ A \Rightarrow p_3  $
\end{center}

\[\boxed {
\cfrac{B \Rightarrow B\cfrac{
\cfrac{A \Rightarrow p_2}{ \Rightarrow p_2
,\neg A}\,\,\,\,    \cfrac{A\Rightarrow p_4 }{
\Rightarrow p_4,\neg A}}{ \Rightarrow p_2,p_4,\neg
A\odot  \neg A}}{ \Rightarrow p_2,B\vert B
\Rightarrow p_4,\neg A\odot  \neg A}\qquad
\cfrac{B \Rightarrow B\cfrac{
\cfrac{A \Rightarrow p_5}{ \Rightarrow p_5
,\neg A}\,\,\,\,    \cfrac{A\Rightarrow p_3}{
\Rightarrow p_3,\neg A}}{ \Rightarrow p_5,p_3,\neg
A\odot  \neg A}}{ \Rightarrow p_5,B\vert B
\Rightarrow p_3,\neg A\odot  \neg A}}
\]

\begin{center}
 \footnotesize  FIGURE 10\normalsize \quad $ \tau _{\{H_{1}^{c}:A
\Rightarrow p_2, H_{2}^{c}:A \Rightarrow p_4 \}}^ {*}$ and $ \tau _{\{H_{1}^{c}:A \Rightarrow p_5, H_{2}^{c}:A \Rightarrow p_3 \}}^ {*}$
\end{center}
\[\boxed {\cfrac{C \Rightarrow C \,\,\,\,  p_1,p_2 \Rightarrow A\odot  A
  }{p_1 \Rightarrow C\vert C,p_2 \Rightarrow A\odot
A}\quad
\cfrac{C \Rightarrow C \,\,\,\,  p_3,p_4 \Rightarrow A\odot  A
  }{p_3 \Rightarrow C\vert C,p_4 \Rightarrow A\odot
A}}
\]
\[\boxed {\cfrac{C \Rightarrow C \,\,\,\,  p_5,p_6 \Rightarrow A\odot  A
  }{p_5 \Rightarrow C\vert C,p_6 \Rightarrow A\odot
A}}
\]

\begin{center}
 \footnotesize  FIGURE 11\normalsize \quad $\tau _{H_{3}^{c}:p_1,p_2 \Rightarrow A\odot  A}^ {*}$, $\tau _{H_{3}^{c}:p_3,p_4 \Rightarrow A\odot  A}^ {*}$ and $\tau _{H_{3}^{c}:p_5,p_6 \Rightarrow A\odot  A}^ {*}$
\end{center}

\textbf{Step 3  (Separation of one branch).}  A proof $ \tau _{H_{1}^{c}:G\vert G^{\ast}}^{\medstar(2)}  $  is constructed by applying sequentially $$ \tau _{H_{3}^{c}:p_3,p_4 \Rightarrow A\odot  A}^{*},
\tau _{H_{3}^{c}:p_5,p_6 \Rightarrow A\odot  A}^ {*}$$  to $ p_3,p_4 \Rightarrow A\odot  A$ and $ p_5,p_6 \Rightarrow A\odot  A$   in  $ G_{H_{1}^{c}:G\vert G^{\ast}}^{\medstar(1)}$,  as
shown in Figure 12, where $G''\equiv  G_{H_{1}^{c}:G\vert G^{\ast}}^{\medstar(1)}\backslash\{p_3,p_4 \Rightarrow A\odot  A,p_5,p_6 \Rightarrow A\odot  A\}$

\[\boxed {\cfrac{C \Rightarrow C \,\,\,\, \cfrac{C \Rightarrow C \,\,\,\,  G''\vert p_3,p_4 \Rightarrow A\odot  A\vert p_5,p_6 \Rightarrow A\odot  A
  }{G''\vert p_3 \Rightarrow C\vert C,p_4 \Rightarrow A\odot
A\vert p_5,p_6 \Rightarrow A\odot  A}}{ G_{H_{1}^{c}:G\vert G^{\ast}}^{\medstar(2)}\equiv G''\vert p_3 \Rightarrow C\vert C,p_4 \Rightarrow A\odot A\vert p_5 \Rightarrow C\vert C,p_6 \Rightarrow A\odot A}}
\]

\begin{center}
 \footnotesize  FIGURE 12 \quad \normalsize A proof $\tau _{H_{1}^{c}:G\vert G^{\ast}}^{\medstar(2)}$  of   $G_{H_{1}^{c}:G\vert G^{\ast}}^{\medstar(2)}$
\end{center}

$$G_{H_{1}^{c}:G\vert G^{\ast}}^{\medstar(2)}=
\Rightarrow p_2,B\vert B \Rightarrow p_4,\neg A\odot  \neg A \vert p_1\Rightarrow C\vert C,p_2 \Rightarrow A\odot  A\vert A \Rightarrow p_3  \vert\Rightarrow p_1,B\vert $$ $$B \Rightarrow p_6,\neg A\odot  \neg A \vert A\Rightarrow p_5 \vert p_3 \Rightarrow C\vert C,p_4 \Rightarrow A\odot A\vert p_5 \Rightarrow C\vert C,p_6 \Rightarrow A\odot A.$$
Notice that
 $$ \mathcal{D} ( {B \Rightarrow p_4,\neg A\odot  \neg A \vert
A \Rightarrow p_3  \vert p_3 \Rightarrow C\vert C,p_4 \Rightarrow A\odot A} )$$$$=
\mathcal{D}( { B \Rightarrow p_6,\neg A\odot  \neg A \vert A\Rightarrow p_5 \vert p_5 \Rightarrow C\vert C,p_6 \Rightarrow A\odot A} )$$$$=
 A\Rightarrow C\vert C,B \Rightarrow A\odot A,\neg A\odot  \neg A.\,\,\,\,\, \qquad\qquad \quad \quad \qquad  $$
Then it is permissible  to cut off the
part  $$ B \Rightarrow p_6,\neg A\odot  \neg A \vert A\Rightarrow p_5 \vert p_5 \Rightarrow C\vert C,p_6 \Rightarrow A\odot A$$  of  $G_{H_{1}^{c}:G\vert G^{\ast}}^{\medstar(2)}$,   which
corresponds to applying  $ (EC) $  to  $ \mathcal{D}( G_{H_{1}^{c}:G\vert G^{\ast}}^{\medstar(2)}) $.    We
regard such a manipulation  as a constrained contraction rule applied to  $ G_{H_{1}^{c}:G\vert G^{\ast}}^{\medstar(2)}$  and denote it
by  $ ( {EC_\Omega } )$.  Define   $ G_{H_1^c :G|G^* }^{\medstar}$  to be $$\Rightarrow p_2,B\vert B \Rightarrow p_4,\neg A\odot  \neg A \vert
p_1\Rightarrow C\vert C,p_2 \Rightarrow A\odot  A\vert $$$$ A \Rightarrow p_3  \vert \Rightarrow
p_1,B\vert  p_3 \Rightarrow C\vert C,p_4 \Rightarrow A\odot A. $$  Then we construct a proof of  $G_{ H_1^c :G|G^* }^{\medstar} $  by  $ \cfrac{G_{H_{1}^{c}:G\vert G^{\ast}}^{\medstar(2)} }{G_{ H_1^c :G|G^* }^{\medstar} }({EC_\Omega})$,  which  guarantees the validity
of  $$  \vdash _{{\rm {\bf GIUL}}} \mathcal{D}(G_{H_1^c :G|G^* }^{\medstar} )  $$  under
the condition  $$  \vdash _{{\rm {\bf GIUL}}} \mathcal{D}( G_{H_{1}^{c}:G\vert G^{\ast}}^{\medstar(2)}).  $$

A change happens here!  There is only one sequent which is a copy of  a sequent in $G^{\ast}$ in $G_{H_1^c :G|G^* }^{\medstar}$.  It is simpler than $G\vert G^{\ast}$. So we are moving forward. The above procedure is called the separation of $G|G^ *$ as a branch of $H_1^c$ and reformulated as follows (See Section 7  for details).

\[\boxed {
\cfrac{{\underline {\cfrac{{\underline {\cfrac{{\underline {\qquad G|G^*\quad} }}
{{G_{H_1^c :G|G^* }^{ \medstar (1)} }}\left\langle {\tau _{H_1^c :A \Rightarrow p_1 }^{ *} } \right\rangle } }}{{G_{H_1^c :G|G^* }^{\medstar(2)} }}\left\langle {\tau _{H_3^c :p_3 ,p_4  \Rightarrow A \odot A}^{ *},\tau _{H_3^c :p_5 ,p_6  \Rightarrow A \odot A }^{ *} } \right\rangle } }}{{G_{H_1^c :G|G^* }^{ \medstar} }}\left\langle {EC_\Omega} \right\rangle}
\]

The separation of $G|G^ *$ as a branch of $H_2^c$ is constructed by a similar procedure as follows.

\[\boxed {
\cfrac{{\underline {\cfrac{{\underline {\cfrac{{\underline {\qquad G|G^ *\quad} }}{{G_{H_2^c :G|G^* }^{ \medstar (1)} }}\left\langle {\tau _{H_2^c :A \Rightarrow p_3}^{ *} } \right\rangle } }}{{G_{ H_2^c :G|G^* }^{\medstar(2)} }}\left\langle {\tau _{H_3^c :p_3 ,p_4  \Rightarrow A \odot A}^{ *}} \right\rangle } }}{{G_{ H_2^c :G|G^* }^{\medstar} }}\left\langle {EC_\Omega  } \right\rangle}
\]

Note that $ \mathcal{D}(G_{H_1^c :G|G^* }^{\medstar})=H_2 $ and $ \mathcal{D}(G_{H_2^c :G|G^* }^{\medstar})=H_3 $.  So we have built up   one-one correspondences between
proofs  of $G_{H_1^c :G|G^* }^{\medstar},
G_{H_1^c :G|G^* }^{\medstar}$ and those of $H_2, H_3$.

\textbf{Step 3  (Separation algorithm of multiple branches).}
We  will prove  $  \vdash _{{\rm {\bf GIUL}}} \mathcal{D}_0 ( {G_0 } ) $  in a  direct way,
i.e., only the major step of Theorem 8.2  is presented in the following.  (See A.5.4 for a complete illustration.)   Recall that
$$G_{H_1^c :G|G^* }^{\medstar}=\Rightarrow p_2,B\vert B \Rightarrow p_4,\neg A\odot  \neg A \vert
p_1\Rightarrow C\vert C,p_2 \Rightarrow A\odot  A\vert $$$$ A \Rightarrow p_3  \vert \Rightarrow
p_1,B\vert  p_3 \Rightarrow C\vert C,p_4 \Rightarrow A\odot A, $$
$$G_{H_2^c :G|G^*}^{\medstar}= A \Rightarrow p_1 |
   \Rightarrow p_2 ,B|B \Rightarrow p_4 ,\neg A \odot \neg A |
   p_1  \Rightarrow C|C,p_2  \Rightarrow A \odot A |$$$$
      B \Rightarrow p_3 ,\neg A \odot \neg A |p_3  \Rightarrow C|C,p_4  \Rightarrow A \odot A.
$$

By reassigning   identification numbers to occurrences of $p's$ in $G_{H_2^c :G|G^* }^{\medstar}$,
 $$G_{H_2^c :G|G^* }^{\medstar}= A \Rightarrow p_5 |
   \Rightarrow p_6 ,B|B \Rightarrow p_8 ,\neg A \odot \neg A |
   p_5  \Rightarrow C|C,p_6  \Rightarrow A \odot A |$$$$
      B \Rightarrow p_7 ,\neg A \odot \neg A{\kern 1pt} |p_7  \Rightarrow C|C,p_8
       \Rightarrow A \odot A.
$$
By applying $\tau _{\{H_{1}^{c}:A \Rightarrow p_5,H_{2}^{c}:A \Rightarrow p_3 \}}^ {*}$ to
$A \Rightarrow p_3$ in  $G_{H_1^c :G|G^* }^{\medstar}$ and  $A \Rightarrow p_5$ in $G_{H_2^c :G|G^* }^{\medstar}$, we get $  \vdash _{{\rm {\bf GIUL}}} G'  $, where
\[G'\equiv \Rightarrow p_2,B\vert B \Rightarrow p_4,\neg A\odot  \neg A \vert
p_1\Rightarrow C\vert C,p_2 \Rightarrow A\odot  A\vert
 \Rightarrow p_1,B\vert $$$$
  p_3 \Rightarrow C\vert C,p_4 \Rightarrow A\odot A\vert
   \Rightarrow p_6 ,B|B \Rightarrow p_8 ,\neg A \odot \neg A |
   p_5  \Rightarrow C|C,p_6  \Rightarrow A \odot A |$$$$
      B \Rightarrow p_7 ,\neg A \odot \neg A  |p_7  \Rightarrow C|C,p_8  \Rightarrow A \odot A \vert \Rightarrow p_5,B\vert B\Rightarrow p_3,\neg A\odot  \neg A. \]

Why do you reassign   identification numbers to occurrences of $p's$ in $G_{H_2^c :G|G^* }^{\medstar}$?  It  makes  different occurrences of  $p's$  to be assigned different identification numbers in two nodes
$G_{H_1^c :G|G^* }^{\medstar}$ and  $G_{H_2^c :G|G^* }^{\medstar}$  of  the proof of $G'$.

By applying $\left\langle {EC_\Omega^{\ast}  } \right\rangle$ to $G'$, we get $  \vdash _{{\rm {\bf GIUL}}_{\Omega}}  G_{\rm {\bf I}}^\medstar$, where
\[G_{\rm {\bf I}}^\medstar\equiv \Rightarrow p_2,B\vert B \Rightarrow p_4,\neg A\odot  \neg A \vert
p_1\Rightarrow C\vert C,p_2 \Rightarrow A\odot  A\vert
 \Rightarrow p_1,B\vert $$$$
  p_3 \Rightarrow C\vert C,p_4 \Rightarrow A\odot A \vert B\Rightarrow p_3,\neg A\odot  \neg A. \]

A great change  happens here!  We have eliminated all sequents which are copies of some sequents in $G^{\ast}$  and convert  $G\vert G^{\ast}$ into $G_{\rm {\bf I}}^\medstar$ in which each sequent is
some copy of a sequent in $G_0$.

Then $  \vdash _{{\rm {\bf GIUL}}} \mathcal{D}(G_{\rm {\bf I}}^\medstar)  $ by Lemma 5.6,  where $\mathcal{D}( G_{\rm {\bf I}}^\medstar)=H_0=$
\[\Rightarrow C,B \vert C \Rightarrow B,A\odot
A \vert B \Rightarrow C,\neg A\odot  \neg A\vert C,B \Rightarrow
A\odot  A,\neg A\odot  \neg A. \]
So we have built up   one-one correspondences between
the proof of $ G_{\rm {\bf I}}^\medstar$ and that of $H_0$, i.e., the proof of $H_0$  can be constructed by applying  $(\mathcal{D})$ to the proof of $ G_{\rm {\bf I}}^\medstar$.  The major steps of  constructing
$G_{\rm {\bf I}}^\medstar$ are shown in the following figure, where $\mathcal{D}( G\vert G^\ast)=H_1$, $\mathcal{D}(G_{H_1^c :G\vert G^\ast }^\medstar)=H_2$, $\mathcal{D}( G_{H_2^c :G\vert G^\ast }^\medstar)=H_3$ and $\mathcal{D}( G_{\rm {\bf I}}^\medstar)=H_0$.
\[
\boxed {\begin{array}{l}
 \underline{{\begin{array}{l}
\qquad {\kern 1pt}{\kern 1pt}{\kern 1pt}{\kern 1pt}{\kern 1pt}{\kern 1pt}{\kern
1pt}{\kern 1pt}{\kern 1pt}{\kern 1pt}{\kern 1pt}{\kern 1pt}{\kern 1pt}{\kern
1pt}{\kern 1pt}{\kern 1pt}{\kern 1pt}{\kern 1pt}{\kern 1pt}{\kern 1pt}{\kern
1pt}{\kern 1pt}{\kern 1pt}{\kern 1pt}{\kern 1pt}{\kern 1pt}{\kern 1pt}{\kern
1pt}{\kern 1pt}{\kern 1pt}{\kern 1pt}{\kern 1pt}{\kern 1pt}{\kern 1pt}{\kern
1pt}{\kern 1pt}{\kern 1pt}{\kern 1pt}{\kern 1pt}{\kern 1pt}{\kern 1pt}{\kern
1pt}{\kern 1pt}{\kern 1pt}{\kern 1pt}{\kern 1pt}{\kern 1pt}{\kern 1pt}{\kern
1pt}\boxed {\begin{array}{l}
 G\vert G^\ast =\Rightarrow p_2 ,B\vert B\Rightarrow p_4 ,\neg A\odot \neg
A{\kern 1pt}\vert
 p_1 \Rightarrow C{\kern 1pt}\vert \\
 C,p_2 \Rightarrow A\odot A\vert
A\Rightarrow p_1 {\kern 1pt}\vert A\Rightarrow p_3 {\kern 1pt}\vert p_3 ,p_4
\Rightarrow A\odot A \\
 \end{array}}{\kern 1pt} \\
 {\kern 1pt}{\kern 1pt}{\kern 1pt}{\kern 1pt}{\kern 1pt}{\kern 1pt}{\kern
1pt}{\kern 1pt}{\kern 1pt}{\kern 1pt}{\kern 1pt}{\kern 1pt}{\kern 1pt}{\kern
1pt}{\kern 1pt}{\kern 1pt}{\kern 1pt}{\kern 1pt}{\kern 1pt}{\kern 1pt}{\kern
1pt}{\kern 1pt}{\kern 1pt}{\kern 1pt}{\kern 1pt}{\kern 1pt}{\kern 1pt}{\kern
1pt}{\kern 1pt}{\kern 1pt}{\kern 1pt}{\kern 1pt}{\kern 1pt}{\kern 1pt}{\kern
1pt}{\kern 1pt}{\kern 1pt}{\kern 1pt}{\kern 1pt}{\kern 1pt}{\kern 1pt}{\kern
1pt}{\kern 1pt}{\kern 1pt}{\kern 1pt}{\kern 1pt}{\kern 1pt}{\kern 1pt}{\kern
1pt}{\kern 1pt}{\kern 1pt}{\kern 1pt}{\kern 1pt}{\kern 1pt}{\kern 1pt}{\kern
1pt}{\kern 1pt}{\kern 1pt}{\kern 1pt}{\kern 1pt}{\kern 1pt}{\kern 1pt}{\kern
1pt}{\kern 1pt}{\kern 1pt}{\kern 1pt}{\kern 1pt}{\kern 1pt}{\kern 1pt}{\kern
1pt}{\kern 1pt}{\kern 1pt}{\kern 1pt}{\kern 1pt}{\kern 1pt}{\kern 1pt}{\kern
1pt}{\kern 1pt}{\kern 1pt}{\kern 1pt}{\kern 1pt}{\kern 1pt}{\kern 1pt}{\kern
1pt}{\kern 1pt}{\kern 1pt}{\kern 1pt}{\kern 1pt}{\kern 1pt}{\kern 1pt}{\kern
1pt}{\kern 1pt}{\kern 1pt}{\kern 1pt}{\kern 1pt}{\kern 1pt}{\kern 1pt}{\kern
1pt}{\kern 1pt}{\kern 1pt}{\kern 1pt}
   \ddots  \vdots  {\mathinner{\mkern0.5mu\raise0.2pt\hbox{.}\mkern0.0mu
 \raise1.9pt\hbox{.}\mkern0.2mu\raise4.1pt\hbox{.}}}{\kern 1pt}{\kern
1pt}{\kern 1pt}{\kern 1pt}{\kern 1pt}{\kern 1pt}{\kern 1pt}{\kern 1pt}{\kern
1pt}{\kern 1pt}{\kern 1pt}{\kern 1pt}{\kern 1pt}{\kern 1pt}{\kern 1pt}{\kern
1pt}{\kern 1pt}{\kern 1pt}{\kern 1pt}{\kern 1pt}{\kern 1pt}{\kern 1pt}{\kern
1pt}{\kern 1pt}{\kern 1pt}{\kern 1pt}{\kern 1pt}{\kern 1pt}{\kern 1pt}{\kern
1pt}{\kern 1pt}{\kern 1pt}{\kern 1pt}{\kern 1pt}{\kern 1pt}{\kern 1pt}{\kern
1pt}{\kern 1pt}{\kern 1pt}{\kern 1pt}{\kern 1pt}{\kern 1pt}{\kern 1pt}{\kern
1pt}{\kern 1pt}{\kern 1pt}{\kern 1pt}{\kern 1pt}{\kern 1pt}{\kern 1pt}{\kern
1pt}{\kern 1pt}{\kern 1pt}{\kern 1pt}{\kern 1pt}{\kern 1pt}{\kern 1pt}{\kern
1pt}{\kern 1pt}{\kern 1pt}{\kern 1pt}{\kern 1pt}{\kern 1pt}{\kern 1pt}{\kern
1pt}{\kern 1pt}{\kern 1pt}{\kern 1pt}{\kern 1pt}{\kern 1pt}{\kern 1pt}{\kern
1pt}{\kern 1pt}{\kern 1pt}{\kern 1pt}{\kern 1pt}{\kern 1pt}{\kern 1pt}{\kern
1pt}{\kern 1pt}{\kern 1pt}{\kern 1pt}{\kern 1pt}{\kern 1pt}{\kern 1pt}{\kern
1pt}{\kern 1pt}{\kern 1pt}{\kern 1pt}{\kern 1pt}{\kern 1pt}{\kern 1pt}{\kern
1pt}{\kern 1pt}{\kern 1pt}{\kern 1pt}{\kern 1pt}{\kern 1pt}{\kern 1pt}{\kern
1pt}{\kern 1pt}{\kern 1pt}{\kern 1pt}{\kern 1pt}{\kern 1pt}{\kern 1pt}{\kern
1pt}{\kern 1pt}{\kern 1pt}{\kern 1pt}{\kern 1pt}{\kern 1pt}{\kern 1pt}{\kern
1pt}{\kern 1pt}{\kern 1pt}{\kern 1pt}{\kern 1pt}{\kern 1pt}{\kern 1pt}{\kern
1pt}{\kern 1pt}{\kern 1pt}{\kern 1pt}{\kern 1pt}{\kern 1pt}{\kern 1pt}
   \ddots  \vdots  {\mathinner{\mkern0.5mu\raise0.2pt\hbox{.}\mkern0.0mu
 \raise1.9pt\hbox{.}\mkern0.2mu\raise4.1pt\hbox{.}}}{\kern 1pt}{\kern 1pt}{\kern 1pt}{\kern 1pt}{\kern
1pt}{\kern 1pt}{\kern 1pt}{\kern 1pt}{\kern 1pt}{\kern 1pt}{\kern 1pt}{\kern
1pt}{\kern 1pt}{\kern 1pt}{\kern 1pt}{\kern
1pt}{\kern 1pt}{\kern 1pt}{\kern 1pt}{\kern 1pt}{\kern 1pt}{\kern 1pt}{\kern
1pt}{\kern 1pt}{\kern 1pt}{\kern 1pt}{\kern 1pt}{\kern 1pt}{\kern 1pt}{\kern
1pt}{\kern 1pt}{\kern 1pt}{\kern 1pt}{\kern 1pt}{\kern 1pt}{\kern 1pt}{\kern
1pt}{\kern 1pt}{\kern 1pt}{\kern 1pt}{\kern 1pt}{\kern 1pt}{\kern 1pt}{\kern
1pt} \\
 \boxed {\begin{array}{l}
 G_{H_2^c :G\vert G^\ast }^\medstar=A\Rightarrow p_1 {\kern 1pt}\vert \Rightarrow
p_2 ,B\vert \\
 B\Rightarrow p_4 ,\neg A\odot \neg A{\kern 1pt}\vert p_1 \Rightarrow
C{\kern 1pt}\vert \\
 C,p_2 \Rightarrow A\odot A{\kern 1pt}\vert p_3 \Rightarrow C\vert \\
 B\Rightarrow p_3 ,\neg A\odot \neg A\vert C,p_4 \Rightarrow A\odot A \\
 \end{array}}{\kern 1pt}{\kern 1pt}{\kern 1pt}{\kern 1pt}{\kern 1pt}{\kern
1pt}{\kern 1pt}{\kern 1pt}{\kern 1pt}{\kern 1pt}{\kern 1pt}{\kern 1pt}{\kern
1pt}{\kern 1pt}\boxed {\begin{array}{l}
 G_{H_1^c :G\vert G^\ast }^\medstar=\Rightarrow p_2 ,B\vert B\Rightarrow p_4 ,\neg
A\odot \neg A{\kern 1pt}\vert \\
 p_1 \Rightarrow C{\kern 1pt}\vert C,p_2 \Rightarrow A\odot A\vert
A\Rightarrow p_3 {\kern 1pt}\vert \\
 A\Rightarrow p_1 ,B{\kern 1pt}\vert p_3 \Rightarrow C\vert C,p_4
\Rightarrow A\odot A \\
 \end{array}} \\
 \end{array}}} \\
 \qquad \qquad \qquad \qquad \qquad \qquad \qquad \qquad \qquad    \ddots  \vdots  {\mathinner{\mathinner{\mkern0.5mu\raise0.2pt\hbox{.}\mkern0.0mu
 \raise1.9pt\hbox{.}\mkern0.2mu\raise4.1pt\hbox{.}}}}{\kern 1pt}{\kern 1pt}{\kern 1pt}{\kern 1pt}{\kern
1pt}{\kern 1pt}{\kern 1pt}{\kern 1pt}{\kern 1pt}{\kern 1pt}{\kern 1pt}{\kern
1pt}{\kern 1pt}{\kern 1pt}{\kern 1pt}{\kern 1pt}{\kern 1pt}{\kern 1pt}{\kern
1pt}{\kern 1pt}{\kern 1pt}{\kern 1pt}{\kern 1pt}{\kern 1pt}{\kern 1pt}{\kern
1pt}{\kern 1pt}{\kern 1pt}{\kern 1pt}{\kern 1pt}{\kern 1pt}{\kern 1pt}{\kern
1pt}{\kern 1pt}{\kern 1pt}{\kern 1pt}{\kern 1pt}{\kern 1pt}{\kern 1pt}{\kern
1pt}{\kern 1pt}{\kern 1pt}{\kern 1pt} \\
 \qquad  \qquad \qquad\boxed {\begin{array}{l}
 G_{\rm {\bf I}}^\medstar=\Rightarrow p_2 ,B\vert B\Rightarrow p_4 ,\neg A\odot \neg A{\kern
1pt}\vert p_1 \Rightarrow C{\kern 1pt}\vert C,p_2 \Rightarrow A\odot A\vert
\\
 B\Rightarrow \neg A\odot \neg A,p_3 {\kern 1pt}\vert \Rightarrow p_1
,B{\kern 1pt}\vert p_3 \Rightarrow C\vert C,p_4 \Rightarrow A\odot A \\
 \end{array}} \\
 \end{array}}
\]
\normalsize

 In the above example,  $\mathcal{D}(G_{\rm {\bf I}}^\medstar) = \mathcal{D}_0 (G_0 )$. But it is not always the case. In general,   we can prove that  $\vdash _{{\mathbf{GL}}} \mathcal{D}_0 (G_0 )$  if $ \vdash _{{\mathbf{GL}}} \mathcal{D}(G_{\rm {\bf I}}^\medstar)$,  which is shown in the proof of Main  theorem in Page 46.  This example shows that the proof of Main  theorem essentially presents an algorithm to construct a proof of  $ \mathcal{D}_0 ( {G_0 }) $  from  $ \tau  $.

\section{ Preprocessing of Proof Tree}
Let $ \tau $ be a cut-free proof  of  $ G_0$  in Main theorem  in  $ {\rm {\bf GL}}$ by Lemma 2.15.   Starting with $ \tau  $,  we will construct a proof  $ \tau^{\ast}$ which contains no application of $(EC)$ and has some other properties in this section.

\begin{lemma}
(i) If  $  \vdash _{\rm {\bf GL}} \Gamma _1 \Rightarrow A,\Delta _1  $  and  $
\vdash _{\rm {\bf GL}} \Gamma _2 \Rightarrow B,\Delta _2  $  $$ then\,\,\, \vdash
_{\rm {\bf GL}} \Gamma _1 \Rightarrow A \wedge B,\Delta _1 \vert \Gamma _2
\Rightarrow A \wedge B,\Delta _2;$$

(ii) If  $  \vdash _{\rm {\bf GL}} \Gamma _1,A \Rightarrow \Delta _1  $  and  $
\vdash _{\rm {\bf GL}} \Gamma _2,B \Rightarrow \Delta _2  $  $$ then\,\,\, \vdash
_{\rm {\bf GL}} \Gamma _1,A \vee B \Rightarrow \Delta _1 \vert \Gamma _2,A
\vee B \Rightarrow \Delta _2.$$
\end{lemma}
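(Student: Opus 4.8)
The plan is to give a direct derivation in the hypersequent calculus $\mathbf{GL}$, the only structural rules needed being $(COM)$, $(EW)$ and $(EC)$, together with $(\wedge_r)$ for part~(i) and $(\vee_l)$ for part~(ii). The obvious attempt --- applying $(\wedge_r)$ (resp.\ $(\vee_l)$) straight to the two hypotheses --- fails, because that rule forces the two premises to carry the \emph{same} side-context $\Gamma,\Delta$ around the active component, whereas here the contexts $\Gamma_1,\Delta_1$ and $\Gamma_2,\Delta_2$ differ. The key observation is that $(COM)$ can be used to ``cross'' the formulas $A$ and $B$ between the two components, after which $(\wedge_r)$ (resp.\ $(\vee_l)$) becomes applicable; two uses of it plus a couple of external contractions then finish the job. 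Once this is seen, everything is mechanical, so no induction on proofs is required --- which is why the lemma can be placed before the substantive preprocessing constructions.

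For~(i): first apply $(COM)$ to $\Gamma_1\Rightarrow A,\Delta_1$ and $\Gamma_2\Rightarrow B,\Delta_2$, partitioning the antecedent of the first premise entirely into the $\Pi$-part and that of the second entirely into the $\Gamma$-part, and sending $A$ into the succedent slot of the first premise that travels with the $\Gamma$-side and $B$ into the slot of the second that travels with the $\Pi$-side; the conclusion is the crossed hypersequent $\Gamma_2\Rightarrow A,\Delta_2\vert\Gamma_1\Rightarrow B,\Delta_1$. By $(EW)$ adjoin $\Gamma_1\Rightarrow B,\Delta_1$ to the second hypothesis to get $\Gamma_2\Rightarrow B,\Delta_2\vert\Gamma_1\Rightarrow B,\Delta_1$, then apply $(\wedge_r)$ to these two hypersequents with active component $\Gamma_2\Rightarrow(\cdot),\Delta_2$ and follow it by $(EC)$, obtaining $\Gamma_1\Rightarrow B,\Delta_1\vert\Gamma_2\Rightarrow A\wedge B,\Delta_2$. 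Symmetrically, by $(EW)$ form $\Gamma_1\Rightarrow A,\Delta_1\vert\Gamma_2\Rightarrow A\wedge B,\Delta_2$ from the first hypothesis. A final $(\wedge_r)$ applied to these two with active component $\Gamma_1\Rightarrow(\cdot),\Delta_1$, followed by $(EC)$, yields exactly $\Gamma_1\Rightarrow A\wedge B,\Delta_1\vert\Gamma_2\Rightarrow A\wedge B,\Delta_2$.

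Part~(ii) runs along the same lines with $(\vee_l)$ in place of $(\wedge_r)$: apply $(COM)$ to $\Gamma_1,A\Rightarrow\Delta_1$ and $\Gamma_2,B\Rightarrow\Delta_2$, this time crossing the \emph{antecedent} formulas, to obtain $\Gamma_2,A\Rightarrow\Delta_2\vert\Gamma_1,B\Rightarrow\Delta_1$; then weaken in $\Gamma_1,B\Rightarrow\Delta_1$, apply $(\vee_l)$ on the $\Gamma_2$-component and contract, weaken in $\Gamma_2,A\vee B\Rightarrow\Delta_2$, and apply $(\vee_l)$ on the $\Gamma_1$-component followed by $(EC)$. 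All of $(COM)$, $(EW)$, $(EC)$, $(\wedge_r)$, $(\vee_l)$ are rules of \textbf{GUL} and \textbf{GIUL}, hence also of \textbf{GMTL} and \textbf{GIMTL}, and every hypersequent built en route is single-conclusion whenever the hypotheses are, so the argument is uniform over the four calculi. The one step demanding care --- and the only real obstacle --- is the bookkeeping of the $(COM)$-partition: checking that the crossed formula lands in the intended component and that the resulting sequents remain single-conclusion in the single-conclusion systems. This is routine once the conclusion schema of $(COM)$ is read off carefully, so I would relegate it to a short verification rather than spell it out in full.
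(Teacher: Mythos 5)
Your derivation is correct, and it checks out against the rule formats of Definition 2.5: the $(COM)$ instance you describe (antecedent of the first premise entirely in the $\Pi$-part, of the second entirely in the $\Gamma$-part, with $A$ and $B$ routed to opposite components) does produce $\Gamma _2 \Rightarrow A,\Delta _2 \vert \Gamma _1 \Rightarrow B,\Delta _1$, and after the $(EW)$ padding each $(\wedge _r)$ application has two premises whose active components share the same context $\Gamma ,\Delta$, with the duplicated side components removed by $(EC)$; part (ii) is the exact mirror image with $(\vee _l)$. This is, however, a genuinely different route from the paper's. The paper first derives the context-free kernel $A \Rightarrow A \wedge B \vert B \Rightarrow A \wedge B$ from the axioms $A \Rightarrow A$ and $B \Rightarrow B$ (one $(COM)$ giving $A \Rightarrow B \vert B \Rightarrow A$, then two $(\wedge _r)$'s each taking an axiom as its other premise), and then applies $(CUT)$ twice -- on $A$ against $\Gamma _1 \Rightarrow A,\Delta _1$ and on $B$ against $\Gamma _2 \Rightarrow B,\Delta _2$ -- to install the contexts. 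So both proofs turn on the same $(COM)$ crossing trick, but yours operates directly on the given derivations and is entirely cut-free, whereas the paper's isolates a provable hypersequent and cuts the hypotheses into it. For the purpose the lemma serves here (licensing the derived rules $(\wedge _{rw})$ and $(\vee _{lw})$, which are thereafter treated as primitives), either argument suffices, since only derivability in ${\rm {\bf GL}}$ is at stake and $(CUT)$ is available there (and eliminable by Lemma 2.15); your version has the mild advantage of not appealing to $(CUT)$ at all. The single point worth a sentence in a full write-up is the one you already flag: in the single-conclusion calculi the hypotheses force $\Delta _1 =\Delta _2 =\emptyset$, so every component built en route is single-conclusion and the $(EW)$/$(EC)$ steps match the displayed rule schemata literally.
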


\begin{proof}
(i)
\footnotesize
 $$ \cfrac{\Gamma _2 \Rightarrow B,\Delta _2 \cfrac{\Gamma _1
\Rightarrow A,\Delta _1 \cfrac{B \Rightarrow B  \cfrac{A \Rightarrow A
\cfrac{A \Rightarrow A \,\,\, B \Rightarrow B}{A \Rightarrow B  \vert B
\Rightarrow A
   }(COM)}{A \Rightarrow A \wedge B \vert B \Rightarrow A}( \wedge _r )}{A \Rightarrow A \wedge
B  \vert B \Rightarrow A \wedge B}( \wedge _r
)}{\Gamma _1 \Rightarrow A \wedge B,\Delta _1   \vert
B \Rightarrow A \wedge B}(CUT)}{\Gamma _1 \Rightarrow A \wedge B,\Delta _1
  \vert \Gamma _2 \Rightarrow A \wedge B,\Delta _2}(CUT)$$
\normalsize

(ii) is proved by a procedure similar to that of (i) and omitted.
\end{proof}

We introduce two new rules  by Lemma 4.1.

\begin{definition}
$ \cfrac{G_1\vert\Gamma _1 \Rightarrow A,\Delta_1\,\,\,\,\,\,G_2\vert\Gamma _2 \Rightarrow B,\Delta _2 }{G_1\vert G_2\vert\Gamma _1 \Rightarrow A \wedge B,\Delta _1 \vert \Gamma _2 \Rightarrow A \wedge B,\Delta _2 }( \wedge _{rw} )  $\\ and $ \,\,\,\,\cfrac{G_1\vert\Gamma _1,A \Rightarrow \Delta _1\,\,\,\,\,\,G_2\vert \Gamma _2,B \Rightarrow
\Delta _2    }{G_1\vert G_2\vert\Gamma _1,A \vee B
\Rightarrow \Delta _1 \vert \Gamma _2,A \vee B \Rightarrow \Delta _2 }(
\vee _{lw} )$ are called the generalized $(\wedge _{r})$ and $ (\vee _{l} )$ rules,  respectively.
\end{definition}

Now, we begin to process  $ \tau  $  as follows.

\textbf{Step 1 } A proof  $ \tau^{1}  $   is constructed by replacing inductively
all applications of
\[ \cfrac{G_1\vert \Gamma \Rightarrow A,\Delta \,\,\,\,\,\,G_2\vert \Gamma
\Rightarrow B,\Delta }{G_1\vert G_2\vert\Gamma \Rightarrow A \wedge B,\Delta }( \wedge
_r )\,\,\,(\mathrm{or\,\,\,} \cfrac{G_1\vert \Gamma,A \Rightarrow \Delta \,\,\,\,\,\,
G_2\vert \Gamma,B \Rightarrow \Delta }{G_1\vert G_2\vert \Gamma,A \vee B \Rightarrow
\Delta } (\vee_l) )\] in  $ \tau  $  with
\[ \cfrac{\cfrac{G_1\vert \Gamma \Rightarrow A,\Delta \,\,\,\,\,\,G_2\vert \Gamma
\Rightarrow B,\Delta }{G_1\vert G_2\vert \Gamma \Rightarrow A \wedge B,\Delta \vert
\Gamma \Rightarrow A \wedge B,\Delta }( \wedge _{rw} )}{G_1\vert G_2\vert\Gamma
\Rightarrow A \wedge B,\Delta }(EC) \]
\[ (\mathrm{accordingly\,\,\,} \cfrac{\cfrac{G_1\vert \Gamma,A \Rightarrow \Delta
\,\,\,\,\,\, G_2\vert \Gamma,B \Rightarrow \Delta }{G_1\vert G_2\vert \Gamma,A \vee B
\Rightarrow \Delta \vert \Gamma,A \vee B \Rightarrow \Delta }( \vee _{lw}
)}{G_1\vert G_2\vert \Gamma,A \vee B \Rightarrow \Delta }(EC) \,\,\,\mathrm{for}\,\,\,  (\vee_l)).\]
The replacements in Step 1 are local and the root of  $ \tau^{1}
 $  is also labeled by  $ G_0$.

\begin{definition}
We sometimes may regard  $ \cfrac{G'}{G'} $  as a structural rule of  $ {\rm {\bf
GL}} $  and denote it by  $ (ID_\Omega ) $  for convenience.  The focus sequent for $ (ID_\Omega ) $
is undefined.
\end{definition}

\begin{lemma}
Let  $ \cfrac{G'\vert S^m}{G'\vert S}(EC^ * ) \in \tau^{1}  $,   $ Th_{\tau^{1} }
(G'\vert S)=(H_0,H_1, \cdots,H_n ) $,   where  $ H_0 = G'\vert S $  and  $ H_n =
G_0$.  A tree  $ {\tau}'$  is constructed by replacing each  $ H_k  $  in  $ \tau^{1}  $
with  $ H_k \vert S^{m - 1} $  for all  $ 0 \leqslant k \leqslant n $.  Then  $ {\tau}'$  is a proof of  $ G_0\vert S^{m - 1}$.
\end{lemma}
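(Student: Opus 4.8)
The plan is to \emph{defer} the macro-contraction $(EC^*)$ all the way down to the root. Rather than contracting $S^m$ to $S$ at the node $H_0=G'\vert S$, we keep the $m-1$ surplus copies of $S$ as a \emph{passive external context} and drag this block $S^{m-1}$ along the thread $Th_{\tau'}(G'\vert S)=(H_0,\dots,H_n)$ down to $G_0=H_n$. So I will show that the tree $\tau_1$ of the statement is a legal derivation by checking, node by node along the thread, that appending $S^{m-1}$ to the appropriate external context leaves every inference an instance of the very same rule.

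The crucial bookkeeping is at the top. Since $H_0\vert S^{m-1}=(G'\vert S)\vert S^{m-1}=G'\vert S^m$, the relabelled node coincides with the \emph{premise} of the step $\cfrac{G'\vert S^m}{G'\vert S}(EC^*)$; hence in $\tau_1$ the whole $(EC^*)$-chain lying strictly above $H_0$ collapses --- its intermediate nodes $G'\vert S^{m-1},\dots,G'\vert S^2$ are deleted and the subtree $\tau'(G'\vert S^m)$ is grafted directly onto $H_0\vert S^{m-1}$. As $\tau'(G'\vert S^m)$ consists of nodes $\geqslant_{\tau'} G'\vert S^m >_{\tau'} H_0$, it contains no thread node, so it is taken over verbatim and its leaves remain initial sequents. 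Likewise, at every two-premise thread step the other (off-thread) premise $G''$ is, by Proposition~2.12(ii), incomparable with all thread nodes above the branching point, so the subtree $\tau'(G'')$ contains no thread node and is left unchanged. Consequently every leaf of $\tau_1$ is an initial sequent, namely a leaf of $\tau'$.

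It remains to check the $n$ inferences that do change, i.e.\ those along the thread. These rest on a single uniform observation, the \emph{context-enlargement} property of the calculus: for every rule of $\mathbf{GL}$ (Definitions~2.5--2.6, together with the generalized rules $(\wedge_{rw}),(\vee_{lw})$ of Definition~4.2, the rules $(EC),(EW),(COM)$, and the $(ID_\Omega)$ convention of Definition~4.3), appending a fixed hypersequent $K$ to the external context of one chosen premise and, simultaneously, to the conclusion again produces an instance of the same rule. This is immediate from inspecting the schemata: the side-context variable of the chosen premise ($G$, or $G_1$ resp.\ $G_2$ for a two-premise rule) occurs freely and is reproduced verbatim in the conclusion, so the modification never disturbs an active sequent-formula. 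Applied with $K=S^{m-1}$ to the premise that lies on the thread, it turns the original step $H_k\to H_{k+1}$ (whose premise on the thread is $H_k$, with possibly a second, unchanged premise $G''$) into the legal step with premise $H_k\vert S^{m-1}$ and conclusion $H_{k+1}\vert S^{m-1}$ --- the external context of the conclusion simply picks up the $S^{m-1}$ fed in from $H_k$'s side. The one slightly delicate case is a thread step that is itself an $(EC)$, or an $(EC^*)$, contracting a copy of $S$ (which may occur for $k\geqslant 1$, the hypothesis forbidding it only for the step $H_0\to H_1$): there $H_k\vert S^{m-1}$ still has the shape $H'\vert S\vert S$, so a single $(EC)$ on $S$ yields exactly $H_{k+1}\vert S^{m-1}$. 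Composing these for $k=0,\dots,n-1$, together with the verbatim subtrees discussed above, shows $\tau_1$ is a proof, and its root is $H_n\vert S^{m-1}=G_0\vert S^{m-1}$.

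I expect the only genuine --- though still routine --- effort to be the rule-by-rule verification of the context-enlargement property over the whole rule set, plus the top-of-tree bookkeeping: correctly replacing the collapsed $(EC^*)$-chain by the identification $H_0\vert S^{m-1}=G'\vert S^m$ and confirming that neither a leaf nor an off-thread subtree is disturbed.
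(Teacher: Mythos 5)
Your proposal is correct and takes essentially the same route as the paper: the paper also proves the claim by walking down the thread (formally, by induction on $n$), using the identification $H_0\vert S^{m-1}=G'\vert S^m$ to absorb the $(EC^*)$ step as an identity and the closure of every rule under enlarging the external context of the thread premise and the conclusion by $S^{m-1}$. Your extra remarks on the collapsed $(EC^*)$-chain and on thread steps that are themselves $(EC)$-applications are just more explicit versions of what the paper leaves implicit.
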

\begin{proof}
 The proof is by induction on  $n$ .   Since $ \tau^{1}(G'\vert S^{m} )$ is a proof  and $ \cfrac{G'\vert S^m}{H_{0}\vert S^{m - 1} }(ID_{\Omega})$  is valid in $ {\rm {\bf GL}} $,  then  $ {\tau}'(H_{0}\vert S^{m - 1} )$  is a proof.
  Suppose that  $ {\tau}' (H_{n-1} \vert S^{m - 1} )$   is a proof.   Since  $ \cfrac{H_{n-1}  \quad G''}{H_{n} }(II)$ (or  $ \cfrac{H_{n-1}
}{H_{n} }(I)) $  in  $ \tau^{1}$,  then
 $ \cfrac{H_{n-1}\vert S^{m - 1}\quad
  G''}{H_{n} \vert S^{m - 1}} $  (or
 $ \cfrac{H_{n-1} \vert S^{m - 1}}{H_{n} \vert S^{m - 1}}) $ is an application of
 the same rule $(II)$ (or $(I)$). Thus
 $ {\tau}'(H_{n} \vert S^{m - 1} )$   is a proof.
\end{proof}

\begin{definition}
The manipulation described in Lemma 4.4 is called  sequent-inserting operation.
\end{definition}

Clearly, the number of  $ (EC^ * )$-applications in  $ {\tau}' $  is less than
 $ \tau^{1}$.    Next, we continue to process  $ \tau$.

\textbf{Step 2 }Let  $ \cfrac{G_1'''\vert \{S_1^c \}^{m_1' }}{G_1 '\vert S_1^c
}(EC^ * ), \cdots,\cfrac{G_N'''\vert \{S_N^c \}^{m_N' }}{G_N '\vert S_N^c }(EC^
* ) $  be all applications of  $ (EC^ * ) $  in  $ \tau^{1}  $ and  $ G_0^ * \coloneq \{S_1^c \}^{m_1' -1}\vert \cdots\vert \{S_N^c\}^{m_N' - 1} $.  By repeatedly applying sequent-inserting operation,
we construct  a proof of  $ G_0 \vert G_0^ *  $
 in  $ {\rm {\bf GL}} $  without applications of  $ (EC^{\ast})$ and denote it by  $\tau^{2}$.

\begin{remark}
(i)  $ \tau^{2}  $  is constructed by converting  $ (EC) $  into
 $ (ID_\Omega ) $; (ii) Each node of  $ \tau^{2}  $  has the form  $ H_0 \vert H_0^ *
 $,   where  $ H_0\in \tau^{1}  $  and  $ H_0^ *  $  is a (possibly empty) subset of  $ G_0^
*$.
\end{remark}

We  need the following construction to eliminate applications of $(EW)$
in $\tau^{2}$.

\begin{construction}
Let  $ H \in \tau^{2}  $,   $ H' \subseteq H $  and  $ Th_{\tau^{2} } (H) = ( {H_0,
\cdots,H_n } ) $,   where  $ H_0 = H $,   $ H_n =G_0\vert G_0^ *$.  Hypersequents  $ \left\langle {H_k }
\right\rangle _{H:H'}$
and trees
$ \tau _{H:H'}^{2}  ( {\left\langle {H_k }
\right\rangle _{H:H'} } )$  for all $0\leqslant  k\leqslant  n$ are constructed inductively as follows.

(i) $ \left\langle {H_0 } \right\rangle _{H:H'} \coloneq H' $  and  $ \tau _{H:H'}^{2} ( {\left\langle {H_0 } \right\rangle _{H:H'} } ) $  consists of a single node  $ H' $;

(ii) Let  $ \cfrac{G'\vert S' \quad G''\vert S''}{G'\vert
G''\vert H'' }(II) $  (or  $ \cfrac{G'\vert S'}{G'\vert S''   }(I)) $  be in  $ \tau^{2}
 $,    $ H_k = G'\vert S'  $  and  $ H_{k + 1} = G'\vert G''\vert H'' $
(accordingly $ H_{k + 1} = G'\vert S''   $  for  $ (I) $)
for some  $ 0 \leqslant k \leqslant n - 1$.

If  $ S' \in \left\langle
{H_k } \right\rangle _{H:H'}  $ $$ \left\langle {H_{k + 1} } \right\rangle
_{H:H'} \coloneq {\left\langle {H_k } \right\rangle _{H:H'} \backslash
\{S'\}}\vert G''\vert H'' $$
$$(\mathrm{accordingly} \,\,\,  \left\langle {H_{k + 1} }
\right\rangle _{H:H'} \coloneq {\left\langle {H_k } \right\rangle _{H:H'}
\backslash \{S'\}}\vert S'' \,\,\,  \mathrm{ for}\,\,\, (I))$$
and  $ \tau _{H:H'}^{2} ( {\left\langle {H_{k + 1} } \right\rangle _{H:H'} }
) $  is constructed by combining trees  $$ \tau _{H:H'}^{2}
( {\left\langle {H_k } \right\rangle _{H:H'} } ),    \tau^{2}(G''\vert S'' )\,\,\,  \mathrm{with}\,\,\,
 \cfrac{\left\langle {H_k } \right\rangle _{H:H'}\,\,\,  G''\vert S''
}{\left\langle {H_{k + 1} } \right\rangle _{H:H'}
}(II) $$
$$ (\mathrm{accordingly} \,\,\, \tau _{H:H'}^{2}
( {\left\langle {H_k } \right\rangle _{H:H'} } )\,\,\,  \mathrm{with}\,\,\,
 \cfrac{\left\langle {H_k } \right\rangle _{H:H'} }{\left\langle {H_{k + 1} }
\right\rangle _{H:H'}   }(I)  \,\,\,  \mathrm{for}  \,\,\, (I)) $$
otherwise
 $ \left\langle {H_{k + 1} } \right\rangle _{H:H'}
\coloneq \left\langle {H_k } \right\rangle _{H:H'}$  and
$ \tau _{H:H'}^{2}  ( {\left\langle {H_{k + 1} }
\right\rangle _{H:H'} } ) $  is constructed by combining  $$ \tau _{H:H'}^{2}  (
{\left\langle {H_k } \right\rangle _{H:H'} } )\,\,\, \mathrm{with}\,\,\,
 \cfrac{\left\langle {H_k } \right\rangle _{H:H'}
  }{\left\langle {H_{k + 1} } \right\rangle _{H:H'}
  }(ID_\Omega ).$$

(iii) Let  $ \cfrac{G'}{G'\vert S'}(EW) \in \tau^{2}  $,   $ H_k = G'  $  and  $ H_{k + 1} = G'\vert S' $
then
$ \left\langle {H_{k + 1} } \right\rangle _{H:H'} \coloneq \left\langle {H_k } \right\rangle _{H:H'}    $
and  $ \tau _{H:H'}^{2}  ( {\left\langle {H_{k + 1} } \right\rangle
_{H:H'} } ) $  is constructed by combining  $ \tau _{H:H'}^{2}  ( {\left\langle
{H_k } \right\rangle _{H:H'} } ) \,\,\, \mathrm{with}\,\,\,\cfrac{\left\langle {H_k }
\right\rangle _{H:H'}
}{\left\langle {H_{k + 1} } \right\rangle _{H:H'}
}(ID_\Omega ).$
\end{construction}

\begin{lemma}
(i) $ \left\langle {H_k } \right\rangle _{H:H'} \subseteq H_k  $  for all  $ 0
\leqslant k \leqslant n $;

(ii)  $ \tau _{H:H'}^{2}  ( {\left\langle {H_k } \right\rangle _{H:H'}
} ) $  is a derivation of  $ \left\langle {H_k } \right\rangle _{H:H'}  $
from  $ H' $  without  $ (EC)$.
\end{lemma}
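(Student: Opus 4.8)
The plan is to prove clauses (i) and (ii) simultaneously by induction on $k$ along the thread $Th_{\tau''}(H)=(H_0,\dots,H_n)$, running in parallel with the recursion that defines $\langle H_k\rangle_{H:H'}$ and $\tau''_{H:H'}(\langle H_k\rangle_{H:H'})$ in Construction 4.7. For the base case $k=0$, Construction 4.7(i) sets $\langle H_0\rangle_{H:H'}=H'$, and the standing hypothesis on $H'$ is $H'\subseteq H=H_0$, so (i) holds; the one-node tree with root $H'$ is a derivation of $H'$ from $H'$ containing no $(EC)$, so (ii) holds.

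For the inductive step, assume (i) and (ii) at $k$ and consider the inference of $\tau''$ whose conclusion is $H_{k+1}$ and one of whose premises is $H_k$. Since $\tau$, hence $\tau''$, is cut-free and, by Remark 4.6(i), $\tau''$ contains no $(EC)$, this inference is an instance of a one-premise rule of type $(I)$, a two-premise rule of type $(II)$, $(EW)$, or $(ID_\Omega)$ — precisely the situations covered by Construction 4.7(ii)--(iii); for $(EW)$ and $(ID_\Omega)$ one sets $\langle H_{k+1}\rangle_{H:H'}=\langle H_k\rangle_{H:H'}$ and appends an $(ID_\Omega)$-step, and in the $(II)$ case we take $H_k$ to be the first displayed premise, interchanging the two premises if necessary. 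Clause (i) then follows in each case by a short multiset computation. When the tracked focus sequent $S'$ lies in $\langle H_k\rangle_{H:H'}$, write $H_k=G'\vert S'$; the induction hypothesis gives $\langle H_k\rangle_{H:H'}\subseteq G'\vert S'$, so deleting one copy of $S'$ leaves a sub-multiset of $G'$, whence $\langle H_{k+1}\rangle_{H:H'}=(\langle H_k\rangle_{H:H'}\backslash\{S'\})\vert G''\vert H''\subseteq G'\vert G''\vert H''=H_{k+1}$ (the one-premise shape $G'\vert S'$ to $G'\vert S''$ is identical). When $S'\notin\langle H_k\rangle_{H:H'}$, or in the $(EW)$/$(ID_\Omega)$ cases, one has $\langle H_k\rangle_{H:H'}\subseteq G'$ and $\langle H_{k+1}\rangle_{H:H'}=\langle H_k\rangle_{H:H'}$, while $G'\subseteq H_{k+1}$ in every rule shape, so $\langle H_{k+1}\rangle_{H:H'}\subseteq H_{k+1}$.

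For clause (ii), the tree $\tau''_{H:H'}(\langle H_{k+1}\rangle_{H:H'})$ is obtained by appending one inference below $\tau''_{H:H'}(\langle H_k\rangle_{H:H'})$ — which, by the induction hypothesis, is a derivation of $\langle H_k\rangle_{H:H'}$ from $H'$ without $(EC)$ — together, in the two-premise case, with the subtree $\tau''(G''\vert S'')$ of $\tau''$, which, being a subtree of the $(EC)$-free proof $\tau''$, is itself an $(EC)$-free derivation from the empty multiset. The appended inference is either the same rule $(I)$ or $(II)$ with its schematic side hypersequent $G'$ replaced by the sub-multiset $\langle H_k\rangle_{H:H'}\backslash\{S'\}$ (in the subcases where $S'\in\langle H_k\rangle_{H:H'}$, using $\langle H_k\rangle_{H:H'}=(\langle H_k\rangle_{H:H'}\backslash\{S'\})\vert S'$), or else an application of $(ID_\Omega)$; in either event it is a legitimate instance of a rule of $\mathbf{GL}$, because the $(I)$ and $(II)$ rules (Definitions 2.5 and 2.7) carry their side hypersequent as a schematic parameter. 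Since none of $(I)$, $(II)$, $(EW)$, $(ID_\Omega)$ is $(EC)$, the resulting tree is a derivation of $\langle H_{k+1}\rangle_{H:H'}$ from $H'$ without $(EC)$, completing the induction.

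Essentially everything here is routine; the only point that needs genuine care is the verification, used for clause (ii), that every appended inference really is a valid rule instance — that is, that shrinking the side hypersequent from $G'$ to a sub-multiset of it (and, in the focus-sequent subcases, removing exactly one copy of $S'$) never violates the shape of the rule. Because every rule entering the argument has its context as a schematic variable, I do not expect a real obstacle; the only thing demanding attention is to keep the parallel case analysis — two-premise $(II)$, one-premise $(I)$, $(EW)$, $(ID_\Omega)$, each further split on whether the tracked focus sequent occurs in $\langle H_k\rangle_{H:H'}$ — lined up with Construction 4.7 clause by clause.
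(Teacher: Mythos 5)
Your proposal is correct and follows essentially the same route as the paper: a simultaneous induction on $k$ along the thread, with the base case given by Construction 4.7(i) and the inductive step split by rule type, using $\left\langle {H_k } \right\rangle _{H:H'} \subseteq H_k$ to see that deleting the focus sequent leaves a sub-multiset of the side hypersequent, and appending a valid (non-$(EC)$) inference instance for clause (ii). The only difference is cosmetic — you spell out the two-premise case and the schematic-context justification that the paper dismisses as "similar and omitted."
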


\begin{proof}
 The proof is by induction on  $ k$. For the base step,  $ \left\langle {H_0 } \right\rangle _{H:H'} = H'$  and  $ \tau _{H:H'}^{2}  ( {\left\langle {H_0 }
\right\rangle _{H:H'} } ) $  consists of a single node  $ H'$.  Then  $ \left\langle
{H_0 } \right\rangle _{H:H'} \subseteq H_0 = H $,
 $ \tau _{H:H'}^{2}
( {\left\langle {H_0 } \right\rangle _{H:H'} } ) $  is a derivation
of  $ \left\langle {H_0 } \right\rangle _{H:H'}  $  from  $ H' $  without  $ (EC)$.

For the induction step,  suppose that $ \left\langle {H_k } \right\rangle _{H:H'}  $
and $ \tau _{H:H'}^{2}  ( {\left\langle {H_k } \right\rangle _{H:H'}
} ) $  be constructed such that  (i) and (ii) hold for some  $ 0 \leqslant
k \leqslant n - 1$.  There are two cases to be considered.

\textbf{Case 1} Let  $ \cfrac{G'\vert S' }{G'\vert S''
}(I) \in \tau^{2}  $,   $ H_k = G'\vert S'  $  and $ H_{k + 1} = G'\vert
S''$.  If  $ S' \in \left\langle {H_k }
\right\rangle _{H:H'}  $  then  \\
$ \left\langle {H_k } \right\rangle _{H:H'}
\backslash \{S'\} \subseteq G' $  by  $ \left\langle {H_k } \right\rangle
_{H:H'} \subseteq H_k = G'\vert S' $.  Thus  $ \left\langle {H_{k + 1} }
\right\rangle _{H:H'} = ( {\left\langle {H_k } \right\rangle _{H:H'}
\backslash \{S'\}} )\vert S'' \subseteq G'\vert S'' =
H_{k + 1}.$
Otherwise  $ S' \notin \left\langle {H_k }
\right\rangle _{H:H'}  $  then  $ \left\langle {H_k } \right\rangle _{H:H'}\subseteq G'$ by $ \left\langle {H_k } \right\rangle _{H:H'}\subseteq H_k=G'\vert S'$. Thus
$ \left\langle {H_{k + 1} }
\right\rangle _{H:H'} \subseteq H_{k + 1}$ by  $ \left\langle {H_{k + 1} }
\right\rangle _{H:H'} =  \left\langle {H_k } \right\rangle _{H:H'}\subseteq G'\subseteq H_{k+1}$.
$ \tau _{H:H'}^{2}  ( {\left\langle {H_{k + 1} }
\right\rangle _{H:H'} } ) $  is a derivation of  $ \left\langle {H_{k + 1}
} \right\rangle _{H:H'}  $  from  $ H' $  without  $ (EC) $  since  $ \tau _{H:H'}^{2}  ( {\left\langle {H_k } \right\rangle _{H:H'} } ) $  is  such one  and
$ \cfrac{\left\langle {H_k } \right\rangle _{H:H'}}{\left\langle {H_{k + 1} } \right\rangle _{H:H'} }(I)$  is a valid  instance of  a rule $(I)$ of $ {\rm {\bf GL}}$.
The case of  applications of two-premise rule  is proved by a similar procedure  and omitted.

\textbf{Case 2} Let $ \cfrac{G'}{G'\vert S'}(EW) \in \tau^{2}  $,   $ H_k = G'$  and $ H_{k + 1} = G'\vert S'$. Then
$ \left\langle {H_{k + 1} }
\right\rangle _{H:H'} \subseteq H_{k + 1}$ by  $ \left\langle {H_{k + 1} }
\right\rangle _{H:H'} =  \left\langle {H_k } \right\rangle _{H:H'}\subseteq H_k\subseteq H_{k+1}$.
$ \tau _{H:H'}^{2}  ( {\left\langle {H_{k + 1} }
\right\rangle _{H:H'} } ) $  is a derivation of  $ \left\langle {H_{k + 1}
} \right\rangle _{H:H'}  $  from  $ H' $  without  $ (EC) $  since  $ \tau _{H:H'}^{2}  ( {\left\langle {H_k } \right\rangle _{H:H'} } ) $  is  such one  and
$ \cfrac{\left\langle {H_k } \right\rangle _{H:H'}}{\left\langle {H_{k + 1} } \right\rangle _{H:H'} }(ID_{\Omega})$  is valid by Definition 4.3.

\end{proof}

\begin{definition}
 The manipulation described in Construction 4.7  is called derivation-pruning operation.
\end{definition}

\begin{notation}
We denote  $ \left\langle {H_n } \right\rangle _{H:H'}  $  by
 $G_{H:H'}^{2}  $,    $ \tau _{H:H'}^{2}  (
{\left\langle {H_n } \right\rangle _{H:H'} } ) $  by  $ \tau _{H:H'}^{2}   $  and say that  $ H' $  is  transformed into  $ G_{H:H'}^{2}  $  in  $ \tau^{2}$.
\end{notation}

Then Lemma 4.8 shows that   $ \cfrac{\underline{\quad H' \quad}}{ G_{H:H'}^{2} }\left\langle {\tau _{H:H'}^{2} } \right\rangle  $, $G_{H:H'}^{2}\subseteq G_{0}\vert G_{0}^{ *} $.  Now, we continue to process  $ \tau  $  as follows.

\textbf{Step 3} Let  $ \cfrac{G'  }{G'\vert S'}(EW)\in\tau^{2}  $  then  $ \tau _{G'\vert S' :G'}^{2} ( {\left\langle {H_n } \right\rangle _{G'\vert S'
:G'} } ) $  is a derivation of  $ \left\langle {H_n } \right\rangle
_{G'\vert S' :G'}  $  from  $ G' $  thus a proof of  $ \left\langle {H_n
} \right\rangle _{G'\vert S' :G'}  $  is constructed by combining
 $ \tau^{2}( {G'} ) $  and $ \tau _{G'\vert S' :G'}^{2} ( {\left\langle {H_n } \right\rangle _{G'\vert S' :G'}
} ) $  with  $ \cfrac{G'
}{G'    }(ID_\Omega )
$.  By repeatedly applying the procedure above, we construct a proof $ \tau^{3} $ of $ G_{1}\vert G_{1}^ * $  without
 $ (EW)    $  in  $ {\rm {\bf GL}} $,   where $ G_{1}\subseteq G_0, G_{1}^ * \subseteq G_0^ *$ by Lemma 4.8 (i).

\textbf{Step 4 }Let  $ \Gamma,p, \bot \Rightarrow \Delta \in \tau^{3} $  (or
 $ \Gamma,p \Rightarrow \top,\Delta  $,   $ \cfrac{G\vert \Gamma \Rightarrow
\Delta }{G\vert \Gamma,p \Rightarrow \Delta }(WL)) $  then there exists
 $ \Gamma ' \Rightarrow \Delta ' \in H $  such that  $ p \in \Gamma ' $  for all  $ H
\in Th_{\tau^{3}} (\Gamma,p, \bot \Rightarrow \Delta ) $  (accordingly  $ H \in
Th_{\tau^{3}} (\Gamma,p \Rightarrow \top,\Delta ) $,   $ H \in Th_{\tau^{3}}
(\Gamma,p \Rightarrow \Delta )) $  thus a proof is constructed by replacing
top-down  $ p $  in each  $ \Gamma ' $  with  $  \top$.

Let  $ \Gamma, \bot
\Rightarrow p,\Delta  $  (or  $ \Gamma \Rightarrow \top,p,\Delta  $,
 $ \cfrac{G\vert \Gamma \Rightarrow \Delta }{G\vert \Gamma \Rightarrow p,\Delta
}(WR)) $  is a leaf of  $ \tau^{3} $  then there exists  $ \Gamma ' \Rightarrow
\Delta ' \in H $  such that  $ p \in \Delta ' $  for all  $ H \in Th_{\tau^{3}}
(\Gamma, \bot \Rightarrow p,\Delta ) $  (accordingly  $ H \in Th_{\tau^{3}}
(\Gamma \Rightarrow \top,p,\Delta ) $  or  $ H \in Th_{\tau^{3}} (\Gamma
\Rightarrow p,\Delta )) $  thus a proof is constructed by replacing top-down
 $ p $  in each  $ \Gamma ' $  with  $  \bot$.

 Repeatedly applying the procedure
above, we construct a proof $ \tau^{4}  $ of $ G_{2}\vert G_{2}^ * $  in  $ {\rm {\bf GL}} $ such that there doesn't exist occurrence of  $ p $
in  $ \Gamma  $  or  $ \Delta  $  at each leaf labeled by  $ \Gamma, \bot \Rightarrow
\Delta  $  or  $ \Gamma \Rightarrow \top,\Delta  $,   or  $ p $  is not the weakening
formula  $ A $  in  $ \cfrac{G\vert \Gamma \Rightarrow \Delta }{G\vert \Gamma
\Rightarrow A,\Delta }(WR) $  or  $ \cfrac{G\vert \Gamma \Rightarrow \Delta
}{G\vert \Gamma,A \Rightarrow \Delta }(WL) $  when  $ (WR) $  or  $ (WL) $  is
available. Define two operations $\sigma _l$ and $\sigma _r$  on sequents by  $\sigma _l(\Gamma,p\Rightarrow \Delta)\coloneq\Gamma,\top\Rightarrow \Delta$
and $\sigma _r(\Gamma\Rightarrow p,\Delta)\coloneq\Gamma \Rightarrow \bot,\Delta$. Then $ G_{2}\vert G_{2}^ * $ is obtained by applying $\sigma _l$ and $\sigma _r$ to some designated sequents in $ G_{1}\vert G_{1}^ * $.

\begin{definition}
The manipulation described in Step 4  is called  eigenvariable-replacing operation.
\end{definition}

\textbf{Step 5 }A proof  $ \tau ^ *  $  is constructed from  $ \tau^{4}  $  by
assigning inductively one unique identification number to each occurrence of
 $ p $  in  $ \tau^{4}  $  as follows.

One unique identification number, which is a positive integer, is assigned
to each leaf of the form  $ p \Rightarrow p $  in  $ \tau^{4}  $  which corresponds
to  $ p_k \Rightarrow p_k  $  in  $ \tau ^ *$. Other nodes of $\tau^{4}$  are processed as follows.

$\bullet$ Let  $ \cfrac{G_{1}\vert \Gamma,\lambda p \Rightarrow \mu p,\Delta }{G_{1}\vert
\Gamma',\lambda p \Rightarrow \mu p,\Delta'}(I) \in \tau^{4}  $.  Suppose
that all occurrences of  $ p $  in  $ G_{1}\vert \Gamma, \lambda p \Rightarrow \mu
p,\Delta  $  are assigned identification numbers and have the form
$ G_{1}'\vert \Gamma,p_{i_1 }, \cdots
,p_{i_\lambda } \Rightarrow p_{j_1 }, \cdots,p_{j_\mu },\Delta $
in  $ \tau ^ *$,  which we often write as  $ G_{1}'\vert\Gamma,\{p_{i_k }\}_{k=1}^{\lambda}\Rightarrow
\{p_{j_k }\}_{k=1}^{\mu},\Delta.$
Then  $ G_{1}\vert \Gamma ',\lambda p \Rightarrow \mu
p,\Delta ' $  has the form
$ G_{1}'\vert \Gamma ',\{p_{i_{k} }\}_{k=1}^{ \lambda}
\Rightarrow \{p_{j_{k}}\}_{k=1}^{\mu},\Delta'.$

$\bullet$  Let $ \cfrac{G' \quad  G'' }{G'''}( \wedge _{rw} ) \in \tau^{4}$,  where
$G' \equiv G_1 \vert \Gamma,\lambda p \Rightarrow \mu p,A,\Delta, $
 $ G'' \equiv G_2 \vert \Gamma,\lambda p \Rightarrow \mu p,B,\Delta,$
$G''' \equiv G_1 \vert G_2 \vert \Gamma
,\lambda p \Rightarrow \mu p,A \wedge B,\Delta
 \vert \Gamma,\lambda p \Rightarrow \mu p,A \wedge B,\Delta.$
  Suppose that  $ G' $  and
 $ G'' $  have the forms $G_1' \vert \Gamma,\{p_{i_{1k} }\}_{k=1}^{\lambda}\Rightarrow \{p_{j_{1k} }\}_{k=1}^{\mu},A, \Delta
$  and
$ G_2' \vert \Gamma,\{p_{i_{2k} }\}_{k=1}^{\lambda}\Rightarrow \{p_{j_{2k} }\}_{k=1}^{\mu},B, \Delta$  in  $ \tau ^ * $, respectively.
Then  $ G''' $  has the form
$G_1 '\vert G_2' \vert \Gamma,
 \{p_{i_{1k} }\}_{k=1}^{\lambda}\Rightarrow \{p_{j_{1k} }\}_{k=1}^{\mu},A \wedge B, \Delta
\vert \Gamma,\{p_{i_{2k} }\}_{k=1}^{\lambda}\Rightarrow \{p_{j_{2k} }\}_{k=1}^{\mu},A \wedge B, \Delta. $ All applications of  $  (\vee _{lw}) $  are processed by the procedure similar to that of  $(\wedge _{rw})$.

$\bullet$  Let$\cfrac{G' \quad  G''}{G'''}(\odot  _r ) \in
\tau^{4}$,  where
 $ G' \equiv G_1 \vert \Gamma _1,\lambda _1 p \Rightarrow \mu _1
p,A,\Delta _1, $\\
$G'' \equiv G_2 \vert \Gamma _2,\lambda _2 p
\Rightarrow \mu _2 p,B,\Delta _2, $
$ G''' \equiv G_1 \vert G_2 \vert
\Gamma _1,\Gamma _2,(\lambda _1 + \lambda _2 )p \Rightarrow (\mu _1 + \mu
_2 )p,A\odot  B,\Delta _1,\Delta _2. $
  Suppose that  $ G' $  and  $ G'' $  have the forms
$ G_1'\vert \Gamma _1,\{p_{i_{1k} }\}_{k=1}^{\lambda_{1}}\Rightarrow \{p_{j_{1k} }\}_{k=1}^{\mu_{1}},
A,\Delta _1 $ and
$G_2' \vert \Gamma_2,\{p_{i_{2k} }\}_{k=1}^{\lambda_{2}}\Rightarrow \{p_{j_{2k} }\}_{k=1}^{\mu_{2}},B,\Delta _2 $  in  $ \tau ^ *$,   respectively.
 Then  $ G'''  $  has the
form  $ G_1 '\vert G_2 '\vert \Gamma _1,\Gamma _2,\{p_{i_{1k} }\}_{k=1}^{\lambda_{1}},\{p_{i_{2k} }\}_{k=1}^{\lambda_{2}}\Rightarrow \{p_{j_{1k} }\}_{k=1}^{\mu_{1}}, \{p_{j_{2k} }\}_{k=1}^{\mu_{2}},
 A\odot  B,\Delta _1,\Delta _2.$
All  applications of  $  (\to_l)  $  are processed by the procedure similar to that of  $ (\odot _r)$.

$\bullet$  Let $ \cfrac{G'  \quad G''}{G'''}(COM) \in \tau^{4}$,  where
$ G' \equiv G_1 \vert \Gamma _1,\Pi _1,\lambda _1 p \Rightarrow
\mu _1 p,\Sigma _1,\Delta _1, $\\
$ G'' \equiv G_2 \vert \Gamma _2,\Pi
_2,\lambda _2 p \Rightarrow \mu _2 p,\Sigma _2,\Delta _2,$
$G''' \equiv  G_1 \vert G_2 \vert \Gamma _1,\Gamma _2,(\lambda _{11} + \lambda
_{21} )p \Rightarrow (\mu _{11} + \mu _{21} )p,\Delta _1,\Delta _2\vert$\\
$\Pi _1,\Pi _2,(\lambda _{12} + \lambda _{22} )p \Rightarrow (\mu _{12} +
\mu _{22} )p,\Sigma _1,\Sigma _2,  $
 where  $ \lambda _{11} + \lambda _{12} = \lambda _1,\lambda _{21} +
\lambda _{22} = \lambda _2  $,   $ \mu _{11} + \mu _{12} = \mu _1,\mu _{21} +
\mu _{22} = \mu _2$.

Suppose that  $ G' $  and  $ G'' $  have the forms
$ G_1' \vert \Gamma _1,\Pi _1, \{p_{i_{k}^{1} }\}_{k=1}^{ \lambda_{1}}
\Rightarrow \{p_{j_{k}^{1}}\}_{k=1}^{\mu_{1}},\Sigma _1,\Delta _1$ and\\
$ G_2' \vert \Gamma _2,\Pi _2,\{p_{i_{k}^{2} }\}_{k=1}^{ \lambda_{2}}
\Rightarrow \{p_{j_{k}^{2}}\}_{k=1}^{\mu_{2}},\Sigma _2,\Delta _2$  in  $ \tau ^ *  $,  respectively.   Then  $ G'''  $  has the form\\
$ G_1' \vert G_2 '\vert \Gamma _1,\Gamma _2,\{p_{i_{1k}^{1} }\}_{k=1}^{ \lambda_{11}},\{p_{i_{1k}^{2} }\}_{k=1}^{ \lambda_{21}} \Rightarrow  \{p_{j_{1k}^{1}}\}_{k=1}^{\mu_{11}}, \{p_{j_{1k}^{2}}\}_{k=1}^{\mu_{21}},\Delta _1, \Delta _2 \vert$\\
$\Pi _1,\Pi _2,\{p_{i_{2k}^{1} }\}_{k=1}^{ \lambda_{12}},\{p_{i_{2k}^{2} }\}_{k=1}^{ \lambda_{22}} \Rightarrow
\{p_{j_{2k}^{1}}\}_{k=1}^{\mu_{12}}, \{p_{j_{2k}^{2}}\}_{k=1}^{\mu_{22}},\Sigma _1,\Sigma _2$,  where\\
$\{p_{i_{k}^{w} }\}_{k=1}^{ \lambda_{w}}=\{p_{i_{1k}^{w} }\}_{k=1}^{ \lambda_{w1}}\bigcup\{p_{i_{2k}^{w} }\}_{k=1}^{ \lambda_{w2}},\{p_{j_{k}^{w}}\}_{k=1}^{\mu_{w}}
 =\{p_{j_{1k}^{w}}\}_{k=1}^{\mu_{w1}}\bigcup\{p_{j_{2k}^{w}}\}_{k=1}^{\mu_{w2}}  $
for $w=1,2$.

\begin{definition}
The manipulation described in Step 5  is  called  eigenvariable-labeling operation.
\end{definition}

\begin{notation}
Let $G_{2}$  and $G_{2}^ * $  be converted to $G$  and $G^*$  in $ \tau ^ * $, respectively. Then $ \tau ^ *  $ is a proof  of $ G\vert G^*$.
\end{notation}

  In preprocessing of $\tau$,  each $ \cfrac{G_i'''\vert \{S_i^c \}^{m_i' }}{G_i'''\vert S_i^c}(EC^{\ast})_{i} $ is converted  into
  $ \cfrac{G_i''\vert \{S_i^c\}^{m_i' }}{G_i ''\vert \{S_i^c \}^{m_i' }}(ID_\Omega )_{i} $ in Step 2,  where $ G_i''' \subseteq G_i''$ by Lemma 4.4.
   $ \cfrac{G'  }{G'\vert S'}(EW)\in\tau^{2}  $ is converted
  into  $ \cfrac{G''  }{G''}(ID_\Omega )$ in Step 3,  where $ G'' \subseteq G'$ by Lemma 4.8(i).   Some  $G' \vert \Gamma ',p \Rightarrow\Delta ' \in\tau^{3}$  (or $G' \vert \Gamma ' \Rightarrow p,\Delta '$ ) is revised as  $G' \vert \Gamma ',\top \Rightarrow \Delta '$  (or  $G' \vert \Gamma ' \Rightarrow\bot,\Delta '$) in Step 4.   Each occurrence of $p$ in $\tau^{4}$ is  assigned the unique identification number in Step 5.    The whole preprocessing above is depicted by Figure 13.
\large
\[
\xrightarrow{\tau}{} G_0 \xrightarrow[{ \wedge _{rw}, \vee _{lw} }]{{Step\,\,\,   1:\,\,\tau^{1} }}G_0 \xrightarrow[{ EC}]{{Step   \,\,\,2:\,\,\tau^{2}}}G_0 |G_0^ *
\xrightarrow[{EW }]{{Step\,\,\,   3:\,\,\tau^{3}}}G_1 |G_1^* \] \[\xrightarrow[{\top, \bot,W }]{{Step \,\,\,4:\,\,\tau^{4}}}G_2 |G_2^*  \xrightarrow[{ID \,\,\,numbers}]{{Step\,\,\,   5:\,\,\tau^*}}G|G^*\qquad\qquad\qquad\qquad
\]
\normalsize
\begin{center}
 \footnotesize  FIGURE 13\normalsize\quad Preprocessing of  $\tau$
\end{center}

\begin{notation}
Let  $ \cfrac{G_i '''\vert \{S_i^c\}^{m_i' }}{G_i'''\vert S_i^c}(EC^{\ast})_{i}, 1\leqslant  i \leqslant  N $  be
all  $ (EC^*)$-nodes of  $ \tau^{1}$ and $G_i'''\vert \{S_i^c \}^{m_i' }$ be converted
 to $G_i''\vert \{S_i^c\}^{m_i}$ in  $ \tau ^*$.  Note that there are no identification numbers for
  occurrences of variable $p$  in  $S_i^c\in G_i'''\vert \{S_i^c\}^{m_i' }$ meanwhile they  are assigned to
$p$   in  $S_i^c\in G_i ''\vert \{S_i^c\}^{m_i }$.  But we use the same notations
for  $S_i^c\in G_i '''\vert \{S_i^c \}^{m_i' }$ and  $S_i^c\in G_i ''\vert \{S_i^c\}^{m_i }$
 for simplicity.

In the whole paper,  let  $ H_i^c=G_{i}' \vert \{S_i^c\}^{m_i } $  denote the unique node of  $ \tau^*  $  such that  $ H_i^c \leqslant G_{i}'' \vert \{S_i^c\}^{m_i} $  and  $ S_i^c  $  is the focus sequent of  $ H_i^c  $  in  $ \tau ^ *  $,   in which case we denote the focus one
 $ S_{i1}^c  $  and others  $ S_{i2}^c \vert \cdots \vert S_{im_i }^c  $  among
 $ \{S_i^c \}^{m_i }$.  We sometimes denote  $ H_i^c  $  also by  $ G_{i }'\vert \{S_{iv}^c \}_{v =
1}^{m_i }  $  or  $ G_{i }'\vert S_{i1}^c \vert \{S_{iv}^c \}_{v = 2}^{m_i }  $.   We then write $G^ *  $  as  $ \{S_{iv}^c \}_{i = 1 \cdots N}^{v = 2 \cdots m_i }$.

We call $ H_i^c  $, $ S_{iu}^c  $ the $i$-th  pseudo-$(EC)$ node of  $ \tau^*  $ and pseudo-$(EC)$ sequent, respectively.    We abbreviate  pseudo-$EC$ as $pEC. $ Let  $ H\in \tau^ *  $, by  $ S_i^c  \in  H$ we mean that  $ S_{iu}^c  \in  H$ for some
$1\leqslant  u \leqslant  m_{i}$.

It is possible that  there doesn't  exist  $ H_i^c \leqslant G_{i}'' \vert \{S_i^c \}^{m_i} $
such that  $ S_i^c  $  is the focus sequent of  $ H_i^c  $  in  $ \tau ^ *  $,
  in which case $\{S_i^c \}^{m_i}\subseteq G\vert G^{\ast} $,  then it hasn't any
  effect on  our argument  to treat all such  $S_i^c$ as members of $G$.
  So we assume that  all $ H_i^c$ are always defined for all $G_i''\vert \{S_i^c\}^{m_i}$ in  $ \tau ^ *$, i.e.,  $ H_i^c>G\vert G^{\ast}  $.
\end{notation}

\begin{proposition}
(i)  $ \{S_{iv}^c \}_{v = 2 \cdots m_i }\subseteq H $ for all $H\leqslant  H_i^c$;
(ii) $ G^ *= \{S_{iv}^c \}_{i = 1 \cdots N}^{v = 2 \cdots m_i }$.

\end{proposition}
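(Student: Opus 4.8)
The plan is to extract both parts directly from the construction of $\tau^*$ in Section~4, by following the \emph{spectator} copies of the contraction sequents of the $(EC^*)$-nodes through Steps~1--5; I would prove (i) first and then deduce (ii) from it together with the definition of $G_0^*$ made in Step~2.

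For (i), fix $i$ and write $Th_{\tau^*}(H_i^c) = (H_0,\cdots,H_n)$ with $H_0 = H_i^c$ and $H_n = G\vert G^*$. By Proposition~2.12(i) this sequence is exactly the set of nodes $H$ with $H\leqslant H_i^c$, so it suffices to show $\{S_{iv}^c\}_{v=2}^{m_i}\subseteq H_k$ for every $k$, which I would do by induction on $k$. The base case $k=0$ is the definition $H_i^c = G_i'\vert\{S_{iv}^c\}_{v=1}^{m_i}$ of Notation~4.17. For the induction step two facts about $\tau^*$ do the work. First, $\tau^*$ contains no application of $(EC)$ (removed in Step~2) and none of $(EW)$ (removed in Step~3), so every rule occurring in $\tau^*$ --- including the auxiliary rules $(\wedge_{rw})$, $(\vee_{lw})$ and $(ID_\Omega)$ --- carries its side hypersequent unchanged from premise(s) to conclusion; hence any component that is never the focus sequent nor part of the principal sequent or hypersequent of an inference along $Th_{\tau^*}(H_i^c)$ is reproduced verbatim in every $H_k$. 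Secondly, $S_{i2}^c,\cdots,S_{im_i}^c$ are precisely the $m_i-1$ copies that were inserted as \emph{side} components, by the sequent-inserting operation of Lemma~4.4 and Definition~4.6 (applied in Step~2 for the various $(EC^*)$-nodes), into every node of the thread running from the $(ID_\Omega)_i$-node $G_i''\vert\{S_i^c\}^{m_i}$ down to the root; and Steps~3--5 keep them side components, since derivation-pruning in Step~3 deletes only components descended from a weakening formula (which these are not, descending rather from the premise of $(EC^*)_i$), while eigenvariable-replacing and eigenvariable-labeling in Steps~4--5 only rewrite or decorate occurrences of $p$ inside fixed sequents. As the focus sequent of $H_i^c$ itself is $S_{i1}^c$ (Notation~4.17), none of $S_{i2}^c,\cdots,S_{im_i}^c$ is ever in focus or principal position along $Th_{\tau^*}(H_i^c)$, so the first fact carries them from $H_k$ to $H_{k+1}$, closing the induction; in particular they all occur in $H_n = G\vert G^*$.

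For (ii), the inclusion $\{S_{iv}^c\}_{i=1\cdots N}^{v=2\cdots m_i}\subseteq G^*$ is the case $H = G\vert G^*$ of (i), together with the observation that each such copy was inserted into the $G_0^*$-slot of the root in Step~2 and hence carried into the $G^*$-slot by Steps~3--5. Conversely, $G_0^* = \{S_1^c\}^{m_1'-1}\vert\cdots\vert\{S_N^c\}^{m_N'-1}$ by its definition in Step~2 consists of nothing but copies of the sequents $S_i^c$, Steps~3--5 add no new component to it, and the relabeling fixed in Notation~4.17 renames exactly these (surviving, by the proof of (i)) copies as $\{S_{iv}^c\}_{i=1\cdots N}^{v=2\cdots m_i}$; this gives the reverse inclusion.

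The step I expect to be the main obstacle is the second fact used in the proof of (i): one must go carefully through Steps~2--5 and verify that a spectator copy is genuinely never promoted to a focus or principal position and never pruned away, so that the $m_i-1$ copies visible in $H_i^c$ are exactly the ones that reach the root $G\vert G^*$. Once this is established, the remainder is a routine induction along threads, using Proposition~2.12 and the shapes of the rules of $\mathbf{GL}$ together with the auxiliary rules $(\wedge_{rw})$, $(\vee_{lw})$, $(ID_\Omega)$.
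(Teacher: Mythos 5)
Your proof is correct and follows the route the paper leaves implicit: Proposition 4.15 is stated without proof, as an immediate consequence of Notation 4.14 and the preprocessing Steps 2--5, and your induction along $Th_{\tau^*}(H_i^c)$ --- using that $\tau^*$ contains no $(EC)$ or $(EW)$, that every remaining rule copies side components verbatim from premise to conclusion, and that $S_{i1}^c$ is the unique focus among the $m_i$ copies below $G_i''\vert\{S_i^c\}^{m_i}$ --- is exactly that verification, with (ii) then following from the definition of $G_0^*$ in Step 2. The only blemishes are cosmetic: the internal references should be to Notation 4.14 and Definition 4.5 rather than 4.17 and 4.6, and the Step-3 pruning can also discard side-hypersequents contributed by the second premise of a two-premise rule (not only descendants of a weakening formula), but this is harmless because $m_i$ is by definition the number of copies that survive into $\tau^*$.
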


Now,  we replace locally each $ \cfrac{G'  }{G'}(ID_\Omega )$ in $\tau^{\ast}$ with $ G' $ and denote the resulting proof also by   $\tau^{\ast}$, which has no essential difference with the original one but could simplify subsequent arguments.
We introduce the system  $ {\rm {\bf GL}}_\Omega$ as follows.

\begin{definition}
 $ {\rm {\bf GL}}_\Omega  $  is a restricted subsystem of
 $ {\rm {\bf GL}} $  such that

(i)  $ p $  is designated as the unique eigenvariable
by which we mean that it is not used to built up any formula
containing logical connectives and only used as a sequent-formula.

(ii) Each occurrence of  $ p $ on each side of  every component  of  a hypersequent in $ {\rm {\bf GL}}$ is assigned
one unique identification number  $ i $   and written as  $ p_i$  in  $ {\rm {\bf GL}}_\Omega$.
Initial sequent  $ p\Rightarrow p $  of  $ {\rm {\bf GL}} $ has the form
 $ p_{i} \Rightarrow p_{i}$  in  $ {\rm {\bf GL}}_\Omega$.

 (iii) Each sequent  $ S $  of  $ {\rm {\bf GL}} $  in the form  $ \Gamma,\lambda p
\Rightarrow \mu p,\Delta  $  has the form  \[ \Gamma,\{p_{i_k }\}_{k=1}^{\lambda}\Rightarrow
\{p_{j_k }\}_{k=1}^{\mu},\Delta\]  in  $ {\rm
{\bf GL}}_\Omega  $,  where  $ p $  does not occur in  $ \Gamma  $  or  $ \Delta  $,    $ i_k
\ne i_l $  for all  $ 1 \leqslant k < l \leqslant \lambda  $,    $ j_k\ne j_l  $  for
all  $ 1 \leqslant k < l \leqslant \mu$.
 Define  $ v_l (S) = \{i_1, \cdots
,i_\lambda \} $  and  $ v_r (S) = \{j_1, \cdots,j_\mu \}$.  Let  $ G $  be a hypersequent of  $ {\rm {\bf
GL}}_\Omega  $ in the form  $ S_1 \vert \cdots \vert S_n  $  then  $ v_l (S_k) \bigcap v_l (S_l) =
\emptyset  $  and  $ v_r (S_k) \bigcap v_r (S_l) = \emptyset  $  for all  $ 1
\leqslant k < l \leqslant n$.  Define  $ v_l (G) = \bigcup _{k = 1}^n v_l (S_k
) $,    $ v_r (G) = \bigcup _{k = 1}^n v_r (S_k )$.  Here, $l$  and  $r$   in $ v_l$ and $v_r$  indicate the left side and right side of a sequent, respectively.

(iv) A hypersequent $ G $  of  $ {\rm {\bf
GL}}_\Omega  $ is called closed if  $ v_l (G) = v_r (G)$.  Two hypersequents $ G' $  and  $ G'' $  of  $ {\rm {\bf GL}}_\Omega  $ are called disjoint if  $ v_l (G') \bigcap v_l (G'') = \emptyset
 $,   $ v_l (G') \bigcap v_r (G'') = \emptyset  $,   $ v_r (G') \bigcap v_l (G'') =
\emptyset  $  and  $ v_r (G') \bigcap v_r (G'') = \emptyset$.   $ G'' $  is a copy of
 $ G' $  if they are disjoint and there exist two bijections  $ \sigma _l :v_l
(G') \to v_l (G'') $  and  $ \sigma _r :v_r (G') \to v_r (G'') $  such that  $ G'' $
can be obtained by applying  $ \sigma _l  $  to antecedents of sequents in  $ G' $
and  $ \sigma _r  $  to succedents of sequents in  $ G'$,  i.e., $ G''=  \sigma _r \circ\sigma _l(G')$.

(v) A closed hypersequent  $ G'\vert G''\vert G'''  $  can be contracted as  $ G'\vert
G''  $  in  $ {\rm {\bf GL}}_\Omega  $  under the condition that  $ G''  $  and  $ G'''  $
are closed and  $ G'''  $  is a copy of  $ G''  $.   We call it the constraint
external contraction rule and denote by
 \[ \cfrac{
 G'\vert G''\vert G''' }{G'\vert G'' }(EC_\Omega ). \]
 Furthermore, if there doesn't exist
two closed hypersequents $H',H''\subseteq G'\vert G'' $ such that $ H''  $  is a copy of  $ H'  $ then we
call it the fully constraint contraction rule and denote by  $ \cfrac{\underline{G'\vert G'' \vert G'''}}{G'\vert G'' }\left\langle {EC_\Omega ^ *} \right\rangle $.

 (vi)  $ (EW)$  and $(CUT)$ of  $ {\rm {\bf GL}} $
are forbidden. $ (EC)$,  $ ( \wedge _r ) $  and  $ ( \vee _l ) $  of  $ {\rm {\bf GL}} $  are
replaced with $ (EC_\Omega) $,  $ ( \wedge _{rw} ) $  and  $ ( \vee _{lw} ) $  in  $ {\rm {\bf
GL}}_\Omega  $,   respectively.

(vii) $ G_1 \vert S_1   $  and  $ G_2 \vert S_2  $  are closed
and disjoint  for each two-premise rule \\
$ \cfrac{  G_1 \vert S_1 \quad G_2 \vert S_2 }{G_1 \vert G_2 \vert H'}(II) $  of  $ {\rm {\bf GL}}_\Omega$ and, $ G' \vert S' $  is closed
for each one-premise rule  $ \cfrac{  G' \vert S' }{G' \vert S''}(I) $.

(viii)  $ p $  doesn't occur in  $ \Gamma  $  or  $ \Delta  $  for each initial sequent
 $ \Gamma, \bot \Rightarrow \Delta  $  or  $ \Gamma \Rightarrow \top,\Delta
 $ and,  $ p $  doesn't act as the weakening formula  $ A $  in  $ \cfrac{G\vert \Gamma
\Rightarrow \Delta }{G\vert \Gamma \Rightarrow A,\Delta }(WR) $  or
 $ \cfrac{G\vert \Gamma \Rightarrow \Delta }{G\vert \Gamma,A \Rightarrow
\Delta }(WL)$  when  $(WR)$  or  $(WL)$  is available.

\end{definition}

\begin{lemma}
Let  $ \tau  $  be a cut-free proof of  $ G_0  $  in  $ {\rm {\bf
L}} $  and  $ \tau^ *  $  be the tree  resulting from preprocessing of  $ \tau$.

(i) If $ \cfrac{G'\vert S' }{G'\vert S''}(I)\in\tau^ * $ then $ v_l (G'\vert
S'') = v_r (G'\vert S'') = v_r (G'\vert S') = v_l (G'\vert S') $;

 (ii) If $ \cfrac{
 G' \vert S' \quad G'' \vert S'' }{G' \vert G'' \vert
H'}(II)\in\tau^ * $  then
 $ v_l
(G'\vert G''\vert H') = $
$v_l (G' \vert S' )
\bigcup v_l (G'' \vert S'' )=
v_r (G'\vert G''\vert H') = v_r (G' \vert S' ) \bigcup v_r (G'' \vert
S'' )$;

(iii)  If  $H\in\tau^ *$ and   $k\in v_l(H)$ then $k\in v_r(H)$;

(iv) If  $H\in\tau^ *$ and  $k\in v_l(H)$ (or $k\in v_r(H)$) then $H\leqslant  p_{k}\Rightarrow p_{k}$;

 (v)  $ \tau ^ *  $  is a proof of  $ G\vert G^ *  $  in  $ {\rm
{\bf GL}}_\Omega$ without $ (EC_\Omega) $;

 (vi) If  $H',H''\in\tau^ *$  and  $H'\| H''$ then  $v_l(H')\bigcap v_l(H'')=\emptyset$,
$v_r(H')\bigcap v_r(H'')=\emptyset$.
\end{lemma}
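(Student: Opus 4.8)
The plan is to isolate the single structural fact about $\tau^\ast$ that drives all six clauses and then to read each clause off it. First I would record the following observation about $\tau^\ast$: \emph{apart from the leaves $p_k\Rightarrow p_k$, no inference of $\tau^\ast$ creates, deletes, or merges an occurrence of the eigenvariable $p$, and every inference leaves each such occurrence on the side of its component where it was.} To see this, note that $\tau$ is cut-free and that, by the hypotheses of the Main Lemma, every sequent-formula of $G_0$ is either $p$-free or is the formula $p$ itself; hence by the subformula property of cut-free derivations (a routine consequence of Lemma~2.15) every formula occurring in $\tau$, and therefore in $\tau^\ast$, is $p$-free or is $p$, so $p$ never sits properly inside a compound formula. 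Consequently $p$ is never a principal or active formula of a logical rule of $\tau^\ast$; by Step~4 (equivalently, Definition~4.16(viii)) $p$ is never introduced by a $\bot$- or $\top$-initial sequent nor as a weakening formula of $(WL),(WR)$; and the only rules moving components about — $(COM)$, $(\wedge_{rw})$, $(\vee_{lw})$ — send antecedent parts to antecedent parts and succedent parts to succedent parts, while the eigenvariable-labeling of Step~5 merely transports the identification labels downward, relabelling the copied component of $(\wedge_{rw}),(\vee_{lw})$ so that distinct occurrences never collide by accident. Thus each occurrence of $p$ in $\tau^\ast$ originates at a unique leaf $p_k\Rightarrow p_k$ and is propagated, side and label intact, down a single branch, and in particular no label occurs twice on one side of one component.

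The core of the lemma is then the equivalence, for every $H\in\tau^\ast$ and every $k$,
\[ k\in v_l(H)\iff H\leqslant_{\tau^\ast}(p_k\Rightarrow p_k)\iff k\in v_r(H), \]
which I would prove by walking down the thread $Th_{\tau^\ast}(p_k\Rightarrow p_k)$: by the observation, both the antecedent occurrence and the succedent occurrence of $p_k$ in that leaf survive at every node of this thread and at no other node of $\tau^\ast$, and by Proposition~2.12(i) the nodes of the thread are exactly those $H$ with $H\leqslant_{\tau^\ast}(p_k\Rightarrow p_k)$. Clauses~(iii) and~(iv) are immediate from this. For~(vi), if $H'\|H''$ but some $k$ lies in $v_l(H')\cap v_l(H'')$, then $H'\leqslant_{\tau^\ast}(p_k\Rightarrow p_k)$ and $H''\leqslant_{\tau^\ast}(p_k\Rightarrow p_k)$, whence $H'\nparallel H''$ by Proposition~2.12(iii), a contradiction; the succedent case is identical with $v_r$ in place of $v_l$.

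For~(i) and~(ii) I would use the observation directly: a one-premise inference $\frac{G'\vert S'}{G'\vert S''}(I)$ leaves the $p$-occurrences on each side untouched, so $v_l(G'\vert S')=v_l(G'\vert S'')$ and $v_r(G'\vert S')=v_r(G'\vert S'')$, while a two-premise inference $\frac{G'\vert S'\quad G''\vert S''}{G'\vert G''\vert H'}(II)$ makes each side of its conclusion the union of the corresponding sides of its two premises, so $v_l(G'\vert G''\vert H')=v_l(G'\vert S')\cup v_l(G''\vert S'')$ and likewise for $v_r$ — the relevant Step~5 cases ($(COM)$, $(\odot_r)$, $(\to_l)$, $(\wedge_{rw})$, $(\vee_{lw})$) being precisely this bookkeeping; composing these identities with the equalities $v_l=v_r$ at each node supplied by~(iii) yields the full chains of equalities. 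Finally, for~(v): the preprocessing of Section~4 has deleted every $(EW)$, left no $(CUT)$, converted every $(EC)$ to $(ID_\Omega)$ and then away, and replaced $(\wedge_r),(\vee_l)$ by $(\wedge_{rw}),(\vee_{lw})$, so every inference of $\tau^\ast$ is an inference of $\mathbf{GL}_\Omega$ and no $(EC_\Omega)$ occurs in it; Step~4 secures Definition~4.16(viii); by the observation no label repeats on a side of a component and by the equivalence every hypersequent of $\tau^\ast$ is closed, so each sequent has the form of Definition~4.16(iii) and each hypersequent is closed in the sense of Definition~4.16(iv); and~(vi), applied to the mutually incomparable premises of each inference, gives the disjointness demanded by Definition~4.16(vii). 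Hence $\tau^\ast$ is an $(EC_\Omega)$-free proof of $G\vert G^\ast$ in $\mathbf{GL}_\Omega$. The main obstacle is not any single step but the case bookkeeping behind the displayed observation — verifying rule by rule, and in particular for $(COM)$ and the generalized rules $(\wedge_{rw}),(\vee_{lw})$, that Step~5's labelling is well defined and side-preserving — together with arranging the dependencies so the argument is not circular, which is why the thread equivalence should be proved directly rather than by an induction that would have to pass through the very rules whose effect on $v_l,v_r$ it governs.
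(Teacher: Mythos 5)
Your proposal is correct and follows essentially the same route as the paper: clauses (i)--(v) are read off the eigenvariable-labeling of Step~5 and Definition~4.16 (the paper simply declares them immediate from these), and (vi) is derived exactly as you do, from clause~(iv) and the incomparability of two nodes both lying below $p_k\Rightarrow p_k$ (Proposition~2.12). The only difference is one of detail: you make explicit the propagation/thread argument behind (iii) and (iv) and the rule-by-rule bookkeeping behind (i), (ii) and (v), which the paper leaves implicit in its one-line justification.
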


\begin{proof}
Claims from (i) to (iv) are immediately  from Step 5 in preprocessing of  $ \tau$ and Definition 4.16.
 (v) is from Notation 4.13 and Definition 4.16.
 Only (vi) is proved as follows.

 Suppose that $k\in v_l(H')\bigcap v_l(H'')$.  Then $H'\leqslant  p_{k}\Rightarrow p_{k}$,  $H''\leqslant  p_{k}\Rightarrow p_{k}$ by Claim (iv).  Thus $H'\leqslant  H''$ or $H''\leqslant  H'$,  a contradiction with $H'\| H''$ hence $v_l(H')\bigcap v_l(H'')=\emptyset$.\\  $v_r(H')\bigcap v_r(H'')=\emptyset$ is proved by a similar procedure and omitted.
\end{proof}

\section{The generalized density rule $(\mathcal{D})$  for $ {\bf GL}_{\Omega}$}
In this section, we define the generalized density rule $(\mathcal{D})$   for  $ {\bf GL}_{\Omega}$ and prove that it is admissible in ${\rm {\bf GL}}_\Omega$.
\begin{definition}
Let  $ G $  be a closed hypersequent of  $ {\rm {\bf GL}}_\Omega  $  and  $ S \in G$.
Define  $ \left[ S \right]_G=\bigcap\{H:S\in H\subseteq G,v_l (H ) = v_r (H) \}$, i.e., $\left[ S \right]_G$ is the
minimal closed unit of $G$ containing  $S.$  In general,
for  $G'\subseteq G$,   define $ \left[ G'\right]_G=\bigcap\{H:G'\subseteq H\subseteq G,v_l (H ) = v_r (H) \}$.

\end{definition}

Clearly,  $ \left[ S \right]_G = S$  if  $ v_l ( S ) = v_r ( S) $  or  $ p $  does not occur in  $S$.  The following construction  gives a procedure  to construct  $\left[ S \right]_G$ for any given $S\in G$.

\begin{construction}
Let  $ G$ and $S $ be as above.  A sequence  $ G_1,G_2, \cdots,G_n  $  of hypersequents is constructed recursively as follows.
(i) $ G_1 = \{S\} $;
(ii) Suppose that  $ G_k  $  is constructed for  $ k \geqslant 1$.  If  $ v_l ( {G_k } ) \ne v_r ( {G_k } ) $  then there exists $ i_{k+1} \in v_l ( {G_k } )\backslash v_r ( {G_k } ) $
(or  $ i_{k+1} \in v_r ( {G_k } )\backslash v_l ( {G_k } )) $
thus there exists the unique $ S_{k + 1} \in G\backslash G_k  $  such that  $ i_{k+1} \in v_r
( {S_{k + 1} } )\backslash v_l( {S_{k + 1} } ) $ (or  $ i_{k+1} \in v_l ( {S_{k + 1} } )\backslash v_r( {S_{k + 1} } )) $  by  $ v_l (G ) = v_r (G) $
 and Definition 4.16  then  let  $ G_{k + 1} = G_k \vert S_{k + 1}  $
otherwise the procedure terminates and $ n\coloneq k $.
\end{construction}

\begin{lemma}
(i)  $ G_n =\left[ S \right]_G$;\\
(ii)Let $S'\in \left[ S \right]_G$ then  $\left[ S' \right]_G=\left[ S \right]_G$;\\
(iii)Let $G'\equiv G\vert H'$,    $G''\equiv G\vert H''$,$v_{l}(G')=v_{r}( G')$,
$v_{l}(G'')=v_{r}( G'')$,  $v_l (H')\ominus v_r ( H' )=
v_l (H'')\ominus v_r ( H'' ) $  then  $\left[ H' \right]_{G'}\backslash H'=\left[ H'' \right]_{G''}\backslash H''$, where $A\ominus B$ is the symmetric difference of two multisets $A, B$;\\
(iv)Let  $ v_{lr} ( {G_k } ) = v_l ( {G_k } ) \bigcap v_r
( {G_k } )$ then $ \left| {v_{lr} ( {G_k } )} \right| +
1 \geqslant \left| {G_k } \right| $  for all  $ 1 \leqslant k \leqslant n$;\\
(v)  $ \left| {v_l ( {\left[ S\right]_G } )} \right| + 1
\geqslant \left| {\left[ S\right]_G } \right|. $
\end{lemma}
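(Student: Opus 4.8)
The plan is to read Definition 5.1 graph-theoretically. For a closed hypersequent $G$ of $\mathbf{GL}_\Omega$ let $\mathcal{G}_G$ be the graph whose vertices are the components of $G$, with two distinct components adjacent iff they share an identification number; by Definition 4.16(iii) such a shared number then occurs on the left of one and on the right of the other. Because $v_l(G)=v_r(G)$, a sub-hypersequent $H\subseteq G$ is closed exactly when it is a union of connected components of $\mathcal{G}_G$ together with, possibly, some components in which $p$ does not occur; hence $[S]_G$ is precisely the connected component $C_G(S)$ of $S$. I would prove this first: $C_G(S)$ is closed, since a number exposed in $C_G(S)$ would, by $v_l(G)=v_r(G)$ and uniqueness of identification numbers, have its partner component adjacent to $C_G(S)$ and hence already in it; and any closed $H$ with $S\in H\subseteq G$ contains $C_G(S)$, by induction on graph distance from $S$, using that $v_l(H)=v_r(H)$ forces the partner of any number exposed at the current stage to already lie in $H$.

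For (i), I would check that the sequence of Construction 5.2 is a traversal of $\mathcal{G}_G$ from $S$: inductively $G_k\subseteq C_G(S)$, because $S_{k+1}$ carries the chosen number $i_{k+1}\in v_l(G_k)\cup v_r(G_k)$ and is therefore adjacent in $\mathcal{G}_G$ to a component of $G_k$; the procedure stops because $|G_k|$ strictly increases and is bounded by $|G|$; and at termination $v_l(G_n)=v_r(G_n)$, so $G_n$ is a closed sub-hypersequent of $G$ containing $S$. Thus $G_n\subseteq[S]_G=C_G(S)\subseteq G_n$, i.e.\ $G_n=[S]_G$. Part (ii) is then immediate: if $S'\in[S]_G=C_G(S)$ then $C_G(S')=C_G(S)$, whence $[S']_G=C_G(S')=C_G(S)=[S]_G$.

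For (iv), I would induct on $k$ along Construction 5.2. The case $k=1$ is trivial. For the step, $|G_{k+1}|=|G_k|+1$; in the left-branch case the number $i_{k+1}$ chosen at step $k$ lies in $v_l(G_k)\backslash v_r(G_k)$ and in $v_r(S_{k+1})$, so $i_{k+1}\in v_l(G_{k+1})\cap v_r(G_{k+1})=v_{lr}(G_{k+1})$ while $i_{k+1}\notin v_{lr}(G_k)$; since $v_{lr}$ is monotone along the construction, $|v_{lr}(G_{k+1})|\geq|v_{lr}(G_k)|+1$, and the right-branch case is symmetric. With the induction hypothesis this gives $|v_{lr}(G_{k+1})|+1\geq|G_{k+1}|$. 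Part (v) follows by taking $k=n$ in (iv): by (i) $G_n=[S]_G$, and $G_n$ being closed gives $v_{lr}(G_n)=v_l(G_n)=v_l([S]_G)$, so $|v_l([S]_G)|+1\geq|[S]_G|$.

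Part (iii) is where the real work lies. Here $G$ itself need not be closed, but $G'=G\vert H'$ and $G''=G\vert H''$ are, and since $G'$ is a legitimate hypersequent of $\mathbf{GL}_\Omega$ its distinct components have disjoint left- (resp.\ right-) label sets; this yields $v_l(H')\cap v_l(G)=\emptyset$ and $v_r(H')\cap v_r(G)=\emptyset$, and then, using $v_l(G')=v_r(G')$, the identities $v_l(H')\backslash v_r(H')=D\cap v_r(G)$ and $v_r(H')\backslash v_l(H')=D\cap v_l(G)$ with $D:=v_l(H')\ominus v_r(H')$. Hence the partner in $G'$ of any number in $D$ lies in $G$, and which component of $G$ it is is determined by $G$ and that number alone. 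Running the set-version of Construction 5.2 from $G_1=H'$, every component ever adjoined belongs to $G$ (the components of $H'$ are already present and are never re-adjoined, because any number exposed during the construction lies outside $v_r(H')$ or outside $v_l(H')$), and the set $R$ of adjoined components is exactly the set of components of $G$ reachable in $\mathcal{G}_G$ from the $G$-partners of the numbers of $D$; since edges between $G$-components coincide in $\mathcal{G}_{G'}$ and $\mathcal{G}_G$ and edges from $G$-components to components of $H'$ run only along numbers of $D$, this set depends only on $G$ and $D$. As $(G,D)$ is the same for $H'$ and $H''$ by hypothesis, $R$ is the same, and $[H']_{G'}=H'\vert R$ and $[H'']_{G''}=H''\vert R$ by the set-analogue of (i); hence $[H']_{G'}\backslash H'=R=[H'']_{G''}\backslash H''$. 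I expect this paragraph to be the main obstacle: the delicate points are verifying that no component of $H'$ is ever re-adjoined and that numbers exposed after the first layer are either internal to $G$ or members of $D$ looping back into $H'$, and the disjointness identities above are exactly what makes this bookkeeping work.
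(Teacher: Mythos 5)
Your proposal is correct, and for parts (i), (ii), (iv) and (v) it is essentially the paper's own argument: the paper also shows $G_n\subseteq[S]_G$ by inducting along Construction 5.2 (using closedness of $[S]_G$ to force $S_{k+1}\in[S]_G$) and $[S]_G\subseteq G_n$ from minimality, proves (ii) by tracing the chain of shared identification numbers back to $S$, and proves (iv) by exactly your induction ($i_{k+1}\in v_{lr}(G_{k+1})\setminus v_{lr}(G_k)$ together with $|G_{k+1}|=|G_k|+1$). Your graph-theoretic packaging ($[S]_G$ as the connected component of $S$ in $\mathcal{G}_G$) is a reformulation rather than a different proof, but it buys you something real in part (iii): the paper dismisses (iii) with ``it holds immediately from Construction 5.2 and (i),'' whereas you actually supply the missing argument. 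Your key observations there --- that the disjointness of label sets across components of the legitimate hypersequent $G'$ gives $v_l(H')\cap v_l(G)=\emptyset$ and $v_r(H')\cap v_r(G)=\emptyset$, hence $D\cap v_r(G)=v_l(H')\setminus v_r(H')$ and $D\cap v_l(G)=v_r(H')\setminus v_l(H')$, so that the set of $G$-components adjoined is the reachability closure in $\mathcal{G}_G$ of the $G$-partners of the numbers in $D$ and therefore depends only on the pair $(G,D)$ --- are exactly the bookkeeping the paper leaves implicit, and the delicate points you flag (no component of $H'$ is re-adjoined since the construction only selects from $G\setminus G_k$; paths re-entering $H'$ along $D$ contribute no new $G$-components beyond the entry points already counted) do go through. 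So your write-up is, if anything, more complete than the paper's on this lemma.
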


\begin{proof}
(i) Since $G_{k}\subseteq G_{k+1}$  for $1\leqslant  k\leqslant  n-1$ and $S\in G_{1}$ then $S\in G_{n}\subseteq G$
thus $\left[ S \right]_G\subseteq G_{n}$ by $v_l (G_{n} ) = v_r (G_{n}) $.   We prove  $ G_{k}\subseteq\left[ S \right]_G $ for $1\leqslant  k\leqslant  n$  by induction on  $ k$ in the  following.  Clearly, $G_1\subseteq \left[ S \right]_G$. Suppose that $G_k\subseteq \left[ S \right]_G$ for some $1\leqslant  k\leqslant  n-1$.
Since  $ i_{k+1} \in v_l (G_k)\backslash v_r ( {G_k } ) $ (or  $ i_{k+1} \in v_r ( {G_k } )\backslash v_l ( {G_k } )) $  and $ i_{k+1} \in v_r ( {S_{k + 1} } ) $ (or  $ i_{k+1} \in v_l ( {S_{k + 1} } )) $ then $S_{k+1}\in\left[ S \right]_G$ by $G_k\subseteq \left[ S \right]_G$ and $v_l (\left[ S \right]_G ) = v_r (\left[ S \right]_G)$ thus $G_{k+1}\subseteq \left[ S \right]_G$. Then $ G_{n}\subseteq\left[ S \right]_G $ thus
$ G_n =\left[ S \right]_G$.

(ii) By (i),  $ \left[ S \right]_G=S_{1}\vert S_{2}\vert \cdots\vert S_{n}$, where $S_{1}=S$.  Then $S'=S_{k}$ for some $1\leqslant  k\leqslant  n$ thus $ i_{k} \in v_r( {S_{k} } )\backslash v_l ( {S_{k} } ) $ (or  $ i_{k} \in v_l ( {S_{k } } )\backslash v_r( {S_{k} } )) $  hence there exists the unique $k'<k$ such that $ i_{k} \in v_l ( {S_{k' } } )\backslash v_r( {S_{k'} } )$ (or  $ i_{k} \in v_r( {S_{k'} } )\backslash v_l ( {S_{k'} }) ) $ if $k\geq2$ hence $S_{k'}\in \left[ S_{k} \right]_G$. Repeatedly,  $S_{1}\in \left[ S_{k} \right]_G$, i.e.,  $S\in \left[ S' \right]_G$ then  $\left[ S\right]_G\subseteq\left[ S' \right]_G$.  $\left[ S'\right]_G\subseteq\left[ S \right]_G$ by   $S'\in \left[ S \right]_G$ then $\left[ S'\right]_G=\left[ S \right]_G$.

(iii)  It holds immediately from Construction 5.2 and (i).

(iv) The proof is by induction on  $ k$.  For the base step, let  $ k
= 1 $  then  $ \left| {G_k } \right| = 1 $  thus  $ \left| {v_{lr} ( {G_k }
)} \right| + 1 \geqslant \left| {G_k } \right| $  by  $ \left|
{v_{lr} ( {G_k } )} \right| \geqslant 0$.  For the induction step,
suppose that  $ \left| {v_{lr} ( {G_k } )} \right| + 1 \geqslant
\left| {G_k } \right| $  for some  $ 1 \leqslant k < n$.  Then  $ \left| {v_{lr}
( {G_{k + 1} } )} \right| \geqslant \left| {v_{lr} ( {G_k }
)} \right| + 1 $  by  $i_{k+1}  \in v_{lr} ( {G_{k + 1} } )\backslash
v_{lr} ( {G_k } ) $  and  $ v_{lr} ( {G_k } ) \subseteq
v_{lr} ( {G_{k + 1} } ) $.  Then  $ \left| {v_{lr} ( {G_{k + 1} } )} \right|
+ 1 \geqslant \left| {G_{k + 1} } \right| $  by  $ \left| {G_{k + 1} }
\right| = \left| {G_k } \right| + 1 = k + 1$.

 (v) It holds by (iv)  and
$ v_{lr} ( {\left[ S\right]_G } ) = v_l ( {\left[ S \right]_G } )$.
\end{proof}

\begin{definition}
Let  $ G = S_1 \vert \cdots \vert S_r  $  and  $ S_l  $  be in the
form  $ \Gamma _l, \{p_{i_{k}^l }\}_{k=1}^{ \lambda _{l} }\Rightarrow\{p_{j_{k}^l }\}_{k=1}^{\mu _l},\Delta _l  $  for  $ 1 \leqslant l\leqslant r$.

(i) If  $ S \in G $  and  $ \left[ S \right]_G  $  be  $ S_{k_1 } \vert \cdots \vert
S_{k_u }  $  then  $ \mathcal{D}_{G}( S ) $  is defined as\\
 $ \Gamma _{k_1 }, \cdots,\Gamma _{k_u } \Rightarrow ( {\left| {v_l
(\left[ S \right]_G )} \right| -\left|  \left[ S \right]_G\right| + 1} )t,\Delta
_{k_1 }, \cdots,\Delta _{k_u };
 $

(ii) Let  $  \bigcup _{k = 1}^v \left[ {S_{q_k } } \right]_G = G $  and $ \left[
{S_{q_k } } \right]_G \bigcap \left[ {S_{q_l } } \right]_G = \emptyset  $  for
all $ 1 \leqslant k < l \leqslant v $ then  $ \mathcal{D}( G ) $  is
defined as  $ \mathcal{D}_{G}( {S_{q_1 } } )\vert \cdots \vert
\mathcal{D}_{G}( {S_{q_v } } )$.

(iii) We call  $(\mathcal{D})$ the generalized density rule  of  $ {\bf GL}_{\Omega}$,  whose conclusion  $ \mathcal{D}( G ) $  is defined by (ii)  if its premise is $G$.
\end{definition}

Clearly,  $ \mathcal{D}( {p_k \Rightarrow p_k } ) $  is  $  \Rightarrow
t $  and  $ \mathcal{D}(S) = S $  if  $ p $  does not occur in  $S$.

\begin{lemma}
Let $G'\equiv G\vert S$ and $ G''\equiv G\vert S_{1}\vert S_{2}$ be closed and $ \left[ S_{1} \right]_{G''}\bigcap\left[ S_{2} \right]_{G''}=\emptyset$,  where
$S_{1}=\Gamma _1,\{p_{i_{k}^1 }\}_{k=1}^{ \lambda _{1} } \Rightarrow \{p_{j_{k}^1 }\}_{k=1}^{\mu _{1} },\Delta _1;$\,\,\,
$ S_{2}=\Gamma _2,\{p_{i_{k}^2 }\}_{k=1}^{ \lambda _{2} }\Rightarrow
\{p_{j_{k}^2 }\}_{k=1}^{ \mu _{2} },\Delta _2 ;$\\
$S =\Gamma _1,\Gamma _2,\{p_{i_{k}^1 }\}_{k=1}^{ \lambda _{1} },
\{p_{i_{k}^2 }\}_{k=1}^{ \lambda _{2} }\Rightarrow \{p_{j_{k}^1 }\}_{k=1}^{\mu _{1} }, \{p_{j_{k}^2 }\}_{k=1}^{\mu _{2} }, \Delta _1,\Delta _2;$
$ \mathcal{D}_{G''}(S_{1} )=\Gamma_{1},\Sigma_{1}\Rightarrow \Pi_{1},\Delta_{1}$ and
$ \mathcal{D}_{G''}(S_{2} )=\Gamma_{2},\Sigma_{2}\Rightarrow \Pi_{2}, \Delta_{2}.$ Then $ \mathcal{D}_{G'}(S)=\Gamma_{1},\Sigma_{1},\Gamma_{2},\Sigma_{2}\Rightarrow \Pi_{1},\Delta_{1}, \Pi_{2}, \Delta_{2}.$
\end{lemma}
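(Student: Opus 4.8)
The plan is to reduce the statement to an explicit description of the minimal closed sub-hypersequent $[S]_{G'}$, and then to read $\mathcal{D}_{G'}(S)$ directly off Definition 5.4.

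First I would describe $[S]_{G'}$ in terms of $[S_1]_{G''}$ and $[S_2]_{G''}$. From the displayed form of $S$ and the fact that $G'=G\vert S$ and $G''=G\vert(S_1\vert S_2)$ are hypersequents of $\mathbf{GL}_\Omega$ one checks at once that $v_l(S)=v_l(S_1\vert S_2)$ and $v_r(S)=v_r(S_1\vert S_2)$, so $v_l(S)\ominus v_r(S)=v_l(S_1\vert S_2)\ominus v_r(S_1\vert S_2)$. Since $G'$ and $G''$ are closed, Lemma 5.3(iii) applied with $H'\coloneq S$, $H''\coloneq S_1\vert S_2$ and common part $G$ gives
\[ [S]_{G'}\backslash S=[S_1\vert S_2]_{G''}\backslash(S_1\vert S_2). \]
Next I would verify $[S_1\vert S_2]_{G''}=[S_1]_{G''}\vert[S_2]_{G''}$: the right-hand side is a closed sub-hypersequent of $G''$ (a disjoint union of closed hypersequents is closed) containing $S_1$ and $S_2$, so it contains $[S_1\vert S_2]_{G''}$; conversely $[S_1\vert S_2]_{G''}$ is closed and contains $S_1$, so it contains $[S_1]_{G''}$, likewise $[S_2]_{G''}$, and the hypothesis $[S_1]_{G''}\cap[S_2]_{G''}=\emptyset$ lets these two unite inside it. Distributing the multiset difference over this disjoint union then gives
\[ [S]_{G'}=S\vert([S_1]_{G''}\backslash S_1)\vert([S_2]_{G''}\backslash S_2). \]

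The rest is bookkeeping with Definition 5.4. Write $[S_1]_{G''}=S_1\vert U_1\vert\cdots\vert U_a$ and $[S_2]_{G''}=S_2\vert V_1\vert\cdots\vert V_b$, and for a sequent $T$ let $\Gamma(T)$, $\Delta(T)$ denote the parts of $T$ not containing $p$ (so $\Gamma(S_1)=\Gamma_1$, etc.). Then the hypotheses say precisely that $\Sigma_1=\Gamma(U_1),\dots,\Gamma(U_a)$ and $\Pi_1=c_1\,t,\Delta(U_1),\dots,\Delta(U_a)$ with $c_1\coloneq|v_l([S_1]_{G''})|-|[S_1]_{G''}|+1$, and symmetrically for $\Sigma_2,\Pi_2,c_2$. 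By the last display $[S]_{G'}=S\vert U_1\vert\cdots\vert U_a\vert V_1\vert\cdots\vert V_b$, so by Definition 5.4(i) the antecedent of $\mathcal{D}_{G'}(S)$ is $\Gamma_1,\Gamma_2,\Gamma(U_1),\dots,\Gamma(U_a),\Gamma(V_1),\dots,\Gamma(V_b)=\Gamma_1,\Sigma_1,\Gamma_2,\Sigma_2$ and its succedent is $c\,t,\Delta_1,\Delta_2,\Delta(U_1),\dots,\Delta(U_a),\Delta(V_1),\dots,\Delta(V_b)$ with $c\coloneq|v_l([S]_{G'})|-|[S]_{G'}|+1$; it then remains only to check $c=c_1+c_2$, which rewrites the succedent as $c_1\,t,\Delta(U_1),\dots,\Delta(U_a),c_2\,t,\Delta(V_1),\dots,\Delta(V_b),\Delta_1,\Delta_2=\Pi_1,\Delta_1,\Pi_2,\Delta_2$, the desired conclusion. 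For $c=c_1+c_2$ I would note $v_l([S]_{G'})=v_l(S_1)\cup v_l(S_2)\cup\bigcup_k v_l(U_k)\cup\bigcup_k v_l(V_k)=v_l([S_1]_{G''})\cup v_l([S_2]_{G''})$, that these two sets are disjoint because $[S_1]_{G''}$ and $[S_2]_{G''}$ are disjoint sub-hypersequents of $G''$ and distinct components of a hypersequent of $\mathbf{GL}_\Omega$ have disjoint $v_l$ by Definition 4.16(iii), and that $|[S]_{G'}|=|[S_1]_{G''}|+|[S_2]_{G''}|-1$; then $c=c_1+c_2$ is immediate arithmetic.

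The one genuinely load-bearing step is the identity $[S]_{G'}=S\vert([S_1]_{G''}\backslash S_1)\vert([S_2]_{G''}\backslash S_2)$, that is, the correct application of Lemma 5.3(iii) together with the factorization $[S_1\vert S_2]_{G''}=[S_1]_{G''}\vert[S_2]_{G''}$. The point is that $[S_1]_{G''}\cap[S_2]_{G''}=\emptyset$ makes $S_1$ and $S_2$ disjoint in the sense of Definition 4.16(iv), so the closure processes of Construction 5.2 that generate $[S_1]_{G''}$ and $[S_2]_{G''}$ inside $G''$ do not interact and, once $S_1$ and $S_2$ are merged into the single sequent $S$, reassemble into the closure process generating $[S]_{G'}$ inside $G'$. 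Everything else is a routine multiset computation from Definition 5.4.
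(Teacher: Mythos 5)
Your proposal is correct and follows essentially the same route as the paper: both establish $[S]_{G'}=\{S\}\cup([S_1]_{G''}\backslash\{S_1\})\cup([S_2]_{G''}\backslash\{S_2\})$ via Lemma 5.3(iii) together with $v_l(S)=v_l(S_1\vert S_2)$, $v_r(S)=v_r(S_1\vert S_2)$ and the disjointness hypothesis, and then conclude by the cardinality computation $|v_l([S]_{G'})|-|[S]_{G'}|+1=c_1+c_2$. Your explicit factorization $[S_1\vert S_2]_{G''}=[S_1]_{G''}\vert[S_2]_{G''}$ just spells out a step the paper leaves implicit.
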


\begin{proof}
Since  $ \left[ S_{1} \right]_{G''}\bigcap\left[ S_{2} \right]_{G''}=\emptyset $ then $ \left[ S \right]_{G'}= \left[ S_{1} \right]_{G''}\backslash \{S_{1}\}\bigcup\left[ S_{2} \right]_{G''}\backslash \{S_{2}\}\bigcup\\
 \{S\}$ by $v_{l}(S)=v_{l}( S_{1}\vert S_{2})$,  $v_{r}(S)=v_{r}( S_{1}\vert S_{2})$ and Lemma 5.3 (iii).  Thus\\
$\left| {v_l(\left[ S \right]_{G'})} \right| =\left| {v_l(\left[S_{1} \right]_{G''})} \right|+ \left| {v_l(\left[S_{2} \right]_{G''})} \right|$,  $\left| {\left[ S \right]_{G'}} \right| =\left| {\left[S_{1} \right]_{G''}} \right|+ \left| \left[S_{2} \right]_{G''} \right|-1$.  Hence  \[\left| {v_l(\left[ S \right]_{G'})} \right|-\left| {\left[ S \right]_{G'}} \right| +1=\left| {v_l(\left[S_{1} \right]_{G''})} \right|-\left| {\left[S_{1} \right]_{G''}} \right| +1+\left| {v_l(\left[S_{2} \right]_{G''})} \right|- \left| \left[S_{2} \right]_{G''} \right|+1.\]
 Therefore
$ \mathcal{D}_{G'}(S)=\Gamma_{1},\Sigma_{1},\Gamma_{2},\Sigma_{2}\Rightarrow \Pi_{1},\Delta_{1}, \Pi_{2}, \Delta_{2}  $ by
\[\Pi_{1}=(\left| {v_l(\left[S_{1} \right]_{G''})} \right|-\left| {\left[S_{1} \right]_{G''}} \right| +1)t,\Pi_{1}\backslash (\left| {v_l(\left[S_{1} \right]_{G''})} \right|-\left| {\left[S_{1} \right]_{G''}} \right| +1)t\]
\[\Pi_{2}=(\left| {v_l(\left[S_{2} \right]_{G''})} \right|-\left| {\left[S_{2} \right]_{G''}} \right| +1)t,\Pi_{2}\backslash (\left| {v_l(\left[S_{2} \right]_{G''})} \right|-\left| {\left[S_{2} \right]_{G''}} \right| +1)t\]
\[ \mathcal{D}_{G'}(S)=\Gamma_{1},\Sigma_{1},\Gamma_{2},\Sigma_{2}\Rightarrow(\left| {v_l(\left[ S \right]_{G'})} \right|-\left| {\left[ S \right]_{G'}} \right| +1)t,\quad\qquad\qquad\qquad\qquad\]
\[ \Pi_{1}\backslash (\left| {v_l(\left[S_{1} \right]_{G''})} \right|-\left| {\left[S_{1} \right]_{G''}} \right| +1)t,\Delta_{1}, \Pi_{2} \backslash(\left| {v_l(\left[S_{2} \right]_{G''})} \right|-\left| {\left[S_{2} \right]_{G''}} \right| +1)t, \Delta_{2}\] where $\lambda t=\{ \underbrace {t, \cdots, t}_\lambda \}$.
\end{proof}

\begin{lemma}$\mathrm{([A.5.1])}$
If there exists a proof $ \tau  $ of  $ G  $  in  $ {\rm {\bf GL}}_\Omega  $  then there exists a proof of  $ \mathcal{D}( {G} ) $  in  $ {\rm {\bf GL}}$,  i.e.,  $(\mathcal{D})$ is admissible in ${\rm {\bf GL}}_\Omega$.
\end{lemma}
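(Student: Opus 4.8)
The plan is to argue by induction on the structure of the given proof $\tau$ of $G$ in $\mathbf{GL}_\Omega$, constructing alongside it a proof of $\mathcal{D}(G)$ in $\mathbf{GL}$. The organising remark is that, by Definition 5.4, $\mathcal{D}(G)$ is the $\vert$-join, over the partition of $G$ into its minimal closed blocks $[S_{q_1}]_G,\dots,[S_{q_v}]_G$, of the single sequents $\mathcal{D}_G(S_{q_k})$, and that $\mathcal{D}_G(S)$ depends only on the multiset of antecedents and succedents of the sequents lying in $[S]_G$ together with the two cardinalities $|v_l([S]_G)|$ and $|[S]_G|$. So for each rule instance at the root of $\tau$ it suffices to describe how the block partition of the conclusion arises from those of the premises, and then to convert that into a short $\mathbf{GL}$-derivation of $\mathcal{D}(\text{conclusion})$ from $\mathcal{D}(\text{premises})$; every block not touched by the rule is carried along verbatim, which uses that, by Definition 4.16(iii), distinct blocks of the premises have disjoint sets of identification numbers.

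For the base case, $\mathcal{D}(p_k\Rightarrow p_k)$ is $\Rightarrow t$, an instance of $(t_r)$, while every other initial sequent of $\mathbf{GL}_\Omega$ is $p$-free by Definition 4.16(viii), so $\mathcal{D}$ fixes it and it is already an initial sequent of $\mathbf{GL}$. For a one-premise rule $(I)$, including $(WL),(WR)$, Lemma 4.17(i) shows the premise and the conclusion carry the same $v_l=v_r$, so the block partition is unchanged, only the block of the focus sequent is modified, and only by an application of $(I)$ to a $p$-free principal formula; hence one applies the same $(I)$ in $\mathbf{GL}$ to the corresponding block component of $\mathcal{D}(\text{premise})$. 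For $(\wedge_{rw})$ and $(\vee_{lw})$ the two premises are closed and disjoint (Definition 4.16(vii)) and the labelling of Step 5 places the identification numbers of the two focus sequents on the two distinct components $H_1',H_2'$ of the conclusion, so $[H_1']$ and $[H_2']$ stay disjoint with the same shapes as the premise blocks of the two focus sequents; one then applies $(\wedge_{rw})$, respectively $(\vee_{lw})$, in $\mathbf{GL}$ to the two corresponding block components, with the other blocks of the two premises as side hypersequents. For $(EC_\Omega)$, the closed sub-hypersequents $G''$ and its copy $G'''$ decompose into blocks lying inside themselves, and $\mathcal{D}(G''')=\mathcal{D}(G'')$ since $\mathcal{D}_G$ does not see the identification numbers; thus $\mathcal{D}(\text{premise})=\mathcal{D}(G')\vert\mathcal{D}(G'')\vert\mathcal{D}(G'')$ and a sequence of $(EC)$'s in $\mathbf{GL}$ contracts the duplicated block sequents to give $\mathcal{D}(G')\vert\mathcal{D}(G'')=\mathcal{D}(\text{conclusion})$. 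Finally, for $(\odot_r)$ and $(\to_l)$ the block $[S']$ of premise $1$ containing the focus sequent and the block $[S'']$ of premise $2$ merge, in the conclusion, into the single block $[H']$ of the principal sequent; here Lemma 5.5 is exactly the tool, applied with $G$ the union of the side-sequents of the two blocks, $S_1=S'$, $S_2=S''$ (with $S_2$ written, for $(\to_l)$, as having the principal subformula $B$ in its antecedent), and $S$ the sequent obtained from $H'$ by splitting the principal formula back into $A,B$: it gives that $\mathcal{D}_{\text{conc}}(H')$ is the merge of $\mathcal{D}_{\text{prem}_1}(S')$ and $\mathcal{D}_{\text{prem}_2}(S'')$ with the occurrence of $A$ in the succedent of the former and the occurrence of $B$ in the succedent (for $\odot_r$) or antecedent (for $\to_l$) of the latter recombined into $A\odot B$, respectively $A\to B$, so one application of $(\odot_r)$, respectively $(\to_l)$, in $\mathbf{GL}$ to those two block components yields exactly $\mathcal{D}(\text{conclusion})$.

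The main obstacle is $(COM)$. Here the focus sequents are not fused into one sequent: $(COM)$ cuts $S'$ and $S''$ into two pieces each and re-assembles them into two new components $H_1',H_2'$, so that $[S']$, $[S'']$ together with $H_1',H_2'$ form, in the conclusion, a closed region $R$ which, depending on how the identification numbers of $S'$ and $S''$ get distributed between $H_1'$ and $H_2'$, may split into two blocks $[H_1'],[H_2']$ or collapse into a single block. The bookkeeping is controlled by $v_l(H_1')\sqcup v_l(H_2')=v_l(S')\sqcup v_l(S'')$ and $v_r(H_1')\sqcup v_r(H_2')=v_r(S')\sqcup v_r(S'')$ (from Step 5) together with Lemma 5.3(iii)--(v); a count of $t$'s shows that one application of $(COM)$ in $\mathbf{GL}$ to $\mathcal{D}_{\text{prem}_1}(S')$ and $\mathcal{D}_{\text{prem}_2}(S'')$ conserves the total number of copies of $t$, and that this total matches the total $t$-count of $\mathcal{D}$ over $R$ precisely when $R$ splits into two blocks. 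The splitting case is then handled by a single $(COM)$ in $\mathbf{GL}$ with the division of the non-$p$ material copied from the original $(COM)$-instance (Lemma 5.3 identifying the two output components with $\mathcal{D}([H_1'])$ and $\mathcal{D}([H_2'])$). The collapsing case is the real difficulty, and where I expect the bulk of the work to lie: the single conclusion block then carries one fewer $t$ than $(COM)$ produces, and in one component rather than two, so the $\mathbf{GL}$-derivation must additionally absorb a surplus copy of $t$ and collapse two components to one; I would do this with the auxiliary $\mathbf{GL}$-derivable sequents $\Rightarrow t$ and $t,t\Rightarrow t$ and applications of $(EC)$ and $(CUT)$, after first using Lemma 5.3 to pin down which block components of the conclusion actually coincide so that $(EC)$ is available.
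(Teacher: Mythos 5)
Your overall architecture — induction on the proof in $\mathbf{GL}_\Omega$, tracking how the partition of each hypersequent into minimal closed blocks $[S]_G$ evolves under each rule, carrying untouched blocks along verbatim, and isolating $(COM)$ as the one rule where blocks can either split or collapse — is exactly the paper's proof of Lemma 5.6, and your treatment of the base case, the one-premise rules, $(\wedge_{rw})/(\vee_{lw})$, $(EC_\Omega)$, the merging rules $(\odot_r)/(\to_l)$ (via Lemma 5.5, where the paper computes directly, an inessential difference), and the splitting subcase of $(COM)$ all match.

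The gap is in the collapsing subcase of $(COM)$, i.e.\ the paper's Case 1 where $S_3\in[S_4]_{G'''}$ so that $[S_3]_{G'''}=[S_4]_{G'''}$ is a \emph{single} block whose image under $\mathcal{D}$ is \emph{one} sequent containing all the material of $\mathcal{D}_{G'}(S_1)$ and $\mathcal{D}_{G''}(S_2)$. Your proposed mechanism — apply $(COM)$ and then repair with $\Rightarrow t$, $t,t\Rightarrow t$, $(EC)$ and $(CUT)$ — does not accomplish the required step: $(EC)$ only contracts two \emph{identical} components, and here the two components produced by $(COM)$ (or the two premise block-sequents themselves) are in general distinct sequents whose multiset \emph{union} must be formed; no rule of $\mathbf{GL}$ concatenates two distinct components of a hypersequent into one, and cutting against $t,t\Rightarrow t$ only merges two $t$'s inside a single sequent. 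The missing idea is the paper's: from
\[
\left|v_l([S_3]_{G'''})\right|-\left|[S_3]_{G'''}\right|+1=\left|v_l([S_1]_{G'})\right|+\left|v_l([S_2]_{G''})\right|-\left|[S_1]_{G'}\right|-\left|[S_2]_{G''}\right|+1\geqslant 0
\]
one of the two premise blocks has positive $t$-count, so (w.l.o.g.) $\mathcal{D}_{G'}(S_1)$ carries an explicit $t$ in its succedent; then apply $(t_l)$ to $\mathcal{D}_{G''}(S_2)$ to put a $t$ in its antecedent and perform a single $(CUT)$ on that $t$. The cut is what fuses the two sequents into the one conclusion sequent and simultaneously absorbs the surplus copy of $t$; no $(COM)$ and no $(EC)$ are used in this subcase. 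Without this device your induction step for $(COM)$ does not go through.
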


\begin{proof}
 We proceed by induction on the height of $ \tau  $.  For the base step, if  $ G $  is  $ p_k
\Rightarrow p_k  $  then  $ \mathcal{D}( G ) $  is  $  \Rightarrow t $
otherwise  $ \mathcal{D}( G ) $  is  $ G $  then  $  \vdash _{{\rm {\bf
GL}}} \mathcal{D}( G ) $  holds. For the induction step,
the following cases are considered.

 $ \bullet $   Let \[ \cfrac{G'\vert S'}{G'\vert S''}( \to _r
) \in \tau\] where \[ S' \equiv A,\Gamma,\{p_{i_{k} }\}_{k=1}^{ \lambda}
\Rightarrow \{p_{j_{k}}\}_{k=1}^{\mu},\Delta, B,  \]
\[S'' \equiv
\Gamma,\{p_{i_{k} }\}_{k=1}^{ \lambda}
\Rightarrow \{p_{j_{k}}\}_{k=1}^{\mu},\Delta, A \to B. \]  Then $ \left[ S'' \right]_{G'\vert S''}= \left[ S' \right]_{G'\vert S'}\backslash \{S'\}\vert S''$
by $v_l(S')=v_l(S'')$,  $v_r(S')=v_r(S'')$ and Lemma 5.3 (iii).
Let   $ \mathcal{D}_{G'\vert S'} ( {S'} ) = A,\Gamma,\Gamma '' \Rightarrow \Delta '',\Delta,B $  then
 $ \mathcal{D}_{G'\vert S''} ( {S''} )= \Gamma,\Gamma ''
\Rightarrow \Delta '',\Delta, A \to B $  thus a proof of  $ \mathcal{D}(
{G'\vert S''} ) $  is constructed by combining the proof of
 $ \mathcal{D}( {G'\vert S'} ) $  and  $ \cfrac{\mathcal{D}_{G'\vert S'}
( {S'} )}{\mathcal{D}_{G'\vert S''} ( {S''} )}( \to _r
)$.  Other rules of type  $ (I) $  are processed by a procedure similar to
above.

 $ \bullet $   Let  \[ \cfrac{G_1 \vert S_1\quad  G_2 \vert S_2   }{G_1 \vert G_2 \vert S_3
}(\odot  _r ) \in \tau\] where
\[  S_1 \equiv \Gamma _1,\{p_{i_{k}^{1} }\}_{k=1}^{ \lambda_{1}}
\Rightarrow \{p_{j_{k}^{1}}\}_{k=1}^{\mu_{1}},A,\Delta _1\]
\[ S_2 \equiv
 \Gamma _2,\{p_{i_{k}^{2} }\}_{k=1}^{ \lambda_{2}}
\Rightarrow \{p_{j_{k}^{2}}\}_{k=1}^{\mu_{2}},B,\Delta _2 \]
\[ S_3 \equiv
\Gamma _1,\Gamma _2,\{p_{i_{k}^{1} }\}_{k=1}^{ \lambda_{1}}
,\{p_{i_{k}^{2} }\}_{k=1}^{ \lambda_{2}}
\Rightarrow \{p_{j_{k}^{2}}\}_{k=1}^{\mu_{2}},\{p_{j_{k}^{1}}\}_{k=1}^{\mu_{1}},  A\odot  B,\Delta
_1,\Delta _2.\]

 Let
\[\mathcal{D}_{G_1 \vert S_1 } (
{S_1 } ) = \Gamma _1,\Gamma _{11} \Rightarrow \Delta _{11},( {\left| {v_l
(\left[ {S_1 } \right]_{G_1 \vert S_1 } )} \right| - \left| {\left[ {S_1 }
\right]_{G_1 \vert S_1 } } \right| + 1} )t,A,\Delta _1,\]
\[ \mathcal{D}_{G_2 \vert S_2 } ( {S_2 } ) =\Gamma _2,\Gamma
_{21} \Rightarrow \Delta _{21},( {\left| {v_l (\left[ {S_2 }
\right]_{G_2 \vert S_2 } )} \right| - \left| {\left[ {S_2 } \right]_{G_2
\vert S_2 } } \right| + 1} )t,B,\Delta _2. \] Then
 $ \mathcal{D}_{G_1 \vert G_2 \vert S_3 } ( {S_3  } ) $  is
\[ \Gamma _1,\Gamma _2,\Gamma _{11},\Gamma _{21} \Rightarrow \Delta _{11},\Delta
_{21},A\odot  B,\Delta _1,\Delta _2,\]
\[({\left| {v_l (\left[ {S_1 } \right]_{G_1 \vert S_1 } )}
\right| + \left| {v_l (\left[ {S_2 } \right]_{G_2 \vert S_2 } )} \right| -
\left| {\left[ {S_1 } \right]_{G_1 \vert S_1 } } \right| - \left| {\left[
{S_2 } \right]_{G_2 \vert S_2 } } \right| + 2} )t \]  by
 $ \left[ {S_3 } \right]_{G_1 \vert G_2 \vert S_3 } = (
{\left[ {S_1 } \right]_{G_1 \vert S_1 } \backslash \{S_1 \}} ) \bigcup
( {\left[ {S_2 } \right]_{G_2 \vert S_2 } \backslash \{S_2 \}} )
\bigcup \{S_3 \}$.  Then the proof of  $ \mathcal{D}( {G_1 \vert G_2 \vert
S_3 } ) $  is constructed by combining  $  \vdash _{{\rm {\bf GL}}}
\mathcal{D}( {G_1 \vert S_1 } ) $  and  \\
$  \vdash _{{\rm {\bf GL}}}
\mathcal{D}( {G_2 \vert S_2 } ) $  with  $ \cfrac{\mathcal{D}_{G_1 \vert S_1 } (S_1 )\quad \mathcal{D}_{G_2\vert S_2}(S_2) }{\mathcal{D}_{G_1 \vert G_2 \vert S_3 } ( {S_3  } )}(\odot  _r )$.  All
applications of  $ (\to _l) $  are processed by a procedure similar to that of
 $ \odot  _r  $  and omitted.

 $ \bullet $  Let \[ \cfrac{G' \quad G''  }{G'''}( \wedge _{rw} )\in\tau  \]
 where
 \[ G' \equiv G_1 \vert S_1, \quad G'' \equiv G_2 \vert S_2,\quad G''' \equiv G_1 \vert G_2 \vert S_1' \vert S_2',\]
 \[ S_w \equiv \Gamma _w,\{p_{i_{k}^{w} }\}_{k=1}^{ \lambda_{w}}
\Rightarrow \{p_{j_{k}^{w}}\}_{k=1}^{\mu_{w}},A_{w},\Delta _w,  \]
\[ S_w' \equiv \Gamma _w,\{p_{i_{k}^{w} }\}_{k=1}^{ \lambda_{w}}
\Rightarrow \{p_{j_{k}^{w}}\}_{k=1}^{\mu_{w}},A_{1} \wedge A_{2},\Delta _w   \]
for $w=1,2.$
Then
 $ \left[ {S_1' } \right]_{G'''} =  {\left[ {S_1 } \right]_{G'}
\backslash \{S_1\} }  \vert S_1' $,
 $ \left[ {S_2' } \right]_{G'''} = {\left[ {S_2 } \right]_{G'' } \backslash  \{S_2\} } \vert  S_2' $  by Lemma 5.3 (iii).
Let  \[ \mathcal{D}_{G_w \vert S_w } ( {S_w} ) =
\Gamma _w,\Gamma _{w1} \Rightarrow \Delta _{w1}, ( {\left| {v_l (\left[ {S_w } \right]_{G_w \vert
S_w } )} \right| - \left| {\left[ {S_w } \right]_{G_w \vert S_w } } \right|
+ 1} )t,A_{w},\Delta _1 \]
 for $w=1,2.$  Then
\[\mathcal{D}_{G'''}( {S_w'  }) = \Gamma _w,\Gamma _{w1} \Rightarrow \Delta _{w1}, ( {\left| {v_l (\left[ {S_w }
\right]_{G_w \vert S_w} )} \right| - \left| {\left[ {S_w } \right]_{G_w
\vert S_w } } \right| + 1} )t, A_{1} \wedge A_{2},\Delta _w  \]
for $w=1,2.$  Then the proof of
 $ \mathcal{D}( {G''' } ) $  is
constructed by combining  $  \vdash _{{\rm {\bf GL}}} \mathcal{D}( {G'} ) $  and
$  \vdash _{{\rm {\bf GL}}} \mathcal{D}( {G'' } ) $  with
$ \cfrac{\mathcal{D}_{G'} ( {S_1 }) \quad
\mathcal{D}_{G''} ( {S_2 } )  }{\mathcal{D}_{G'''} (S_1'\vert S_2') }( \wedge _{rw} )$.  All applications of  $ (\vee_{lw}) $  are processed by a procedure similar to that of  $ (\wedge _{rw})  $  and
omitted.

 $ \bullet $   Let  \[ \cfrac{G' \quad G''  }{G'''}(COM)\in\tau  \] where
 \[ G' \equiv G_1 \vert S_1, \quad G'' \equiv G_2 \vert S_2,\quad G''' \equiv G_1 \vert G_2 \vert S_3 \vert S_4\]
 \[ S_1 \equiv \Gamma _1,\Pi _1,\{p_{i_{k}^{1} }\}_{k=1}^{ \lambda_{1}}
\Rightarrow \{p_{j_{k}^{1}}\}_{k=1}^{\mu_{1}}, \Sigma _1,\Delta _1,\]
\[ S_2 \equiv \Gamma _2,\Pi _2,\{p_{i_{k}^{2} }\}_{k=1}^{ \lambda_{2}}
\Rightarrow \{p_{j_{k}^{2}}\}_{k=1}^{\mu_{2}},\Sigma _2,\Delta _2,\]
\[ S_3 \equiv \Gamma _1,\Gamma _2,\{p_{i_{1k}^{1} }\}_{k=1}^{ \lambda_{11}},\{p_{i_{1k}^{2} }\}_{k=1}^{ \lambda_{21}} \Rightarrow  \{p_{j_{1k}^{1}}\}_{k=1}^{\mu_{11}}, \{p_{j_{1k}^{2}}\}_{k=1}^{\mu_{21}},\Delta _1,\Delta _2, \]
\[
S_4 \equiv \Pi _1,\Pi _2,\{p_{i_{2k}^{1} }\}_{k=1}^{ \lambda_{12}},
\{p_{i_{2k}^{2} }\}_{k=1}^{ \lambda_{22}} \Rightarrow  \{p_{j_{2k}^{1}}\}_{k=1}^{\mu_{12}}, \{p_{j_{2k}^{2}}\}_{k=1}^{\mu_{22}},\Sigma _1,\Sigma _2\]
where
$\{p_{i_{k}^{w} }\}_{k=1}^{ \lambda_{w}}=\{p_{i_{1k}^{w} }\}_{k=1}^{ \lambda_{w1}}\bigcup\{p_{i_{2k}^{w} }\}_{k=1}^{ \lambda_{w2}}, \{p_{j_{k}^{w}}\}_{k=1}^{\mu_{w}}
 =\{p_{j_{1k}^{w}}\}_{k=1}^{\mu_{w1}}\bigcup\{p_{j_{2k}^{w}}\}_{k=1}^{\mu_{w2}}$
 for $w=1,2.$

\textbf{Case 1}  $S_3\in \left[ {S_4 } \right]_{G'''}$.  Then
$ \left[ {S_3 } \right]_{G'''} = \left[ {S_4 } \right]_{G'''}
$ by Lemma 5.3 (ii) and \\
$ \left[
{S_3 } \right]_{G'''} = \left[ {S_1 } \right]_{G'} \vert \left[ {S_2 }
\right]_{G''} \vert S_3 \vert S_4 \backslash \{S_1,S_2 \}$ by Lemma 5.3 (iii).  Then
 \[ \left| {v_l (\left[ {S_3 } \right]_{G'''} )} \right| - \left| {\left[
{S_3 } \right]_{G'''} } \right| + 1 =\left| {v_l (\left[ {S_1 }
\right]_{G'} )} \right| + \left| {v_l (\left[ {S_2 } \right]_{G''} )}
\right| - \left| {\left[ {S_1 } \right]_{G'} } \right| - \left| {\left[ {S_2
} \right]_{G''} } \right| + 1 \geqslant 0. \]
Thus  $ \left| {v_l (\left[ {S_1 }
\right]_{G'} )} \right| - \left| {\left[ {S_1 } \right]_{G'} } \right| + 1
\geqslant 1 $  or  $ \left| {v_l (\left[ {S_2 } \right]_{G''} )} \right| -
\left| {\left[ {S_2 } \right]_{G''} } \right| + 1 \geqslant 1 $.
  Hence we
assume that, without loss of generality,\[\mathcal{D}_{G'} ( {S_1 })= \Gamma _1,\Pi _1,\Gamma ' \Rightarrow \Delta ',t,\Sigma _1,\Delta _1, \]
\[ \mathcal{D}_{G''} ( {S_2 } )= \Gamma _2,\Pi _2,\Gamma '' \Rightarrow \Delta '',
\Sigma _2,\Delta _2. \]   Then
 \[ \mathcal{D}_{G'''} ( {S_3 \vert S_4  } ) =
\Gamma _1,\Pi _1,\Gamma ',\Gamma _2,\Pi _2,\Gamma '' \Rightarrow
\Delta ',\Sigma _1,\Delta _1,\Delta '',\Sigma _2,\Delta _2. \]
    Thus the proof
of  $ \cfrac{\underline{\mathcal{D}_{G'}
( {S_1 })\quad  \mathcal{D}_{G''} ( {S_2 } )}}{\mathcal{D}_{G'''} ( {S_3
\vert S_4  } )} $  is constructed by
\[ \cfrac{\Gamma _1,\Pi _1,\Gamma ' \Rightarrow \Delta ',t,\Sigma _1,\Delta
_1\quad \cfrac{\Gamma _2,\Pi _2,\Gamma '' \Rightarrow \Delta
'',\Sigma _2,\Delta _2 }{\Gamma _2,\Pi _2,\Gamma '',t \Rightarrow
\Delta '',\Sigma _2,\Delta _2 }(t_l )}{\Gamma _1,\Pi _1,\Gamma ',\Gamma
_2,\Pi _2,\Gamma '' \Rightarrow \Delta ',\Sigma _1,\Delta _1,\Delta
'',\Sigma _2,\Delta _2 }(CUT).
\]

\textbf{Case 2}  $S_3\notin \left[ {S_4 } \right]_{G'''}$. Then
$ \left[ {S_3 } \right]_{G'''} \bigcap \left[ {S_4 }
\right]_{G'''} = \emptyset$  by Lemma 5.3 (ii).   Let
\[ S_{3w} \equiv \Gamma _w,
\{p_{i_{1k}^{w} }\}_{k=1}^{ \lambda_{w1}}\Rightarrow  \{p_{j_{1k}^{w}}\}_{k=1}^{\mu_{w1}}, \Delta _w, \]
\[ S_{4w} \equiv  \Pi _w,\{p_{i_{2k}^{w} }\}_{k=1}^{ \lambda_{w2}} \Rightarrow  \{p_{j_{2k}^{w}}\}_{k=1}^{\mu_{w2}},\Sigma _w, \]
 for $w=1,2.$  Then
\[ \left[ {S_3 } \right]_{G'''} = \left[ {S_{31} }
\right]_{G_{1}\vert S_{31} \vert S_{41}} \backslash \{S_{31} \}  \bigcup \left[
{S_{32} } \right]_{G_{2}\vert S_{32} \vert S_{42}} \backslash \{S_{32} \}  \bigcup
\{S_3\},\]
\[ \left[ {S_4 } \right]_{G'''} =
\left[ {S_{41} } \right]_{G_{1}\vert S_{31} \vert S_{41}} \backslash \{S_{41} \}
\bigcup \left[ {S_{42} } \right]_{G_{2}\vert S_{32} \vert S_{42}} \backslash \{S_{42} \} \bigcup \{S_4\} \]  by $v_l (S_{3} )=v_l (S_{31}\vert S_{32} )$,
$v_l (S_{1} )=v_l (S_{31}\vert S_{41} )$,  $v_l (S_{2} )=v_l (S_{32}\vert S_{42} )$  and\\
$v_l (S_{4} )=v_l (S_{41}\vert S_{42} )$. Let
\[ \mathcal{D}_{G_{w}\vert S_{3w} \vert S_{4w} } (
{S_{3w} } ) = \Gamma _w,{\rm X}_{3w} \Rightarrow \Psi _{3w}
,\Delta _w,\]
\[ \mathcal{D}_{G_{w}\vert S_{3w} \vert S_{4w} } ( {S_{4w}
} ) = \Pi _w,{\rm X}_{4w} \Rightarrow \Psi _{4w},\Sigma _w\]
 for  $w=1,2.$
 Then
\[ \mathcal{D}_{G'} ( {S_1 } ) = \Gamma _1,\Pi _1
,{\rm X}_{31},{\rm X}_{41} \Rightarrow \Psi _{31},\Psi _{41},\Sigma _1
,\Delta _1,\]
\[ \mathcal{D}_{G''} ( {S_2 } )= \Gamma _2,\Pi _2,{\rm
X}_{32},{\rm X}_{42} \Rightarrow \Psi _{32},\Psi _{42},\Sigma _2,\Delta
_2,\]
\[ \mathcal{D}_{G'''} ( {S_3 } ) = \Gamma _1,{\rm X}_{31}
,\Gamma _2,{\rm X}_{32} \Rightarrow \Psi _{31},\Delta _1,\Psi _{32}
,\Delta _2,\]
\[ \mathcal{D}_{G'''} ( {S_4 } ) = \Pi _1,{\rm X}_{41},\Pi _2
,{\rm X}_{42} \Rightarrow \Psi _{41},\Sigma _1,\Psi _{42},\Sigma _2
\] by Lemma 5.5, $ \left[ {S_3 } \right]_{G'''} \bigcap \left[ {S_4 }
\right]_{G'''} = \emptyset,   \left[ {S_{31} }
\right]_{G_{1}\vert S_{31} \vert S_{41}} \bigcap \left[ {S_{41} } \right]_{G_{1}\vert S_{31} \vert S_{41}}=\emptyset,\\
 \left[
{S_{32} } \right]_{G_{2}\vert S_{32} \vert S_{42}} \bigcap \left[ {S_{42} } \right]_{G_{2}\vert S_{32} \vert S_{42}}=\emptyset$.
Then
the proof of  $ \mathcal{D}_{G'''} ( {S_3 \vert S_4  }
) $  is constructed by combing the proofs of
$ \mathcal{D}_{G'} (
{S_1 } ) $  and  $ \mathcal{D}_{G''} ( {S_2 } ) $  with
$ \cfrac{   \mathcal{D}_{G'} ( {S_1 }) \quad \mathcal{D}_{G''} ( {S_2 } )}{\mathcal{D}_{G'''} ( {S_3
\vert S_4  } )}(COM).$

 $ \bullet $   $ \cfrac{G'\vert G'' \vert G''' }{G'\vert
G'' }(EC_\Omega ) \in \tau$.  Then $G', G'' $  and  $ G'''  $  are closed and  $ G''' $  is a copy of  $ G''  $ thus $ \mathcal{D}_{G'\vert G'' \vert G'''}({ G'' })= \mathcal{D}_{G'\vert G'' \vert G'''}({ G''' })$ hence a proof of  $ \mathcal{D}({G'\vert G''} ) $  is constructed by combining the proof of
 $\mathcal{D}( G'\vert G'' \vert G''') $  and  $ \cfrac{\mathcal{D}(G'\vert G'' \vert G''') }{\mathcal{D}({G'\vert G'' } )}(EC^* )$.
\end{proof}

The following two lemmas are corollaries  of Lemma 5.6.

\begin{lemma}
If there exists a derivation of  $ G_0  $  from  $ G_1,
\cdots,G_r  $  in  $ {\rm {\bf GL}}_\Omega  $  then there exists a derivation of
 $ \mathcal{D}( {G_0 } ) $  from  $ \mathcal{D}( {G_1 } ),
\cdots,\mathcal{D}( {G_r } ) $  in  $ {\rm {\bf GL}}$.
\end{lemma}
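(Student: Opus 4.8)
The plan is to mimic the proof of Lemma 5.6 almost verbatim, the point being that the construction given there is entirely local: each application of a ${\rm {\bf GL}}_\Omega$-rule is turned into a fixed block of ${\rm {\bf GL}}$-inferences relating the $\mathcal{D}$-images of its premises to the $\mathcal{D}$-image of its conclusion, and the argument never uses that the subderivations above a rule instance are full proofs. So I would argue by induction on the height of the given derivation $\tau$ of $G_0$ from $G_1,\dots,G_r$ in ${\rm {\bf GL}}_\Omega$, constructing a derivation of $\mathcal{D}(G_0)$ from $\mathcal{D}(G_1),\dots,\mathcal{D}(G_r)$ in ${\rm {\bf GL}}$.

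For the base step, a leaf of $\tau$ is either an initial sequent of ${\rm {\bf GL}}_\Omega$ or one of the hypotheses $G_i$. In the first case $\mathcal{D}$ carries it to an initial sequent of ${\rm {\bf GL}}$: $\mathcal{D}(p_k\Rightarrow p_k)$ is ${}\Rightarrow t$, an instance of $(t_r)$, while $\mathcal{D}$ leaves $A\Rightarrow A$, $\Gamma\Rightarrow\top,\Delta$, $\Gamma,\bot\Rightarrow\Delta$, ${}\Rightarrow t$ and $f\Rightarrow{}$ unchanged, since by Definition 4.16(viii) $p$ does not occur in them; hence the $\mathcal{D}$-image of such a leaf is derivable in ${\rm {\bf GL}}$ with no hypotheses. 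In the second case $\mathcal{D}(G_i)$ is itself one of the allowed hypotheses, so the single node $\mathcal{D}(G_i)$ is already a derivation of $\mathcal{D}(G_i)$ from $\mathcal{D}(G_1),\dots,\mathcal{D}(G_r)$. For the induction step I would run through exactly the same case analysis as in the proof of Lemma 5.6 — the rules of type $(I)$ (treated via $(\to_r)$), the rules of type $(II)$ (treated via $(\odot_r)$), $(\wedge_{rw})$, $(\vee_{lw})$, $(COM)$ in both of its cases, and $(EC_\Omega)$ — and graft the corresponding local ${\rm {\bf GL}}$-derivation on top of the derivations of the $\mathcal{D}$-images of the immediate premises supplied by the induction hypothesis.

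The only new ingredient compared with Lemma 5.6 is the leaf case for the hypotheses $G_i$, and that case is immediate, so I expect no real obstacle: the work is purely bookkeeping. The one point worth checking is that $\mathcal{D}$ is defined at every node of $\tau$, i.e. that every hypersequent occurring in $\tau$ (in particular each $G_i$) is closed; this holds because, by Definition 4.16 (clauses (v) and (vii) in particular), the rules of ${\rm {\bf GL}}_\Omega$ are only applicable to, and only produce, closed hypersequents, and the combinatorial facts about $[\cdot]_G$ and $\mathcal{D}$ used inside the case analysis (Lemma 5.3 and Lemma 5.5) concern single rule instances and so remain valid here without change.
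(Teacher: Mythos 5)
Your proposal is correct and matches the paper's intent: the paper states this lemma as a corollary of Lemma 5.6, and your argument is exactly the implicit one — rerun the height induction of Lemma 5.6, whose rule-by-rule translations are local, and add the trivial new leaf case where a hypothesis $G_i$ is mapped to the hypothesis $\mathcal{D}(G_i)$. Your side remark that every hypersequent in the derivation is closed (so $\mathcal{D}$ is everywhere defined, via Definition 4.16 and Lemma 4.17) is the right point to check and is handled correctly.
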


\begin{lemma}
Let  $ \tau  $  be a cut-free proof of  $ G_0  $  in  $ {\rm {\bf
GL}} $  and  $ \tau ^ *  $  be the proof of  $ G\vert G^ *  $  in  $ {\rm {\bf GL}}_\Omega  $
resulting from preprocessing of  $ \tau$. Then  $  \vdash _{\rm {\bf
GL}} \mathcal{D}( {G\vert G^ * } )$.
\end{lemma}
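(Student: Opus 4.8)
The plan is to read Lemma~5.8 off as an immediate instance of Lemma~5.6 applied to the particular $\mathbf{GL}_\Omega$-proof $\tau^*$ furnished by the preprocessing of Section~4. By Lemma~4.17(v), $\tau^*$ is genuinely a proof of $G\vert G^*$ in $\mathbf{GL}_\Omega$ (in fact one free of $(EC_\Omega)$), so the only point that needs attention before invoking Lemma~5.6 is that $\mathcal{D}(G\vert G^*)$ is actually defined, i.e.\ that $G\vert G^*$ is a \emph{closed} hypersequent of $\mathbf{GL}_\Omega$ in the sense of Definition~4.16(iv).

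First I would check that every node $H$ of $\tau^*$ is closed, by an induction along $\tau^*$ from the leaves downwards. For the base step, an initial sequent of $\mathbf{GL}_\Omega$ either contains no occurrence of $p$, so that $v_l(H)=v_r(H)=\emptyset$, or has the form $p_k\Rightarrow p_k$, so that $v_l(H)=v_r(H)=\{k\}$; either way $H$ is closed. For the induction step, if $H$ is the conclusion of a one-premise rule then $v_l(H)=v_r(H)$ is exactly Lemma~4.17(i), and if $H$ is the conclusion of a two-premise rule with premises $H',H''$ then Lemma~4.17(ii) gives $v_l(H)=v_l(H')\cup v_l(H'')=v_r(H')\cup v_r(H'')=v_r(H)$. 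In particular the root $G\vert G^*$ is closed, hence by Construction~5.2 and Lemma~5.3 it carries the disjoint covering $\bigcup_k[S_{q_k}]_{G\vert G^*}=G\vert G^*$ required in Definition~5.4, so $\mathcal{D}(G\vert G^*)$ is well-defined.

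With that settled, I would apply Lemma~5.6 to $\tau^*$: since $(\mathcal{D})$ is admissible in $\mathbf{GL}_\Omega$, from the $\mathbf{GL}_\Omega$-proof of $G\vert G^*$ one obtains a $\mathbf{GL}$-proof of $\mathcal{D}(G\vert G^*)$, which is precisely $\vdash_{\mathbf{GL}}\mathcal{D}(G\vert G^*)$. (Equivalently one may quote Lemma~5.7 with $r=0$.) I do not expect any genuine obstacle at this point: all the substance has already been absorbed into Lemma~5.6 — whose delicate case is $(COM)$ with $S_3\notin[S_4]_{G'''}$, resolved via Lemma~5.5 — and into the verification (Lemma~4.17(v)) that preprocessing turns the cut-free $\mathbf{GL}$-proof $\tau$ into a legitimate $(EC_\Omega)$-free $\mathbf{GL}_\Omega$-proof. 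The one detail worth making explicit, as above, is the closedness of $G\vert G^*$, since that is what gives $\mathcal{D}(G\vert G^*)$ its meaning.
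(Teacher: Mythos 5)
Your proposal is correct and follows essentially the same route as the paper, which simply records Lemma 5.8 as a corollary of Lemma 5.6 (via Lemma 4.17(v)); your extra verification that every node of $\tau^*$, and hence $G\vert G^*$, is closed is implicit in the paper through Lemma 4.17(i)--(iii) and Definition 4.16, and making it explicit is a reasonable bit of care rather than a deviation.
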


\section{Extraction of Elimination Rules}
In this section, we will investigate Construction 4.7
further to extract more derivations from $\tau ^ * $.

Any two sequents in a hypersequent seem independent of one another in the sense that they can only be contracted into one by $(EC)$  when it is applicable.  Note that one-premise logical rules just modify one sequent of a hypersequent and two-premise rules associate a sequent in a hypersequent with one in a different hypersequent.

$\tau^{\ast}$ (or any proof without $(EC_{\Omega})$ in $\mathbf{GL}_{\Omega}$)  has an essential property, which we  call the distinguishability of  $\tau^{\ast}$,  i.e.,  any variables,  formulas, sequents or hypersequents  which occur at the node $H$ of  $\tau^{\ast}$ occur inevitably  at  $H'<H$ in some forms.

Let  $ H\equiv  G'\vert S'\vert S'' \in \tau ^ *$.   If  $  S'$ is equal to   $S''$   as two sequents
then  the case  that $\tau_{H:S'}^{\ast}$  is equal to $\tau_{H:S''}^{\ast}$ as two derivations could possibly  happen.  This means that
both  $  S'$  and   $S''$  are the focus sequent of one node in $\tau^*$ when  $G_{H:S'}^{\ast}\neq S'$ and  $G_{H:S''}^{\ast}\neq S''$,  which contradicts that
each node has the unique focus sequent in any derivation.  Thus we need differentiate  $ S' $  from  $ S'' $  for all  $ G'\vert S'\vert S'' \in \tau ^ *$.

Define  $ \overline {S'} \in \tau ^ *  $  such that  $ G'\vert
S'\vert S'' \leqslant \overline {S'}  $,    $ S' \in \overline {S'}  $  and  $ S' $  is the principal sequent of  $ \overline {S'}  $.   If  $ \overline {S'}  $  has the unique principal sequent,  $ N_{S'} \coloneq 0 $,   otherwise  $ N_{S'} \coloneq 1 $  (or  $ N_{S'} = 2) $  to indicate that  $ S' $  is one designated  principal sequent (or accordingly  $ N_{S'} =2 $  for another) of such  an application as  $ (COM), ( \wedge _{rw} ) $  or  $ ( \vee _{lw} ) $.  Then we may regard  $ S' $  as
 $ (S'; \mathcal{P}(\overline {S'}),N_{S'} ) $.  Thus  $ S' $  is always different from  $ S'' $  by $ \mathcal{P}(\overline {S'})\neq \mathcal{P}(\overline {S''})  $  or,  $\mathcal{P}(\overline {S'}) =\mathcal{P}(\overline {S''})  $  and  $ N_{S'} \ne N_{S''}$.
 We formulate it by the following construction.

 \begin{construction}$\mathrm{([A.5.2])}$
 A labeled tree  $\tau ^{**}$, which has the same tree structure as $\tau^*$,   is constructed as follows.

 (i) If  $S$ is a leaf  $\tau^*$, define $ \overline {S} =S$,  $N_{S}=0$  and    the node $\mathcal{P}(S)$ of $\tau^{**}$ is labeled by  \\
 $(S; \mathcal{P}(\overline {S}),N_{S})$;

 (ii) If $\cfrac{G'\vert S'}{H\equiv G'\vert S''}(I)\in \tau ^*$ and  $\mathcal{P}(G'\vert S')$ be labeled by
 $\mathcal{G'}\vert (S'; \mathcal{P}(\overline {S'}),N_{S'})$ in $\tau^{**}$. Then define $\overline {S''}=H$, $N_{S''}=0$ and  the node $\mathcal{P}(H)$  of $\tau^{**}$ is labeled by $\mathcal{G'}\vert (S''; \mathcal{P}(\overline {S''}),N_{S''})$;

  (iii)  If  $ \cfrac{G'\vert S' \quad G''\vert S''}{H\equiv G'\vert G''\vert H'}(II) \in \tau ^ *  $,  $\mathcal{P}(G'\vert S')$ and   $\mathcal{P}(G''\vert S'')$ be  labeled by
  $\mathcal{G'}\vert (S'; \mathcal{P}(\overline {S'}),N_{S'})$ and   $\mathcal{G''}\vert (S''; \mathcal{P}(\overline {S''}),N_{S''})$ in $\tau^{**}$, respectively. If  $H'=S_{1}\vert  S_{2}$  then define $\overline {S_{1}}=\overline {S_{2}}=H$,   $N_{S_{1}}=1$,  $N_{S_{2}}=2$ and  the node $\mathcal{P}(H)$ of $\tau^{**}$ is  labeled by $\mathcal{G'}\vert \mathcal{G''}\vert  (S_{1}; \mathcal{P}(\overline {S_{1}}),N_{S_{1}})\vert (S_{2}; \mathcal{P}(\overline {S_{2}}),N_{S_{2}})$.    If  $H'=S_{1}$  then define $\overline {S_{1}}=H$,   $N_{S_{1}}=0$ and $\mathcal{P}(H)$  is  labeled by $\mathcal{G'}\vert\mathcal{ G''}\vert  (S_{1}; \mathcal{P}(\overline {S_{1}}),N_{S_{1}})$.
\end{construction}

In the whole paper,  we treat  $\tau^{*}$ as  $\tau^{**}$ without mention of  $\tau^{**}$.  Note that
in preprocessing of  $ \tau$, some logical applications could also be converted to $(ID_\Omega )$ in Step 3 and
we need fix the focus sequent  at  each node  $H$ and subsequently assign valid identification numbers to each $H'<H$ by eigenvariable-labeling operation.

\begin{proposition}
(i)   $ G'\vert S'\vert S'' \in \tau ^ *$ implies $ \{S'\} \bigcap \{S''\} = \emptyset$;
(ii) $H\in \tau ^ *  $ and $ H'\vert H'' \subseteq H  $  imply  $ H' \bigcap H'' = \emptyset$;
 (iii) Let  $ H\in \tau^ *  $ and  $ S_i^c  \in  H$  then $H\leqslant  H_i^c$ or $H_i^c\leqslant  H $.
\end{proposition}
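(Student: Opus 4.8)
The plan is to work throughout with $\tau^*$ in the labelled form $\tau^{**}$ produced by Construction 6.2, so that a component of a node of $\tau^*$ is really a triple $(S,\mathcal{P}(\overline{S}),N_S)$, and to exploit two structural features of $\tau^*$: it has no application of $(EW)$ (removed in Step 3 of the preprocessing) and none of $(EC_\Omega)$ (Lemma 4.17 (v)). I shall also use that the position map $\mathcal{P}$ of Definition 2.13 separates distinct nodes of $\tau^*$: reading $\mathcal{P}(K)$ in binary recovers the length of the thread $Th(K)$ together with the left/right pattern of its steps, hence $K$ itself.

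For (i), let $H\equiv G'\vert S'\vert S''$ be a node of $\tau^*$; by Construction 6.2 the labels of $S'$ and $S''$, and in particular the nodes $\overline{S'}$ and $\overline{S''}$, are defined. If $\overline{S'}\neq\overline{S''}$ then $\mathcal{P}(\overline{S'})\neq\mathcal{P}(\overline{S''})$, so $S'$ and $S''$ differ already in their second coordinate. If $\overline{S'}=\overline{S''}$ then $S'$ and $S''$ are both principal sequents of a single inference; the only rules of $\mathbf{GL}_\Omega$ with two principal sequents are $(COM)$, $(\wedge_{rw})$ and $(\vee_{lw})$, and for each of these Construction 6.2 (iii) labels the two new components with $N=1$ and $N=2$, so $N_{S'}\neq N_{S''}$. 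In both cases $S'$ and $S''$ are distinct labelled sequents, i.e. $\{S'\}\cap\{S''\}=\emptyset$.

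Part (ii) follows at once: if some labelled sequent $S$ occurred in both $H'$ and $H''$ with $H'\vert H''\subseteq H$, then $H$ would contain two copies of $S$, i.e. $H\equiv H_0\vert S\vert S$ for some $H_0$, contradicting (i); hence $H'\cap H''=\emptyset$. For (iii) the key point is that in $\tau^*$ the set of nodes at which a fixed labelled sequent occurs is linearly ordered by $\leqslant_\tau$. Indeed, since there is no $(EW)$ and no $(EC_\Omega)$, every component of a node is either principal there --- created by that inference, noting that the initial sequents have a single component --- or is inherited, unchanged together with its label, from exactly one premise (the two premises of a two-premise rule carry disjoint labelled contexts, because distinct subtrees give distinct creation nodes and hence distinct positions). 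Tracing upwards therefore leads to a unique creation node $C$, and by Construction 6.2 the same labelled sequent cannot arise at two different creation nodes; consequently it occurs exactly along an initial segment of the thread $Th(C)$ --- from $C$ downwards until it is consumed as a focus sequent --- which is a chain. Now let $H\in\tau^*$ with $S_i^c\in H$, say $S_{iu}^c\in H$ for some $1\leqslant u\leqslant m_i$. Since $H_i^c=G_i'\vert\{S_{iv}^c\}_{v=1}^{m_i}$ we also have $S_{iu}^c\in H_i^c$, so $H$ and $H_i^c$ both lie in the chain of nodes at which $S_{iu}^c$ occurs, whence $H\leqslant H_i^c$ or $H_i^c\leqslant H$.

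I expect the only delicate step to be the chain statement underlying (iii). One has to check that the label $(S,\mathcal{P}(\overline{S}),N_S)$ of a passenger component is genuinely invariant as it is carried down a thread (this is precisely how Construction 6.2 propagates the contexts $\mathcal{G'}$, $\mathcal{G''}$), that it can disappear only by being used as a focus sequent, and that the copies $S_{iv}^c$ with $v\geqslant 2$ re-inserted by the sequent-inserting operation behave as ordinary passengers of $\tau^*$; the fact $S_{iu}^c\in H_i^c$ is then either the defining property of the focus sequent of $H_i^c$ (for $v=1$) or recorded already by Proposition 4.15 (i) (for $v\geqslant 2$). Once this bookkeeping is settled, (iii) is purely a matter of the tree order, while (i) and (ii) are the pigeonhole principle applied to the triples $(S,\mathcal{P}(\overline{S}),N_S)$.
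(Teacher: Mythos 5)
Your proposal is correct and takes essentially the same route as the paper: parts (i) and (ii) rest on the triples $(S,\mathcal{P}(\overline{S}),N_S)$ of Construction 6.1 exactly as the paper intends, and for (iii) your chain argument amounts to the paper's observation that $S_{iu}^c\in H$ and $S_{iu}^c\in H_i^c$ force $H\leqslant\overline{S_{iu}^c}$ and $H_i^c\leqslant\overline{S_{iu}^c}$, whence comparability by Proposition 2.12(iii). The only slip is a reference to ``Construction 6.2'' where the labelled tree $\tau^{**}$ is in fact Construction 6.1.
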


\begin{proof}
(iii)  Let  $ S_i^c  \in  H$  then   $ S_i^c =S_{iu}^c $ for some $1\leqslant  u \leqslant  m_{i}$ by Notation 4.14.
Thus  $ S_i^c  \in H_i^c$ also by Notation 4.14.  Hence $H\leqslant  \overline {S_i^c }$ and $H_i^c\leqslant  \overline {S_i^c }$ by Construction 6.1.  Therefore $H\leqslant  H_i^c$ or $H_i^c \leqslant  H $.
\end{proof}

\begin{lemma}
Let  $ H \in \tau ^ *  $  and  $ Th(H) = ( {H_0, \cdots,H_n }
) $,   where  $ H_0 =H $,   $ H_n = G\vert G^*$,   $ G_k \subseteq H $  for  $ 1 \leqslant k \leqslant 3 $.

  (i)  If  $ G_3 = G_1
\bigcap G_2$ then $ \left\langle {H_i } \right\rangle _{H:G_3 } = \left\langle
{H_i } \right\rangle _{H:G_1 }  \bigcap \left\langle {
H_i } \right\rangle _{H:G_2 }  $  for all  $ 0 \leqslant i \leqslant n$;

  (ii)  If  $ G_3 = G_1\vert G_2$ then $ \left\langle {H_i } \right\rangle _{H:G_3 } = \left\langle
{H_i } \right\rangle _{H:G_1 }  \vert  \left\langle {
H_i } \right\rangle _{H:G_2 }  $  for all  $ 0 \leqslant i \leqslant n$.
\end{lemma}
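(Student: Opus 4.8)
The plan is to prove (i) and (ii) together by induction on the index $i$ of the node $H_i$ along the thread, following the recursive definition of $\langle H_i\rangle_{H:H'}$ in Construction 4.7. For $i=0$ we have $\langle H_0\rangle_{H:G_j}=G_j$ for $j=1,2,3$, so in case (i) the claim is exactly the hypothesis $G_3=G_1\cap G_2$ and in case (ii) it is $G_3=G_1\vert G_2$; the base case is done. For the inductive step I look at the inference of $\tau^*$ joining $H_k$ to $H_{k+1}$. By the definition of a thread this is either a one-premise rule $\frac{G'\vert S'}{G'\vert S''}(I)$ with $H_k=G'\vert S'$ and $H_{k+1}=G'\vert S''$, or a two-premise rule $\frac{G'\vert S'\ \ G''\vert S''}{G'\vert G''\vert H''}(II)$ with (after relabelling if necessary) $H_k=G'\vert S'$ its left premise and $H_{k+1}=G'\vert G''\vert H''$, or a weakening $\frac{G'}{G'\vert S'}(EW)$ (or a trivial $\frac{G'}{G'}(ID_\Omega)$, treated exactly as $(EW)$).

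In the $(EW)$/$(ID_\Omega)$ case Construction 4.7(iii) gives $\langle H_{k+1}\rangle_{H:H'}=\langle H_k\rangle_{H:H'}$ for every seed $H'$, so the inductive identity is passed on unchanged. In the other two cases Construction 4.7(ii) says: writing $W:=S''$ for $(I)$ and $W:=G''\vert H''$ for $(II)$, one has $\langle H_{k+1}\rangle_{H:H'}=(\langle H_k\rangle_{H:H'}\setminus\{S'\})\vert W$ if $S'\in\langle H_k\rangle_{H:H'}$, and $\langle H_{k+1}\rangle_{H:H'}=\langle H_k\rangle_{H:H'}$ otherwise. So the step reduces to a finite multiset computation whose inputs are: whether $S'$ lies in $\langle H_k\rangle_{H:G_1}$ and whether it lies in $\langle H_k\rangle_{H:G_2}$, together with the inductive identity for $\langle H_k\rangle_{H:G_3}$.

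Two structural facts drive this computation. First, as in Lemma 4.8(i) each $\langle H_k\rangle_{H:G_j}$ is a sub-hypersequent of the node $H_k$ of $\tau^*$, and by the preceding Proposition (distinguishability of $\tau^*$) every node of $\tau^*$ has pairwise distinct components; hence on these sub-hypersequents $\cap$ and $\vert$ behave as set operations, all multiplicities being $0$ or $1$. Second, the block $W$ created at $H_{k+1}$ — the principal sequent $S''$ for $(I)$, or the context $G''$ of the off-thread premise together with the principal sequent(s) $H''$ for $(II)$ — is disjoint from $H_k=G'\vert S'$: for $(I)$ the sequent $S''$ does not occur in the premise; for $(II)$ the node $G'\vert G''\vert H''$ has distinct components so $G',G'',H''$ are pairwise disjoint, while the focus sequent $S'$ is consumed and hence occurs neither in $G''$ nor in $H''$. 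Consequently $W$ is disjoint from every $\langle H_k\rangle_{H:H'}\subseteq H_k$. Now in case (ii) the inductive identity $\langle H_k\rangle_{H:G_3}=\langle H_k\rangle_{H:G_1}\vert\langle H_k\rangle_{H:G_2}$ together with $\langle H_k\rangle_{H:G_3}\subseteq H_k$ forces $\langle H_k\rangle_{H:G_1}$ and $\langle H_k\rangle_{H:G_2}$ to be disjoint, so $S'$ lies in at most one of them; appending $W$ after deleting $S'$ from the union is then exactly appending $W$ to that same summand (or, if $S'$ lies in neither, doing nothing), which is the definition of $\langle H_{k+1}\rangle_{H:G_1}\vert\langle H_{k+1}\rangle_{H:G_2}$. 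In case (i): if $S'$ lies in both $\langle H_k\rangle_{H:G_1}$ and $\langle H_k\rangle_{H:G_2}$ then it also lies in $\langle H_k\rangle_{H:G_3}$, all three seeds delete $S'$ and append the fresh $W$, and $(A\cap B)\setminus\{S'\}=(A\setminus\{S'\})\cap(B\setminus\{S'\})$ together with freshness of $W$ give the identity at $H_{k+1}$; if $S'$ lies in neither, nothing changes; and if $S'$ lies in exactly one, say only in $\langle H_k\rangle_{H:G_1}$, then $S'\notin\langle H_k\rangle_{H:G_3}$ so $\langle H_{k+1}\rangle_{H:G_3}=\langle H_k\rangle_{H:G_3}$, while $\langle H_{k+1}\rangle_{H:G_1}=(\langle H_k\rangle_{H:G_1}\setminus\{S'\})\vert W$ and $\langle H_{k+1}\rangle_{H:G_2}=\langle H_k\rangle_{H:G_2}$; intersecting these, $W$ disappears by freshness, and $S'\notin\langle H_k\rangle_{H:G_2}$ makes the deletion of $S'$ irrelevant, so the intersection is again $\langle H_k\rangle_{H:G_1}\cap\langle H_k\rangle_{H:G_2}=\langle H_k\rangle_{H:G_3}$.

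I expect the only genuinely delicate point to be this asymmetric subcase of (i), where $S'$ survives in one of the two seeds but not the other, so the two sides are updated by different clauses of Construction 4.7; it works only because the newly introduced block $W$ is fresh — a consequence of the distinguishability of $\tau^*$ in the preceding Proposition — and because $S'$, being absent from $\langle H_k\rangle_{H:G_2}$, may be deleted from $\langle H_k\rangle_{H:G_1}$ without disturbing the intersection. The remaining verifications are routine multiset algebra, and the passage from the one-premise to the two-premise rule only replaces the fresh singleton $S''$ by the fresh block $G''\vert H''$.
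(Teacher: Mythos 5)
Your proof is correct and follows essentially the same route as the paper's: induction on the position along the thread, with a case split on whether the focus sequent $S'$ belongs to $\left\langle H_k\right\rangle_{H:G_1}$ and/or $\left\langle H_k\right\rangle_{H:G_2}$, the asymmetric subcase being resolved by the freshness of the newly introduced sequents. You are in fact somewhat more thorough than the paper, which treats only the one-premise case and simply asserts $S''\notin\left\langle H_{i+1}\right\rangle_{H:G_1}$ where you derive the disjointness of the new block from the distinguishability of $\tau^*$ (Proposition 6.2).
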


\begin{proof}
The proof is by induction on  $ i $  for  $ 0 \leqslant i < n$.
Only (i) is proved as follows and (ii) by a similar procedure and omitted.

For the base step,  $ \left\langle {H_0 } \right\rangle _{H:G_3 } =
\left\langle {H_0 } \right\rangle _{H:G_1 }  \bigcap \left\langle
{ H_0 } \right\rangle _{H:G_2 }  $ holds by $ \left\langle {H_0 }
\right\rangle _{H:G_1 } = G_1   $,    $ \left\langle { H_0
} \right\rangle _{H:G_2 } = G_2  $,   $ \left\langle {H_0 } \right\rangle _{H:G_3
} = G_3  $  and $ G_3 = G_1 \bigcap G_2$.

For the induction step,  suppose that
 $ \left\langle {H_i } \right\rangle _{H:G_3 } = \left\langle {H_i }
\right\rangle _{H:G_1 }  \bigcap \left\langle { H_i }
\right\rangle _{H:G_2 }  $  for some  $ 0 \leqslant i < n$.  Only is the  case of one-premise rule  given in the following and other cases are omitted.

 Let  $ \cfrac{G'\vert S'}{G'\vert S''
}(I) \in \tau ^ *  $,   $ H_i = G'\vert S'  $  and $ H_{i + 1} = G'\vert
S''$.

 Let  $ S' \in \left\langle {H_i } \right\rangle
_{H:G_3 }  $.   Then  $ \left\langle {H_{i + 1} } \right\rangle _{H:G_3 } = (
{\left\langle {H_i } \right\rangle _{H:G_3 } \backslash \{S'\}} )\vert
S'' $,   \\
 $ \left\langle {H_{i + 1} } \right\rangle _{H:G_1 } = (
{\left\langle {H_i } \right\rangle _{H:G_1 } \backslash \{S'\}} )\vert
S'' $  by  $ S' \in \left\langle {H_i } \right\rangle _{H:G_1 }
 $  and  \\
 $ \left\langle {H_{i + 1} } \right\rangle _{H:G_2 } = (
{\left\langle {H_i } \right\rangle _{H:G_2 } \backslash \{S'\}} )\vert
S'' $  by  $ S' \in \left\langle {H_i } \right\rangle _{H:G_2 }  $.   Thus\\
 $ \left\langle {H_{i + 1} } \right\rangle _{H:G_3 } = \left\langle {H_{i + 1}
} \right\rangle _{H:G_1 }  \bigcap \left\langle { H_{i +
1} } \right\rangle _{H:G_2 }  $  by  $ \left\langle {H_i } \right\rangle _{H:G_3
} = \left\langle {H_i } \right\rangle _{H:G_1 }  \bigcap
\left\langle { H_i } \right\rangle _{H:G_2 }$.

Let  $ S' \notin \left\langle {H_i } \right\rangle _{H:G_1 }  $  and
 $ S' \notin \left\langle
{H_i } \right\rangle _{H:G_2 }  $.   Then  $ \left\langle {H_{i + 1} }
\right\rangle _{H:G_1 } = \left\langle {H_i } \right\rangle _{H:G_1 }
 $,  \\
 $ \left\langle {H_{i + 1} } \right\rangle _{H:G_2 } = \left\langle {H_i }
\right\rangle _{H:G_2 }  $  and  $ \left\langle {H_{i + 1} } \right\rangle
_{H:G_3 } = \left\langle {H_i } \right\rangle _{H:G_3 }  $.   Thus\\
  $ \left\langle
{H_{i + 1} } \right\rangle _{H:G_3 } = \left\langle {H_{i + 1} }
\right\rangle _{H:G_1 }  \bigcap \left\langle { H_{i + 1}
} \right\rangle _{H:G_2 }  $  by  $ \left\langle {H_i } \right\rangle _{H:G_3 } =
\left\langle {H_i } \right\rangle _{H:G_1 }  \bigcap \left\langle
{ H_i } \right\rangle _{H:G_2 }$.

Let  $ S' \notin \left\langle
{H_i } \right\rangle _{H:G_1 },  S' \in \left\langle {H_i }
\right\rangle _{H:G_2 }$.   Then  $ \left\langle {H_{i + 1} } \right\rangle
_{H:G_1 } = \left\langle {H_i } \right\rangle _{H:G_1 }  $, \\
  $ \left\langle
{H_{i + 1} } \right\rangle _{H:G_3 } = \left\langle {H_i } \right\rangle
_{H:G_3 }  $  and  $ \left\langle {H_{i + 1} } \right\rangle _{H:G_2 } =
({\left\langle {H_i } \right\rangle _{H:G_2 } \backslash \{S'\}} )\vert
S'' $.  Thus  \\
$ \left\langle {H_{i + 1} } \right\rangle _{H:G_3 } = \left\langle
{H_{i + 1} } \right\rangle _{H:G_1 }  \bigcap \left\langle {{\kern
1pt} H_{i + 1} } \right\rangle _{H:G_2 }  $ by  $ \left\langle {H_i }
\right\rangle _{H:G_3 } = \left\langle {H_i } \right\rangle _{H:G_1 } {\kern
1pt} \bigcap \left\langle { H_i } \right\rangle _{H:G_2 }$,
 $ S'' \notin \left\langle {H_{i + 1} } \right\rangle _{H:G_1 }$.

 The case of  $ S' \notin \left\langle {H_i } \right\rangle _{H:G_2 },  S' \in \left\langle {H_i } \right\rangle _{H:G_1 }  $   is proved by a  similar procedure  and omitted.
\end{proof}

\begin{lemma}
(i)   Let  $ G'\vert S' \in \tau ^ *  $  then  $G_{G'\vert S':S'}^{ *}   \bigcap G_{G'\vert S':G'}^{ *}  = \emptyset, G_{G'\vert S':G'}^{ *}\vert G_{G'\vert S':S'}^{ *}= G\vert G^ *;$

(ii) $H\in \tau^{\ast}$,  $ H'\vert H'' \subseteq H $  then $ G_{H:H'\vert H''}^{\ast} =
G_{H:H'}^{\ast} \vert G_{H:H''}^{\ast}$.
\end{lemma}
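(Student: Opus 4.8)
The plan is to obtain both parts from Lemma 6.5, together with Proposition 6.3 (ii) and two elementary facts about the two extremal choices of the set one prunes along. The key point is that Lemma 6.5 already establishes that pruning along an intersection (resp. a union) of subhypersequents behaves like the intersection (resp. union) of the prunings; so I only need to identify the right $G_1,G_2,G_3$ and compute $G_{H:H}^*$ and $G_{H:\emptyset}^*$.

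First I would record that, for every $H\in\tau^*$ with $Th(H)=(H_0,\ldots,H_n)$ (so $H_0=H$, $H_n=G\vert G^*$), one has $G_{H:H}^*=G\vert G^*$ and $G_{H:\emptyset}^*=\emptyset$. Both follow by induction on $k$ for the hypersequents $\langle H_k\rangle_{H:H}$ resp. $\langle H_k\rangle_{H:\emptyset}$ built by Construction 4.7 (applied to $\tau^*$). For the first, $\langle H_0\rangle_{H:H}=H=H_0$; and since $\tau^*$ contains no $(EW)$ (it was removed in Step 3 of the preprocessing), every edge $H_k,H_{k+1}$ of the thread comes from a one- or two-premise rule of $\mathbf{GL}_\Omega$ whose focus sequent in the premise lying on $Th(H)$ is, by the induction hypothesis $\langle H_k\rangle_{H:H}=H_k$, a component of $\langle H_k\rangle_{H:H}$. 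Hence the first clause of Construction 4.7 (ii) applies and, using that removing the focus sequent from a premise leaves exactly its context, gives $\langle H_{k+1}\rangle_{H:H}=H_{k+1}$; at $k=n$ this yields $G_{H:H}^*=H_n=G\vert G^*$. For the second, $\langle H_k\rangle_{H:\emptyset}=\emptyset$ is preserved at every step since a focus sequent is never a member of $\emptyset$, so $G_{H:\emptyset}^*=\emptyset$.

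Part (ii) is then immediate: given $H\in\tau^*$ with $H'\vert H''\subseteq H$, apply Lemma 6.5 (ii) with $G_1:=H'$, $G_2:=H''$, $G_3:=H'\vert H''$ (all submultisets of $H$) and read it off at $i=n$. For part (i) put $H:=G'\vert S'$. By Proposition 6.3 (ii) applied to the decomposition $\{S'\}\vert G'=H$ of the node $H\in\tau^*$ we get $\{S'\}\cap G'=\emptyset$. Now apply Lemma 6.5 at $i=n$ twice, with $G_1:=\{S'\}$ and $G_2:=G'$: taking $G_3:=G_1\cap G_2=\emptyset$, part (i) gives $G_{H:S'}^*\cap G_{H:G'}^*=G_{H:\emptyset}^*=\emptyset$; taking $G_3:=G_1\vert G_2=H$, part (ii) gives $G_{H:G'}^*\vert G_{H:S'}^*=G_{H:H}^*=G\vert G^*$, the final equalities being the two facts from the preceding paragraph.

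I do not expect a serious obstacle here: Lemma 6.5 carries the real work, and the one genuinely new ingredient is Proposition 6.3 (ii), which is precisely what guarantees that $S'$ occurs in $H$ with the multiplicity it contributes as the distinguished component, so that pruning from $\{S'\}$ and pruning from $G'$ partition, rather than merely cover, the derivation $\tau_{H:H}^*$. The only point that deserves care is the multiset bookkeeping — checking that the chosen $G_1,G_2,G_3$ are submultisets of $H$ and that $G_3$ is literally the intersection ($\emptyset$) resp. union ($H$) demanded by Lemma 6.5 — together with noticing that the identity $G_{H:H}^*=G\vert G^*$ is exactly where the absence of $(EW)$ in $\tau^*$ enters.
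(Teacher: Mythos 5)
Your proposal is correct and takes essentially the same route as the paper, whose entire proof reads ``(i) and (ii) are immediately from Lemma 6.3'' (the intersection/union lemma you cite as Lemma 6.5). You merely make explicit the ingredients the paper leaves implicit --- the auxiliary identities $G_{H:H}^{*}=G\vert G^{*}$ (using the absence of $(EW)$ in $\tau^{*}$) and $G_{H:\emptyset}^{*}=\emptyset$, together with the disjointness $\{S'\}\bigcap G'=\emptyset$ from the paper's Proposition 6.2(ii) --- and these are exactly the right details.
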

\begin{proof}
(i) and (ii) are immediately from Lemma 6.3.
\end{proof}

\begin{notation}
We write $ \tau _{H_i^c:S_{i1}^c }^{ * }$, $ G _{H_i^c:S_{i1}^c }^{ * }$  as
$ \tau _{S_{i1}^c }^{ * }$, $ G _{S_{i1}^c }^{ * }$, respectively, for the sake of simplicity.
\end{notation}

\begin{lemma}
(i) $G _{S_{i1}^c }^{ * } \subseteq G\vert G^{\ast}$;

(ii) $ \tau _{S_{i1}^c }^{ * }  $  is a
derivation of  $G_{S_{i1}^c }^{ *
}  $ from  $ S_{i1}^c  $,  which we denote by
 $ \cfrac{\underline{\,\,\,\, S_{i1}^c\,\,\,\,}}{ G_{S_{i1}^c }^{ * } }\left\langle {\tau _{S_{i1}^c }^{ *
} } \right\rangle $;

(iii)   $G_{S_{iu}^c }^{ * } = {S_{iu}^c }$  and  $ \tau _{S_{iu}^c }^{ * }  $
consists of a single node  $  {S_{iu}^c } $ for all $2\leqslant  u\leqslant  m_{i}$;

(iv) $ v_l (
{G_{S_{i1}^c }^{ * } }
)\backslash v_l ( {S_{i1}^c } ) = v_r ( {G_{S_{i1}^c }^{ * } } )\backslash
v_r ( {S_{i1}^c } ) $;

(v) $\left\langle {H} \right\rangle _{S_{i1}^c }\in\tau _{S_{i1}^c }^{*} $  implies
$H\leqslant  H_{i}^c$.  Note that $\left\langle {H} \right\rangle _{S_{i1}^c }$ is undefined for
any $H> H_{i}^c$ or $H\| H_{i}^c$.

(vi) $ S_{j}^c \in G_{S_{i1}^c }^{ * }  $  implies  $H_i^c\nleqslant H_j^c$.
\end{lemma}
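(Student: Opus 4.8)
The plan is to handle the six items separately, in the order stated, using the preprocessing apparatus of Section~4 together with Construction~6.1. Items (i) and (ii) are just the instance $k=n$ of Lemma~4.8 (which holds for $\tau^*$ exactly as stated for $\tau''$): by definition $\tau_{S_{i1}^c}^*$ is the derivation obtained by applying the derivation-pruning operation to $\tau^*$ with $H=H_i^c$ and $H'=S_{i1}^c$, so $G_{S_{i1}^c}^*=\langle H_n\rangle_{H_i^c:S_{i1}^c}\subseteq H_n=G\vert G^*$ and $\tau_{S_{i1}^c}^*$ is a derivation of $G_{S_{i1}^c}^*$ from $S_{i1}^c$. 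Item (v) is immediate as well, since $\langle H\rangle_{S_{i1}^c}=\langle H\rangle_{H_i^c:S_{i1}^c}$ is defined exactly for the nodes $H$ of $Th_{\tau^*}(H_i^c)$, and $H\in Th_{\tau^*}(H_i^c)$ iff $H\leq H_i^c$ by Proposition~2.12(i); in particular it is undefined once $H>H_i^c$ or $H\| H_i^c$. For item (iii): when $u\geq 2$ the sequent $S_{iu}^c$ lies in $G^*\subseteq G\vert G^*$ by Proposition~4.15(ii) and occurs in every $H\leq H_i^c$ by Proposition~4.15(i), hence in every node of $Th_{\tau^*}(H_i^c)$; since a focus sequent is transformed by its rule and so disappears from the conclusion, $S_{iu}^c$ is never a focus sequent along that thread, so every step of Construction~4.7 applied to $\{S_{iu}^c\}$ takes the ``otherwise'' branch and $\langle H_k\rangle_{H_i^c:S_{iu}^c}=\{S_{iu}^c\}$ throughout, which gives (iii).

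For item (iv) I would prove, by induction along $Th_{\tau^*}(H_i^c)=(H_0=H_i^c,\dots,H_n=G\vert G^*)$, the invariant $v_l(\langle H_k\rangle_{S_{i1}^c})\backslash v_l(S_{i1}^c)=v_r(\langle H_k\rangle_{S_{i1}^c})\backslash v_r(S_{i1}^c)$, the case $k=n$ being the claim. The base case is clear since $\langle H_0\rangle_{S_{i1}^c}=\{S_{i1}^c\}$. A one-premise step or the ``otherwise'' branch changes neither side, because by Lemma~4.17(i) a rule of type $(I)$ does not alter the $p$-content of its focus/principal sequent (recall $p$ enters no composite formula and, by Definition~4.16(viii), is no weakening formula). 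At a two-premise step one passes from $\langle H_k\rangle_{S_{i1}^c}$ to $(\langle H_k\rangle_{S_{i1}^c}\backslash\{S'\})\vert G''\vert H'$, where $G''\vert S''$ is the merged sibling premise; since $v_l(H')=v_l(S')\cup v_l(S'')$ and $v_r(H')=v_r(S')\cup v_r(S'')$ for the principal hypersequent of each two-premise rule of $\mathbf{GL}_\Omega$, and $G''\vert S''$ is closed by Definition~4.16(vii) so that $v_l(G''\vert S'')=v_r(G''\vert S'')$, a short computation shows that both $v_l$ and $v_r$ of the pruned hypersequent increase by exactly the set $v_l(G''\vert S'')$, so the invariant persists.

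Item (vi) is the crux. I would assume $S_j^c\in G_{S_{i1}^c}^*$ and argue by contradiction from $H_i^c\leq H_j^c$, fixing the copy $S_j^c=S_{ju}^c\in H_j^c$, so that $H_j^c\leq\overline{S_j^c}$ by Construction~6.1. Because $\langle H_0\rangle_{S_{i1}^c}=\{S_{i1}^c\}$ and $S_{i1}^c$ is the focus sequent removed at the first step $H_0\to H_1$ (Notation~4.14), the sequent $S_j^c\in\langle H_n\rangle_{S_{i1}^c}=G_{S_{i1}^c}^*$ must enter $\langle H_k\rangle_{S_{i1}^c}$ at some step $H_{k-1}\to H_k$ with $k\geq 1$. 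Such a newly added sequent is either contributed by the merged sibling premise of that step, which is parallel to $H_i^c$ by Proposition~2.12(ii), whereas $S_j^c$ lying in it would --- via Proposition~6.2(iii) together with $H_i^c\leq H_j^c$ and Proposition~2.12(iii) --- make that sibling comparable with $H_i^c$, a contradiction; or it is a principal sequent of the rule with conclusion $H_k$, forcing $\overline{S_j^c}=H_k$ by Construction~6.1 and hence $H_j^c\leq\overline{S_j^c}=H_k<H_i^c$, contradicting $H_i^c\leq H_j^c$. Either way we reach a contradiction, so $H_i^c\nleq H_j^c$.

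The main obstacle is (vi): it is the one place where one must pin down exactly which $(pEC)$-sequents survive into a pruned hypersequent, which is precisely what the distinguishability apparatus of Construction~6.1 (the label $\overline{S}$) and Proposition~6.2(iii) are designed to control. Item (iv) needs a little care with the $v_l$/$v_r$ bookkeeping, but it is smooth once one notes that in $\mathbf{GL}_\Omega$ every binary inference imports a closed sibling premise and therefore perturbs $v_l$ and $v_r$ symmetrically; the remaining items are essentially unwinding the definitions.
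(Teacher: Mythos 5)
Your treatment of items (i)--(v) coincides with the paper's, which simply declares them immediate from Construction~4.7 and Lemma~4.8; your extra bookkeeping for (iv) (binary inferences import a closed, disjoint sibling, so $v_l$ and $v_r$ grow by the same set) is exactly the right justification and is sound. For item (vi), however, you take a genuinely different route. The paper's proof is indirect but very short: since $S_j^c\in G\vert G^{\ast}$ it is some non-focus copy $S_{ju}^c$ with $u\geq 2$, so by (iii) the pruning of $\{S_j^c\}$ from $H_i^c$ is just $\{S_j^c\}$ itself; if $H_i^c\leq H_j^c$ then $S_j^c$ sits in the side-hypersequent of $H_i^c$, hence $\{S_{i1}^c\}\cap\{S_j^c\}=\emptyset$ there, and the intersection property of derivation-pruning (Lemma~6.3(i)) gives $G_{S_{i1}^c}^{\ast}\cap G_{H_i^c:S_j^c}^{\ast}=\emptyset$, i.e.\ $S_j^c\notin G_{S_{i1}^c}^{\ast}$. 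You instead trace the unique step of $Th_{\tau^\ast}(H_i^c)$ at which $S_j^c$ first enters $\left\langle H_k\right\rangle_{S_{i1}^c}$ and rule out both entry mechanisms: entry via the sibling side-hypersequent is killed by $G''\vert S''\| H_i^c$ together with Proposition~6.2(iii) and Proposition~2.12(ii)--(iii), and entry as a principal sequent is killed because it would force $\overline{S_j^c}<H_i^c$ while $\overline{S_j^c}\geq H_j^c\geq H_i^c$. Both arguments are correct; the paper's buys brevity by reusing Lemma~6.3 and item (iii) as black boxes, whereas yours is more self-contained and makes explicit where the distinguishability labelling $\overline{S}$ of Construction~6.1 is actually needed, which is arguably more informative about why the claim holds.
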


\begin{proof}
Claims from (i) to (v) are immediately from Construction 4.7 and Lemma 4.8.

 (vi) Since $ S_j^c  \in   G_{S_{i1}^c }^{ * }\subseteq G\vert G^{\ast}$ then $S_j^c\ $ has the form $S_{ju}^c $ for some $u\geq2$ by Notation 4.14.  Then $G_{S_{j}^c }^{ * } = {S_{j}^c }$ by (iii).   Suppose that  $ H_i^c \leqslant  H_j^c$.  Then $ S_{j}^c$ is transferred  from $ H_j^c$ downward to $H_i^c$ and in side-hypersequent of  $ H_i^c$  by Notation 4.14 and $G\vert G^{\ast}<H_i^c \leqslant  H_j^c$.  Thus   $ \{ {S_{i1}^c } \}  \bigcap\{ {S_{j}^c } \}  = \emptyset  $ at $H_i^c $ since $S_{i1}^c $ is the unique focus sequent of  $ H_i^c$.  Hence  $ S_{j}^c\notin G_{S_{i1}^c }^{ * }  $ by Lemma 6.3 and (iii),   a contradiction therefore  $ H_i^c\nleqslant H_j^c$.
\end{proof}

\begin{lemma}
Let  $ \cfrac{G'\vert S' \quad G''\vert S''}{H\equiv  G'\vert G''\vert H'}(II) \in \tau ^ *$.
 (i) If $ S_j^c  \in  G_{H:H' }^{ * }$  then $H_j^c\leqslant  H $ or $H_j^c\| H $;
 (ii) If $ S_j^c  \in  G_{H:G'' }^{ * }$  then $H_j^c\leqslant  H $ or $H_j^c\| G'\vert S'  $.
\end{lemma}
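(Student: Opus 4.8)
The plan is to prove both parts by contradiction, in each case showing that the forbidden position of $H_j^c$ would force the distinguished copy of $S_j^c$ into a block of the root that is disjoint from the one named in the hypothesis. First I would record the standing facts. Since $S_j^c \in G_{H:H'}^{*}\subseteq G\vert G^{*}$ (respectively $S_j^c\in G_{H:G''}^{*}\subseteq G\vert G^{*}$, by Lemma 4.8) and the members of $G^{*}$ are exactly the copies $\{S_{iv}^{c}\}_{i,\,v\geqslant 2}$ (Proposition 4.15(ii)), Notation 4.14 yields $S_j^c=S_{ju}^{c}$ for a unique $u\geqslant 2$, whence by Proposition 4.15(i) this copy occurs in every node $H''\leqslant H_j^c$. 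By Lemma 6.4 applied to the decomposition $H=G'\vert G''\vert H'$, the pruning splits $G\vert G^{*}$ into pairwise disjoint sub-hypersequents $G_{H:G'}^{*}$, $G_{H:G''}^{*}$, $G_{H:H'}^{*}$, each of which contains at most one copy of $S_{ju}^{c}$. Finally I would note that $S_{ju}^{c}$ is the focus sequent of no inference on the thread $Th(H_j^c)$: were it such a focus, it would be transformed strictly below that inference, contradicting the persistence given by Proposition 4.15(i).

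For (i), assume $H_j^c\nleq H$ and $H_j^c\not\| H$; by trichotomy of $\leqslant_\tau$ this forces $H<H_j^c$, so $H_j^c\in\tau^{*}(G'\vert S')$ or $H_j^c\in\tau^{*}(G''\vert S'')$ for the binary node $H$ — say $H_j^c\in\tau^{*}(G'\vert S')$, the other case being symmetric with $G'$ replaced by $G''$. Then $G'\vert S'\leqslant H_j^c$, so $S_{ju}^{c}\in G'\vert S'$ by Proposition 4.15(i); since the inference at $H$ lies on $Th(H_j^c)$, this copy is not the left focus occurrence $S'$, hence it lies in the side-hypersequent $G'$ of the left premise. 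Side-hypersequents pass unchanged to the conclusion, so the copy lies in the $G'$-block of $H$, and — never being a focus below $H$ — it descends unchanged along $Th(H)$ into $G_{H:G'}^{*}$. As $G_{H:G'}^{*}\cap G_{H:H'}^{*}=\emptyset$ and $G\vert G^{*}$ contains only one copy of $S_{ju}^{c}$, we get $S_j^c=S_{ju}^{c}\notin G_{H:H'}^{*}$, contradicting the hypothesis.

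For (ii), suppose the conclusion fails, i.e.\ $H_j^c\nleq H$ and $H_j^c$ is comparable with $G'\vert S'$. Since $G'\vert S'$ is the parent of $H$, comparability leaves only $H_j^c=G'\vert S'$ or $G'\vert S'<H_j^c$, and in either case $H<G'\vert S'\leqslant H_j^c$, so $H_j^c\in\tau^{*}(G'\vert S')$ and $H<H_j^c$. Then the argument of (i) applies to this left-premise subtree: $S_{ju}^{c}\in G'\vert S'$, $S_{ju}^{c}\ne S'$, the copy of $S_{ju}^{c}$ lies in $G'$, descends to $G_{H:G'}^{*}$, and $G_{H:G'}^{*}\cap G_{H:G''}^{*}=\emptyset$ contradicts $S_j^c\in G_{H:G''}^{*}$. (The remaining positions of $H_j^c$ need no contradiction: $H_j^c\leqslant H$ and $H_j^c\in\tau^{*}(G''\vert S'')$ give $H_j^c\leqslant H$ resp.\ $H_j^c\| G'\vert S'$ at once, while $H_j^c\| H$ together with $H\leqslant G'\vert S'$ gives $H_j^c\| G'\vert S'$ by Proposition 2.12(ii).)

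The step I expect to cost the most care is justifying that the persisting copy of $S_{ju}^{c}$ really lies in $G'$ and descends unchanged into $G_{H:G'}^{*}$; this must be made precise with the labelled tree $\tau^{**}$ of Construction 6.1, so that persisting copies are provably distinct occurrences from the freshly formed sequents of $H'$. This is most delicate for $(COM)$, whose principal hypersequent $H'$ introduces no new connective, so the distinguishability of $\tau^{*}$ and the relation $\overline{S_{ju}^{c}}\leqslant H_j^c$ must be invoked in place of a "new principal formula" argument. Ruling out $S_{ju}^{c}=S'$ should then be handled uniformly across the five shapes of $(II)$ — $(\to_l),(\odot_r),(\wedge_r),(\vee_l),(COM)$ — since in each the two focus sequents are absorbed into $H'$ at the conclusion.
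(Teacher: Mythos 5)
Your proof is correct and follows essentially the same route as the paper's: both rest on the persistence of the copy $S_{ju}^c$ ($u\geqslant 2$) down the thread, the disjoint decomposition $G_{H:G'}^{*}\vert G_{H:G''}^{*}\vert G_{H:H'}^{*}$ from Lemmas 6.3--6.4, and the occurrence-tagging/restriction on $(II)$ that prevents the persisting copy from being a focus or principal sequent. The only cosmetic difference is that the paper forces the copy into $H'$ and contradicts the restriction that sequents of $H'$ differ from $S'$ and $S''$, whereas you locate it in $G_{H:G'}^{*}$ (resp. $G_{H:G''}^{*}$) and contradict disjointness directly; these are two phrasings of the same argument.
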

\begin{proof}
(i) We impose a restriction on $(II)$ such that each sequent in $H'$  is  different from $S'$ or $S''$ otherwise we treat it as an $(EW)$-application.  Since $ S_j^c  \in  G_{H:H' }^{ *}\subseteq G\vert G^{\ast}$ then $S_j^c\ $ has the form $S_{ju}^c $ for some $u\geq2$ by Notation 4.14.  Thus $G_{S_{j}^c }^{ * } = {S_{j}^c }$.  Suppose that  $ H_j^c > H$.  Then  $S_j^c$  is transferred  from $ H_j^c$ downward to $H$. Thus $S_j^c\in H'$ by  $G_{S_{j}^c }^{ * } =S_{j}^c\in  G_{H:H' }^{ * }$ and Lemma 6.3.  Hence $S_j^c=S'$ or $S_j^c=S''$,  a contradiction with the restriction above.  Therefore $H_j^c\leqslant  H $ or $H_j^c\| H $.

(ii) Let $S_j^c  \in G_{H:G''}^{ * }$.  If  $H_{j}^{c}> H$ then $S_{j}^{c}\in H$ by Proposition 4.15(i) and  thus  $S_{j}^{c}\in G''$ by Lemma 6.3 and,   hence  $H_{j}^{c}\parallel G'\vert S'$ by $H_{j}^{c}\geqslant G''\vert S''$, $G'\vert S'\| G''\vert S''$.  If  $H_{j}^{c}\| H$ then  $H_{j}^{c}\parallel G'\vert S'$ by $H\leqslant  G'\vert S'$. Thus
 $ H_j^c \leqslant H$ or  $ H_j^c  \| G'\vert S' $.
\end{proof}

\begin{definition}
(i) By  $ H_i^c \rightsquigarrow H_j^c  $  we mean that  $ S_{ju}^c \in G_{S_{i1}^c }^{ *}$ for some $2\leqslant  u \leqslant  m_{j}$;
(ii) By  $ H_i^c  \leftrightsquigarrow H_j^c $  we mean that  $ H_i^c \rightsquigarrow H_j^c$  and  $ H_j^c\rightsquigarrow H_i^c $ ;
(iii) $H_i^c \nrightsquigarrow H_j^c $ means that  $ S_{ju}^c \notin G_{S_{i1}^c }^{ *}$ for all $2\leqslant  u \leqslant  m_{j}$.
\end{definition}

Then Lemma 6.6 (vi) shows that  $ H_i^c \rightsquigarrow H_j^c  $  implies  $ H_i^c
\nleqslant H_j^c$.
\begin{lemma}
Let   $ H_i^c
\| H_j^c$,  $ H_i^c\rightsquigarrow H_j^c$,   $ \cfrac{G'\vert S'  \,\,\,G''\vert S''}{G'\vert G''\vert H'}(II) \in \tau ^ *  $ such that  $G'\vert S' \leqslant  H_{i }^c$, $ G''\vert S'' \leqslant  H_{j}^c$.   Then  $ S' \in \left\langle {G'\vert S'  }
\right\rangle _{S_{i1}^c }  $.
\end{lemma}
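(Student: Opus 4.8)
The plan is to argue by contradiction. Assume $S'\notin\langle G'\vert S'\rangle_{S_{i1}^c}$ and write $Th_{\tau^*}(H_i^c)=(K_0,\dots,K_c)$ with $K_0=H_i^c$ and $K_c=G\vert G^*$. Since $G'\vert S'\leqslant H_i^c$ there is $a$ with $K_a=G'\vert S'$, the left premise of the displayed rule $(II)$; then $K_{a+1}=G'\vert G''\vert H'$, which is also the successor of $G''\vert S''$ on $Th_{\tau^*}(H_j^c)$ (recall $G''\vert S''\leqslant H_j^c$), so the threads of $H_i^c$ and of $H_j^c$ coincide from $K_{a+1}$ on, and $K_k\leqslant H_j^c$ for every $k\geqslant a+1$. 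Since $H_i^c\rightsquigarrow H_j^c$, fix $u$ with $2\leqslant u\leqslant m_j$ and $S_{ju}^c\in G_{S_{i1}^c}^*=\langle K_c\rangle_{S_{i1}^c}$. I would first record three facts. (a) $p$ occurs in $S_j^c$: indeed $S_{j1}^c\neq S_{ju}^c$ by Proposition 6.2(i), while both are copies of $S_j^c$, so they receive different identification numbers; hence $v_l(S_{ju}^c)\cup v_r(S_{ju}^c)\neq\emptyset$. (b) By Lemma 6.6(iii), $G_{S_{ju}^c}^*=S_{ju}^c$, so on $Th_{\tau^*}(H_j^c)$ the sequent $S_{ju}^c$ is never a focus sequent, and since by Proposition 4.15(i) it occurs unchanged in every $H\leqslant H_j^c$, it is never a principal sequent there either. (c) By Proposition 4.15(i), $S_{ju}^c\in G''\vert S''$ and $S_{ju}^c\in K_k$ for every $a+1\leqslant k\leqslant c$.

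Next I would show $S_{ju}^c\notin K_t$ for $0\leqslant t\leqslant a$. As $G'\vert S'$ and $G''\vert S''$ are the two premises of one rule they are incomparable, so with $G''\vert S''\leqslant H_j^c$ Proposition 2.12(ii) gives $K_a\|H_j^c$; since $K_a\leqslant K_t$ for $t\leqslant a$, Proposition 2.12(ii) again gives $K_t\|H_j^c$. Then Lemma 4.17(vi) yields $v_l(K_t)\cap v_l(H_j^c)=v_r(K_t)\cap v_r(H_j^c)=\emptyset$. Since $S_{ju}^c\in H_j^c$ and, by (a), carries some index $m$, say $m\in v_l(S_{ju}^c)\subseteq v_l(H_j^c)$ (the case $m\in v_r$ being symmetric), we get $m\notin v_l(K_t)$, so no sequent of $K_t$ has $p_m$ in its antecedent and therefore $S_{ju}^c\notin K_t$; by Lemma 4.8(i), $S_{ju}^c\notin\langle K_t\rangle_{S_{i1}^c}$.

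Finally I would prove $S_{ju}^c\notin\langle K_k\rangle_{S_{i1}^c}$ for all $a\leqslant k\leqslant c$ by induction on $k$; the base $k=a$ is the previous paragraph. From $K_a$ to $K_{a+1}$ the inference is the displayed $(II)$ with left premise $K_a$ and focus $S'$; since $S'\notin\langle K_a\rangle_{S_{i1}^c}$, Construction 4.7 gives $\langle K_{a+1}\rangle_{S_{i1}^c}=\langle K_a\rangle_{S_{i1}^c}$, so $S_{ju}^c$ stays out. For $k\geqslant a+1$ we have $S_{ju}^c\in K_k$ by (c), hence $v_l(S_{ju}^c)\subseteq v_l(K_k)$ and $v_r(S_{ju}^c)\subseteq v_r(K_k)$, and the step $K_k\to K_{k+1}$ lies on $Th_{\tau^*}(H_j^c)$. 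If this step uses a one-premise rule, Construction 4.7 either keeps the active set or replaces the thread-focus by its successor, and by (b) $S_{ju}^c$ is neither that focus nor a principal. If it uses a two-premise rule with off-thread premise $P$, then $P\|K_k$, so Lemma 4.17(vi) together with (a) shows the index $m$ of $S_{ju}^c$ is absent from $P$, whence $S_{ju}^c$ occurs neither in $P$ nor in the side hypersequent of $P$; and by Construction 4.7, $\langle K_{k+1}\rangle_{S_{i1}^c}$ is obtained from $\langle K_k\rangle_{S_{i1}^c}$ by possibly deleting the thread-focus and adjoining the side hypersequent of $P$ together with the principal sequents, none of which contains $S_{ju}^c$ (the first by the inductive hypothesis, the second by the preceding sentence, the last by (b)). Thus $S_{ju}^c\notin\langle K_c\rangle_{S_{i1}^c}=G_{S_{i1}^c}^*$, contradicting $H_i^c\rightsquigarrow H_j^c$; hence $S'\in\langle G'\vert S'\rangle_{S_{i1}^c}$.

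The step I expect to be the main obstacle is the bookkeeping in the induction: matching the update clause of Construction 4.7 against every one- and two-premise inference of $\mathbf{GL}_\Omega$ — including the cases in which $Th_{\tau^*}(H_i^c)$ runs through the second premise of a rule, and the rules $(COM),(\wedge_{rw}),(\vee_{lw})$ with two principal sequents — and placing the two facts ``$S_{ju}^c$ is never a focus nor a principal on $Th_{\tau^*}(H_j^c)$'' and ``a side copy of a pseudo-$(EC)$ node carries a $p$-index'' exactly where they are needed; the latter is the conceptual crux that makes the disjointness statements of Lemma 4.17(vi) applicable (if one insisted on admitting $p$-free pseudo-$(EC)$ sequents one would instead argue, via Notation 4.14, that such sequents may be treated as members of $G$).
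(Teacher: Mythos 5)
Your overall strategy is sound and its crucial move is the same as the paper's: assuming $S'\notin\langle G'\vert S'\rangle_{S_{i1}^c}$, Construction 4.7 forces $\langle G'\vert G''\vert H'\rangle_{S_{i1}^c}=\langle G'\vert S'\rangle_{S_{i1}^c}\subseteq G'$, and from there one shows the copies $S_{ju}^c$ never enter the pruned hypersequent, contradicting $H_i^c\rightsquigarrow H_j^c$. The paper finishes in one step from that point: the pruned set at $G'\vert G''\vert H'$ is disjoint from $G''\vert H'$ by Proposition 6.2(ii), and disjoint sub-hypersequents have disjoint prunings (Lemma 6.3/6.4), so no $S_{ju}^c$ reaches $G_{S_{i1}^c}^*$. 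Your explicit induction along the shared thread is a legitimate, if much more laborious, unpacking of that same disjointness argument.

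There is, however, a genuine gap at your fact (a). Proposition 6.2(i) says that the two occurrences $S_{j1}^c$ and $S_{ju}^c$ are distinct as elements of the multiset $H_j^c$, but this distinctness comes from the position labels $(S,\mathcal{P}(\overline{S}),N_S)$ of Construction 6.1, not from identification numbers of $p$; nothing in the preprocessing forces a contracted sequent $S_j^c$ to contain $p$ (Step 2 of Section 4 collects \emph{all} $(EC^*)$-applications of $\tau'$), so $v_l(S_{ju}^c)\cup v_r(S_{ju}^c)$ may well be empty. Fact (a) is load-bearing: both the exclusion $S_{ju}^c\notin K_t$ for $t\leqslant a$ and the exclusion of $S_{ju}^c$ from the off-thread premise $P$ in your inductive step are made to rest on Lemma 4.17(vi) applied to a $p$-index of $S_{ju}^c$, and both collapse when $S_j^c$ is $p$-free. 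The parenthetical patch you offer (treating $p$-free pseudo-$(EC)$ sequents ``as members of $G$'' via Notation 4.14) is not what that notation says and does not remove the case from the lemma's hypotheses. The correct repair is to argue with occurrences rather than indices: by Construction 6.1 the occurrence $S_{ju}^c$ lives only at nodes $\leqslant\overline{S_{ju}^c}$, and since $\overline{S_{ju}^c}\geqslant H_j^c$ while $K_t\Vert H_j^c$ (resp. $P\Vert K_k$), Proposition 2.12 rules out $S_{ju}^c\in K_t$ (resp. $S_{ju}^c\in P$) without any appeal to $p$. With that substitution your induction goes through; alternatively, the whole induction can be replaced by the paper's single appeal to Proposition 6.2(ii) and Lemma 6.3.
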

\begin{proof}
Suppose that  $ S' \notin \left\langle {G'\vert S'} \right\rangle _{S_{i1}^c }  $.   Then $\left\langle {G'\vert S'} \right\rangle _{S_{i1}^c }\subseteq G'$ by $\left\langle {G'\vert S'} \right\rangle _{S_{i1}^c }\subseteq G'\vert S' $,
 $ \left\langle {G'\vert G''\vert H'} \right\rangle _{S_{i1}^c } = \left\langle {G'\vert S' } \right\rangle _{S_{i1}^c }$ by Construction 4.7.   Thus  $ \left\langle {G'\vert G''\vert H'} \right\rangle _{S_{i1}^c} \subseteq G'$.  Hence
$  G''\vert H' \bigcap \left\langle {G'\vert G''\vert H'} \right\rangle
_{S_{i1}^c } = \emptyset  $ by Proposition 6.2 (ii).   Therefore
 $ S_{ju}^c \notin G_{S_{i1}^c }^{ *}$ for all $1\leqslant  u \leqslant  m_{j}$ by Lemma 6.3,   i.e., $ H_i^c\nrightsquigarrow H_j^c$,  a contradiction and hence
 $ S' \in \left\langle {G'\vert S'} \right\rangle _{S_{i1}^c }$.
\end{proof}

Lemma 6.6 (ii) shows that $ \tau _{S_{i1}^c }^{ * }  $  is a
derivation of  $G_{S_{i1}^c }^{ *
}  $ from  one premise $ S_{i1}^c  $.  We generalize it by  introducing derivations from multiple  premises in the following.  In the remainder of  this section,  let  $ I = \{ {H_{i_1 }^c, \cdots
,H_{i_m }^c } \}  \subseteq \{ {H_{1 }^c, \cdots
,H_{N }^c } \}  $,   $ H_{i_k }^c \leftrightsquigarrow H_{i_q }^c  $  for all  $ 1
\leqslant k < q \leqslant m$.   Then  $  H_{i_k }^c \nleqslant H_{i_q }^c  $  and
 $ H_{i_q }^c \nleqslant  H_{i_k }^c$  by Lemma 6.6 (vi) thus  $ H_{i_k }^c \| H_{i_q }^c  $ for all  $ 1 \leqslant k < q \leqslant m$.

\begin{notation}
  $ H_{I}^V$ denotes  the intersection node of  $ H_{i_1 }^c, \cdots,H_{i_m}^c  $. We sometimes write the intersection node of  $ H_{i }^c$ and $H_{j}^c$  as $ H_{ij}^V$.  If $I=\{H_i^c \}$,   $H_I^V \coloneq H_i^c $, i.e.,  the intersection node of a single node is itself.
\end{notation}

Let   $ \cfrac{G'\vert S'  \,\,\,G''\vert S''}{G'\vert G''\vert H'}(II) \in \tau ^ *  $ such that  $ G'\vert G''\vert H' = H_{I }^V $.  Then  $ I$  is divided into two subsets
 $ I_l = \{ {H_{l_1}^c, \cdots, H_{l_{m(l)}}^c }
\}  $  and  $ I_r= \{ {H_{r_1 }^c, \cdots,H_{r_{m(r)} }^c } \}  $,   which occur in the left subtree  $ \tau^ * (G'\vert S') $  and right subtree  $ \tau ^ * (G''\vert S'') $  of  $ \tau ^ * (G'\vert G''\vert H') $,   respectively.

Let  $ \mathcal{I}= \{ {S_{i_1 1}^c, \cdots,S_{i_m 1}^c } \}  $,   $ \mathcal{I}_l= \{ {S_{l_{1}1}^c, \cdots,S_{l_{m(l)}1}^c } \}$,
$ \mathcal{I}_r= \{ {S_{r_{1}1}^c, \cdots,S_{r_{m(r)}1}^c} \}$ such that
$ \mathcal{I}=\mathcal{I}_l\bigcup\mathcal{I}_r$.  A derivation $ \tau
_{\mathcal{I}}^{ * }  $  of  $ \left\langle {G\vert G^ * }
\right\rangle _{\mathcal{I}}  $ from  $ S_{i_1 1}^c, \cdots
,S_{i_m 1}^c  $  is constructed by induction on $ |I|$.
 The base case of  $ \left| I \right| = 1 $  has been done by Construction 4.7.  For the induction case,   suppose that derivations  $ \tau _{\mathcal{I}_l}^{ * }  $  of
 $ \left\langle {G\vert G^ * } \right\rangle _{\mathcal{I}_l}  $
from  $ S_{l_{1}1}^c, \cdots, S_{l_{m(l)}1}^c$  and
 $ \tau _{\mathcal{I}_r }^{ * }  $  of  $ \left\langle {G\vert G^ *
} \right\rangle _{\mathcal{I}_r }  $  from  $ S_{r_{1}1}^c, \cdots, S_{r_{m(r)}1}^c$  are constructed.
Then  $ \tau
_{\mathcal{I}}^{ * }  $  of  $ \left\langle {G\vert G^ * }
\right\rangle _{\mathcal{I}}  $ from  $ S_{i_1 1}^c, \cdots
,S_{i_m 1}^c  $  is constructed as follows.

\begin{construction} $\mathrm{([A.5.2])}$
(i)\[ \left\langle {H} \right\rangle _{\mathcal{I}}
\coloneq \left\langle {H} \right\rangle
_{\mathcal{I}_l}\,\, \mathrm{for} \,\,\mathrm{all }\,\,G'\vert S'\leqslant  H\leqslant  H_{i}^{c} \,\,\mathrm{for} \,\,\mathrm{some}\,\,\, H_{i}^{c}\in I_{l},\]
\[ \left\langle {H} \right\rangle _{\mathcal{I}}
\coloneq \left\langle {H} \right\rangle
_{\mathcal{I}_r}\,\, \mathrm{for} \,\,\mathrm{all }\,\,G''\vert S''\leqslant  H\leqslant  H_{i}^{c} \,\,\mathrm{for} \,\,\mathrm{some}\,\,\, H_{i}^{c}\in I_{r},\]
\[
\tau
_{\mathcal{I}}^{ * } ( {\left\langle {G'\vert S'}
\right\rangle _{\mathcal{I}} } ) \coloneq  \tau
_{\mathcal{I}_l}^{ * } ( {\left\langle {G'\vert S'}
\right\rangle _{\mathcal{I}_l} }),\,\,\,
\tau_{\mathcal{I}}^{ * } ( {\left\langle {G''\vert S''}
\right\rangle _{\mathcal{I}} } ) \coloneq \tau
_{\mathcal{I}_r }^{ * } ( {\left\langle {G''\vert S''}
\right\rangle _{\mathcal{I}_r } } );
\]
(ii)\[\left\langle G'\vert G''\vert H'\right\rangle
_{\mathcal{I}} \coloneq {\left\langle G' \right\rangle _{\mathcal{I}_l
}}\vert \left\langle G''\right\rangle _{\mathcal{I}_r } \vert
H'\] and
\[\cfrac{\left\langle G'\vert S'
\right\rangle _{\mathcal{I}_l}\,\,\, \left\langle G''\vert S'' \right\rangle _{\mathcal{I}_r }
}{\left\langle G'\vert G''\vert H' \right\rangle
_{\mathcal{I}} }(II) \in \tau _{\mathcal{I}}^{ *
};
\]
(iii) Other nodes of  $ \tau _{\mathcal{I}}^{ * }  $  are
built up by Construction 4.7  (ii).
\end{construction}

The following lemma is a generalization of Lemma 6.6.

\begin{lemma} Let  $ Th (H_{i_k}^c ) = ( {H_{{i_k}0}^c, \cdots,H_{{i_k}n_{i_k}
}^c } ) $, where $1 \leqslant k \leqslant m,  H_{{i_k}0}^c =H_{i_k}^c$ and
$ H_{{i_k}n_{i_k} }^c = G\vert G^ *$.
   Then,  for all $0 \leqslant u \leqslant n_{i_k }$,

(i) $$ \left\langle {H_{i_k u}^c } \right\rangle _{\mathcal{I}} =
\bigcap\{\left\langle {H_{i_k u}^c }
\right\rangle_{S_{j1}^c }:H_j^c \in I,H_{i_k u}^c \leqslant H_j^c \}; $$

(ii)
$$ \cfrac{\underline{\{ {S_{j1}^c:H_j^c \in
I,H_{i_k u}^c \leqslant H_j^c } \}}}{
\left\langle
{H_{i_k u}^c } \right\rangle _\mathcal{I} }\left\langle { \tau _\mathcal{I}^{ *} ( {\left\langle {H_{i_k u}^c }
\right\rangle _\mathcal{I} } )} \right\rangle ;$$

(iii)   $$ v_l ( {\left\langle {H_{i_k u}^c } \right\rangle _{_\mathcal{I} }
} )\backslash\bigcup \{ v_l(S_{j1}^c ):H_j^c \in I,H_{i_k u}^c
\leqslant H_j^c  \}  =$$$$
 v_r ( {\left\langle {H_{i_k u}^c
} \right\rangle _{_\mathcal{I} } } )\backslash\bigcup \{
v_r (S_{j1}^c ):H_j^c \in I,H_{i_k u}^c \leqslant H_j^c \};$$

(iv) $\left\langle {H} \right\rangle _\mathcal{I}\in\tau _{\mathcal{I}}^{*} $  if and only if
$H\leqslant  H_{i}^c$ for some $ H_{i}^c\in I$. Note that $\left\langle {H} \right\rangle _{\mathcal{I} }$ is undefined  if $H> H_{i}^c$ or $H\| H_{i}^c$ for all $ H_{i}^c\in I$.
\end{lemma}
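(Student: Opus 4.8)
The plan is to prove the four claims simultaneously by induction on $|I|$. For the base case $|I| = 1$, where $I = \{H_{i_1}^c\}$, every node $H_{i_1 u}^c$ of the thread satisfies $H_{i_1 u}^c \leq H_{i_1}^c$, so the index set appearing in each claim is the singleton $\{S_{i_1 1}^c\}$; then (i) and (iv) are immediate from Construction 4.7 (respectively Lemma 6.6(v)), (ii) is Lemma 4.8(ii), and (iii) follows from Lemma 4.8 together with the balance property of the rules recorded in Lemma 4.17(i),(ii) (Lemma 6.6(iv) being its special case at the root $G\vert G^*$).

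For the inductive step I would first locate the intersection node $H_I^V$. Since the $H_{i_k}^c$ are pairwise incomparable ($H_{i_k}^c \| H_{i_q}^c$), we have $H_I^V <_\tau H_{i_k}^c$ for every $k$, and $H_I^V$ is the conclusion of a two-premise rule $(II)$ with premises $G'\vert S'$ and $G''\vert S''$ and principal part $H'$, so $H_I^V \equiv G'\vert G''\vert H'$; this splits $I$ into the nonempty parts $I_l$, $I_r$ occurring in $\tau^*(G'\vert S')$ and $\tau^*(G''\vert S'')$ respectively, each still consisting of pairwise $\leftrightsquigarrow$-related $(pEC)$-nodes, so the induction hypothesis applies to $\mathcal{I}_l$ and $\mathcal{I}_r$. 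The crucial preliminary fact is that, since every $H_j^c \in I_l$ and $H_q^c \in I_r$ satisfy $H_j^c \leftrightsquigarrow H_q^c$ (hence $H_j^c \rightsquigarrow H_q^c$ and $H_q^c \rightsquigarrow H_j^c$), $H_j^c \| H_q^c$, $G'\vert S' \leq H_j^c$ and $G''\vert S'' \leq H_q^c$, Lemma 6.9 (applied to each premise in turn) gives $S' \in \langle G'\vert S'\rangle_{S_{j1}^c}$ for all $H_j^c \in I_l$ and $S'' \in \langle G''\vert S''\rangle_{S_{q1}^c}$ for all $H_q^c \in I_r$; combined with the induction hypothesis (i) this yields $S' \in \langle G'\vert S'\rangle_{\mathcal{I}_l}$ and $S'' \in \langle G''\vert S''\rangle_{\mathcal{I}_r}$, so the inference step in Construction 6.11(ii) is a genuine instance of $(II)$ with focus sequents $S'$ and $S''$.

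Next I would run through $Th(H_{i_k}^c)$ in three regimes. For a node $H_{i_k u}^c$ strictly above $H_I^V$, say lying in $\tau^*(G'\vert S')$, it is incomparable with every member of $I_r$, so $\{H_j^c \in I : H_{i_k u}^c \leq H_j^c\} = \{H_j^c \in I_l : H_{i_k u}^c \leq H_j^c\}$, and since $\langle H_{i_k u}^c\rangle_{\mathcal{I}} = \langle H_{i_k u}^c\rangle_{\mathcal{I}_l}$ by Construction 6.11(i), all four claims transfer verbatim from the induction hypothesis for $\mathcal{I}_l$. At $H_I^V$ every $H_j^c \in I$ lies above it; for $H_j^c \in I_l$ the preliminary fact gives $\langle H_I^V\rangle_{S_{j1}^c} = \bigl(\langle G'\vert S'\rangle_{S_{j1}^c} \setminus \{S'\}\bigr) \vert G'' \vert H'$ with the first block contained in $G'$, and dually $\langle H_I^V\rangle_{S_{j1}^c} = G' \vert \bigl(\langle G''\vert S''\rangle_{S_{j1}^c} \setminus \{S''\}\bigr) \vert H'$ for $H_j^c \in I_r$; intersecting these over all $j$, i.e.\ distributing $\bigcap$ over the disjoint blocks $G'$, $G''$, $H'$, and invoking the induction hypothesis (i) for $I_l$ and $I_r$, yields $\langle G'\rangle_{\mathcal{I}_l} \vert \langle G''\rangle_{\mathcal{I}_r} \vert H' = \langle H_I^V\rangle_{\mathcal{I}}$, which is claim (i); claims (ii)--(iv) at $H_I^V$ then follow from the legitimacy of that step and Construction 6.11(ii). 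Finally, for $H_{i_k u}^c \leq_\tau H_I^V$ the construction of $\tau_{\mathcal{I}}^*$ coincides with Construction 4.7 applied with ``premise'' $\langle H_I^V\rangle_{\mathcal{I}}$, the relevant index set is all of $\{S_{j1}^c : H_j^c \in I\}$, and the four claims propagate down the thread using the iterated distributivity of $\langle\cdot\rangle$ over $\bigcap$ from Lemma 6.3, together with Lemma 4.8 and Lemma 4.17.

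The step I expect to be the main obstacle is the identification of $\langle H_I^V\rangle_{\mathcal{I}}$ with the $m$-fold intersection $\bigcap_j \langle H_I^V\rangle_{S_{j1}^c}$: this is precisely where the hypothesis $H_{i_k}^c \leftrightsquigarrow H_{i_q}^c$ is indispensable, since through Lemma 6.9 it guarantees that the focus sequents $S'$ and $S''$ survive the pruning from every relevant $S_{j1}^c$, so that the decomposition of $\langle H_I^V\rangle_{S_{j1}^c}$ into the three blocks $G'$, $G''$, $H'$ is uniform in $j$ and matches the recursive definition in Construction 6.11(ii). Keeping precise track of which premise sequents $S_{j1}^c$ remain leaves of the pruned subderivation, and checking that this index set is exactly $\{H_j^c \in I : H_{i_k u}^c \leq H_j^c\}$ in all three regimes, is the delicate bookkeeping on which all four claims rest.
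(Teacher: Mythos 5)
Your proposal is correct and follows essentially the same route as the paper: induction on $|I|$, splitting $I$ into $I_l$ and $I_r$ at the intersection node $H_I^V$, using Lemma 6.9 together with the induction hypothesis for claim (i) to show that $S'$ and $S''$ survive the pruning (so the $(II)$-step in Construction 6.11(ii) is genuine), and then establishing the intersection identity $\left\langle {H_I^V} \right\rangle _{\mathcal{I}} = \left\langle {G'} \right\rangle _{\mathcal{I}_l}\vert \left\langle {G''} \right\rangle _{\mathcal{I}_r}\vert H' = \left\langle {H_I^V} \right\rangle _{\mathcal{I}_l}\bigcap \left\langle {H_I^V} \right\rangle _{\mathcal{I}_r}$. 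The only difference is expository: you spell out the three regimes (above, at, and below $H_I^V$) and the propagation of claims (ii)--(iv), where the paper compresses this to ``other claims hold immediately from Construction 6.11.''
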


\textbf{Proof:} (i) is proved by induction on  $ \left| I \right|$.
For the base step, let  $ \left| {I} \right| = 1 $  then the claim
holds clearly. For the induction step, let  $ \left| {I}
\right| \geqslant 2 $  then  $ \left| {I_l} \right| \geqslant 1 $  and
 $ \left| {I_r} \right| \geqslant 1$.  Then  $ S' \in \left\langle
{G'\vert S'} \right\rangle _{S_{i1}^c }  $  for all  $ H_i^c \in I_l $  by Lemma 6.9  and
 $ H_i^c \rightsquigarrow H_j^c  $  for all  $ H_j^c \in I_r$.   $ \left\langle
{G'\vert S'} \right\rangle _{\mathcal{I}_l} =\bigcap _{H_i^c \in I_l }
\left\langle {G'\vert S'} \right\rangle _{S_{i1}^c }  $  by the
induction hypothesis then  $ S' \in \left\langle {G'\vert S'} \right\rangle
_{\mathcal{I}_l}  $  thus  $ \left\langle {G'\vert G''\vert H'}
\right\rangle _{\mathcal{I}_l} = \left\langle {G'}
\right\rangle _{\mathcal{I}_l} \vert G''\vert H' $  by  $ G'\vert
S' \leqslant H_{I_l }^V$.

$ \left\langle {G'\vert G''\vert H'}
\right\rangle _{\mathcal{I}_r } = \left\langle {G''}
\right\rangle _{\mathcal{I}_r } \vert G'\vert H' $  holds by a
procedure similar to above then  $$ \left\langle {G'\vert G''\vert H'}
\right\rangle _{\mathcal{I}} = \left\langle {G'} \right\rangle
_{\mathcal{I}_l} \vert \left\langle {G''} \right\rangle
_{\mathcal{I}_r } \vert H'$$$$\qquad\qquad\qquad\qquad\qquad\qquad= ( {\left\langle {G'}
\right\rangle _{\mathcal{I}_l} \vert G''\vert H'} ) \bigcap
( {\left\langle {G''} \right\rangle _{\mathcal{I}_r } \vert
G'\vert H'} )$$$$
\qquad\qquad\qquad\qquad\qquad=\left\langle {G'\vert G''\vert H'} \right\rangle
_{\mathcal{I}_l} \bigcap \left\langle {G'\vert G''\vert H'}
\right\rangle _{\mathcal{I}_r }  $$  by  $ \left\langle {G'}
\right\rangle _{\mathcal{I}_l} \subseteq G' $  and  $ \left\langle
{G''} \right\rangle _{\mathcal{I}_r } \subseteq G''$.  Other
claims hold immediately from Construction 6.11.

\begin{lemma}
(i) Let  $G_{\mathcal{I}}^{ * }$ denote $\left\langle {G\vert G^* } \right\rangle _{\mathcal{I}}$ then
$G_{\mathcal{I}}^{ * }= \bigcap_{H_i^c \in I} G_{S_{i1}^c }^{ * }$;

(ii) $ \cfrac{\underline{S_{i_1 1}^c \,\,\,\cdots\,\,\,S_{i_m 1}^c}}{
G_{\mathcal{I}}^{ * } }\left\langle {\tau
_{\mathcal{I}}^{ * } } \right\rangle; $

(iii)   $ v_l ( { G_{\mathcal{I}}^{ * }}
)\backslash \bigcup_{H_j^c \in I}v_l(S_{j1}^c )= v_r ( {G_{\mathcal{I}}^{ * } }
)\backslash\bigcup_{H_j^c \in I}v_r(S_{j1}^c )$;

(iv)  $ S_j^c \in G_{\mathcal{I}}^{ * }  $  implies  $ H_i^c \nleqslant H_j^c  $  for all $ H_i^c \in I$.
\end{lemma}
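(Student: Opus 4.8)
The plan is to derive all four claims from the corresponding facts about single-premise pruning (Lemma 6.6) together with the decomposition properties of $\left\langle\cdot\right\rangle_{\mathcal{I}}$ recorded in Lemma 6.12. For (i), I would observe that $G_{\mathcal{I}}^{*} = \left\langle G\vert G^{*}\right\rangle_{\mathcal{I}}$ is exactly the value at the root $H_n = G\vert G^{*}$ of the construction in Construction 6.11, and that the root is reachable from every $H_{i_k}^c$ with $H_{i_k}^c \leqslant H_n$. Hence by Lemma 6.12(i), applied at $u = n_{i_k}$ for any fixed $k$, $\left\langle G\vert G^{*}\right\rangle_{\mathcal{I}} = \bigcap\{\left\langle G\vert G^{*}\right\rangle_{S_{j1}^c} : H_j^c \in I, H_n \leqslant H_j^c\}$; since $H_n = G\vert G^{*}$ is the root of $\tau^{*}$ and we have assumed in Notation 4.14 that every $H_i^c > G\vert G^{*}$, the condition $H_n \leqslant H_j^c$ holds for all $H_j^c \in I$, so this intersection is $\bigcap_{H_i^c\in I}\left\langle G\vert G^{*}\right\rangle_{S_{i1}^c} = \bigcap_{H_i^c\in I}G_{S_{i1}^c}^{*}$ by Notation 6.5, giving (i).

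For (ii), I would take $u = n_{i_k}$ in Lemma 6.12(ii): the tree $\tau_{\mathcal{I}}^{*}(\left\langle H_n\right\rangle_{\mathcal{I}})$ is a derivation of $\left\langle H_n\right\rangle_{\mathcal{I}} = G_{\mathcal{I}}^{*}$ from the leaf set $\{S_{j1}^c : H_j^c\in I, H_n\leqslant H_j^c\}$, which as above equals $\{S_{i_1 1}^c,\dots,S_{i_m 1}^c\}$; writing $\tau_{\mathcal{I}}^{*}$ for this tree yields the displayed derivation. Claim (iii) is likewise the specialization of Lemma 6.12(iii) to $u = n_{i_k}$: the left side is $v_l(G_{\mathcal{I}}^{*})\setminus\bigcup_{H_j^c\in I}v_l(S_{j1}^c)$ and the right side is $v_r(G_{\mathcal{I}}^{*})\setminus\bigcup_{H_j^c\in I}v_r(S_{j1}^c)$, and these coincide. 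So (i)--(iii) are essentially bookkeeping once Lemma 6.12 is in hand.

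The substantive point is (iv). I would argue by contradiction: suppose $S_j^c\in G_{\mathcal{I}}^{*}$ but $H_i^c\leqslant H_j^c$ for some $H_i^c\in I$. By (i), $S_j^c\in G_{S_{i1}^c}^{*}$, and since $S_j^c\in G\vert G^{*}$ it has the form $S_{ju}^c$ for some $u\geqslant 2$ (Notation 4.14), so $G_{S_j^c}^{*} = S_j^c$ by Lemma 6.6(iii). Now $S_j^c\in G_{S_{i1}^c}^{*}$ together with $H_i^c\rightsquigarrow H_j^c$ would say $H_j^c$ lies above $H_i^c$ in the pruning sense; but Lemma 6.6(vi) states precisely that $S_j^c\in G_{S_{i1}^c}^{*}$ implies $H_i^c\nleqslant H_j^c$, contradicting the assumption. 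Hence no such $H_i^c$ exists. The only place where care is needed — and what I expect to be the main obstacle — is making sure the reduction to the single-premise case is legitimate: one must check that the hypotheses $H_{i_k}^c\leftrightsquigarrow H_{i_q}^c$ for all $k<q$ (which, via Lemma 6.6(vi), force $H_{i_k}^c\parallel H_{i_q}^c$) are exactly what is needed for Construction 6.11 to apply and for the inductive clause of Lemma 6.12 to be invoked at each two-premise split; once that is granted, all four claims follow from Lemmas 6.6 and 6.12 as above, and I would simply cite "Lemma 6.6 and Lemma 6.12" for the routine verifications.
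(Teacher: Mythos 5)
Your proposal is correct and follows essentially the same route as the paper: the paper's proof states that (i)--(iii) are immediate from Lemma 6.12 (specialized at the root $G\vert G^{*}$, exactly as you do) and that (iv) follows from (i) together with Lemma 6.6(vi), which is precisely your contradiction argument. The extra detour in your (iv) through $G_{S_j^c}^{*}=S_j^c$ and the $\rightsquigarrow$ relation is harmless but unnecessary; the intersection in (i) plus Lemma 6.6(vi) already closes the case.
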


\begin{proof}
(i), (ii) and  (iii) are immediately from Lemma 6.12.  (iv) holds by (i) and Lemma 6.6 (vi).
\end{proof}

Lemma 6.13 (iv) shows that there exists no copy of $S_{i_k}^{c}$ in
$G_{\mathcal{I}}^{ * }$ for any $1\leqslant  k\leqslant  m$.  Then we may regard them to be eliminated in $ \tau_{\mathcal{I}}^{ * } $.  We then call $ \tau_{\mathcal{I}}^{ * } $  an elimination derivation.

Let $ \mathcal{I'}= \{ {S_{i_1 u_1}^c, \cdots,S_{i_m u_m}^c } \}$  be another set of sequents to $I$
such that   $ G'\equiv S_{i_1 u_1}^{c}\vert\cdots\vert S_{i_m u_m}^{c}$  is a copy of  $G''\equiv  S_{i_1 1}^{c}\vert\cdots\vert S_{i_m 1}^{c}$. Then $G'$ and $G''$  are disjoint and there exist two bijections  $ \sigma _l :v_l
(G') \to v_l (G'') $  and  $ \sigma _r :v_r (G') \to v_r (G'')$ such that $\sigma _r \circ\sigma _l(G')=G''$.
By applying $\sigma _r \circ\sigma _l$  to $ \tau_\mathcal{I}^{ * }$, we construct a derivation from  $S_{i_1 u_1}^{c},\cdots, S_{i_m u_m}^{c}$ and  denote it by $ \tau_\mathcal{I'}^{ * }$ and its root by $ G_\mathcal{I'}^{ * }$.

Let $ \mathbf{I'}= \{ {G_{b_1} \vert S_{i_1 u_1}^c, \cdots,G_{b_m} \vert S_{i_m u_m}^c } \} $ be a set of hypersequents to $I$, where  $G_{b_k} \vert S_{i_k u_k}^c$ be closed for all $1\leqslant  k\leqslant  m$.  By applying $ \tau_\mathcal{I'}^{ * }$ to $S_{i_1 u_1}^c, \cdots, S_{i_m u_m}^c $  in  $G_{b_1} \vert S_{i_1 u_1}^{c},\cdots,
G_{b_m} \vert S_{i_m u_m}^{c}$,  we construct  a derivation from
 $$G_{b_1} \vert S_{i_1 u_1}^{c},\cdots,
G_{b_m} \vert S_{i_m u_m}^{c}$$ and  denote it by  $ \tau_{\mathbf{I'}}^{ * }$  and its root by $ G_\mathbf{I'}^{ * }$.  Then $ G_\mathbf{I'}^{ * }=\{G_{b_k} \}_{k=1}^{m}\vert G_\mathcal{I'}^{ * }$.
\begin{definition}
We will use all $ \tau_{\mathbf{I'}}^{ * }$ as  rules of $\mathbf{GL_{\Omega}}$ and call them   elimination rules.  Further, we call
$S_{i_1 u_1}^{c},\cdots,S_{i_m u_m}^{c}$  focus sequents and,  all sequents in $G_{\mathcal{I'}}^{ *}$ principal sequents and,  $G_{b_1},\cdots, G_{b_m}$  side-hypersequents  of  $ \tau_{\mathbf{I'}}^{ * }$.
\end{definition}

\begin{remark}
We regard  Construction 4.7 as a procedure  $\mathcal{F}$, whose inputs are $\tau^2, H, H'$  and output $\tau_{H:H'}^2$.   With such a viewpoint, we write $\tau_{H:H'}^2$  as $\mathcal{F}_{H:H'}(\tau^2)$. Then $\tau
_{\mathcal{I}}^{ * }  $  can be constructed by iteratively  applying $\mathcal{F}$ to
$\tau^{\ast}$, i.e., $\tau_{\mathcal{I}}^{ * } =\mathcal{F}_{H_{i_m}^c:S_{i_m 1}^c}(\cdots \mathcal{F}_{H_{i_1}^c:S_{i_11}^c}(\tau^{\ast})\cdots)$.

We replace locally each $ \cfrac{G'  }{G'}(ID_\Omega )$ in  $\tau_{\mathcal{I}}^{* }$  with $ G' $ and denote the resulting derivation also by  $\tau_{\mathcal{I}}^{* }$.
Then each non-root node in  $\tau_{\mathcal{I}}^{* }$ has the focus sequent.

Let $H\in\tau_{\mathcal{I}}^{ * }$.  Then there exists a unique  node in $\tau^{*}$,  which we denote by $\mathcal{O}(H)$ such that  $H$  comes from $\mathcal{O}(H)$  by Construction 4.7 and 6.11.  Then the  focus sequent  of $\mathcal{O}(H)$  in $\tau^{*}$ is  the focus  of $H$ in  $\tau_{\mathcal{I}}^{ * }$  if $H$ is a non-root node and,
$\mathcal{O}(H)=H$ or $H\subseteq\mathcal{O}(H)$ as two hypersequents.
Since the relative position of any two nodes in $\tau^{ * }$  keep unchanged  in constructing  $\tau_{\mathcal{I}}^{ * }$,  $H_{1}\leqslant_{\tau_{\mathcal{I}}^{ * }}H_{2}$  if and only if
$\mathcal{O}(H_{1})\leqslant_{\tau^{ * }}\mathcal{O}(H_{2})$ for any $H_{1}, H_{2}\in\tau_{\mathcal{I}}^{ * }$.   Especially,   $\mathcal{O}(S_{i_k1}^c)=H_{i_k}^c$ for  $S_{i_k1}^c\in\tau_{\mathcal{I}}^{* }$.

Let $H\in\tau_{\mathcal{I}}^{ * }$.  Then  $H'\equiv \sigma _r \circ\sigma _l(H)\in \tau_{\mathcal{I'}}^{* }$ and  $H''\equiv\{G_{b_k}:H\leqslant_{\tau_{\mathcal{I}}^{ * }}S_{i_k1}^c\mathrm{ and }1\leqslant  k\leqslant  m\} \mid H'\in \tau_{\mathbf{I'}}^{* }$.  Define $\mathcal{O}(H')=\mathcal{O}(H'')=
\mathcal{O}(H)$.  Then $\mathcal{O}(G_{\mathbf{I'}}^{\ast})=G\vert G^{\ast}$ and $\mathcal{O}(G_{b_k}\vert S_{i_k u_k}^c)=H_{i_k}^c$ for all  $G_{b_k}\vert S_{i_k u_k}^c\in\tau_{\mathbf{I'}}^{* }$.
\end{remark}

Since $G_\mathcal{I}^\ast =\left\langle {G\vert G^\ast } \right\rangle
_\mathcal{I} \subseteq G\vert G^\ast $, then each $(pEC)$-sequent in
$G_\mathcal{I}^\ast $ has the form $S_{jv}^c $ for some $1\leqslant
j\leqslant N$, $2\leqslant v\leqslant m_j $ by Proposition 4.15(ii).  Then we introduce the
following definition.

\begin{definition}
(i) By $S_j^c \in
G_\mathcal{I}^\ast $  we means that there exists $H\in \tau
_\mathcal{I}^\ast $ such that $S_j^c \in H$, $\mathcal{O}(H)=H_j^c $.   So is $S_j^c \in G_{\mathcal{I}'}^\ast $.

(ii) Let $S_j^c \in
G_\mathcal{I}^\ast $. By $H_j^c \leqslant_{ \tau
_\mathcal{I}^\ast}  H_i^c$  we means that there exist $H, H'\in \tau
_\mathcal{I}^\ast $ such that $S_j^c \in H$, $\mathcal{O}(H)=H_j^c, \mathcal{O}(H')=H_i^c $ and
$H_j^c \leqslant_{\tau^{\ast}}  H_i^c$.   We usually  write  $  \leqslant_{ \tau
_\mathcal{I}^\ast}  $   as  $\leqslant$.
\end{definition}

\section{Separation of one branch}
In the remainder of this paper,  we assume that  $ p $
occurs at most one time for each sequent in  $ G_0$  as the one in Main  theorem,  $ \tau$  be a cut-free proof of  $ G_0  $  in  $ {\rm {\bf GL}} $  and  $\tau ^ * $  the proof of  $ G\vert G^ *  $  in  $ {\rm {\bf GL}}_\Omega  $
resulting from preprocessing of  $ \tau$. Then
$\left| v_{l}(S)\right|+\left| v_{r}(S) \right|\leq1$ for all
 $S\in G$,  which plays a key role in discussing the separation of branches.

\begin{definition}
By  $ S' \in _c G' $  we mean that there exists some copy of  $ S' $  in  $ G'$.
 $ G' \subseteq _c G'' $  if  $ S' \in _c G'' $  for all  $ S' \in G'$.
  $ G' = _c G'' $  if  $ G' \subseteq _c G'' $ and  $ G''\subseteq _c G' $. Let  $ G_{11},
\cdots,G_{1m}  $  be  $ m $  copies of  $ G_1  $  then we denote  $ G'\vert G_{11} \vert
\cdots \vert G_{1m}  $  by  $ G'\vert \{G_{1u} \}_{u = 1}^{m}$ or $ G'\vert \{G_{1} \}^{m}$.
\end{definition}

\begin{definition}
Let  $ I =  \{ {H_{i_1 }^c, \cdots,H_{i_m }^c } \}  \subseteq
\{ {H_1^c, \cdots,H_N^c } \}  $,   $ H_{i_k }^c  \| H_{i_l }^c  $  for all
 $ 1 \leqslant k < l \leqslant m$.   $ \lceil {S_{i_k }^c } \rceil_{I}  $
is called a branch of  $ H_{i_k }^c  $  to $I$ if it is a closed hypersequent such that
\begin{flushleft}
$(i)\,\,  \lceil {S_{i_k }^c } \rceil_{I}\subseteq_{c}G\vert G^{\ast},$\\
$(ii)\,\,  S_{i_k }^c \in \lceil {S_{i_k }^c } \rceil_{I} ,$ \\
$(iii)\,\,  S_j^c \in \lceil {S_{i_k }^c } \rceil_{I} $  implies   $H_j^c \leqslant
H_{i_k }^c$    or   $H_j^c  \| H_i^c $   for all   $H_i^c \in I.$
\end{flushleft}
\end{definition}
Then (i) $ S_{i_l }^c \notin _c \lceil {S_{i_k }^c } \rceil_{I}  $  for all
 $ 1 \leqslant k, l \leqslant m$,  $k\ne l$; (ii) $ S_j^c \in \lceil {S_{i_k }^c } \rceil_{I}  $ and
$ H_j^c \nleqslant H_{i_k }^c  $   imply  $ H_j^c \notin I$.

In this section, let  $ I = \{
{H_{i }^c } \}  $,    $ {\rm {\bf I}} = \{\lceil {S_{i }^c }\rceil_{I} \} $,  we will give an algorithm to eliminate all $S_{j }^c\in \lceil {S_{i }^c } \rceil_{I}$
satisfying  $H_{j }^c\leqslant  H_{i }^c$.

\begin{construction}$\mathrm{([A.3])}$
 A sequence of hypersequents  $G_{\rm {\bf I}}^{ \medstar(q)}  $  and their derivations  $ \tau _{\rm {\bf I}}^{\medstar(q)}  $  from $\lceil {S_{i }^c }\rceil_{I}$  for all  $ q
\geqslant 0 $  are constructed inductively as follows.

For the base case, define  $ G_{\rm {\bf I}}^{\medstar(0)} $ to be $\lceil {S_{i }^c }\rceil_{I}$ and, $\tau _{\rm {\bf I}}^{ \medstar(0)}$   be $ \cfrac{\underline{\quad\qquad}}{G_{\rm {\bf I}}^{\medstar(0 )}}$.  For the induction case, suppose that  $ \tau _{\rm {\bf I}}^{\medstar(q)}  $  and
$G_{\rm {\bf I}}^{ \medstar(q)}  $  are constructed for some  $ 0 \leqslant q$.  If there
exists no $ S_j^c \in G_{\rm {\bf I}}^{\medstar(q)}  $ such that  $ H_j^c \leqslant H_{i }^c $,  then  the procedure terminates and define  $J_{\rm {\bf I}} $ to be $ q$;   otherwise define $ H_{i_{q} }^c$ such that $ S_{i_{q} }^c \in G_{\rm {\bf I}}^{\medstar (q)}  $,  $ H_{i_{q} }^c \leqslant H_{i }^c $   and $ H_j^c\leqslant  H_{i_{q} }^c$ for all
 $ S_j^c \in G_{\rm {\bf I}}^{\medstar(q)}, H_j^c\leqslant  H_{i }^c $.
 Let  $ S_{i_{q} 1}^c
, \cdots,S_{i_{q} m_{q} }^c  $  be all copies of  $ S_{i_{q} }^c  $  in
$ G_{\rm {\bf I}}^{\medstar(q)}  $ then define
$G_{\rm {\bf I}}^{\medstar(q+1)} =G_{\rm {\bf I}}^{\medstar(q)} \backslash
\{S_{i_{q} u}^c \}_{u = 1}^{m_{q} }  \vert
\{G_{S_{i_{q} u}^c}^{ *} \}_{u = 1}^{ m_{q} }  $
and its derivation  $ \tau _{\rm {\bf I}}^{ \medstar(q+1)}  $  is constructed by sequentially
applying $ \tau _{S_{i_{q} 1}^c}^{ *}, \cdots,
\tau _{S_{i_{q} m_{q} }^c}^{ *}  $ to
 $ S_{i_{q} 1}^c, \cdots,S_{i_{q} m_{q} }^c  $  in
 $ G_{\rm {\bf I}}^{\medstar(q)}$,  respectively.   Notice that we assign new identification numbers to new
occurrences of  $ p $ in  $\tau _{S_{i_{q} u }^c}^{ *}$ for all
$0\leqslant  q\leqslant  J_{\rm {\bf I}}-1$, $1\leqslant  u\leqslant  m_{q}$.
\end{construction}

\begin{lemma}
(i) $ H_{i_{0}}^{c}=  H_{i}^{c}$ and $ H_{i_{q+1}}^{c}< H_{i_{q}}^{c} $
for all $0\leqslant  q\leqslant  J_{\rm {\bf I}}-2$;

(ii)   $ { G_{\rm {\bf I}}^{\medstar(q)} }\subseteq_{c} G\vert G^{\ast}   $  is closed for all $0\leqslant  q\leqslant  J_{\rm {\bf I}}$;

 (iii) $ \cfrac{\underline{\lceil {S_{i}^c } \rceil_{I}}}{G_{\rm {\bf I}}^{\medstar(q )} }
 \left\langle {\tau _{\rm {\bf I}}^{ \medstar(q)} } \right\rangle  $  for all $0\leqslant  q\leqslant  J_{\rm {\bf I}}$,  especially, $ \cfrac{\underline{\lceil {S_{i}^c } \rceil_{I}}}{G_{\rm {\bf I}}^{\medstar(J_{\rm {\bf I}} )} }
 \left\langle {\tau _{\rm {\bf I}}^{ \medstar(J_{\rm {\bf I}} )} } \right\rangle  $;

 (iv) $ S_j^c \in G_{\rm {\bf I}}^{\medstar(J_{\rm {\bf I}})}$ implies  $ H_j^c  \| H_{i }^c  $ and,  $ S_j^c \in G_{S_{i_{q} u}^c}^{ *}$ for some  $\tau
_{G_b \vert S_{i_{q} u}^c}^{ *}\in \tau _{\rm {\bf I}}^{\medstar(J_{\rm {\bf I}} )}$ or $S_j^c \in\lceil {S_{i}^c } \rceil_{I}$,  $ H_j^c \nleqslant H_{i }^c  $, where $G_b  = G_I^{\medstar(q)} \backslash \{ S_{i_{q} v}^c \} _{v = 1}^{u} |\{ G_{S_{i_{q} v}^c }^* \} _{v = 1}^{u-1 }$, $ G_b \vert S_{i_{q} u}^c  $  is closed and $ 0 \leqslant
q \leqslant J_{\rm {\bf I}}-1$, $1 \leqslant u \leqslant m_{q}$.
\end{lemma}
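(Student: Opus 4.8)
The plan is to prove (i)--(iv) by a single induction on $q$ along Construction 7.3, carrying the invariants: $(\mathrm{a})$ $G_{\rm {\bf I}}^{\medstar(q)}\subseteq_c G\vert G^{\ast}$ is closed; $(\mathrm{b})$ $\tau_{\rm {\bf I}}^{\medstar(q)}$ is a ${\rm {\bf GL}}_\Omega$-derivation of $G_{\rm {\bf I}}^{\medstar(q)}$ from $\lceil S_{i_k}^c\rceil_I$; $(\mathrm{c})$ every $S_j^c\in G_{\rm {\bf I}}^{\medstar(q)}$ has $H_j^c\leqslant H_{i_k}^c$ or $H_j^c\,\|\,H_{i_k}^c$; $(\mathrm{d})$ every sequent of $G_{\rm {\bf I}}^{\medstar(q)}$ either lies in $\lceil S_{i_k}^c\rceil_I$ or is a sequent of some expansion $G_{S_{i_k^{q'}u}^c}^{\ast}$ grafted in at an earlier step $q'<q$. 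The base case $q=0$ is immediate: $(\mathrm{a})$, $(\mathrm{c})$ hold by Definition 7.2, $(\mathrm{b})$ by the trivial derivation, $(\mathrm{d})$ vacuously; and $H_{i_k^0}^c=H_{i_k}^c$ because every $(pEC)$-node $\leqslant H_{i_k}^c$ lies on the thread of $H_{i_k}^c$, so those whose focus-copies occur in $\lceil S_{i_k}^c\rceil_I$ form a chain (Proposition 2.12(iii)) whose maximum is $H_{i_k}^c$. The same chain property guarantees that, while the construction proceeds, the node $H_{i_k^q}^c$ demanded in Construction 7.3 exists and is unique, being the maximum of the nonempty set of $H_j^c\leqslant H_{i_k}^c$ with $S_j^c\in G_{\rm {\bf I}}^{\medstar(q)}$.

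For the inductive step I would analyse the expansion of $S_{i_k^q}^c$: each copy $S_{i_k^q u}^c$ $(1\leqslant u\leqslant m_{i_k^q})$ is replaced by a freshly relabelled copy $G_{S_{i_k^q u}^c}^{\ast}$ of $G_{S_{i_k^q 1}^c}^{\ast}$, grafting on the derivation $\tau_{S_{i_k^q u}^c}^{\ast}$ furnished by Lemma 6.6(ii). Closedness is preserved through all $m_{i_k^q}$ substitutions---so each intermediate hypersequent $G_b\vert S_{i_k^q u}^c$ to which a piece is grafted is closed---because, by Lemma 6.6(iv), replacing a copy of $S_{i_k^q}^c$ by the corresponding copy of $G_{S_{i_k^q 1}^c}^{\ast}$ changes $v_l$ and $v_r$ by the same multiset $v_l(G_{S_{i_k^q 1}^c}^{\ast})\ominus v_l(S_{i_k^q 1}^c)=v_r(G_{S_{i_k^q 1}^c}^{\ast})\ominus v_r(S_{i_k^q 1}^c)$, with the fresh relabelling supplying the $v_l/v_r$ disjointness of Definition 4.16(vii); and $\subseteq_c G\vert G^{\ast}$ persists since $G_{S_{i_k^q 1}^c}^{\ast}\subseteq G\vert G^{\ast}$ (Lemma 6.6(i)). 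Since $\tau_{S_{i_k^q u}^c}^{\ast}$ is free of $(EC_\Omega)$ and $(EW)$, grafting it onto the closed context $G_b$ is a legitimate string of ${\rm {\bf GL}}_\Omega$-inferences (this is the elimination-rule mechanism of Section 6), so chaining these onto $\tau_{\rm {\bf I}}^{\medstar(q)}$ produces $\tau_{\rm {\bf I}}^{\medstar(q+1)}$; this settles $(\mathrm{a})$, $(\mathrm{b})$ and, at $q=J_{\rm {\bf I}}$, claims (ii) and (iii). Invariant $(\mathrm{d})$ is clear, since the only sequents not inherited from $G_{\rm {\bf I}}^{\medstar(q)}$ are those of the $G_{S_{i_k^q u}^c}^{\ast}$.

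The crux is claim (i), the strict descent $H_{i_k^{q+1}}^c<H_{i_k^q}^c$, which I would get from Lemma 6.6(vi): any $S_j^c$ occurring in $G_{S_{i_k^q 1}^c}^{\ast}$ satisfies $H_{i_k^q}^c\nleqslant H_j^c$. With $j=i_k^q$ this says no new copy of $S_{i_k^q}^c$ is created, so after step $q$ no copy of $S_{i_k^q}^c$ survives; a newly introduced $S_j^c$ cannot have $H_j^c>H_{i_k}^c$ either (else $H_{i_k^q}^c\leqslant H_{i_k}^c<H_j^c$ would contradict $H_{i_k^q}^c\nleqslant H_j^c$), which propagates $(\mathrm{c})$. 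Moreover, a newly introduced $S_j^c$ with $H_j^c\leqslant H_{i_k}^c$ is comparable with $H_{i_k^q}^c$ (Proposition 2.12(iii)), so $H_{i_k^q}^c\nleqslant H_j^c$ forces $H_j^c<H_{i_k^q}^c$; and a carried-over $S_j^c$ with $H_j^c\leqslant H_{i_k}^c$ has $H_j^c\leqslant H_{i_k^q}^c$ by the choice of $H_{i_k^q}^c$, hence $H_j^c<H_{i_k^q}^c$ because all copies of $S_{i_k^q}^c$ are gone. Thus every $S_j^c\in G_{\rm {\bf I}}^{\medstar(q+1)}$ with $H_j^c\leqslant H_{i_k}^c$ has $H_j^c<H_{i_k^q}^c$, whence $H_{i_k^{q+1}}^c<H_{i_k^q}^c$; the resulting strictly descending chain of $(pEC)$-nodes of the finite tree $\tau^{\ast}$ must terminate, so $J_{\rm {\bf I}}$ is well-defined. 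Finally (iv) drops out: at $q=J_{\rm {\bf I}}$ the termination clause says no surviving $S_j^c$ has $H_j^c\leqslant H_{i_k}^c$, so by $(\mathrm{c})$ each has $H_j^c\,\|\,H_{i_k}^c$, and by $(\mathrm{d})$ each is either a sequent of some grafted $G_{S_{i_k^q u}^c}^{\ast}$ (its derivation $\tau_{G_b\vert S_{i_k^q u}^c}^{\ast}$ being a subderivation of $\tau_{\rm {\bf I}}^{\medstar(J_{\rm {\bf I}})}$, with $G_b\vert S_{i_k^q u}^c$ closed as above) or already in $\lceil S_{i_k}^c\rceil_I$, necessarily with $H_j^c\nleqslant H_{i_k}^c$.

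I expect the main obstacle to be the identification-number bookkeeping across the repeated graftings: choosing the fresh labels so that closedness and the disjointness conditions of Definition 4.16(vii) are genuinely maintained at every step, and pinning down the identification of the abstract $(pEC)$-node $H_j^c$ of $\tau^{\ast}$ with the relabelled focus-sequent copies of $S_j^c$ that migrate through the successive $G_{\rm {\bf I}}^{\medstar(q)}$; this identification is exactly what licenses the repeated appeals to Lemma 6.6(iv),(vi).
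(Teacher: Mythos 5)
Your proposal is correct and follows essentially the same route as the paper: the same base case $H_{i_k^0}^c=H_{i_k}^c$, the same two-case descent argument for (i) (carried-over sequents via maximality of $H_{i_k^q}^c$, grafted sequents via Lemma 6.6(vi) plus comparability on the thread of $H_{i_k}^c$), the same use of Lemma 6.6(i),(ii),(iv) for closedness and the chained derivation in (ii)--(iii), and the same case split for (iv). The only difference is presentational --- you bundle everything into one induction with explicit invariants where the paper proves the four claims by separate inductions --- so there is nothing substantive to add.
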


\begin{proof}
(i) Since $ S_{i }^{c} \in G_{\rm {\bf I}}^{\medstar(0)}  $ by $ S_{i }^{c} \in  \lceil {S_{i }^c }
\rceil_{I}= G_{\rm {\bf I}}^{\medstar(0)} $ and,  $ H_j^c\leqslant  H_{i}^c$ for all $ S_j^c \in G_{\rm {\bf I}}^{ \medstar (0)}, H_j^c\leqslant  H_{i }^c $  then $ H_{i_{0}}^{c}=  H_{i}^{c}$.
If $ S_{i_{q+1} }^c \in G_{\rm {\bf I}}^{\medstar(q)} \backslash \{S_{i_{q} u}^c \}_{u = 1}^{m_{q} }$
then $ H_{i_{q+1}}^{c}\leqslant H_{i_{q}}^{c} $  by  $S_{i_{q+1}}^{c} \in G_{\rm {\bf I}}^{\medstar(q)}$, $ H_{i_{q+1} }^c\leqslant H_{i }^c $ thus  $ H_{i_{q+1}}^{c}< H_{i_{q}}^{c} $
by  all copies of  $ S_{i_{q} }^c  $  in  $ G_{\rm {\bf I}}^{\medstar(q)} $ being collected in  $\{S_{i_{q} u}^c \}_{u = 1}^{m_{q} }$.  If  $S_{i_{q+1} }^c \in\{G_{S_{i_{q} u}^c}^{ *} \}_{u = 1}^{ m_{q} }$ then $H_{i_{q}}^{c}\nleqslant H_{i_{q+1}}^{c}$ by Lemma 6.6 (vi) thus $ H_{i_{q+1}}^{c}< H_{i_{q}}^{c} $ by $ H_{i_{q} }^c\leqslant H_{i }^c $,  $ H_{i_{q+1} }^c\leqslant H_{i }^c $.
Then $ H_{i_{q+1}}^{c}< H_{i_{q}}^{c} $ by  $G_{\rm {\bf I}}^{\medstar(q+1)} =G_{\rm {\bf I}}^{\medstar(q)} \backslash
\{S_{i_{q} u}^c \}_{u = 1}^{m_{q} }  \vert
\{G_{S_{i_{q} u}^c}^{ *} \}_{u = 1}^{ m_{q} }  $.  Note that  $H_{i_{J_{\rm {\bf I}}} }^c$ is undefined in Construction 7.3.

(ii) $v_l(G_{\rm {\bf I}}^{\medstar(0)})=v_r({ G_{\rm {\bf I}}^{\medstar(0)} }), { G_{\rm {\bf I}}^{\medstar(0)} }\subseteq_{c} G\vert G^{\ast}$ by $ G_{\rm {\bf I}}^{\medstar(0)} =\lceil {S_{i }^c }\rceil_{I}$.
Suppose that $v_l(G_{\rm {\bf I}}^{\medstar(q)})=v_r({ G_{\rm {\bf I}}^{\medstar(q)} }),  { G_{\rm {\bf I}}^{\medstar(q)} }\subseteq_{c} G\vert G^{\ast}$ then $v_l(G_{\rm {\bf I}}^{\medstar(q+1)})=v_r({ G_{\rm {\bf I}}^{\medstar(q+1)} }),
{ G_{\rm {\bf I}}^{\medstar(q+1 )} }\subseteq_{c} G\vert G^{\ast}$ by $G_{\rm {\bf I}}^{\medstar(q+1)} =G_{\rm {\bf I}}^{\medstar(q)} \backslash
\{S_{i_{q} u}^c \}_{u = 1}^{m_{q} }  \vert
\{G_{S_{i_{q} u}^c}^{ *} \}_{u = 1}^{ m_{q} }  $,
$v_l(G_{S_{i_{q} u}^c}^{ *} \backslash
\{S_{i_{q} u}^c \})=
v_r(G_{S_{i_{q} u}^c}^{ *} \backslash
\{S_{i_{q} u}^c \}) $ and
$G_{S_{i_{q} u}^c}^{ *}\subseteq_{c} G\vert G^{\ast}$ for all $1\leqslant  u\leqslant  m_{q}$.

(iii) $\tau _{\rm {\bf I}}^{ \medstar(0)}$ is $ \cfrac{\underline{\quad\qquad}}{G_{\rm {\bf I}}^{\medstar(0 )}} \left\langle {\tau _{\rm {\bf I}}^{ \medstar(0)} } \right\rangle  $.
Given $ \cfrac{\underline{\,\,\lceil {S_{i}^c } \rceil_{I}\,\,}}{G_{\rm {\bf I}}^{\medstar(q )} }
 \left\langle {\tau _{\rm {\bf I}}^{ \medstar(q)} } \right\rangle  $  then
 $ \cfrac{\underline{\,\,\lceil {S_{i}^c } \rceil_{I}\quad}}{G_{\rm {\bf I}}^{\medstar(q+1 )} }
 \left\langle {\tau _{\rm {\bf I}}^{ \medstar(q+1)} } \right\rangle  $ is constructed by
linking up  the conclusion of
previous derivation to the premise of  its successor
 in the sequence of derivations
 $$ \cfrac{\underline{\lceil {S_{i}^c } \rceil_{I}}}{G_{\rm {\bf I}}^{\medstar(q )} }
 \left\langle {\tau _{\rm {\bf I}}^{ \medstar(q)} } \right\rangle,  \cfrac{\underline{\,\,G_{\rm {\bf I}}^{\medstar(q)} \backslash
\{S_{i_{q} 1}^c \} \vert S_{i_{q} 1}^c\,\,}}{G_{\rm {\bf I}}^{\medstar(q)} \backslash
\{S_{i_{q} 1}^c \}  \vert G_{S_{i_{q} 1}^c}^{ *}}
 \left\langle {\tau _{S_{i_{q} 1}^c}^{ *}} \right\rangle,\cdots,
\cfrac{\underline{\,\,\,G_{\rm {\bf I}}^{\medstar(q)} \backslash
\{S_{i_{q} u}^c \}_{u = 1}^{m_{q}-1 }
  \vert S_{i_{q}m_{q}}^c\,\,\, \vert
\{G_{S_{i_{q} u}^c}^{ *}\}_{u = 1}^{m_{q}-1}}}{G_{\mathbf{I}}^{\medstar(q + 1)}  =G_{\rm {\bf I}}^{\medstar(q)} \backslash
\{S_{i_{q} u}^c \}_{u = 1}^{m_{q}}  \vert
\{G_{S_{i_{q} u}^c}^{ *}\}_{u = 1}^{m_{q}} }
\left\langle {\tau _{S_{i_{q} m_{q}}^c}^{ *}} \right\rangle,$$ as shown in the following figure.

\[
\cfrac{{\underline {{\kern 1pt} {\kern 1pt} {\kern 1pt} {\kern 1pt} {\kern 1pt} \cfrac{{\underline {\cfrac{{\underline {{\kern 1pt} {\kern 1pt} {\kern 1pt} {\kern 1pt} {\kern 1pt} {\kern 1pt} \cfrac{{\underline {{\kern 1pt} {\kern 1pt} {\kern 1pt} {\kern 1pt} {\kern 1pt} {\kern 1pt} {\kern 1pt} {\kern 1pt} {\kern 1pt} {\kern 1pt} {\kern 1pt} {\kern 1pt} {\kern 1pt} {\kern 1pt} {\kern 1pt} {\kern 1pt} {\kern 1pt} {\kern 1pt} {\kern 1pt} {\kern 1pt} {\kern 1pt} {\kern 1pt} {\kern 1pt} {\kern 1pt} {\kern 1pt} {\kern 1pt} {\kern 1pt} {\kern 1pt} {\kern 1pt} {\kern 1pt} {\kern 1pt} {\kern 1pt} {\kern 1pt} {\kern 1pt} {\kern 1pt} {\kern 1pt} {\kern 1pt} {\kern 1pt} {\kern 1pt} {\kern 1pt} {\kern 1pt} {\kern 1pt} {\kern 1pt} {\kern 1pt} {\kern 1pt} {\kern 1pt} {\kern 1pt} {\kern 1pt} {\kern 1pt} {\kern 1pt} {\kern 1pt} {\kern 1pt} {\kern 1pt} {\kern 1pt} {\kern 1pt} {\kern 1pt} {\kern 1pt} {\kern 1pt} {\kern 1pt} {\kern 1pt} {\kern 1pt} {\kern 1pt} {\kern 1pt} {\kern 1pt} {\kern 1pt} {\kern 1pt} {\kern 1pt} {\kern 1pt} {\kern 1pt} {\kern 1pt} {\kern 1pt} {\kern 1pt} {\kern 1pt} {\kern 1pt} {\kern 1pt} {\kern 1pt} {\kern 1pt} {\kern 1pt} {\kern 1pt} {\kern 1pt} {\kern 1pt} {\kern 1pt} {\kern 1pt} {\kern 1pt} {\kern 1pt} {\kern 1pt} {\kern 1pt} {\kern 1pt} {\kern 1pt} {\kern 1pt} {\kern 1pt}\,\,\,\,
 [S_{i}^c {\kern 1pt} ]_{I}\,\,\,\, {\kern 1pt} {\kern 1pt} {\kern 1pt} {\kern 1pt} {\kern 1pt} {\kern 1pt} {\kern 1pt} {\kern 1pt} {\kern 1pt} {\kern 1pt} {\kern 1pt} {\kern 1pt} {\kern 1pt} {\kern 1pt} {\kern 1pt} {\kern 1pt} {\kern 1pt} {\kern 1pt} {\kern 1pt} {\kern 1pt} {\kern 1pt} {\kern 1pt} {\kern 1pt} {\kern 1pt} {\kern 1pt} {\kern 1pt} {\kern 1pt} {\kern 1pt} {\kern 1pt} {\kern 1pt} {\kern 1pt} {\kern 1pt} {\kern 1pt} {\kern 1pt} {\kern 1pt} {\kern 1pt} {\kern 1pt} {\kern 1pt} {\kern 1pt} {\kern 1pt} {\kern 1pt} {\kern 1pt} {\kern 1pt} {\kern 1pt} {\kern 1pt} {\kern 1pt} {\kern 1pt} {\kern 1pt} {\kern 1pt} {\kern 1pt} {\kern 1pt} {\kern 1pt} {\kern 1pt} {\kern 1pt} {\kern 1pt} {\kern 1pt} {\kern 1pt} {\kern 1pt} {\kern 1pt} {\kern 1pt} {\kern 1pt} {\kern 1pt} } }}{{\cfrac{{\underline {G_{\mathbf{I}}^{\medstar(q)}  = G_{\mathbf{I}}^{\medstar(q)} \backslash \{ S_{i_{q} u}^c \} _{u = 1}^{m_{q}  } |\{ S_{i_{q} u}^c \} _{u = 2}^{m_{q}  } |S_{i_{q} 1}^c {\kern 1pt} {\kern 1pt} } }}{{G_{\mathbf{I}}^{\medstar(q)} \backslash \{ S_{i_{q} u}^c \} _{u = 1}^{m_{q}  } |\{ S_{i_{q} u}^c \} _{u = 3}^{m_{q}  } |S_{i_{q} 2}^c |G_{S_{i_{q} 1}^c }^* }}\left\langle {\tau _{S_{i_{q} 1}^c }^* } \right\rangle }}\left\langle {\tau _{\bf{I}}^{\medstar(q)} } \right\rangle {\kern 1pt} } }}{ \vdots }\left\langle {\tau _{S_{i_{q} 2}^c }^* } \right\rangle } }}{{G_{\mathbf{I}}^{\medstar(q)} \backslash \{ S_{i_{q} u}^c \} _{u = 1}^{m_{q}  } |S_{i_{q} m_{q}  }^c |\{ G_{S_{i_{q} u}^c }^* \} _{u = 1}^{m_{q}   - 1} }}{\kern 1pt} {\kern 1pt} } }}{{G_{\mathbf{I}}^{\medstar(q + 1)}  = G_{\mathbf{I}}^{\medstar(q)} \backslash \{ S_{i_{q} u}^c \} _{u = 1}^{m_{q}  } |\{ G_{S_{i_{q} u}^c }^* \} _{u = 1}^{m_{q}  } }}\left\langle {\tau _{S_{i_{q} m_{q}  }^{c}}^{*} } \right\rangle
\]

\begin{center}
 \normalsize\quad A  derivation of  $G_{\mathbf{I}}^{\medstar(q + 1)}$  from $G_{\mathbf{I}}^{\medstar(q)}$
\end{center}

(iv) Let  $ S_j^c \in G_{\rm {\bf I}}^{\medstar (J_{\rm {\bf I}} )}  $.  Then $ H_j^c \nleqslant H_{i }^c  $
by  the definition of  $J_{\rm {\bf I}}$.  If  $S_j^c \in\lceil {S_{i}^c } \rceil_{I}$,  then
$ H_j^c  \| H_{i }^c  $  by $ H_j^c \nleqslant H_{i }^c  $ and the definition of  $\lceil {S_{i}^c } \rceil_{I}$.  Otherwise,  by Construction 7.3,  there exists some  $\tau
_{G_b \vert S_{i_{q} u}^c}^{ *}$ in $\tau _{\rm {\bf I}}^{ \medstar (J_{\rm {\bf I}} )}$
such that  $ S_j^c \in G_{S_{i_{q} u}^c}^{ *}.$  Then $ H_{i_{q} }^c \nleqslant H_j^c  $ by Lemma 6.6 (vi).
Thus  $ H_{i }^c \nleqslant H_j^c  $   by  $ H_{i_{q} }^c \leqslant
H_{i }^c  $.   Hence $ H_j^c  \| H_{i }^c  $.
\end{proof}

Lemma 7.4   shows that  Construction 7.3 presents a derivation $ \tau _{\rm {\bf I}}^{\medstar(J_{\rm {\bf I}})}  $ of $G_{\rm {\bf I}}^{\medstar(J_{\rm {\bf I}})} $  from
 $\lceil {S_{i }^c }\rceil_{I}$  such that there doesn't exist  $ S_j^c \in G_{\rm {\bf I}}^{ \medstar (J_{\rm {\bf I}})}  $  satisfying  $ H_j^c \leqslant H_{i }^c $, i.e., all  $ S_j^c \in \lceil {S_{i }^c }\rceil_{I}$ satisfying  $ H_j^c \leqslant H_{i }^c $  are eliminated by Construction 7.3.  We generalize this procedure as follows.

\begin{construction}
Let  $ H \in \tau ^ * $,   $ H_{1}\subseteq H$ and $ H_{2}\subseteq_{c} G\vert G^{\ast} $.  Then
 $ G_{H:H_{l}}^{\medstar(J_{H:H_{l}})}  $ and its derivation  $ \tau _{H:H_{l}}^{\medstar (J_{H:H_{l}})}  $ for $ l=1,2  $ are constructed by  procedures similar to that of Construction 7.3 such that  $ H_j^c \nleqslant H $ for all  $ S_j^c \in G_{H:H_{l}}^{\medstar(J_{H:H_{l}})}$, where
$ G_{H:H_{1}}^{\medstar(0)}\coloneq G_{H:H_{1}}^{\ast} $,
$ \tau _{H:H_{1}}^{\medstar(0)}\coloneq \tau _{H:H_{1}}^{\ast}$, which are defined by Construction 4.7.
\end{construction}

We sometimes write $J_{\rm {\bf I}}$,  $J_{H:H_{l}}$ as  $J$ for simplicity.  Then the following lemma holds clearly.
\begin{lemma}
(i) $ \cfrac{\underline{\,\,\,\, H_{l}\,\,\,\,}}{G_{H:H_{l}}^{ \medstar(J)} }
 \left\langle {\tau _{H:H_{l}}^{ \medstar(J)} } \right\rangle  $,  $ H_j^c \nleqslant H$  for all $ S_j^c \in G_{H:H_{l}}^{ \medstar(J)}$.

(ii)  If  $S_i^c\in H $ and $ H_i^c > H $  then  $ G_{H:S_i^c }^{ \medstar (J)} = S_i^c$.

(iii) If $S\in_c G$  or,   $S\in_c G^{\ast}$  is a copy of  $S_{i1}^{c}$ and $ H_i^c \nleqslant H $  then $ G_{H:S}^{ \medstar (J)} = S$.

(iv) Let  $ H'\vert H'' \subseteq H\in \tau^{\ast}$. Then  $ G_{H:H'\vert H''}^{ \medstar (J)} =
G_{H:H'}^{\medstar(J)} \vert G_{H:H''}^{\medstar (J)}$ by suitable assignments of
 identification numbers to new occurrences of  $p$ in constructing
 $ \tau_{H:H'\vert H''}^{ \medstar (J)} $,
$\tau_{H:H'}^{\medstar(J)}$ and  $ \tau_{H:H''}^{\medstar (J)}$.

(v) $ G_{\rm {\bf I}}^{\medstar(J)}  =
\bigcup \{G_{H_{i}^{c}:S_{j}^{c}}^{\medstar(J)}:S_j^c \in\lceil {S_{i}^c } \rceil_{I},H_j^c\leqslant  H_{i}^{c} \} \vert \bigcup \{S_{j}^{c}:S_j^c \in\lceil {S_{i}^c } \rceil_{I},H_j^c\nleqslant H_{i}^{c}\}\vert \bigcup\{S:S\in\lceil {S_{i}^c } \rceil_{I},S\in_c G\}$.

\end{lemma}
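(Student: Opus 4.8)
The plan is to verify the five clauses one at a time. Since each of $G_{H:H_l}^{\medstar(q)}$, $\tau_{H:H_l}^{\medstar(q)}$ and $G_{\rm I}^{\medstar(q)}$ is defined by an explicit recursion (Constructions~7.3 and~7.5, built on the derivation-pruning of Construction~4.7), every clause should come out of an induction on the stage index $q$, using the Section~6 properties of pruning; I would not expect any of them to need a new idea.

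For (i) I would copy the argument of Lemma~7.4(iii)--(iv): $\tau_{H:H_l}^{\medstar(q+1)}$ is obtained from $\tau_{H:H_l}^{\medstar(q)}$ by grafting onto its conclusion the sequence of pruned derivations $\tau_{S_{i_k^q1}^c}^{*},\dots,\tau_{S_{i_k^q m_{i_k^q}}^c}^{*}$, each of which is a derivation from a single $pEC$-sequent by Lemma~6.6(ii); so by induction $\tau_{H:H_l}^{\medstar(J)}$ is a derivation of $G_{H:H_l}^{\medstar(J)}$ from $H_l$, and ``$H_j^c\nleq H$ for all $S_j^c\in G_{H:H_l}^{\medstar(J)}$'' is exactly the termination condition fixing $J=J_{H:H_l}$. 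For (ii), $H_i^c>H$ gives $H_i^c\nleq H$, so the construction from $S_i^c$ stops at once with $G_{H:S_i^c}^{\medstar(J)}=G_{H:S_i^c}^{\medstar(0)}=G_{H:S_i^c}^{*}$, and $G_{H:S_i^c}^{*}=S_i^c$ because the copy of $S_i^c$ sitting in $H$ can only be one of the side copies $S_{iv}^c$ with $v\geq 2$: the focus copy $S_{i1}^c$ is transformed by the rule at $H_i^c$ and does not occur in any node below $H_i^c$, whereas by Proposition~4.15(i) the side copies lie in every node $\leq H_i^c$ and, belonging to $G^{*}$, travel unchanged to the root, so pruning from one of them is trivial (Lemma~6.6(iii)). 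Clause (iii) is the same point again: if $S\in_c G$ then $S$ is a copy of a sequent of $G_0$, hence not a copy of any $S_j^c$, so no elimination step is eligible and $G_{H:S}^{\medstar(J)}=G_{H:S}^{\medstar(0)}=S$; and if $S$ is a copy of $S_{i1}^c$ with $H_i^c\nleq H$, the only $pEC$-sequent occurring in $S$ is a copy of $S_i^c$, which fails the eligibility requirement $H_i^c\leq H$, so again the construction halts immediately and $G_{H:S}^{\medstar(J)}=S$.

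Clause (iv) rests on the fact that derivation-pruning distributes over $\vert$, $G_{H:H'\vert H''}^{*}=G_{H:H'}^{*}\vert G_{H:H''}^{*}$ (Lemma~6.4(ii)); one then checks by induction on the stage that the elimination steps on the two halves can be run independently, copying the relevant $\tau_{S_{ju}^c}^{*}$ separately into the $H'$-part and the $H''$-part with fresh labels chosen disjointly from one another and consistently with the combined derivation $\tau_{H:H'\vert H''}^{\medstar(q)}$. Clause (v) is then obtained by unwinding Construction~7.3 for the single branch $\lceil S_{i_k}^c\rceil_I$: at stage $q$ every copy of $S_{i_k^q}^c$ present has $H_{i_k^q}^c\leq H_{i_k}^c$ and is replaced by the matching $G_{S_{i_k^q u}^c}^{*}$, while by (ii)--(iii) the sequents $S_j^c$ of the branch with $H_j^c\nleq H_{i_k}^c$ and those coming from $G$ are never touched; comparing this stage-by-stage with the definition of $G_{H_{i_k}^c:S_j^c}^{\medstar(J)}$ and using (iv) to split $\lceil S_{i_k}^c\rceil_I$ into individual sequents yields the displayed union.

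The main obstacle here is not mathematical but notational: making the identification-number reassignments in (iv) and (v) precise enough that the three derivations $\tau_{H:H'\vert H''}^{\medstar(J)}$, $\tau_{H:H'}^{\medstar(J)}$ and $\tau_{H:H''}^{\medstar(J)}$ literally agree on their common part and keep the label sets of the two halves disjoint throughout (which is what the later constraint-contraction steps of ${\rm {\bf GL}}_\Omega$ require). Once Constructions~7.3 and~7.5 are read with that convention in force, every clause is indeed immediate.
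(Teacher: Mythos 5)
Your proposal follows the same route as the paper: (i) by transplanting the argument of Lemma~7.4, (ii) by observing that the copy of $S_i^c$ at $H<H_i^c$ must be a side copy $S_{iu}^c$ with $u\geq 2$ that travels unchanged to the root so that $J=0$, (iii) analogously, (iv) by the distributivity of pruning over $\vert$ (Lemma~6.4) together with a simultaneous stage-by-stage construction with consistent identification numbers, and (v) by splitting the branch via (iv). This matches the paper's proof (which in fact omits (i), (iii) and (v) with exactly the reductions you give), so no further comment is needed.
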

\begin{proof}
(i) is proved by a procedure similar to that of  Lemma 7.4 (iii),  (iv)  and omitted.

(ii) Since $S_{i1}^{c}$ is the focus sequent of  $H_{i}^{c}$ then it is revised by some rule at
the node lower than $H_{i}^{c}$.  Thus $S_i^c\  \in H$ is some copy of  $S_{i1}^{c}$ by  $ H_i^c > H $.
Hence $S_i^c\ $ has the form $S_{iu}^c $ for some $u\geq2$.  Therefore it is transferred  downward to
$G\vert G^{\ast}$, i.e.,   $S_i^c\  \in G\vert G^{\ast}$. Then
$ G_{H:S_i^c }^{ \medstar (0)} =G_{H:S_i^c }^{ \ast} = S_i^c$.
Since there exists  no  $S_j^c\in  G_{H:S_j^c }^{ \medstar (0)}, H_j^c\leqslant  H$ then $J=0$.  Thus
$ G_{H:S_i^c }^{ \medstar (J)} = S_i^c$.

(iii)  is proved by a procedure similar to that of (ii) and omitted.

(iv) Since  $ H'\vert H'' \subseteq H\in \tau^{\ast}$,  then $ H'\bigcap H'' = \emptyset $
by Proposition 6.2.  Thus $ G_{H:H'\vert H''}^{\medstar (0)} = G_{H:H'\vert H''}^{\ast} =
G_{H:H'}^{\ast} \vert G_{H:H''}^{\ast}=G_{H:H'}^{\medstar(0)} \vert G_{H:H''}^{\medstar (0)}$.
Suppose that  $ G_{H:H'\vert H''}^{ \medstar (q)} =G_{H:H'}^{\medstar(q)} \vert G_{H:H''}^{\medstar q)}$ for
some $q\geq0$.  Then all copies $\{S_{i_{q} u}^c \}_{u = 1}^{m_{q}}$
of  $ S_{i_{q} }^c  $  in $G_{H:H'\vert H''}^{ \medstar (q)}$  are  divided two
subsets $\{S_{i_{q} u}^c \}_{u = 1}^{m_{q}}\bigcap G_{H:H'}^{\medstar(q)} $  and
$\{S_{i_{q} u}^c \}_{u = 1}^{m_{q}}\bigcap G_{H:H''}^{\medstar(q)} $.  Thus we can construct
$ G_{H:H'\vert H''}^{ \medstar (q+1)}, G_{H:H'}^{\medstar(q+1)} $ and $ G_{H:H''}^{\medstar (q+1)}$
simultaneously and assign  the same identification numbers to  new occurrences of  $p$  in
$G_{H:H'}^{\medstar(q+1)} $ and $ G_{H:H''}^{\medstar (q+1)}$ as the corresponding one in
$ G_{H:H'\vert H''}^{ \medstar (q+1)}$.  Hence
$ G_{H:H'\vert H''}^{ \medstar (q+1)} =
G_{H:H'}^{\medstar(q+1)} \vert G_{H:H''}^{\medstar (q+1)}$. Then
  $ G_{H:H'\vert H''}^{ \medstar (J)} =
G_{H:H'}^{\medstar(J)} \vert G_{H:H''}^{\medstar (J)}$.

Note that the requirement is
imposed only on one derivation that  distinct occurrence of $p$ has different identification number.
We permit  $ G_{H:H''}^{\medstar (q+1)}= G_{H:H''}^{\medstar (q)}$ or   $ G_{H:H'}^{\medstar (q+1)}= G_{H:H'}^{\medstar (q)}$ in the proof above,  which has no essential  effect on the proof of the claim.

(v) is immediately from (iv).
\end{proof}

Lemma 7.6(v) shows that   $G_{\rm {\bf I}}^{\medstar(J)}$  could be
constructed by  applying $ \tau_{H_{i}^{c}:S_j^c }^{ \medstar (J)}$ sequentially to each
 $S_j^c \in\lceil {S_{i}^c } \rceil_{I}$ satisfying $H_j^c\leqslant  H_{i}^{c}$. Thus the requirement   $ H_{i_{q+1}}^{c}< H_{i_{q}}^{c} $ in Construction 7.3  is not necessary, but which make the termination of the procedure obvious.

\begin{construction}  Apply  $ (EC_\Omega^{\ast} ) $  to  $ G_{\rm {\bf I}}^{ \medstar (J)} $  and denote the resulting hypersequent by  $G_{\rm {\bf I}}^\medstar$ and its derivation by  $ \tau _{\rm{\bf I}}^\medstar  $. It is possible that  $ (EC_\Omega^{\ast} ) $  is not applicable to  $ G_{\rm {\bf I}}^{\medstar (J )}  $  in which case we apply  $ \left\langle { ID_\Omega}
\right\rangle  $  to it for the regularity of the derivation.
\end{construction}

\begin{lemma}

(i) $ \cfrac{\underline{\lceil {S_{i}^c } \rceil_{I}
}}{G_{\rm {\bf I}}^\medstar}
\left\langle {\tau _{\rm {\bf I}}^\medstar } \right\rangle  $,
 $G_{\rm {\bf I}}^\medstar $  is
closed  and  $ H_j^c  \| H_{i }^c  $  for all  $ S_j^c \in G_{\rm {\bf I}}^\medstar$;

(ii) $ \tau _{\rm {\bf I}}^\medstar  $  is constructed by applying elimination
rules, say,
  $ \cfrac{\underline{\,\, G_b \vert S_{i_{q} u}^c\,\,}}
  {G_b \vert G_{S_{i_{q} u}^c}^{ *} }\left\langle {\tau
_{G_b \vert S_{i_{q} u}^c}^{*} } \right\rangle  $,   and the fully
constraint contraction rules, say,  $ \cfrac{\underline{G_{2}}}{G_{1} }\left\langle {EC_\Omega ^ * } \right\rangle  $,   where  $ H_{i_{q} }^c
\leqslant H_{i }^c  $,    $ G_b \vert S_{i_{q} u}^c  $  is closed for  $ 0 \leqslant
q \leqslant J-1$, $1 \leqslant u \leqslant m_{q}$.
\end{lemma}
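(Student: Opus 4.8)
The plan is to obtain this statement as a direct corollary of Constructions 7.3 and 7.7 together with Lemma 7.4, so that the proof is essentially a matter of assembling facts already established. First I would recall that, by Construction 7.7, $\tau_{\rm {\bf I}}^\Omega$ is nothing but the derivation $\tau_{\rm {\bf I}}^{\medstar(J)}$ of $G_{\rm {\bf I}}^{\medstar(J)}$ from $\lceil S_{i_k}^c\rceil_I$ supplied by Lemma 7.4 (iii), extended by the single closing inference of Construction 7.7, which is either $\langle EC_\Omega^*\rangle$ applied to $G_{\rm {\bf I}}^{\medstar(J)}$ or, when the former is not applicable, $\langle ID_\Omega\rangle$. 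In either case the derivation claim $\cfrac{\underline{\lceil S_{i_k}^c\rceil_I}}{G_{\rm {\bf I}}^\medstar}\langle\tau_{\rm {\bf I}}^\Omega\rangle$ is immediate.

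Next I would settle the two structural assertions in (i). By Lemma 7.4 (ii), $G_{\rm {\bf I}}^{\medstar(J)}$ is closed. Since $\langle ID_\Omega\rangle$ changes nothing, assume the final inference is $\langle EC_\Omega^*\rangle$, i.e. a (possibly iterated) application of $(EC_\Omega)$. For one step $\cfrac{G'\vert G''\vert G'''}{G'\vert G''}(EC_\Omega)$ with $G''$, $G'''$ closed and $G'\vert G''\vert G'''$ closed, the fact that $v_l$ and $v_r$ split as disjoint unions over the components of a $\mathbf{GL}_\Omega$-hypersequent (Definition 4.16 (iii)) forces $v_l(G')=v_r(G')$; hence $G'\vert G''$ is closed, and by induction on the number of steps $G_{\rm {\bf I}}^\medstar$ is closed. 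As each such step only deletes components, every sequent of $G_{\rm {\bf I}}^\medstar$ already occurs in $G_{\rm {\bf I}}^{\medstar(J)}$, so $S_j^c\in G_{\rm {\bf I}}^\medstar$ implies $S_j^c\in G_{\rm {\bf I}}^{\medstar(J)}$, and Lemma 7.4 (iv) then yields $H_j^c\| H_{i_k}^c$. This completes (i).

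For (ii) I would simply unwind how the derivation was built. In the $q$-th stage of Construction 7.3, $\tau_{\rm {\bf I}}^{\medstar(q+1)}$ is formed from $\tau_{\rm {\bf I}}^{\medstar(q)}$ by applying the sub-derivations $\tau_{S_{i_k^q 1}^c}^*,\dots,\tau_{S_{i_k^q m_{i_k^q}}^c}^*$ in turn to the copies $S_{i_k^q u}^c$ occurring in $G_{\rm {\bf I}}^{\medstar(q)}$; read as a rule of $\mathbf{GL}_\Omega$, the $u$-th such step is precisely the elimination rule $\tau_{G_b\vert S_{i_k^q u}^c}^*$ of Definition 6.15 (the case $|I|=1$), whose side-hypersequent $G_b=G_{\rm {\bf I}}^{\medstar(q)}\backslash\{S_{i_k^q v}^c\}_{v=1}^{u}\,\vert\,\{G_{S_{i_k^q v}^c}^*\}_{v=1}^{u-1}$ makes $G_b\vert S_{i_k^q u}^c$ closed by Lemma 7.4 (ii) and the closedness of each $G_{S_{i_k^q v}^c}^*\backslash\{S_{i_k^q v}^c\}$ (a consequence of Lemma 6.6 (iv)). The bound $H_{i_k^q}^c\leqslant H_{i_k}^c$ follows from Lemma 7.4 (i), where $H_{i_k^0}^c=H_{i_k}^c$ and $H_{i_k^{q+1}}^c<H_{i_k^q}^c$, by transitivity. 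Composing all these elimination-rule applications yields $\tau_{\rm {\bf I}}^{\medstar(J)}$, and appending the $\langle EC_\Omega^*\rangle$- (or $\langle ID_\Omega\rangle$-) step of Construction 7.7 yields $\tau_{\rm {\bf I}}^\Omega$, which is exactly the form asserted in (ii).

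The argument is mostly routine; the one point that needs genuine care — and hence what I would flag as the main obstacle — is the bookkeeping of identification numbers when the sub-derivations are spliced together. Each $\tau_{S_{i_k^q u}^c}^*$ must be instantiated with fresh indices for its new occurrences of $p$ (as prescribed in Construction 7.3), and after every splice one must check that the composite tree is still a legitimate $\mathbf{GL}_\Omega$-derivation, i.e. that the closedness and disjointness side-conditions of Definition 4.16 (iv), (v), (vii) still hold at every node. This is exactly what the closedness statements of Lemma 7.4 (ii) and Lemma 6.6 (iv), together with the consistent-labelling conventions recorded in Lemma 7.6 (iv) and Remark 6.16, were set up to guarantee, so the only work left is to invoke them in the right order.
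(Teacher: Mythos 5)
Your proposal is correct and follows essentially the same route as the paper, whose entire proof of this lemma is "Immediately from Lemma 7.4"; you have simply spelled out the routine details of how Lemma 7.4 together with Constructions 7.3 and 7.7 yield claims (i) and (ii), including the (correct) observation that $(EC_\Omega)$ preserves closedness and only deletes components, so Lemma 7.4(iv) transfers to $G_{\rm {\bf I}}^\medstar$.
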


\begin{proof}
Immediately from Lemma 7.4.
\end{proof}

\begin{definition}
Let  $ G' \in\tau _{\rm {\bf I}}^{ \medstar (J )}, H'\subseteq G' $ and $\left| v_{l}(H')\right|+\left| v_{r}(H')\right|\geq1$.  $ H' $  is called separable in  $ \tau _{\rm {\bf I}}^{\medstar (J )}  $ if  there exists $ H\subseteq G_{\rm {\bf I}}^{\medstar(J )}  $  such that
$\left| v_{l}(S) \right|+\left| v_{r}(S) \right|=1$ for all $S\in H$,  $ v_l (H) = v_l (H') $ and  $ v_r (H) = v_r(H') $,  and  $H'$  is  called  to be separated into  $H$ and $\widehat{H'}\coloneq H$.
\end{definition}

Note that  $ \tau _{\rm {\bf I}}^{\medstar(J )}  $  is a derivation
without $(EC_{\Omega})$  in  $ {\rm {\bf GL}}_\Omega  $.  Then we can extract elimination derivations from it by  Construction 4.7.

\begin{notation}
Let  $H'\subseteq G'\in \tau _{\rm {\bf I}}^{\medstar (J )}$.
 $ \tau _{\rm {\bf I}\{G':H'\}}^{\medstar(J )}  $  denotes the derivation
from $H'$,   which extracts from $ \tau _{\rm {\bf I}}^{\medstar (J )}$ by
 Construction 4.7,  and  denote  its  root  by
$G_{{\rm {\bf I}}\{G':H'\}}^{\medstar(J )}$.
\end{notation}

The following two lemmas  show that Construction 7.3 and 7.5 force some sequents in  $ \lceil {S_{i}^c } \rceil_{I}$ or $H'$ to be separable.

\begin{lemma}
Let $ \cfrac{G'\vert S' \quad G''\vert S''}{H\equiv G'\vert G''\vert H'}(II) \in \tau ^ *  $,
$\tau_{G_{b } \vert S_{i_{q}u }^c}^{ *}\in\tau _{\rm {\bf I}}^\medstar $,
 $ \cfrac{G_{b } \vert \left\langle {G'\vert S'
     } \right\rangle_{S_{i_{q}u }^c }\,\,\, G''\vert S'' }{H_{1}\equiv G_{b} \vert \left\langle {G'}
\right\rangle _{S_{i_{q}u }^c} \vert G''\vert H'}(II)\in\tau
_{G_{b } \vert S_{i_{q}u }^c}^{ *} $.  Then $H'$  is  separable in  $ \tau _{\rm {\bf I}}^{\medstar (J)}$  and
there is a unique copy of  $ \widehat{S''}\vert G_{{\rm {\bf I}}\{H_{1}:G''\}}^{ \medstar(J)}  $
 in  $G_{\rm {\bf I}}^\medstar$.
\end{lemma}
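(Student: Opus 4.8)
The plan is to unfold the definitions of the objects involved and track how the two-premise rule $(II)$ at node $H$ survives the extraction process. First I would observe that, by hypothesis, $\tau_{G_b \vert S_{i_k^q u}^c}^{*}$ is one of the elimination derivations appearing in $\tau_{\rm {\bf I}}^{\Omega}$, and the displayed instance of $(II)$ inside it tells us that $S' \in \langle G'\vert S'\rangle_{S_{i_k^q u}^c}$ (otherwise, by Construction 4.7(ii), the left premise would collapse and $(II)$ would be replaced by $(ID_\Omega)$, not survive as $(II)$). In particular $G'\vert S' \leqslant H_{i_k^q}^c$ along the thread used to build the elimination derivation, so the eigenvariables labeling $S'$ and the formulas in $H'$ are among those ``traced back'' into $S_{i_k^q u}^c$; meanwhile $G''\vert S''$ sits on the side and is copied verbatim. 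This is exactly the configuration of Lemma 6.9, which I would invoke to pin down that $S'$ really is in the pruned hypersequent, and of Lemma 6.6(iv) to control $v_l, v_r$ of $G_{S_{i_k^q u}^c}^{*}$.

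Next I would establish separability of $H'$ in $\tau_{\rm {\bf I}}^{\medstar(J)}$. The point is that $H'$ is the principal (hyper)sequent of $(II)$ at $H$, so its eigenvariables are inherited from $v_l(S')\cup v_l(S'')$ and $v_r(S')\cup v_r(S'')$ by Lemma 4.17(ii). When $\tau_{\rm {\bf I}}^{\medstar(J)}$ is built by Construction 7.3/7.5, each $S_j^c$ with $H_j^c \leqslant H_{i_k}^c$ that passes through $H$ gets unfolded, and at the bottom $G_{\rm {\bf I}}^{\medstar(J)}$ contains sequents $S$ with $|v_l(S)|+|v_r(S)|\leqslant 1$ carrying exactly the eigenvariable occurrences originally in $H'$ — this is the content of Lemma 7.6(v) together with the assumption (stated at the start of Section 7) that $|v_l(S)|+|v_r(S)|\leqslant 1$ for every $S\in G$. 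So I would define $H$ to be the sub-hypersequent of $G_{\rm {\bf I}}^{\medstar(J)}$ collecting precisely those sequents, check $v_l(H)=v_l(H')$ and $v_r(H)=v_r(H')$ using Lemma 4.17 and Lemma 6.6(iv)/Lemma 6.13(iii), and conclude $H'$ is separable with $\widehat{H'}=H$ in the sense of Definition 7.9.

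For the uniqueness claim about the copy of $\widehat{S''}\vert G_{{\rm {\bf I}}\{H_1:G''\}}^{\medstar(J)}$ in $G_{\rm {\bf I}}^{\medstar}$, I would argue that because $G''\vert S''$ is untouched by the pruning that produced $\tau_{G_b\vert S_{i_k^qu}^c}^{*}$ (it lies strictly off the traced thread, by Lemma 6.7 applied with the roles of $G', G''$), the whole of $G''$ — hence $\widehat{S''}$ together with the elimination-derivation image $G_{{\rm {\bf I}}\{H_1:G''\}}^{\medstar(J)}$ — descends to $G_{\rm {\bf I}}^{\medstar(J)}$ as a closed disjoint sub-hypersequent, and then $(EC_\Omega^{*})$ in Construction 7.7 removes all duplicate closed copies, leaving exactly one. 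The ``exactly one'' part requires that the copy in question is itself closed and that $(EC_\Omega^{*})$ is fully constraint, i.e. no further closed sub-hypersequent is a copy of it afterwards — this follows from Definition 4.16(v) and Lemma 7.8(i) (closedness of $G_{\rm {\bf I}}^{\medstar}$).

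I expect the main obstacle to be the bookkeeping in the separability step: matching the multiset $v_l(H')\ominus v_r(H')$ against the eigenvariable occurrences that actually reach the bottom of $\tau_{\rm {\bf I}}^{\medstar(J)}$, since the unfolding in Construction 7.3 reassigns fresh identification numbers at each stage and one must carefully compose the bijections $\sigma_l,\sigma_r$ (from Definition 4.16(iv), used to build each $\tau_{\mathcal{I}'}^{*}$) to see that no eigenvariable of $H'$ is ``lost'' into a sequent that later gets contracted away by $(EC_\Omega^{*})$. Lemma 5.3(iii) (invariance of $[\,\cdot\,]\backslash(\cdot)$ under equal symmetric differences) is the tool I would lean on to finish this, applied node-by-node along $Th(H)$.
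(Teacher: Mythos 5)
Your overall architecture is right (establish $S'\in\left\langle G'\vert S'\right\rangle_{S_{i_k^q u}^c}$, reduce separability to the fact that $\left|v_l(S)\right|+\left|v_r(S)\right|\leqslant 1$ for $S\in G$, then get uniqueness of the closed copy from $(EC_\Omega^{*})$), but the central technical step is missing. The bound $\left|v_l(S)\right|+\left|v_r(S)\right|\leqslant 1$ holds only for sequents that are copies of sequents in $G_0$; the hypersequent $G_{{\rm {\bf I}}\{H_{1}:H'\}}^{\medstar(J)}$ may still contain sequents $S_j^c$ that are copies of $(pEC)$-sequents from $G^{\ast}$ (namely those with $H_j^c\,\|\,H_{i_k}^c$, which Construction 7.3/7.5 does not eliminate), and such sequents can carry several occurrences of $p$. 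So before you can "define $H$ to be the sub-hypersequent collecting precisely those sequents," you must prove that no eigenvariable of $H'$ lands in any surviving $S_j^c$, i.e.\ that $v_l(S_j^c)\bigcap v_l(H')=\emptyset$ and $v_r(S_j^c)\bigcap v_r(H')=\emptyset$ for every $S_j^c\in G_{{\rm {\bf I}}\{H_{1}:H'\}}^{\medstar(J)}$. Neither Lemma 7.6(v) nor Lemma 6.13(iii) gives this: the former is only a decomposition of $G_{\rm {\bf I}}^{\medstar(J)}$, and the latter is a left/right balance statement, not a disjointness statement. Your fallback tool for the "main obstacle," Lemma 5.3(iii) applied along $Th(H)$, concerns minimal closed sub-hypersequents $\left[\,\cdot\,\right]_G$ and does not address this either.

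The paper closes exactly this gap with a position argument: for each surviving $S_j^c$ it rules out $S_j^c\in\lceil S_{i_k}^c\rceil_I$ (which would force $S_j^c=S'$ or $S''$, contradicting the restriction on $(II)$), locates $S_j^c$ inside some $G_{S_{i_k^{q'}u'}^c}^{*}$, derives $H_{i_k^{q'}}^c\,\|\,H_j^c$ from Lemma 6.6(vi), and then propagates parallelism into the $\Omega$-derivation to conclude $H_j^c\,\|_{\Omega}\,H_{1}$, whence the variable sets are disjoint. Only after that does the $\left|v_l(S)\right|+\left|v_r(S)\right|\leqslant 1$ property for the $G$-part apply and yield separability; and only after $S''$ is separated into $\widehat{S''}$ does $\widehat{S''}\vert G_{{\rm {\bf I}}\{H_{1}:G''\}}^{\medstar(J)}$ become a closed hypersequent whose copies $(EC_\Omega^{*})$ contracts to one. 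You should supply this disjointness argument (or an equivalent one) before the rest of your outline goes through.
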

\begin{proof}
We write  $\leqslant_{ \tau _{\rm {\bf I}}^\medstar}$ as  $\leqslant_{\medstar}$ for  simplicity.
Clearly,   $G _{\rm {\bf I}\{H_1:G''\mid H'\}}^{\medstar(J )}$ is  a copy of $G _{H:G''\mid H'}^{\medstar(J)}  $ and,\\
$ \tau _{\rm {\bf I}\{H_1:G''\mid H'\}}^{\medstar(J)}  $ has no difference with $\tau _{H:G''\mid H'}^{\medstar(J)}  $ except some applications of $(ID_{\Omega})$ and identification numbers of some $p's$.

By Construction 4.7,  $G'\vert S' \in Th_{\tau^{\ast}}(H_{i_{q} }^c)$, $ S'\in\left\langle {G'\vert S'} \right\rangle_{S_{i_{q}u }^c }$ by $G''\vert H'\subseteq H_{1}\in \tau _{G_{b } \vert S_{i_{q}u}^c }^{*}$. Then $G
_{H_{1}:H'}^{ *}\subseteq G_{S_{i_{q}u }^c}^{ *} $,
$G'\vert S' \leqslant_{\tau^{\ast}} H_{i_{q} }^c\leqslant_{\tau^{\ast}} H_{i }^c$ by Lemma 6.6(v)  and 6.4(i).  $G_{{\rm {\bf I}}\{H_{1}:H'\}}^{\medstar(J)}\subseteq
G_{{\rm {\bf I}}}^{\medstar(J)}$ by  Lemma 4.8 (i).

Let  $ S_j^c  \in G_{{\rm {\bf I}}\{H_{1}:H'\}}^{\medstar(J)}$.  Then
 $ H_j^c \nleqslant_{\tau^{\ast}} H_{i} ^c$
by $G_{{\rm {\bf I}}\{H_{1}:H'\}}^{\medstar(J)}\subseteq
G_{{\rm {\bf I}}}^{\medstar(J)}$.
Suppose that   $S_j^c \in\lceil {S_{i}^c } \rceil_{I}$.     Then  $ H_j^c \|_{\tau^{\ast}} H_{i} ^c$  by $ H_j^c \nleqslant_{\tau^{\ast}} H_{i} ^c$ and the definition of  $\lceil {S_{i}^c } \rceil_{I}$.  Thus $S_j^c \in H'$  by $S_j^c \in\lceil {S_{i}^c } \rceil_{I}$, $ S_j^c  \in G_{{\rm {\bf I}}\{H_{1}:H'\}}^{\medstar(J)}$ and $G_{{\rm {\bf I}}}^{\medstar(J)}\leqslant_{\medstar} H_1\leqslant_{\medstar}\lceil {S_{i}^c } \rceil_{I}$.  Hence $S_j^c =S'$,  a contradiction with the restriction of $(II)$ (See the proof of Lemma 6.7). Therefore $S_j^c \notin\lceil {S_{i}^c }\rceil_I $.
Then,  by Lemma  7.4(iv),  there exists some  $\tau
_{G_{b'}\vert S_{i_{k} w}^c}^{ *}$ in $\tau _{\rm {\bf I}}^{ \medstar (J)}$
such that  $ S_j^c \in G_{S_{i_{k} w}^c}^{ *}$ then $ H_{i_{k}}^c \nleqslant_{\tau^{\ast}} H_j^c  $ by Lemma 6.6 (vi).
$  H_j^c\nleqslant_{\tau^{\ast}}  H_{i_{k}}^c$   by  $ H_{i_{k}}^c \leqslant_{\tau^{\ast}}
H_{i }^c  $, $ H_j^c \nleqslant_{\tau^{\ast}} H_{i} ^c$.   Thus  $ H_{i_{k}}^c \|_{\tau^{\ast}} H_j^c$.
 Let  $ \cfrac{G_{1}\vert S_{1} \quad G_{2}\vert S_{2}}{G_{1}\vert G_{2}\vert H_{2}}(II) \in \tau ^ *  $, where $G_{1}\vert G_{2}\vert H_{2}=H_{i_{k}j}^{V}$, $ G_{1}\vert S_{1}\leqslant_{\tau^{\ast}} H_{i_{k} }^c $, $ G_{2}\vert S_{2}\leqslant_{\tau^{\ast}} H_j^c $.  Then $S_{1}\in\left\langle {G_{1}\vert S_{1}} \right\rangle_{S_{i_{k}w }^c}$,  $G_{\rm {\bf I}}^{\medstar(k)} \backslash
\{S_{i_{k} v}^c \}_{v = 1}^{w} \vert\{ G_{S_{i_{k} v}^c }^* \} _{v = 1}^{w-1 } \vert
\left\langle {G_{1}\vert S_{1}} \right\rangle_{S_{i_{k} w}^c}\in \tau _{\rm {\bf I}}^{\medstar
(J)}$, $ G_{2}\vert S_{2} \in \tau _{\rm {\bf I}}^{\medstar
(J)}$. Thus
$ H_j^c  \|_{\medstar}  \lceil {S_{i}^c } \rceil_{I}$  by $G_{\rm {\bf I}}^{\medstar(k)} \backslash \{S_{i_{k} v}^c \}_{v = 1}^{w} \vert\{ G_{S_{i_{k} v}^c }^* \} _{v = 1}^{w-1 }  \vert
\left\langle {G_{1}\vert S_{1}} \right\rangle_{S_{i_{k} w}^c}\leqslant_{\medstar} \lceil {S_{i}^c } \rceil_{I}$,
$ G_{2}\vert S_{2}\leqslant_{\medstar}  H_j^c $ and $G_{1}\vert S_{1}\|_{\tau^{\ast}}G_{2}\vert S_{2}$.

Suppose that  $ H_{1}<_{\medstar} H_j^c$.  Then  $S_j^c \in H'$  by $S_j^c \in H_j^c$ and  $ S_j^c  \in G_{{\rm {\bf I}}\{H_{1}:H'\}}^{\medstar(J)}$.  Thus $S_j^c =S'$ or  $S_j^c =S''$,  a contradiction with the restriction of $(II)$ hence $ H_j^c\leqslant_{\medstar} H_{1}$  or $ H_j^c \|_{\medstar} H_{1}$.   Therefore $ H_j^c \|_{\medstar} H_{1}$  by $ H_{1}\leqslant_{\medstar} \lceil {S_{i}^c } \rceil_{I}$, $ H_j^c  \|_{\medstar} \lceil {S_{i}^c } \rceil_{I}$.  Thus $ v_l(S_j^c) \bigcap v_l (H' ) = \emptyset  $,   $ v_r (S_j^c) \bigcap v_r (H')
 = \emptyset  $.

 Since $ G_{{\rm {\bf I}}\{H_{1}:H'\}}^{\medstar(J)}\subseteq_{c} G\vert G^{\ast} $,
we divide it into two hypersequents  $G_{{\rm {\bf I}}\{H_{1}:H'\}}^{0(J)}$ and
 $G_{{\rm {\bf I}}\{H_{1}:H'\}}^{\ast(J)}$ such that
 $G_{{\rm {\bf I}}\{H_{1}:H'\}}^{\medstar(J)}=
 G_{{\rm {\bf I}}\{H_{1}:H'\}}^{0(J)}\vert
G_{{\rm {\bf I}}\{H_{1}:H'\}}^{\ast(J)},
G_{{\rm {\bf I}}\{H_{1}:H'\}}^{0(J)}\subseteq_{c} G,
G_{{\rm {\bf I}}\{H_{1}:H'\}}^{\ast(J)}\subseteq_{c} G^{\ast} $.
   Hence  $ v_l
(G_{{\rm {\bf I}}\{H_{1}:H'\}}^{\ast(J)}) \bigcap v_l (H' ) = \emptyset  $  and  $ v_r (G_{{\rm {\bf I}}\{H_{1}:H'\}}^{\ast(J)}) \bigcap v_r (H')
 = \emptyset  $.   Thus  $v_l (H')\subseteq  v_l(G_{{\rm {\bf I}}\{H_{1}:H'\}}^{0(J)}) $  and  $v_r (H')\subseteq v_r (G_{{\rm {\bf I}}\{H_{1}:H'\}}^{0(J)}) $ by  $v_l (H' )\subseteq  v_l(G_{{\rm {\bf I}}\{H_{1}:H'\}}^{\medstar(J)} ) $  and  $v_r (H' )\subseteq v_r (G_{{\rm {\bf I}}\{H_{1}:H'\}}^{\medstar(J)}) $.
Since $\left| {v_{l}(S)}\right|+\left| {v_{r}(S)} \right|\leq1$ for all
$S\in G_{{\rm {\bf I}}\{H_{1}:H'\}}^{0(J)}\subseteq_{c} G $,   then there exists one and only one sequent
 $ S \in G_{{\rm {\bf I}}\{H_{1}:H'\}}^{0(J)} $ for every
occurrence of  $ p_{i} $  in  $ H'$  such that  $ p_{i} $  occurs in  $ S$ thus $ H'$  is  separable in  $G_{{\rm {\bf I}}\{H_{1}:H'\}}^{\medstar(J)}$  and
let it be separated to  $\widehat{H'} $.
Then  $S',  S'' $  are separable in  $ \tau _{\rm {\bf I}}^{ \medstar (J)}  $  by  $v_l(S'\vert S'' )=v_l(H' ),  v_r(S'\vert S'' )=v_r(H' )  $ and separated to   $\widehat{S'}$ and $\widehat{S'' }$, respectively.
 Then  $ \widehat{S''}
 \vert G_{{\rm {\bf I}}\{H_1:G''\}}^{\medstar (J)} \subseteq G_{\rm {\bf I}}^{0(J)}
\vert G_{\rm {\bf I}}^{ * (J)}  $  is closed since
 $ G''\vert S'' $  is closed.  Thus all copies of   $ \widehat{S''}\vert G_{{\rm {\bf I}}\{H_1:G''\}}^{ \medstar(J)}  $  in  $ \tau _{\rm {\bf I}}^{ \medstar (J)}  $ are contracted into one by  $ (EC_\Omega^{\ast})$ in  $G_{\rm {\bf I}}^\medstar$.
\end{proof}

\begin{lemma}
(i) All  copies of  $S_{i}^c$ in $\lceil {S_{i}^c } \rceil_{I}$  are  separable in  $ \tau _{\rm {\bf I}}^{\medstar (J)}$;

(ii) Let $H \in \tau ^ *$, $H' \subseteq H$,
 $H_{j}^{c}\leqslant  H$ or   $H_{j}^{c}\| H$ for all $S_{j}^{c} \in G_{H:H' } ^{\ast}$.  Then $H'$  is  separable in  $ \tau _{H:H'}^{\medstar (J)}$.
\end{lemma}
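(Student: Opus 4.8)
The plan is to prove part (ii) in full and then to deduce part (i) as the instance $H=H_{i_k}^c$, $H'=\{S_{i_k1}^c\}$. For (ii) I would first apply Construction 7.5 to the pair $H'\subseteq H$, obtaining the derivation $\tau_{H:H'}^{\medstar(J)}$ of $G_{H:H'}^{\medstar(J)}$ from $H'$; by Lemma 7.6(i) it satisfies $H_j^c\nleqslant H$ for every $S_j^c\in G_{H:H'}^{\medstar(J)}$, and it is built from elimination rules of $\mathbf{GL}_\Omega$ without any application of $(EC_\Omega)$. Since $G_{H:H'}^{\medstar(J)}\subseteq_c G\vert G^\ast$, I would split it, as in the proof of Lemma 7.11, as $G_{H:H'}^{0(J)}\vert G_{H:H'}^{\ast(J)}$ with $G_{H:H'}^{0(J)}\subseteq_c G$ and $G_{H:H'}^{\ast(J)}\subseteq_c G^\ast$. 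The crux is then the claim that no identification number occurring in $H'$ occurs in $G_{H:H'}^{\ast(J)}$, that is, $v_l(G_{H:H'}^{\ast(J)})\cap v_l(H')=\emptyset$ and $v_r(G_{H:H'}^{\ast(J)})\cap v_r(H')=\emptyset$.

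To prove this claim I would fix $S_j^c\in G_{H:H'}^{\ast(J)}$ — a copy of some $(pEC)$-sequent $S_{jv}^c$ with $v\geqslant 2$ by Notation 4.14 — and argue by two cases, exactly as in the proof of Lemma 7.11. If $S_j^c$ already belongs to $G_{H:H'}^{\ast}=G_{H:H'}^{\medstar(0)}$, then it is literally a sequent $S_{jv}^c$ of $G\vert G^\ast$ that was never eliminated, so the hypothesis of (ii) together with $H_j^c\nleqslant H$ forces $H_j^c\| H$; were some $k$ in $v_l(H')\cap v_l(S_j^c)$, then $S_{jv}^c\in H_j^c$ (Proposition 4.15(i)) and Lemma 4.17(iv) would give both $H_j^c\leqslant p_k\Rightarrow p_k$ and $H\leqslant p_k\Rightarrow p_k$ (as $k\in v_l(H')\subseteq v_l(H)$), whence $H_j^c\nparallel H$ by Proposition 2.12(iii), a contradiction; the $v_r$-case is identical. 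If instead $S_j^c$ was introduced by an elimination rule $\tau_{G_b\vert S_{lu}^c}^{\ast}$ occurring in $\tau_{H:H'}^{\medstar(J)}$ — so $H_l^c\leqslant H$ (only $(pEC)$-nodes with origin $\leqslant H$ are eliminated in Construction 7.5) and $S_j^c$ is a copy with freshly assigned numbers of some $S_{jv}^c\in G_{S_{l1}^c}^{\ast}$ — then Lemma 6.6(vi) gives $H_l^c\nleqslant H_j^c$, while $H_j^c\nleqslant H$ and $H_l^c\leqslant H$ give $H_j^c\nleqslant H_l^c$; hence $H_j^c\| H_l^c$, and then $H_j^c\| H$ by Proposition 2.12(ii). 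The $\|$-argument of the first case, applied now to $H_l^c$ and $H_j^c$, shows that no number of $S_{l1}^c$ occurs in $S_{jv}^c$, so in the fresh copy $S_j^c$ only the freshly assigned numbers survive, and these are disjoint from $v_l(H')\cup v_r(H')$, a set of numbers already present in $\tau^\ast$. This establishes the claim.

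Granting the claim I would conclude as follows. By the standing assumption of Section 7, $|v_l(S)|+|v_r(S)|\leqslant 1$ for every $S\in G$, hence for every sequent of $G_{H:H'}^{0(J)}$. Since $\tau_{H:H'}^{\medstar(J)}$ contains no $(EC_\Omega)$, every identification number of $H'$ reaches $G_{H:H'}^{\medstar(J)}$, and by the claim it must occur in $G_{H:H'}^{0(J)}$; letting $\widehat{H'}$ be the sub-hypersequent of $G_{H:H'}^{0(J)}$ formed by those sequents in which some number of $H'$ occurs, each such sequent is single-$p$, and $v_l(\widehat{H'})=v_l(H')$, $v_r(\widehat{H'})=v_r(H')$, so $H'$ is separated into $\widehat{H'}$, which proves (ii). For (i) I would take $H=H_{i_k}^c$ and $H'=\{S_{i_k1}^c\}\subseteq H_{i_k}^c$; the hypothesis of (ii) holds because $S_j^c\in G_{H_{i_k}^c:S_{i_k1}^c}^{\ast}=G_{S_{i_k1}^c}^{\ast}$ implies $H_{i_k}^c\nleqslant H_j^c$ by Lemma 6.6(vi), i.e. $H_j^c\leqslant H_{i_k}^c$ or $H_j^c\| H_{i_k}^c$. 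Thus $S_{i_k1}^c$ is separable in $\tau_{H_{i_k}^c:S_{i_k1}^c}^{\medstar(J)}$, and since $G_{H_{i_k}^c:S_{i_k1}^c}^{\medstar(J)}\subseteq G_{\rm {\bf I}}^{\medstar(J)}$ by Lemma 7.6(v), the separating hypersequent $\widehat{H'}$ lies in $G_{\rm {\bf I}}^{\medstar(J)}$, so $S_{i_k1}^c$ is also separable in $\tau_{\rm {\bf I}}^{\medstar(J)}$.

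I expect the main obstacle to be the identification-number bookkeeping in the second case of the claim: one must make sure that the sequents of $G_{H:H'}^{\ast(J)}$ produced by the iterated elimination steps use, as far as the eigenvariables of $H'$ are concerned, only freshly assigned numbers — equivalently, that the $G^\ast$-part of each $G_{S_{l1}^c}^{\ast}$ is variable-disjoint from its focus $S_{l1}^c$ — and that the order of nodes is faithfully transported from $\tau^\ast$ to $\tau_{H:H'}^{\medstar(J)}$ via the origin correspondence $\mathcal{O}$ of Remark 6.15. This is the same structural tracking carried out in the proof of Lemma 7.11, which the argument above follows closely.
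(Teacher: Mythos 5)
Your proposal is correct and takes essentially the same route as the paper: the paper's own proof of this lemma is omitted with the remark that it follows "by a procedure similar to that of Lemma 7.11," and your argument is exactly that adaptation — splitting $G_{H:H'}^{\medstar(J)}$ into a $G$-part and a $G^{\ast}$-part, showing via the incomparability/fresh-numbering arguments that the eigenvariables of $H'$ can only land in the $G$-part, and invoking the standing assumption $\left| v_l(S)\right|+\left| v_r(S)\right|\leqslant 1$ for $S\in G$. Your derivation of (i) from (ii) via Lemma 6.6(vi) and Lemma 7.6(v) is also consistent with the paper's machinery.
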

\begin{proof}
(i) and (ii) are proved by a procedure similar to that of Lemma 7.11 and omitted.
\end{proof}

\begin{definition}
The skeleton of  $\tau _{\rm {\bf I}}^\medstar  $,  which we denote by
${\bar \tau } _{\rm {\bf I}}^\medstar  $, is constructed by replacing  all
$ \cfrac{\underline{ \,\,G_b \vert S_{i_{q} u}^c\,\, }}{G_b \vert
G_{S_{i_{q} u}^c}^{ *} }\left\langle {\tau_{G_b \vert S_{i_{q} u}^c}^{*} }
 \right\rangle\in\tau _{\rm {\bf I}}^\medstar $  with
 $ \cfrac{{G_b \vert S_{i_{q} u}^c}}{G_b \vert G_{S_{i_{q} u}^c}^{ *} }(\tau
_{G_b \vert S_{i_{q} u}^c}^{ *})$, i.e,  $G_b \vert S_{i_{q} u}^c$ is the parent node of
$G_b  \vert G_{S_{i_{q} u}^c}^{ *}$ in ${\bar \tau } _{\rm {\bf I}}^\medstar  $.
\end{definition}

\begin{lemma}
 ${\bar \tau } _{\rm {\bf I}}^\medstar$  is a linear structure with the lowest node $G_{\rm {\bf I}}^\medstar$ and the highest $\lceil {S_{i}^c } \rceil_{I}$.
\end{lemma}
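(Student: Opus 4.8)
The plan is to read linearity off directly from the way $\tau_{\rm {\bf I}}^\Omega$ was assembled in Constructions 7.3 and 7.7, once every elimination subderivation has been contracted to a single inference. The one fact that makes this work is that each elimination derivation $\tau_{G_b\vert S_{i_k^q u}^c}^{*}$ occurring inside $\tau_{\rm {\bf I}}^\Omega$ is, by Lemma 6.6(ii) in the case $\vert I\vert=1$ together with Construction 4.7 and Remark 6.15, a derivation of its root $G_b\vert G_{S_{i_k^q u}^c}^{*}$ from the \emph{single} premise $G_b\vert S_{i_k^q u}^c$; so in the skeleton it is replaced, as prescribed in Definition 7.13, by the one-premise inference $\cfrac{G_b\vert S_{i_k^q u}^c}{G_b\vert G_{S_{i_k^q u}^c}^{*}}(\tau_{G_b\vert S_{i_k^q u}^c}^{*})$. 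The only further inferences appearing in $\tau_{\rm {\bf I}}^\Omega$ are the single closing step $(EC_\Omega^{\ast})$ (or $\langle ID_\Omega\rangle$) of Construction 7.7 and the occasional $(ID_\Omega)$ inserted for regularity, and both of these are one-premise rules; hence no node of ${\bar \tau}_{\rm {\bf I}}^\Omega$ can have two parents.

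Concretely I would trace the construction level by level. Put $G_{\rm {\bf I}}^{\medstar(0)}=\lceil S_{i_k}^c\rceil_{I}$ at the top. Fix $q$ with $0\leqslant q\leqslant J-1$. By Construction 7.3 and the figure displayed in the proof of Lemma 7.4(iii), $\tau_{\rm {\bf I}}^{\medstar(q+1)}$ is obtained from $\tau_{\rm {\bf I}}^{\medstar(q)}$ by linking the derivations $\tau_{S_{i_k^q 1}^c}^{*},\dots,\tau_{S_{i_k^q m_{i_k^q}}^c}^{*}$ one after another, the $u$-th being applied to the copy $S_{i_k^q u}^c$ still present in the hypersequent produced by the previous one (that the copies survive in this order is part of Lemma 7.4). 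Under the skeleton contraction these become $m_{i_k^q}$ consecutive one-premise macro steps, so $\tau_{\rm {\bf I}}^{\medstar(q+1)}$ adds only a finite chain on top of the chain already built. Concatenating over $q=0,\dots,J-1$ turns $\tau_{\rm {\bf I}}^{\medstar(J)}$ into a single chain running from $\lceil S_{i_k}^c\rceil_{I}$ down to $G_{\rm {\bf I}}^{\medstar(J)}$, and Construction 7.7 appends one more one-premise step to $G_{\rm {\bf I}}^\medstar$. Therefore ${\bar \tau}_{\rm {\bf I}}^\Omega$ is a tree in which $G_{\rm {\bf I}}^\medstar$ is the root, $\lceil S_{i_k}^c\rceil_{I}$ is the unique leaf, and every node other than $\lceil S_{i_k}^c\rceil_{I}$ has exactly one parent; that is, it is a linear structure with lowest node $G_{\rm {\bf I}}^\medstar$ and highest node $\lceil S_{i_k}^c\rceil_{I}$, as claimed.

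The only delicate point --- and what I expect to be the main obstacle in writing this out carefully --- is verifying that collapsing an elimination subderivation cannot covertly reintroduce branching at the level of the skeleton. Internally $\tau_{G_b\vert S_{i_k^q u}^c}^{*}$ does use the two-premise rules $(COM),(\odot_r),(\to_l),(\wedge_{rw}),(\vee_{lw})$ and hence has many leaves, but by Lemma 6.6(ii) (and Construction 4.7/6.11) for a one-element $I$ all of those leaves except $G_b\vert S_{i_k^q u}^c$ itself are initial sequents, so they are swallowed inside the single macro step and contribute nothing to the skeleton's tree shape. One also has to note that applying the elimination derivation to a side hypersequent $G_b$ and reassigning the identification numbers of the fresh occurrences of $p$ --- as prescribed in Construction 7.3 and Remark 6.15 --- alters neither the number of premises nor the underlying tree. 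Once these points are granted, the linearity of ${\bar \tau}_{\rm {\bf I}}^\Omega$ is immediate from the assembly described above.
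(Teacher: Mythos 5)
Your proposal is correct and rests on exactly the observation the paper uses: in the skeleton every macro step --- the elimination rules $\left\langle\tau_{G_b\vert S_{i_k^q u}^c}^{*}\right\rangle$ and $\left\langle EC_\Omega^{\ast}\right\rangle$ (and $\left\langle ID_\Omega\right\rangle$) --- is a one-premise inference, so no branching can occur. The paper states this in a single sentence; your level-by-level tracing and the remark that the internal two-premise rules of an elimination derivation are absorbed into the macro step are just a fuller write-up of the same argument.
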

\begin{proof}
It holds by all  $\tau_{G_b \vert S_{i_{q} u}^c}^{*}$ and
$EC_{\Omega}^{\ast}$   in $ \tau _{\rm {\bf I}}^\medstar $ being one-premise rules.
\end{proof}

\begin{definition}
We call Construction 7.3 together with 7.7   the separation algorithm of one branch and,  Construction 7.5  the separation algorithm along $H$.
\end{definition}

\section{Separation algorithm of multiple branches }

In this section,  let  $ I =  \{ {H_{i_1 }^c, \cdots,H_{i_m }^c } \}  \subseteq
\{ {H_1^c, \cdots,H_N^c } \}  $ such that  $ H_{i_k }^c  \| H_{i_l }^c  $  for all
 $ 1 \leqslant k < l \leqslant m$. We will generalize the separation algorithm of one branch
to that of multiple branches. Roughly speaking,  we give an algorithm to eliminate all  $S_{j }^c\in G\vert G^{\ast}$ satisfying  $H_{j }^c\leqslant H_{i_k }^c$ for some $H_{i_k }^c\in I$.

\begin{definition}
 $\overline I \coloneq \{H_j^c: H_j^c
\leqslant H_i^c \, \, for\, \, some\, \, \, H_i^c \in I\}$.
\end{definition}

\begin{theorem}$\mathrm{([A.4,  A.5.4])}$
Let $ {\rm {\bf I}}
= \{ \lceil {S_{i_1 }^c } \rceil_{I}, \cdots,  \lceil {S_{i_m }^c } \rceil_{I} \}$.
Then there exist one closed  hypersequent  $ G_{\rm {\bf I}}^\medstar\subseteq _c G\vert G^ *$  and its derivation  $ \tau _{\rm {\bf I}}^\medstar$  from  $ \lceil {S_{i_1
}^c } \rceil_{I}  $,  \ldots,  $ \lceil {S_{i_m }^c }
\rceil_{I}  $  in  $ {\rm {\bf GL}}_{\rm {\bf \Omega }}  $  such that

(i)   $ \tau _{\rm {\bf I}}^\medstar$  is constructed by applying elimination
rules, say,
\[\cfrac{\underline{G_{b_1} \vert S_{j_1}^c \,\,\,
  G_{b_2} \vert S_{j_2}^c \,\,\,\cdots\,\,\,
 G_{b_w}\vert S_{j_w}^c}}{G_{\mathbf{I}_{\rm {\bf j}}}^{\ast}=\{
{G_{b_k }} \} _{k = 1}^w \vert  G_{\mathcal{I}_{\rm {\bf j}}}^{\ast} }\left\langle {\tau
_{\mathbf{I}_{\rm {\bf j}}}^{ * } }
\right\rangle,\]
 and the fully constraint contraction rules,
say  $ \cfrac{\underline{G_{2}}}{G_{1} }\left\langle {EC_\Omega ^ *} \right\rangle $,   where  $ 1 \leqslant w \leqslant m $,   $ H_{j_k}^c \leftrightsquigarrow  H_{j_{l} }^c  $   for all  $1\leqslant k <l\leqslant
w $,   ${I}_{\rm {\bf j}}  = \{ H_{j_1}^c, \cdots,
H_{j_w}^c\} \subseteq \overline I $,   $ \mathcal{I}_{\rm {\bf j}} = \{ {S_{j_1}^c, \cdots, S_{j_w}^c } \}$,  $ {\rm {\bf I}}_{\rm {\bf j}} =
\{ {G_{b_1 } \vert S_{j_1}^c, \cdots,
G_{b_w }\vert S_{j_w}^c } \}$  and  $G_{b_k }\vert S_{j_k}^c$  is closed for
all  $ 1 \leqslant k \leqslant w $. Then  $ H_i^c \nleqslant H_j^c  $  for all  $ S_j^c \in G_{\mathcal{I}_{\rm {\bf j}}}^{\ast}  $  and  $ H_i^c \in I$.

(ii) For all  $ H \in {\bar \tau } _{\rm {\bf I}}^\medstar$,

\[
 \partial _{\tau
_{\rm {\bf I}}^\medstar}(H)  \coloneq \left\{ \begin{array}{l}
 G\vert G^\ast \quad H   \,\,is\,\, the\,\, root \,\,of \,\,  {\bar \tau } _{\rm {\bf
I}}^\medstar\,\,or \,\,   G_{2}   \,\,in \,\,  \cfrac{\underline{G_{2}}}{G_{1}}\left\langle {EC_\Omega ^ * \,\,\,\mathrm{or}\,\,\, ID_\Omega  } \right\rangle \in {\bar \tau } _{\rm
{\bf I}}^\medstar,  \\
 H_{j_k }^c \,\,\,\quad \,\,\,H \,\,  is \,\,  G_{b_k }\vert S_{j_k }^c    \,\,in  \,\, \tau _{\mathbf{I}_{\rm {\bf j}} }^{ * } \in {\bar \tau }_{\rm {\bf I}}^\medstar\,\,   for\,\, some\,\,
 1 \leqslant k \leqslant w,\\
 \end{array} \right.
\]
where,  ${\bar \tau } _{\rm {\bf I}}^\medstar$ is the skeleton  of $\tau _{\rm {\bf I}}^\medstar$ which
is defined as Definition 7.13. Then \\
$ \partial _{\tau _{\rm {\bf I}}^\medstar}
(G_{\mathbf{I}_{\rm {\bf j}}}^{\ast})
\leqslant \partial _{\tau _{\rm {\bf I}}^\medstar} ( {G_{{b_{ k} } }
\vert S_{j_{k } }^c } ) $  for some  $ 1
\leqslant k \leqslant w $  in  $ \tau _{{\rm {\bf I}}_{\rm {\bf j}}}^{ * }  $.

(iii) Let  $H \in {\bar \tau } _{\rm {\bf I}}^\medstar$,   $ G\vert G^{*} < \partial _{\tau_{\rm {\bf I}}^\medstar} (H) \leqslant H_I^V  $,    then $ G_{H_I^V :H}^{ \medstar (J)}\in \tau _{\rm {\bf I}}^\medstar$   and  it is constructed  by applying the  separation algorithm along $H_I^V$  to  $H$ and,   is an upper hypersequent of
 either $ \left\langle {EC_\Omega^ * } \right\rangle $ if it is applicable,   or $ \left\langle {ID_\Omega  } \right\rangle$ otherwise.

(iv)  $ S_j^c \in G_{\rm {\bf I}}^\medstar  $  implies  $ H_j^c  \| H_i^c  $  for all  $H_i^c \in I $
and, $ S_j^c \in G_{\mathcal{I}_{\rm {\bf j}} }^{\ast} $ for some
$\tau
_{\mathbf{I}_{\rm {\bf j}}}^{ * }\in \tau _{\rm {\bf I}}^\medstar$  or $ S_j^c \in \lceil {S_{i_k }^c } \rceil_{I}$ for some $H_{i_k}^c\in I$ satisfying $ H_j^c \nleqslant H_{i_k}^c$.
\end{theorem}

Note that in Claim (i),  bold  $\bf j$  in ${I}_{\rm {\bf j}}, \mathcal{I}_{\rm {\bf j}}$ or $ {\rm {\bf I}}_{\rm {\bf j}}$ indicates the
 $w$-tuple  $(j_1,\cdots,j_w)$ in  $S_{j_1}^c,\cdots,S_{j_w}^c$.  Claim (iv) shows the final aim of Theorem 8.2,  i.e., there exists no  $S_j^c \in G_{\rm {\bf I}}^\medstar  $  such  that $ H_j^c \leqslant H_i^c  $  for some  $H_i^c \in I $.  It is almost impossible to construct
 $ \tau _{\rm {\bf I}}^\medstar$  in a non-recursive way.   Thus
 we use Claims (i), (ii) and (iii) in Theorem 8.2  to characterize the structure of  $\tau _{\bf{I}}^\medstar$  in order to construct it recursively.

\begin{proof}
$ \tau _{\rm {\bf I}}^\medstar$  is constructed by induction on  $ \left| I
\right|$.
For the base case,  let $ \left| I \right| = 1 $.  Then $ \tau _{{\rm {\bf I}}}^\medstar$ is constructed  by Construction 7.3 and 7.7.  Here, Claim (i) holds by Lemma 7.8(ii), Lemma 7.4(i) and Lemma 6.6 (vi),  Claim (ii) by Lemma 7.4(i), (iii) is clear and (iv) by Lemma 7.4(iv).

For the induction case,   let  $\left| I \right| \geqslant 2 $.
Let  $ \cfrac{G'\vert S' \quad G''\vert S''}{G'\vert G''\vert H'}(II) \in \tau ^ *  $,   where
 $ G'\vert G''\vert H' = H_{I}^V$.  Then  $ \{ {H_{i_1 }^c, \cdots
,H_{i_m }^c } \}  $  is divided into two subsets  $I_l =
\{ {H_{l_{1} }^c, \cdots,H_{l_{m(l) } }^c } \},   I_r = \{ {H_{r_{1} }^c, \cdots,H_{r_{m(r) } }^c }
\},$  which occur in the left subtree  $ \tau ^ * (G'\vert S') $ and right
subtree  $ \tau ^ * (G''\vert S'') $  of  $ \tau ^ * (H_{I}^V ) $,
respectively. Then $m(l)+m(r)=m$. Let  $ {\rm {\bf I}}_l =
\{\lceil {S_{l_{1} }^c } \rceil_{I}, \cdots,
 \lceil {S_{l_{m(l) } }^c }
\rceil_{I} \},  {\rm {\bf I}}_r = \{\lceil
{S_{r_{1} }^c } \rceil_{I}, \cdots, \lceil {S_{r_{m(r) } }^c } \rceil_{I}
\}.$
Suppose that  derivations $ \tau _{{\rm {\bf I}}_l }^\medstar$ of  $G _{{\rm {\bf I}}_l }^\medstar$  and  $ \tau _{{\rm {\bf I}}_r }^\medstar$ of  $G _{{\rm {\bf I}}_r }^\medstar$  are constructed such that Claims from (i) to (iv) hold.  There are three cases to be considered in the following.

\textbf{Case 1}  $ S' \notin \left\langle {G'\vert
S'} \right\rangle _{\mathcal{I}_{{\rm {\bf j}}_l } }  $ for all  $ \tau _{{\rm {\bf I}}_{{\rm {\bf j}}_l
} }^{ * } \in \tau _{{\rm {\bf I}}_l
}^\medstar$. Then $ \tau _{\rm {\bf I}}^\medstar\coloneq \tau _{{\rm {\bf I}}_l }^\medstar$ and
$G_{\rm {\bf I}}^{\medstar}\coloneq G_{{\rm {\bf I}}_l }^{\medstar}$.

$\bullet$  For Claim (i), let $ \tau _{{\rm {\bf{I}}}_{{\rm {\bf j}}_{l}} }^{ * }\in \tau _{{\rm {\bf I}}_{l}}^\medstar$ and
$S_j^c \in G_{\mathcal{I}_{{\rm {\bf j}}_{l}}}^{ \ast}$.
By the induction hypothesis,   $ H_i^c \nleqslant H_j^c  $  for all  $ H_i^c \in I_{l}$.
 Since $ S' \notin \left\langle {G'\vert S'}
 \right\rangle _{\mathcal{I}_{{\rm {\bf j}}_l }}  $ then $ G'' \vert H'\bigcap \left\langle {G'\vert G''\vert H'}
 \right\rangle _{\mathcal{I}_{{\rm {\bf j}}_l } }=\emptyset  $.  Thus $G_{H_{I}^{V}:G'' \vert H'}^{ \ast} \bigcap G_{\mathcal{I}_{{\rm {\bf j}}_{l}}}^{ \ast}=\emptyset$ by Lemma 6.3 and 6.4. Then  $ S_j^c \notin G_{H_{I}^{V}:G'' \vert H'}^{ \ast}$. Thus $G'' \vert S'' \nleqslant H_j^c  $ by Proposition 4.15(i).  Hence,  for all $ H_i^c \in I_{r}$,  $ H_i^c \nleqslant H_j^c  $ by $ G'' \vert S'' \leqslant H_i^c $.  Then   $ H_i^c \nleqslant H_j^c  $  for all $ H_i^c \in I$.  Claims (ii) and (iii) follow directly from  the induction hypothesis.

$\bullet$  For Claim (iv), let $ S_j^c \in G_{\rm {\bf I}}^\medstar  $.    It follows from the induction hypothesis that   $ H_j^c  \| H_i^c  $  for all  $H_i^c \in I_{l} $ and, $ S_j^c \in G_{\mathcal{I}_{{\rm {\bf j}}_{l}} }^{\ast} $ for some $ \tau _{{\rm {\bf{I}}}_{{\rm {\bf j}}_{l}} }^{ * }\in \tau _{{\rm {\bf I}}_{l}}^\medstar$  or $ S_j^c \in \lceil {S_{l_{k} }^c } \rceil_{I}$ for some $H_{l_{k}}^c\in I_{l}, H_j^c \nleqslant H_{l_{k}}^c$.  Then  $H_j^c\nleqslant  H_I^V$ by $ H_j^c  \| H_{l_{1}}^c, H_I^V<H_{l_{1}}^c$.

If  $ S_j^c \in \lceil {S_{l_{k} }^c } \rceil_{I}$ for some $H_{l_{k}}^c\in I_{l}, H_j^c \nleqslant H_{l_{k}}^c$
then $ H_j^c  \| H_i^c  $  for all  $H_i^c \in I$ by  the definition of branches to  $I$. Thus we assume that $ S_j^c \in G_{\mathcal{I}_{{\rm {\bf j}}_{l}} }^{\ast} $ for some
$ \tau _{{\rm {\bf{I}}}_{{\rm {\bf j}}_{l}} }^{ * }\in \tau _{{\rm {\bf I}}_{l}}^\medstar$ in the following.  If  $G'\vert S'\leqslant H_j^c$ then $ H_j^c  \| H_i^c  $  for all  $H_i^c \in I_{r} $ thus  $ H_j^c  \| H_i^c  $  for all  $H_i^c \in I$. Thus let  $G'\vert S'\nleqslant H_j^c$  in the following.
By  the proof of Claim (i) above,  $G''\vert S''\nleqslant H_j^c$.  Then
   $H_I^V \nless  H_j^c $ by  $G'\vert S'\nleqslant H_j^c$  and  $G''\vert S''\nleqslant H_j^c$.  Thus  $ H_j^c  \| H_I^V $.  Hence   $ H_j^c  \| H_i^c  $  for all  $H_i^c \in I$.

\textbf{ Case 2} $ S'' \notin \left\langle {G''\vert
S''} \right\rangle _{\mathcal{I}_{{\rm {\bf j}}_r } }  $  for all  $ \tau _{{\rm {\bf I}}_{{\rm {\bf j}}_r
}}^{ * } \in \tau _{{\rm {\bf I}}_r
}^\medstar$.  Then $ \tau _{\rm {\bf I}}^\medstar\coloneq \tau _{{\rm {\bf I}}_r }^\medstar$ and
$G_{\rm {\bf I}}^{\medstar}\coloneq G_{{\rm {\bf I}}_r }^{\medstar}$.  This case is proved by a procedure similar to that of Case 1 and omitted.

\textbf{ Case 3} $ S' \in \left\langle {G'\vert
S'} \right\rangle _{\mathcal{I}_{{\rm {\bf j}}_l }}  $ for some  $ \tau _{{\rm {\bf I}}_{{\rm {\bf j}}_l
} }^{ * } \in \tau _{{\rm {\bf I}}_l
}^\medstar$ and  $ S'' \in \left\langle {G''\vert
S''} \right\rangle _{\mathcal{I}_{{\rm {\bf j}}_r }^{\medstar} }  $  for some  $ \tau _{{\rm {\bf I}}_{{\rm {\bf j}}_r
}}^{ * } \in \tau _{{\rm {\bf I}}_r
}^\medstar$.

Given\[ \cfrac{\underline{
  G_{b_{r1} } \vert S_{j_{r1}}^c\,\,\,   G_{b_{r2} } \vert S_{j_{r2}}^c  \,\,\,\cdots \,\,\,G_{b_{rv} } \vert
S_{j_{rv}}^c
 }}{G _r  \equiv \{ {G_{b_{rk} } } \} _{k = 1}^ v\vert G_{\mathcal{I}_{{\rm {\bf j}}_r }}^{\ast} }\left\langle {\tau _{{\rm {\bf I}}_{{\rm {\bf j}}_r
  }}^{ * } } \right\rangle \in
\tau _{{\rm {\bf I}}_r }^\medstar\]
 such that  $ S'' \in \left\langle {G''\vert
S''} \right\rangle _{\mathcal{I}_{{\rm {\bf j}}_r }}$ and $ H_{j_{rk} }^c > H_I^V  $  for all  $ 1 \leqslant k \leqslant v$, where,
$ 1 \leqslant v \leqslant m(r) $,   $ G_{b_{rk} } \vert
S_{j_{rk} }^c  $  is closed for all  $ 1 \leqslant k
\leqslant v $,
   $ I_{{{\mathbf{j}}_r}} =\{ {H_{j_{r1}}^{c},
 H_{j_{r2}}^{c}, \cdots, H_{j_{rv}}^{c}} \}\subseteq \overline{I_{r}}, $
$ \mathcal{I}_{{{\mathbf{j}}_r}} =
\{ {S_{j_{r1}}^{c},
 S_{j_{r2}}^{c}, \cdots, S_{j_{rv}}^{c}} \},$
$ {\mathbf{I}}_{{{\mathbf{j}}_r}} = \{ {{G_{{b_{r1}}}}|S_{j_{r1}}^c, \cdots,{G_{{b_{rv}}}}|S_{j_{rv}}^c} \}.$
 Then  $H_{I_{{{\mathbf{j}}_r}} }^V\geqslant G''\vert S''  $
  by    $ I_{{{\mathbf{j}}_r}}\subseteq\overline{I_{r}}$  and  $H_{j_{rk}}^{c} > H_I^V  $  for all  $ 1 \leqslant k \leqslant v$.  Thus
  $ H_j^c \rightsquigarrow H_i^c  $  for all  $H_j^c \in I_{{\rm {\bf j}}_r
}  $  and  $ H_i^c \in I_l$ by $ S'' \in \left\langle {G''\vert
S''} \right\rangle _{\mathcal{I}_{{\rm {\bf j}}_r } }$ and Construction 6.11.

For each $\tau _{{\rm {\bf I}}_{{\rm {\bf j}}_r
  }}^{ * } \in
\tau _{{\rm {\bf I}}_r }^\medstar$  above,   we construct a derivation $\tau _{{\rm {\bf I}}_l }^\medstar(\tau_{{\rm {\bf I}}_{{\rm {\bf j}}_r } }^\ast )$ in which you may regard $\tau _{{\rm {\bf I}}_l }^\medstar$ as a subroutine,  and $\tau_{{\rm {\bf I}}_{{\rm {\bf j}}_r } }^\ast $ as its input in the following stage 1.  Then a derivation  $ \tau _{{\rm {\bf I}}_r }^\medstar(\tau _{{\rm {\bf I}}_l }^\medstar(\tau _{{\rm {\bf I}}_{{\rm {\bf j}}_r } }^{ * } ))$ is constructed by calling $\tau _{{\rm {\bf I}}_l }^\medstar(\tau_{{\rm {\bf I}}_{{\rm {\bf j}}_r } }^\ast )$ in Stage 2,   in which you may regard $ \tau _{{\rm {\bf I}}_r }^\medstar(\tau _{{\rm {\bf I}}_l }^\medstar(\tau _{{\rm {\bf I}}_{{\rm {\bf j}}_r } }^{ * } ))$ as a routine and $\tau _{{\rm {\bf I}}_l }^\medstar(\tau_{{\rm {\bf I}}_{{\rm {\bf j}}_r } }^\ast )$  as its subroutine.

Firstly,  we present some properties  of $ \tau _{\rm {\bf I}}^\medstar$   which are derived from Claims (i) $\sim$ (iv)  and  applicable  to  $\tau _{{\rm {\bf I}}_l }^\medstar$ or $ \tau _{{\rm {\bf I}}_r }^\medstar$  under the  induction hypothesis.

\begin{notation}
Let
 $$G_{\dagger}\coloneq  \widehat{S''}\vert G_{H_I^V:G''}^{ \medstar(J)} \vert G_{H_I^V:H'}^{\medstar (J)}\backslash\{ \widehat{S'} \vert  \widehat{S''} \}\,\,\,and$$
 $$G_{\ddagger}\coloneq\{ {G_{b_{rk} } } \} _{k = 1}^{v} \vert  \widehat{S''}  \vert
G_{^{H_I^V:\left\langle {G''} \right\rangle _{\mathcal{I}_{{\rm
{\bf j}}_r }} }}^{ \medstar (J)} \vert G_{H_I^V:H'}^{\medstar (J)}\backslash
\{ \widehat{S'} \vert  \widehat{S''} \}$$ be two close hypersequents,  $G_{\dagger}\subseteq H$ for some $H\in  \tau _{{\rm {\bf I}}_l }^\medstar$  and
$G_{\ddagger}\backslash\{G_{b_{rk} } \} _{k = 1}^{v}\subseteq H$ for some $H\in  \tau _{{\rm {\bf I}}_r }^\medstar$.
\end{notation}
Generally,  $ \widehat{S''}\subseteq G_{\dagger}$  is  a copy of  $ \widehat{S''}\subseteq G_{\ddagger}$, i.e., eigenvariables in $ \widehat{S''}\subseteq G_{\dagger}$ have different identification numbers with those in $ \widehat{S''}\subseteq G_{\ddagger}$,  so are $H', G'', S'$.

\begin{lemma}
$S_{j}^{c}\in G_{\dagger}  $  implies  $H_{j}^{c}\parallel G'\vert S'$.
\end{lemma}

\begin{proof}
Let $S_{j}^{c}\in G_{\dagger}\subseteq G_{H_I^V:G''\vert H'}^{ \medstar(J)} $.   Then $H_{j}^{c}\nleqslant H_I^V$ by Lemma 7.6(i). Thus
$H_{j}^{c}> H_I^V$ or $H_{j}^{c}\| H_I^V$.  If  $H_{j}^{c}\| H_I^V$ then $H_{j}^{c}\parallel G'\vert S'$ by $H_I^V< G'\vert S'$ and Proposition 2.12(ii).    If  $H_{j}^{c}> H_I^V$ then $S_{j}^{c}\in H_I^V$ by Proposition 4.15(i).  Thus  $S_{j}^{c}\in G''$ by Lemma 6.3, Lemma 6.7(i).   Hence  $H_{j}^{c}\parallel G'\vert S'$ by $H_{j}^{c}\geqslant G''\vert S''$, $G'\vert S'\| G''\vert S''$.
\end{proof}

\begin{lemma}
(1)  ${\bar \tau } _{\rm {\bf I}}^\medstar$ is an $m$-ary tree and,   $ \tau _{\rm {\bf I}}^\medstar$ is a binary tree;

(2)  Let  $ H \in {\bar \tau } _{\rm {\bf I}}^\medstar$ then $ \partial _{\tau _{\rm {\bf I}}^\medstar}( H) \leqslant H_{i_{k}}^c $  for some  $ 1 \leqslant k \leqslant m$;

(3)  Let  $ H \in {\bar \tau } _{\rm {\bf I}}^\medstar$ then  $ H_I^V  \nparallel \partial _{\tau _{\rm {\bf I}}^\medstar}( H) $;

(4)  Let  $w>1$ in $ \tau _{{\rm {\bf I}}_{\rm {\bf j}} }^{
* } \in \tau _{\rm {\bf I}}^\medstar$ then $H_I^V<H_{j_k}^c$ for all $ 1 \leqslant k \leqslant w$.

(5)  Let  $ \tau _{{\rm {\bf I}}_{\rm {\bf j}} }^{
* } \in \tau _{\rm {\bf I}}^\medstar$,  $ \partial _{\tau _{\rm {\bf I}}^\medstar}
( {G_{b_{k} } \vert S_{j_{k }}^c }
) \leqslant H_I^V  $  for some  $ 1 \leqslant k \leqslant w$.  Then  $ w = 1 $.
\end{lemma}

\begin{proof}
(1)  is immediately from Claim (i).  (2) holds by $G\vert G^{\ast}\leqslant H_{j_k }^c$ and $H_{j_k }^c\leqslant H_{i_k}^c$ for some $H_{i_k}^c\in I$ by ${I}_{\rm {\bf j}}\subseteq \overline I $.  (3) holds by Proposition 2.12(iii),  (2) and $H_I^V\leqslant H_{i_{k}}^c $.

For (4), let  $ w > 1 $.   Then  $ H_{{j}_1}^c  \|H_{{j}_k}^c  $ for
each  $ 2 \leqslant k \leqslant w $,   $ H_{{j }_1}^c \leqslant H_{{i_g } }^c  $ and $H_{{j }_k}^c \leqslant H_{{i_h } }^c $ for some $H_{{i_g } }^c, H_{{i_h } }^c \in I$ by (2).  Thus
$ H_{{j}_1}^c  \|H_{{i}_h}^c  $  and $ H_{{j}_k}^c  \|H_{{i}_g}^c  $  by Proposition 2.12(ii).  Hence $ H_{{j }_1}^c \nleqslant H_{{i_g } {i_h } }^V  $ by  $H_{{i_g } {i_h } }^V < H_{{i_h } }^c $,  and $ H_{{j}_k}^c \nleqslant H_{{i_g } {i_h }}^V  $ by $H_{{i_g } {i_h } }^V < H_{{ i_g } }^c $.
Thus  $ H_I^V < H_{{j }_1 }^c  $  and
$ H_I^V < H_{{j }_k }^c  $  by (3),   $ H_I^V \leqslant H_{{j_1 } {j_k } }^V  $.   Hence  $ H_I^V < H_{{j }_k}^c  $  for all  $ 1 \leqslant k \leqslant w$. (5) is from (4).
\end{proof}

\begin{lemma}
Let $ \cfrac{\underline{H_{i,1}\,\,\, \cdots\,\,\,H_{i,w_{i}}}}{H_{i-1,1}}\left\langle {
\tau _{{\rm {\bf I}}_{\rm {\bf j}}(i) }^{* } } \right\rangle \in
 \tau _{\rm {\bf I}}^\medstar$  for all $1\leqslant i\leqslant n$ such that $ \partial _{\tau
_{\rm {\bf I}}^\medstar}(H_{0,1}) = G\vert G^ *  $ and $ \partial _{\tau
_{\rm {\bf I}}^\medstar}(H_{n,1}) \leqslant H_{I}^{V} $.
Then $ \partial _{\tau
_{\rm {\bf I}}^\medstar}(H_{i,1}) \leqslant H_{I}^{V} $  and $w_{i}=1$ for all $1\leqslant i\leqslant n$.
\end{lemma}

\begin{proof}
The proof is by induction on  $n$.  Let $n=1$ then
$w_1=1$  by Lemma 8.5(5) and $ \partial _{\tau
_{\rm {\bf I}}^\medstar}(H_{1,1}) \leqslant H_{I}^{V} $.  For the induction step, let $ \partial _{\tau
_{\rm {\bf I}}^\medstar}(H_{i,1}) \leqslant H_{I}^{V} $ for some $1<i\leqslant n$ then $w_i=1$ by Lemma 8.5(5).  Since $ \cfrac{\underline{H_{i,1}\,\,\,\cdots\,\,\,H_{i,w_{i}}}}{H_{i-1,1}}\left\langle {
\tau _{{\rm {\bf I}}_{\rm {\bf j}}(i) }^{* } } \right\rangle \in
 \tau _{\rm {\bf I}}^\medstar$ then $ \partial _{\tau
_{\rm {\bf I}}^\medstar}(H_{i-1,1}) \leqslant \partial _{\tau
_{\rm {\bf I}}^\medstar}(H_{i,k}) $ for some $1\leqslant k\leqslant w_i$ by Claim (ii).  Then
$ \partial _{\tau_{\rm {\bf I}}^\medstar}(H_{i-1,1}) \leqslant \partial _{\tau
_{\rm {\bf I}}^\medstar}(H_{i,1})\leqslant H_{I}^{V} $ by $w_{i}=1$.  Thus $w_{i-1}=1$  by Lemma 8.5(5).
\end{proof}

\begin{definition}
Let $ \cfrac{\underline{G_{2}}}{G_{1}}\left\langle {EC_\Omega ^ * } \right\rangle\in  \tau _{{\rm {\bf I}} }^{\medstar} $.  The module of  $ \tau _{{\rm {\bf I}} }^{\medstar} $ at  $G_{2}$, which we denote by $ \tau _{{\rm {\bf I}} :G_{2} }^{\medstar}  $,  is defined as follows: (1) $  G_{2} \in \tau _{{\rm {\bf I}} :G_{2} }^{\medstar}$; (2) $ \cfrac{\underline{H_1     \cdots    H_u  }}{H_0 }\left\langle {\tau _{{\rm {\bf I}}_{{\rm{\bf j}} } }^{ * } } \right\rangle \in
\tau _{{\rm {\bf I}} :G_{2} }^{\medstar} $  if  $ H_0 \in \tau _{{\rm {\bf I}} :G_{2} }^{\medstar} $; (3)  $ H_1 \notin \tau _{{\rm {\bf I}}:G_{2} }^{\medstar}  $  if
 $ \cfrac{\underline{H_1 }}{H_0 }\left\langle {EC_\Omega ^ * } \right\rangle \in \tau _{{\rm {\bf I}} }^{\medstar}$,    $ H_0 \in \tau _{{\rm {\bf I}} :G_{2} }^{\medstar}$.
\end{definition}

Each node of  $ \tau _{{\rm {\bf I}} :G_{2} }^{\medstar}  $ is determined bottom-up, starting with $G_{2}$,  whose root is $G_{2}$ and leaves may be branches, leaves of $ \tau^{\ast}$ or lower hypersequents of  $\left\langle {EC_\Omega ^ * } \right\rangle$-applications. While each node of  $ \tau _{H :H'}^{\ast}  $ is determined top-down, starting with $H'$,  whose root is a subset of $G\vert G^{\ast}$ and leaves  contain  $H'$  and some leaves  of $ \tau^{\ast}$.

 \begin{lemma}
(1)   $ \tau _{{\rm {\bf I}}:G_{2} }^{\medstar}  $ is a derivation without  $\langle EC_\Omega ^*\rangle$ in $ {\rm {\bf GL}}_{\rm {\bf \Omega }}  $.

(2) Let   $ H' \in {\bar \tau }_{{\rm {\bf I}}:G_{2} }^{\medstar}$ and $\partial _{\tau
_{\rm {\bf I}}^\medstar}(H') > H_{I}^{V} $. Then $ \partial _{\tau
_{\rm {\bf I}}^\medstar}(H) > H_{I}^{V} $ for all $H \in {\bar \tau } _{{\rm {\bf I}}:G_{2} }^{\medstar}$ and $ H\geqslant H'$.
\end{lemma}

\begin{proof}
(1) is clear and (2) immediately from Lemma 8.6.
\end{proof}

{\bf  Stage 1  Construction of Subroutine $\tau _{{\rm {\bf I}}_l }^\medstar(\tau
_{{\rm {\bf I}}_{{\rm {\bf j}}_r } }^\ast ).$}
Roughly speaking, $\tau _{{\rm {\bf I}}_l }^\medstar(\tau _{{\rm {\bf
I}}_{{\rm {\bf j}}_r } }^\ast )$ is constructed by
replacing some nodes $\tau _{{\rm {\bf I}}_{{\rm {\bf j}}_l }}^\ast \in \tau _{{\rm {\bf I}}_l }^\medstar$ with $\tau _{{\rm {\bf
I}}_{{\rm {\bf j}}_l }\bigcup {\rm {\bf I}}_{{\rm {\bf
j}}_r } }^\ast $ in post-order. However, the ordinal
postorder-traversal algorithm cannot be used to construct $\tau _{{\rm {\bf
I}}_l }^\medstar(\tau _{{\rm {\bf I}}_{{\rm {\bf j}}_r }
}^\ast )$ because the tree structure of $\tau _{{\rm {\bf I}}_l }^\medstar
(\tau _{{\rm {\bf I}}_{{\rm {\bf j}}_r } }^\ast )$ is
generally different from that of $\tau _{{\rm {\bf I}}_l }^\medstar$ at some
nodes $H\in \tau _{{\rm {\bf I}}_l }^\medstar$ satisfying  $\partial _{\tau _{{\rm{\bf I}}_l }^\medstar} (H)<H_{{\rm {\bf I}}_l }^V $.   Thus we construct
a sequence  $ \tau _{{\rm {\bf I}}_l }^{\medstar (q)}  $  of
trees for all  $ q \geqslant 0 $  inductively as follows.

For the
base case, we mark all  $ \left\langle {EC_\Omega ^ * } \right\rangle
$-applications in  $ \tau _{{\rm {\bf I}}_l }^\medstar$  as unprocessed and define
such marked derivation to be  $ \tau _{{\rm {\bf I}}_l }^{\medstar (0)}$.  For the
induction case, let  $ \tau _{{\rm {\bf I}}_l }^{\medstar (q)}  $  be constructed.
If all applications of  $ \left\langle {EC_\Omega ^ * } \right\rangle  $  in
 $ \tau _{{\rm {\bf I}}_l }^{\medstar (q)}  $  are marked as processed,   we firstly delete the root of the tree  resulting from the  procedure  and then,  apply $ \langle EC_\Omega ^*\rangle$ to the root of the resulting derivation  if it is applicable otherwise add an $\langle ID_\Omega\rangle$-application to it  and finally,   terminate the procedure.   Otherwise we select one of the outermost
unprocessed  $ \left\langle {EC_\Omega ^ * } \right\rangle
$-applications in  $ \tau _{{\rm {\bf I}}_l }^{\medstar (q)}$,   say,
 $ \cfrac{\underline{    {\kern
1pt}   G_{q + 1}^{ \circ \circ }  {\kern
1pt}      }}{G_{q +1}^ \circ }\left\langle {EC_\Omega ^ * } \right\rangle _{q + 1}^ \circ  $,  and  perform the following steps to construct  $ \tau _{{\rm {\bf I}}_l
}^{\medstar (q + 1)}  $  in which
$ \cfrac{\underline{   G_{q + 1}^{
\circ \circ }  }}{G_{q + 1}^ \circ }\left\langle {EC_\Omega ^ * }
\right\rangle _{q + 1}^ \circ  $  be revised as  $ \cfrac{\underline{
      G_{q+ 1}^{ \cdot \cdot }     {\kern1pt}   }}{G_{q + 1}^{\circ}}\left\langle {EC_\Omega ^
* } \right\rangle _{q + 1}^ \cdot  $   such that

  (a) $ \tau _{{\rm {\bf I}}_l
}^{\medstar (q + 1)}  $  is constructed by locally revising  $ \tau _{{\rm {\bf I}}_l :
     G_{q + 1}^{\circ \circ } }^{\medstar (q)}$ and leaving other nodes of  $ \tau _{{\rm {\bf I}}_l }^{\medstar (q)}  $  unchanged, particularly  including  $G_{q + 1}^{\circ}$;

         (b)  $ \tau _{{\rm {\bf I}}_l }^{\medstar (q + 1)} (G_{q + 1}^{ \cdot\cdot} ) $  is a derivation in  $ {\rm {\bf GL}}_{\rm {\bf \Omega }}  $;

   (c)  $G_{q + 1}^{ \cdot\cdot} = G_{q + 1}^{ \circ\circ}$  if  $ S' \notin \left\langle {G'\vert S'}
\right\rangle _{\mathcal{I}_{{\rm {\bf j}}_{_l } }
}  $  for all
$ \tau _{\mathbf{I}_{{\rm {\bf j}}_{_l } }}^{ *} \in \tau _{{\rm {\bf I}}_l }^\medstar(
     G_{q + 1}^{
\circ \circ }
  ) $  otherwise\\
   $ G_{q + 1}^{ \cdot\cdot} = G_{q + 1}^{ \circ\circ}
\backslash G_{\dagger}^{m_{q+1}}
\vert G_{\ddagger}^{m_{q+1}}$  for some $m_{q+1}\geqslant 1$.

\begin{remark}
 By two superscripts  $\circ$ and $\cdot$ in $\left\langle {EC_\Omega ^* } \right\rangle _{q + 1}^ \circ $ or $\left\langle {EC_\Omega ^* } \right\rangle _{q + 1}^\cdot$,  we indicate the unprocessed state and processed state, respectively.   This procedure determines an ordering for all  $ \left\langle {EC_\Omega ^ * } \right\rangle$-applications in  $ \tau _{{\rm {\bf I}}_l }^{\medstar}$ and  the subscript $q+1$  indicates that it is the $q+1$-th application of  $ \left\langle {EC_\Omega ^ * } \right\rangle$  in a post-order transversal of $ \tau _{{\rm {\bf I}}_l }^{\medstar}$.  $G_{q + 1}^{\circ \circ }$ and $G_{q + 1}^ \circ$ ($G_{q+ 1}^{ \cdot \cdot } $ and $G_{q + 1}^{\cdot}$) are the premise and conclusion of $\left\langle {EC_\Omega ^ * }\right\rangle _{q + 1}^ \circ$ ($\left\langle {EC_\Omega ^ * }\right\rangle _{q + 1}^\cdot  $),  respectively.
 \end{remark}

\textbf{Step 1 (Delete)}.  Take the module   $ \tau _{{\rm {\bf I}}_l :
     G_{q + 1}^{\circ \circ } }^{\medstar (q)}$  out of  $ \tau _{{\rm {\bf I}}_l }^{\medstar (q)}  $.
 Since $\left\langle {EC_\Omega ^ * } \right\rangle _{q + 1}^ \circ$  is the unique unprocessed   $ \left\langle {EC_\Omega ^ * } \right\rangle
$-applications in  $ \tau _{{\rm {\bf I}}_l }^{\medstar(q)}(G_{q +1}^ \circ) $ by its choice criteria,   $ \tau _{{\rm {\bf I}}_l :
     G_{q + 1}^{\circ \circ } }^{\medstar (q)}$ is the same as $ \tau _{{\rm {\bf I}}_l :
     G_{q + 1}^{\circ \circ } }^{\medstar}$ by Claim (a).
      Thus it is a derivation.
If    $ \partial _{\tau _{{\rm {\bf I}}_l}^\medstar} ( {H } ) \leqslant H_I^V  $   for all $H\in \tau _{{\rm {\bf I}}_l :G_{q + 1}^{ \circ \circ } }^{\medstar (q)}$,  delete all internal nodes of  $ \tau _{{\rm {\bf I}}_l :
     G_{q + 1}^{\circ \circ } }^{\medstar (q)}$.
Otherwise there exists
\[\cfrac{\underline{G_{b_{l'1} } \vert S_{j_{l'1}}^c\,\,\,
    G_{b_{l'2} } \vert
S_{j_{l'2}}^c \,\,\,     \cdots \,\,\,    G_{b_{l'u'} }
\vert S_{j_{l'u'}}^c}}{G _{l'} \equiv
\{ {G_{b_{l'k} } } \} _{k = 1}^{u' }\vert
G_{\mathcal{I}_{\textbf{j}_{l'} } }^{\ast} }\left\langle
{\tau _{\mathbf{I}_{\textbf{j}_{l'} } }^{ *} } \right\rangle \\
\in \tau _{{\rm {\bf I}}_l :
  G_{q + 1}^{ \circ \circ } }^{\medstar (q)}  \]
   such that
$ \partial _{\tau _{{\rm {\bf I}}_l }^\medstar} ( {G_{b_{l'k} } \vert
S_{j_{l'k} }^c } ) > H_I^V  $  for
all  $ 1 \leqslant k \leqslant u' $  and  $ \partial _{\tau _{{\rm {\bf I}}_l
}^\medstar} ( {G_{l'} } ) \leqslant H_I^V  $  by Lemma 8.8(2) and   $ \partial _{\tau _{{\rm {\bf I}}_l}^\medstar} (G_{q + 1}^{\circ \circ } )=G\vert G^{*}\leqslant H_I^V  $,   then delete all  $ H \in \tau _{{\rm {\bf I}}_l :  G_{q+ 1}^{ \circ \circ } }^{\medstar (q)}  $,    $ G_{q + 1}^ { \circ \circ } \leqslant H <G_{l'}  $.  We denote  the structure resulting from the deletion operation above by $ \tau _{{\rm {\bf I}}_l :  G_{q + 1}^{ \circ \circ } (1)}^{\medstar (q)}  $.   Since  $ \partial _{\tau _{{\rm {\bf I}}_l}^\medstar} ( {G_{l'} } ) \leqslant H_I^V  $  then $ \tau _{{\rm {\bf I}}_l :  G_{q + 1}^{ \circ \circ } (1)}^{\medstar (q)}  $ is a tree by Lemma 8.6.
Thus it is also a derivation.

\textbf{Step 2 (Update)}.    For each  $ G_{q'}^\circ \in \tau _{{\rm {\bf I}}_l :    G_{q+ 1}^{ \circ \circ }(1) }^{\medstar (q)}  $  which satisfies
 $ \cfrac{\underline{ G_{q'}^{ \cdot \cdot }}}{G_{q'}^ \circ
}\left\langle {EC_\Omega ^ * } \right\rangle _{q'}^ \cdot \in \tau _{I_l}^{\medstar (q)}  $  and
 $ S' \in \left\langle {G'\vert S'} \right\rangle
_{\mathcal{I}_{{\rm {\bf j}}_{_l } } }  $
for some  $ \tau _{{\rm {\bf I}}_{{\rm {\bf j}}_{_l } } }^{ * } \in \tau _{{\rm {\bf I}}_l }^\medstar(
    G_{q'}^{ \circ \circ } ) $,
we replace  $ H $  with  $ H\backslash G_{\dagger}
\vert G_{\ddagger}  $  for each  $ H \in \tau _{{\rm {\bf I}}_l :   G_{q+ 1}^{ \circ \circ }(1)}^{\medstar (q)}  $,   $ G_{l'} \leqslant  H \leqslant G_{q'}^\circ$.

Since
 $ \cfrac{\underline{
    G_{q'}^{ \cdot \cdot }}}{G_{q'}^\circ }\left\langle {EC_\Omega ^ * } \right\rangle
_{q'}^ \cdot \in \tau _{I_l }^{\medstar (q)}({G_{q + 1}^{ \circ \circ}}) $  and  $ \left\langle {EC_\Omega ^ * } \right\rangle _{q + 1}^\circ  $  is the outermost unprocessed \\
 $  \left\langle {EC_\Omega ^* } \right\rangle$-application in  $ \tau _{{\rm {\bf I}}_l }^{\medstar (q)}  $
then  $ q' \leqslant q $  and  $ \left\langle {EC_\Omega^ * } \right\rangle _{q'}^ \cdot
 $  has been processed.  Thus Claims (b) and (c) hold for  $ \tau _{\mathbf{I}_l }^{\medstar (q)}   ( G_{q'}^ \cdot ) $  by the induction hypothesis.  Then $ \cfrac{\underline{
    G_{q'}^{ \cdot \cdot }}}{G_{q'}^\cdot} $ is a valid $ \left\langle {EC_\Omega^ * } \right\rangle$-application since  $ \cfrac{\underline{
    G_{q'}^{\circ\circ}}}{G_{q'}^\circ}$,  $ \cfrac{\underline{
    G_{\dagger}^{m_{q'}}}}{G_{\dagger}}$ and $ \cfrac{\underline{
    G_{\ddagger}^{m_{q'}}}}{G_{\ddagger}}$ are valid, where $ G_{q'}^{ \cdot\cdot} = G_{q'}^{ \circ\circ}\backslash G_{\dagger}^{m_{q'}}\vert G_{\ddagger}^{m_{q'}}$,  $G_{q'}^\cdot= G_{q'}^\circ\backslash G_{\dagger}\vert G_{\ddagger} $.

\begin{lemma}
Let  $ G_{l'} <  H \leqslant G_{q'}^\circ$.  Then $ \partial _{\tau _{{\rm {\bf I}}_l}^\medstar} (H)\geqslant G' \vert S'$.
\end{lemma}
 \begin{proof}
Since  $ G_{l'} <  H$ then  $G_{b_{l'k} } \vert S_{j_{l'k} }^c \leqslant H $  for some $1\leqslant k \leqslant u'$.  If  $ \partial _{\tau _{{\rm {\bf I}}_l}^\medstar} (H)\geqslant H_{I_{l}}^{V}$ then $ \partial _{\tau _{{\rm {\bf I}}_l}^\medstar} (H)\geqslant G' \vert S'$.   Otherwise all applications between $G_{l'}$ and $H$ are one-premise rules by Lemma 8.6. Then $H_{j_{l'k} }^c \leqslant \partial _{\tau _{{\rm {\bf I}}_l}^\medstar} (H) $ by Claim (ii).  Thus  $ \partial _{\tau _{{\rm {\bf I}}_l}^\medstar} (H)\geqslant G' \vert S'$ by  $H_{I}^{V}<H_{j_{l'k} }^c$, $\partial _{\tau _{{\rm {\bf I}}_l}^\medstar} (H) \leqslant H_{l_{k'}}^{c}  $ for some $1\leqslant k' \leqslant m(l)$ by Claim (i).
\end{proof}

Since $ \partial _{\tau _{{\rm {\bf I}}_l}^\medstar} (H)\geqslant G' \vert S'$ by  Lemma 8.10  and  $H_{j}^{c}\| G' \vert S'$   for each  $ S_{j}^{c}\in G_{\dagger}$ by Lemma 8.4,  then $ G_{\dagger}\subseteq H$ as side-hypersequent of $H$.  Thus this step
updates the revision of  $ G_{q'}^{\cdot\cdot} $  downward to  $ G_{l'}$.

Let  $m'$ be the number of  $G_{q'}^{\circ}$ satisfying  the above conditions,  $ \tau _{{\rm {\bf I}}_l : G_{q + 1}^{\circ \circ }(1)}^{\medstar (q)}  $, $G_{l'}$  and $G_{b_{l'k} } \vert S_{j_{l'k} }^c $  for all  $ 1 \leqslant k \leqslant u' $  be  updated as $ \tau _{{\rm {\bf I}}_l : G_{q + 1}^{\circ \circ } (2)}^{\medstar (q)}  $, $G_{l''}$,  $ G'_{b_{l'k} } \vert S_{j_{l'k} }^c $, respectively.  Then  $ \tau _{{\rm {\bf I}}_l :  G_{q + 1}^{ \circ \circ } (2)}^{\medstar (q)}  $  is a derivation and $G_{l''}=G_{l'} \backslash
 G_{\dagger}^{m'}\vert G_{\ddagger}^{m'} $.

 \textbf{Step 3 (Replace).}  All
 $ \tau _{{\rm {\bf I}}_{{\rm {\bf j}}_{_l } }}^{ * } \in \tau _{{\rm {\bf I}}_l : G_{q + 1}^{
\circ \circ } (2)}^{\medstar (q)}  $ are processed  in post-order. If
 $ H_i^c \rightsquigarrow H_j^c  $  for all  $ H_i^c
 \in I_{{\rm {\bf j}}_{_l } }  $  and  $H_j^c \in I_{{\rm {\bf j}}_r }  $  it
proceeds by the following procedure otherwise it remains unchanged.
Let  $\tau _{{\rm {\bf I}}_{{\rm {\bf j}}_l } }^{ * } $ be in the form   $$\cfrac{\underline{G_{b_{l1} } \vert S_{j_{l1}}^c\,\,\,    G_{b_{l2} } \vert
S_{j_{l2}}^c \,\,\,     \cdots\,\,\,     G_{b_{lu} }
\vert S_{j_{lu}}^c}}{G_{l} \equiv
\{ {G_{b_{lk} } } \} _{k = 1}^{u} \vert
G_{\mathcal{I}_{{\rm {\bf j}}_{_l } } }^{\ast} }.$$

 Then  $H_{j_{lk}}^c\geqslant G'\vert S'$ for all $1\leqslant k\leqslant u$ by Lemma 8.10,
$G_{b_{lk} } \vert S_{j_{lk}}^c>G_{l''}$.

  Firstly,  replace
  $ \tau _{{\rm {\bf I}}_{{\rm {\bf j}}_{_l } } }^{ * }  $  with
$ \tau _{{\rm {\bf I}}_{{\rm {\bf j}}_{_l } } \cup {\rm {\bf I}}_{{\rm {\bf j}}_r } }^{
* }$.  We may  rewrite  the roots of
$ \tau _{{\rm {\bf I}}_{{\rm {\bf j}}_{_l } }}^{ * }  $  and  $ \tau _{{\rm {\bf I}}_{{\rm {\bf j}}_{_l }
} \cup {\rm {\bf I}}_{{\rm {\bf j}}_r } }^{ * }  $   as
 \nonumber
\begin{flalign}
G_l& = \{ {G_{b_{lk} } }
\} _{k = 1}^{u} \vert  G_{H_I^V:\left\langle {G'} \right\rangle
_{\mathcal{I}_{{\rm {\bf j}}_{_l } } } }^{ \ast} \vert
 G_{H_I^V:G''\vert H' }^{ \ast}\,\,\,  \mathrm{and} \\
 G_{l,r} &\equiv \{ {G_{b_{lk} } }\} _{k = 1}^{u }\vert  G_{H_I^V:\left\langle {G'} \right\rangle
_{\mathcal{I}_{{\rm {\bf j}}_{_l } } } }^{\ast} \vert
\{ {G_{b_{rk} } }\} _{k = 1}^{v}
\vert  G_{H_I^V:\left\langle {G''} \right\rangle
_{\mathcal{I}_{{\rm {\bf j}}_r }}\vert H'  }^{ \ast},
\end{flalign}
 respectively.

 Let $ G_{l''} < H \leqslant G_l $.  By Lemma 8.10, $ \partial
_{\tau _{{\rm {\bf I}}_l }^\medstar} ( H ) \geqslant G'\vert S'$.  By Lemma 6.7,  $ H_j^c \leqslant H_I^V<G'\vert S' $ or  $ H_j^c  \| G'\vert S' $  for all  $ S_j^c \in G_{H_I^V:G'' \vert H'}^{ *}  $.
 Thus $ G_{H_I^V:G''\vert H'}^{ \ast} \subseteq H $.
Secondly,   we replace  $ H $  with
$ H\backslash G_{H_I^V:G''\vert H' }^{\ast}\vert \{ {G_{b_{rk} } } \} _{k = 1}^{v} \vert G_{H_I^V:\left\langle {G''} \right\rangle _{\mathcal{I}_{{\rm
{\bf j}}_r }}\vert H'  }^{ \ast}  $  for all  $ G_{l'} \leqslant H \leqslant G_l$.
Let  $m''$  be the number of  $\tau _{{\rm {\bf I}}_{{\rm {\bf j}}_l } }^{ * }  \in
 \tau _{{\rm {\bf I}}_l : G_{q + 1}^{ \circ \circ } (2)}^{\Omega
(q)}$ satisfying  the replacement conditions above,  $ \tau _{{\rm {\bf I}}_l : G_{q + 1}^{\circ \circ } (2)}^{\medstar (q)}  $,  $G_{l''}$  and  $G'_{b_{l'k} } \vert S_{j_{l'k} }^c $  for all  $ 1 \leqslant k \leqslant u' $ be updated as $ \tau _{{\rm {\bf I}}_l : G_{q + 1}^{\circ \circ } (3)}^{\medstar (q)}  $, $G_{l'''}$,  $ G''_{b_{l'k} } \vert S_{j_{l'k} }^c $, respectively.  Then  $ \tau _{{\rm {\bf I}}_l :  G_{q + 1}^{ \circ \circ } (3)}^{\medstar (q)}  $  is a derivation of $G_{l'''}$ and
$G_{l'''}=G_{l''} \backslash
 \{G_{H_I^V:G''\vert H'}^{\ast} \}^{m''}\vert \{\{ {G_{b_{rk} } } \} _{k = 1}^{v} \vert
G_{H_I^V:\left\langle {G''} \right\rangle _{\mathcal{I}_{{\rm
{\bf j}}_r }}\vert H'}^{ \ast}\}^{m''}.$

\textbf{Step 4  (Separation along $H_I^V$).}  Apply the  separation algorithm along $H_I^V$ to  $ G_{l'''}  $  and denote the resulting derivation by  $ \tau _{{\rm {\bf I}}_l: G_{q + 1}^{ \circ \circ } (4)}^{\medstar (q)}  $  whose root is
labeled by  $ G_{q + 1}^{ \cdot \cdot }$.
Then all  $ G_{H_I^V:\left\langle {G''}
 \right\rangle _{\mathcal{I}_{{\rm {\bf j}}_r }} \vert H'}^{\ast} $  in  $ G_{l'''}  $  are transformed into  $ G_{H_I^V:\left\langle {G''} \right\rangle _{\mathcal{I}_{{\rm
{\bf j}}_r }} \vert H'}^{ \medstar(J)}$ in  $ \tau _{{\rm {\bf I}}_l : G_{q + 1}^{ \circ \circ } (4)}^{\Omega
(q)}$.  Since $ \cfrac{G'\vert S' \quad G''\vert S''}{H_I^V=G'\vert G''\vert H'}(II)\in\tau^{\ast},$
$$  \cfrac{\{ {G_{b_{lk} } } \} _{k = 1}^{u} \vert \left\langle {G'\vert S'}
\right\rangle _{\mathcal{I}_{{\rm {\bf j}}_{_l } }
}\quad \{ {G_{b_{rk} } } \} _{k = 1}^{v} \vert \left\langle {G''\vert S''}
\right\rangle _{\mathcal{I}_{{\rm {\bf j}}_{_r} }
}}{\{ {G_{b_{lk} } } \} _{k = 1}^{u} \vert\{ {G_{b_{rk} } } \} _{k = 1}^{v}\vert  \left\langle {G'}
\right\rangle _{\mathcal{I}_{{\rm {\bf j}}_{_l } }
}\vert \left\langle {G''}
\right\rangle _{\mathcal{I}_{{\rm {\bf j}}_{_r} }
}\vert H'}(II)\in
\tau_{\mathbf{I}_{{\rm {\bf j}}_{_l } }\cup
\mathbf{I}_{{\rm {\bf j}}_{_r} }}^{\ast}\in  \tau _{{\rm {\bf I}}_l : G_{q + 1}^{ \circ \circ } (3)}^{\medstar(q)},$$
$H'$,  $ S' $  and  $ S'' $  are  separable in  $ \tau _{{\rm {\bf I}}_l :
      G_{q+ 1}^{ \circ \circ } (4)}^{\medstar (q)}  $  by  a procedure similar to that of Lemma 7.11.
Let  $ S' $  and  $ S'' $  be separated
into  $  \widehat{S'}  $  and  $  \widehat{S''}$, respectively.
By Claim (iii), $G_{H_I^V:G_{l'}}^{\medstar(J)}=G_{q + 1}^{ \circ\circ}.$

\begin{flalign}
G_{H_I^V:G_{l''}}^{\medstar(J)}&=G_{q + 1}^{ \circ\circ}\backslash
 G_{\dagger}^{m'}\vert G_{\ddagger}^{m'}\,\,\, by \,\,\,Lemma \,\,\,7.6(iv), \\
G_{q + 1}^{ \cdot \cdot}&=G_{H_I^V:G_{l'''}}^{\medstar(J)}\\
&=
G_{H_I^V:G_{l''}}^{\medstar(J)}\backslash
\{G_{H_I^V:G''\vert H'}^{\medstar(J)} \}^{m''}\vert \{\{ {G_{b_{rk} } } \} _{k = 1}^{v} \vert
G_{H_I^V:\left\langle {G''} \right\rangle _{\mathcal{I}_{{\rm
{\bf j}}_r }}\vert H'}^{ \medstar(J)} \}^{m''}\\
&=G_{H_I^V:G_{l''}}^{\medstar(J)}\backslash
\{  \widehat{S'} \vert  \widehat{S''}\vert G_{H_I^V:G''}^{ \medstar(J)} \vert G_{H_I^V:H'}^{\medstar (J)}\backslash
\{ \widehat{S'} \vert  \widehat{S''} \} \}^{m''}\vert \\
&\{\{ {G_{b_{rk} } } \} _{k = 1}^{v} \vert
 \widehat{S'} \vert \widehat{S''}  \vert G_{^{H_I^V:\left\langle {G''} \right\rangle _{\mathcal{I}_{{\rm
{\bf j}}_r }} }}^{ \medstar (J)} \vert
G_{H_I^V:H'}^{\medstar (J)}\backslash
\{ \widehat{S'} \vert  \widehat{S''} \} \}^{m''} \\
&=G_{H_I^V:G_{l''}}^{\medstar(J)}\backslash
 G_{\dagger}^{m''}\vert G_{\ddagger}^{m''}\\
&=\{G_{q + 1}^{ \circ\circ}\backslash G_{\dagger}^{m'}\vert G_{\ddagger}^{m'}\}\backslash G_{\dagger}^{m''}\vert G_{\ddagger}^{m''}\\
&=G_{q + 1}^{ \circ\circ}\backslash G_{\dagger}^{m'+m''}\vert G_{\ddagger}^{m'+m''}\\
&=G_{q + 1}^{ \circ\circ}\backslash G_{\dagger}^{m_{q+1}}\vert G_{\ddagger}^{m_{q+1}}
   \end{flalign}
 where $m_{q+1}\coloneq m'+m''$.

\textbf{Step 5 (Put back). }  Replace  $ \tau _{{\rm {\bf I}}_l :
     G_{q + 1}^{\circ \circ } }^{\medstar (q)}  $  in  $ \tau _{{\rm {\bf I}}_l }^{\medstar (q)}  $
with  $ \tau _{{\rm {\bf I}}_l: G_{q + 1}^{ \circ \circ }
(4)}^{\medstar (q)}$ and mark  $ \cfrac{\underline{G_{q + 1}^{ \cdot \cdot }
 }}{G_{q + 1}^\circ }\left\langle {EC_\Omega ^ * } \right\rangle
_{q + 1}^{\circ}  $  as processed, i.e.,  revise $ \left\langle {EC_\Omega ^ * } \right\rangle_{q + 1}^{\circ}$ as $ \left\langle {EC_\Omega ^ * } \right\rangle_{q + 1}^{\cdot}$.
Among leaves of $ \tau _{{\rm {\bf I}}_l :
     G_{q + 1}^{\circ \circ } }^{\medstar (q)}$,  all $G_{q'}^{\circ}$  are updated as $G_{q'}^{\cdot}$  and others  keep unchanged in $ \tau _{{\rm {\bf I}}_l: G_{q + 1}^{ \circ \circ }(4)}^{\medstar (q)}$.  Then this replacement is feasible,  especially,   $G_{q + 1}^{\circ \circ } $ be replaced with $G_{q + 1}^{\cdot\cdot }$. Define the tree resulting from Step 5  to be
 $ \tau _{{\rm {\bf I}}_l }^{\medstar (q + 1)}$. Then Claims (a), (b) and (c) hold
for  $ q + 1 $  by the above construction.

Finally,  we construct a derivation of
$G_{{\rm {\bf I}}_l }^{\medstar}\backslash G_{\dagger}\vert G_{\ddagger}$
from  $ \lceil {S_{l_{1} }^c }
\rceil_{I },  \cdots,
 \lceil {S_{l_{m(l) } }^c }
\rceil_{I } $,
  $ G_{b_{r1} } \vert S_{j_{r1}}^c, \cdots,  $
$G_{b_{rv} } \vert S_{j_{rv}}^c  $ in  $ {\rm {\bf GL}}_{\rm {\bf \Omega }}  $,   which we denote by  $ \tau _{{\rm {\bf I}}_l }^\medstar(\tau _{{\rm
{\bf I}}_{{\rm {\bf j}}_r }}^{ * }) $.

\begin{remark} All elimination rules used in constructing  $ \tau _{{\rm {\bf I}}_l }^\medstar $  are extracted from  $ \tau ^ *$.  Since  $ \tau _{{\rm
{\bf I}}_{{\rm {\bf j}}_r } }^{ * }  $  is a
derivation in  $ {\rm {\bf GL}}_{\rm {\bf \Omega }}  $  without  $ (EC_{\Omega}) $,   we may extract elimination rules from  $ \tau _{{\rm
{\bf I}}_{{\rm {\bf j}}_r } }^{ * }  $  which we may
use to construct  $\tau _{{\rm {\bf I}}_l }^\medstar(\tau _{{\rm
{\bf I}}_{{\rm {\bf j}}_r }}^{ * })$  by a procedure similar to that of
constructing  $ \tau _{{\rm {\bf I}}_l }^\medstar$  with minor revision at every
node  $ H $  that  $ \partial _{\tau _{{\rm {\bf I}}_l }^\medstar} (H) \leqslant
H_I^V$.  Note that updates and replacements in  Steps 2 and 3 are essentially inductive operations but we neglect it for simplicity.

We may also think of constructing $ \tau _{{\rm {\bf I}}_l }^\medstar(\tau _{{\rm
{\bf I}}_{{\rm {\bf j}}_r }}^{ * }) $
as grafting  $ \tau _{{\rm {\bf I}}_{{\rm {\bf j}}_r }
}^{ * }  $  in  $ \tau _{{\rm {\bf I}}_l }^\medstar$  by adding   $ \tau _{{\rm {\bf I}}_{{\rm {\bf j}}_r } }^{ *}
 $  to some $\tau_{{\rm {\bf I}}_{{\rm {\bf j}}_{l}}}^{ *}\in \tau _{{\rm {\bf I}}_l }^\medstar$.  Since the rootstock  $ \tau_{{\rm {\bf I}}_l }^\medstar$  of the
grafting process is invariant in Stage 2, we encapsulate  $\tau _{{\rm {\bf I}}_l }^\medstar (\tau _{{\rm
{\bf I}}_{{\rm {\bf j}}_r }}^{ * }
)$  as an rule  in  $ {\rm {\bf GL}}_{\rm {\bf \Omega }}  $  whose
premises are  $ G_{b_{r1} } \vert S_{j_{r1} }^c
,     G_{b_{r2} }\vert S_{j_{r2} }^c,
\cdots,  G_{b_{rv} } \vert S_{j_{rv} }^c  $  and conclusion is  $   \widehat{S''}  {\kern
1pt} \vert \{ {G_{b_{rk} } } \} _{k = 1}^{v} \vert
G_{^{H_I^V:\left\langle {G''} \right\rangle _{\mathcal{I}_{{\rm
{\bf j}}_r }} }}^{ \medstar (J)} \vert G_{H_I^V:H'}^{\medstar (J)}\backslash
\{ \widehat{S'} \vert  \widehat{S''} \}\vert G_{\mathbf{I}_{l\backslash r} }^\medstar $,   i.e.,

\[\cfrac{\underline{\qquad  G_{b_{r1} } \vert S_{j_{r1}}^c\quad    G_{b_{r2} } \vert
S_{j_{r2}}^c \quad\cdots  \qquad  G_{b_{rv} } \vert
S_{j_{rv} }^c \qquad }}{ \widehat{S''}  {\kern
1pt} \vert \{ {G_{b_{rk} } } \} _{k = 1}^{v} \vert
G_{^{H_I^V:\left\langle {G''} \right\rangle _{\mathcal{I}_{{\rm
{\bf j}}_r }} }}^{ \medstar (J)} \vert G_{H_I^V:H'}^{\medstar (J)}\backslash
\{ \widehat{S'} \vert  \widehat{S''} \}\vert G_{\mathbf{I}_{l\backslash r }}^\medstar}\left\langle
 \tau _{{\rm {\bf I}}_l }^\medstar(\tau _{{\rm{\bf I}}_{{\rm {\bf j}}_r }}^{ * }
) \right\rangle,
\]
 where,   $ G_{{\rm {\bf I}}_{l \backslash r} }^ \medstar= G_{{\rm {\bf I}}_l }^{\medstar}\backslash G_{\dagger} $  is closed.
\end{remark}

 {\bf  Stage 2  Construction of Routine $ \tau _{{\rm {\bf I}}_r }^\medstar(\tau _{{\rm {\bf
I}}_l }^\medstar(\tau _{{\rm {\bf I}}_{{\rm {\bf j}}_r } }^{ * } ))$.}  A sequence  $ \tau _{{\rm {\bf I}}_r }^{\medstar (q)}  $  of
trees for all  $ q \geqslant 0 $  is constructed inductively as follows.   $ \tau _{{\rm {\bf I}}_r }^{\medstar (0)}$,  $ \tau _{{\rm {\bf I}}_r }^{\medstar (q)}  $,
 $ \cfrac{\underline{    {\kern
1pt}   G_{q + 1}^{ \circ \circ }  {\kern
1pt}      }}{G_{q +
1}^ \circ }\left\langle {EC_\Omega ^ * } \right\rangle _{q + 1}^ \circ  $ are defined as those of Stage 1.
Then we perform the following steps  to construct  $ \tau _{{\rm {\bf I}}_r }^{\medstar (q + 1)}  $  in which  $ \cfrac{\underline{ {\kern
1pt}      G_{q + 1}^{
\circ \circ }
  }}{G_{q + 1}^ \circ }\left\langle {EC_\Omega ^ * }
\right\rangle _{q + 1}^ \circ  $  be revised as  $ \cfrac{\underline{{\kern
1pt}
G_{q + 1}^{ \cdot \cdot }
   }}{G_{q + 1}^\circ }\left\langle
{EC_\Omega ^ * } \right\rangle _{q + 1}^ \cdot  $  such that
Claims (a)  and  (b)  are same as those of Stage 1 and
 (c)  $ G_{q + 1}^{ \cdot \cdot}= G_{q + 1}^{ \circ\circ } $  if  $ S'' \notin \left\langle
{G''\vert S''} \right\rangle _{\mathcal{I}_{{\rm {\bf j}}_{_r } }^{({\rm
{\bf t}}_{_r } )} }  $  for all  $ \tau _{\bf{I}_{{\rm {\bf j}}_{_r }
} }^{ *} \in \tau _{{\rm {\bf I}}_r }^\medstar
(G_{q + 1}^{ \circ \circ }
    ) $  otherwise\\
 $G_{q + 1}^{\cdot\cdot}= G_{q + 1}^{ \circ\circ} \backslash
     \{ \widehat{S'}\vert  G_{H_I^V:G'}^{\medstar(J)}  \}^{m_{q+1}}
\vert \{G_{{\rm {\bf I}}_{l\backslash r}  }^\medstar\}^{m_{q+1}}$
  for some $m_{q+1}\geqslant 1$.

\textbf{Step 1 (Delete)}.      $ \tau _{{\rm {\bf I}}_r :
     G_{q + 1}^{\circ \circ } }^{\medstar (q)}  $  and $ \tau _{{\rm {\bf I}}_r:  G_{q + 1}^{ \circ \circ } (1)}^{\medstar (q)}  $ are defined as before.

 \[ \cfrac{\underline{
  G_{b_{r'1} }\vert S_{j_{r'1}}^c\,\,\,   G_{b_{r'2} } \vert S_{j_{r'2}}^c  \,\,\,\cdots \,\,\,G_{b_{r'v'} } \vert S_{j_{r'v'}}^c
 }}{G_{r'} \equiv \{ {G_{b_{r'k} } } \} _{k = 1}^{v'} \vert  G_{\mathcal{I}_{{\rm {\bf j}}_{r'} }}^{\ast} }\left\langle {\tau _{{\rm {\bf I}}_{{\rm {\bf j}}_{r'} }}^{ *} } \right\rangle\in \tau _{{\rm {\bf I}}_r :
     G_{q + 1}^{\circ \circ } }^{\medstar (q)}\]
satisfies
$ \partial _{\tau _{{\rm {\bf I}}_r }^\medstar}
( {G_{b_{r'k} } \vert S_{j_{r'k}}^c }
) > H_I^V  $  for all  $ 1 \leqslant k \leqslant v' $  and
 $ \partial _{\tau _{{\rm {\bf I}}_r }^\medstar} ( {G_{r'} } )
\leqslant H_I^V  $.

\textbf{Step 2 (Update)}.   For all  $ G_{q'}^ \circ \in
\tau _{{\rm {\bf I}}_r :
   G_{q + 1}^{ \circ \circ } (1) }^{\medstar (q)}
 $  which satisfy  $ \cfrac{\underline{
    G_{q'}^{ \cdot \cdot }}}{G_{q'}^\circ}\left\langle {EC_\Omega ^ * } \right\rangle
_{q'}^ \cdot \in \tau _{{\rm {\bf I}}_r}^{\medstar (q)}  $ and  $ S'' \in \left\langle
{G''\vert S''} \right\rangle _{\mathcal{I}_{{\rm {\bf j}}_{_r } }}  $  for some  $ \tau _{{\rm {\bf I}}_{{\rm
{\bf j}}_{_r} } }^{ *} \in \tau
_{{\rm {\bf I}}_r }^\medstar(G_{q'}^{ \circ \circ } ) $,   we replace  $ H $  with  $ H\backslash
\{\widehat{S'} \vert G_{H_I^V:G'}^{\medstar (J)}  \}\vert  G_{{\rm {\bf I}}_{l\backslash r} }^\medstar $  for all  $ H \in\tau _{{\rm {\bf I}}_r :
   G_{q + 1}^{ \circ \circ } (1)}^{\medstar(q)}  $,   $ G_{r'} \leqslant  H \leqslant G_{q'}^{\circ }$.   Then Claims (a) and (b) are proved  by a procedure as before.   Let $m'$ be the number of  $G_{q'}^{\cdot}$ satisfying the  above conditions.   $ \tau _{{\rm {\bf I}}_r : G_{q + 1}^{\circ \circ } (1)}^{\medstar (q)}  $, $G_{r'}$  and $G_{b_{r'k} } \vert
    S_{j_{r'k}}^c $  for all  $ 1 \leqslant k \leqslant v' $ be updated as $ \tau _{{\rm {\bf I}}_r : G_{q + 1}^{\circ \circ } (2)}^{\medstar (q)}  $, $G_{r''}$,  $ G'_{b_{r'k} } \vert S_{j_{r'k}}^c $, respectively.  Then  $ \tau _{{\rm {\bf I}}_r :  G_{q + 1}^{ \circ \circ } (2)}^{\medstar (q)}  $  is a derivation and $G_{r''}=G_{r'} \backslash
\{ \widehat{S'} \vert G_{H_I^V:G'}^{\medstar(J)}\}^{m'}\vert \{G_{{\rm {\bf I}}_{l\backslash r} }^\medstar\}^{m'}$.

\textbf{Step 3 (Replace).} All  $ \tau _{{\rm {\bf I}}_{{\rm {\bf j}}_r } }^{ * }  \in
 \tau _{{\rm {\bf I}}_r : G_{q + 1}^{ \circ \circ } (2)}^{\Omega
(q)}$ are processed  in post-order. If  $ H_i^c \rightsquigarrow H_j^c  $  for all  $ H_i^c  \in
I_{{\rm {\bf j}}_r }  $  and $H_j^c  \in I_l  $  it proceeds by
the following procedure otherwise it remains unchanged.
Let  $\tau _{{\rm {\bf I}}_{{\rm {\bf j}}_r }}^{ * } $ be in the form   \[\cfrac{\underline{
  G_{b_{r1} } \vert S_{j_{r1}}^c\,\,\,   G_{b_{r2} } \vert S_{j_{r2}}^c  \,\,\,\cdots \,\,\,G_{b_{rv} } \vert
S_{j_{rv}}^c}}{G_r \equiv \{ {G_{b_{rk} } } \} _{k = 1}^{v}  \vert G_{\mathcal{I}_{{\rm {\bf j}}_r }}^{ \ast} }.\]
Then
there exists the unique $1\leqslant k'\leqslant v'$ such that $G_{r''}<G_{b_{r'k'} } \vert S_{j_{r'k'}}^c\leqslant G_r$.

Firstly, we replace
 $ \tau _{{\rm {\bf I}}_{{\rm {\bf j}}_r } }^{ *}  $
with  $\tau _{{\rm {\bf I}}_l }^\medstar(\tau _{{\rm
{\bf I}}_{{\rm {\bf j}}_r }}^{ *}
)$.  We may rewrite the roots of  $ \tau _{{\rm {\bf I}}_{{\rm
{\bf j}}_r } }^{ *}  $,   $\tau _{{\rm {\bf I}}_l }^\medstar(\tau _{{\rm
{\bf I}}_{{\rm {\bf j}}_r }}^{ *}
)$  as
$G_r =\{G_{b_{rk} }\}_{k = 1
}^{ v}  \vert
G_{H_I^V:\left\langle {G''} \right\rangle _{\mathcal{I}_{j_r
} } }^{\ast} \vert
 G_{H_I^V:G'\vert H'}^{\ast},$
$G_{l\backslash r} \equiv  \{G_{b_{rk} } \}_{k = 1}^{v} \vert  \widehat{S''}   \vert
G_{^{H_I^V:\left\langle {G''} \right\rangle _{\mathcal{I}_{{\rm
{\bf j}}_r }} }}^{ \medstar (J)} \vert
 G_{H_I^V:H'}^{ \medstar (J)}\backslash\{ \widehat{S'} \vert  \widehat{S''} \}
\vert G_{{\rm {\bf I}}_{l\backslash r} }^\medstar, $  respectively.

Let  $ G_{r''}
< H \leqslant G_r  $.  Then  $ \partial _{\tau _{{\rm {\bf I}}_r }^\medstar}
( H ) \geqslant G''\vert S'' $ by Lemma 8.10.   Thus  $ G_{H_I^V:G'\vert H'}^{\medstar (0)} \subseteq H,  \{S_j^c:S_j^c \in G_{H_I^V:\left\langle {G''}
\right\rangle _{\mathcal{I}_{j_r } } }^{\ast}, H_j^c
\geqslant G''\vert S'' \}
=\{S_j^c:S_j^c \in G_{H_I^V:\left\langle {G''}
 \right\rangle _{\mathcal{I}_{j_r }} }^{ \medstar (J)}, H_j^c \geqslant G''\vert S'' \}.$  Define
  $ G_H^{ ** }= \{S_j^c:S_j^c \in
G_{H_I^V:\left\langle {G''} \right\rangle _{\mathcal{I}_{j_r
} } }^{ * }, S_j^c\mathrm{\,\,\,be \,\,\,the\,\,\, focus \,\,\,sequent\,\,\, of \,\,\,some\,\,\, } H' \in \tau _{{\rm {\bf I}}_r :   G_{q + 1}^{ \circ \circ } (2)}^{\medstar(q)}, H \leqslant H' \leqslant G_r\}$.

Then we replace  $ H $  with
\[ H\backslash \{G_{H_I^V:\left\langle {G''} \right\rangle
_{\mathcal{I}_{j_r } } }^{ \ast}\backslash G_H^{ ** } \vert
G_{H_I^V:G'\vert H'}^{ \ast} \}
\vert  \widehat{S''}   \vert\{G_{^{H_I^V:\left\langle {G''} \right\rangle
_{\mathcal{I}_{{\rm {\bf j}}_r }} }}^{
\medstar (J)} \backslash G_H^{ ** } \}\vert G_{H_I^V:H'}^{ \medstar(J)}\backslash
\{ \widehat{S'} \vert  \widehat{S''}\}
 \vert G_{{\rm {\bf I}}_{l\backslash r} }^\medstar \]  for all  $G_{b_{r'k'} } \vert S_{j_{r'k'}}^c\leqslant H \leqslant G_r$.

Let  $m''$  be the number of  $ \tau _{{\rm {\bf I}}_{{\rm {\bf j}}_r } }^{ *} \in
 \tau _{{\rm {\bf I}}_r : G_{q + 1}^{ \circ \circ } (2)}^{\Omega
(q)}$ satisfying  the replacement conditions as above,  $ \tau _{{\rm {\bf I}}_r : G_{q + 1}^{\circ \circ } (2)}^{\medstar (q)}  $,  $G_{r''}$  and  $G'_{b_{r'k} } \vert S_{j_{r'k}}^c $  for all  $ 1 \leqslant k \leqslant v' $ be updated as $ \tau _{{\rm {\bf I}}_r : G_{q + 1}^{\circ \circ } (3)}^{\medstar (q)}  $, $G_{r'''}$,  $ G''_{b_{r'k} } \vert S_{j_{r'k} }^c $, respectively.  Then  $ \tau _{{\rm {\bf I}}_r :  G_{q + 1}^{ \circ \circ } (3)}^{\medstar (q)}  $  is a derivation and
 $G_{r'''}=G_{r''} \backslash H_{1}^{m''}\vert H_{2}^{m''} $, where
 \begin{flalign}
 H_{3}&=G_{G'_{b_{r'k} } \vert S_{j_{r'k}}^c}^{\ast\ast},\\
H_{1}&=G_{H_I^V:\left\langle {G''} \right\rangle
_{\mathcal{I}_{j_r } } }^{\ast} \backslash H_{3} \vert
G_{H_I^V:G'\vert H'}^{\ast},\\
H_{2}&= \widehat{S''}   \vert
G_{^{H_I^V:\left\langle {G''} \right\rangle
_{\mathcal{I}_{{\rm {\bf j}}_r }} }}^{\medstar(J)} \backslash H_{3}\vert G_{H_I^V:H'}^{ \medstar (J)}\backslash \{ \widehat{S'} \vert  \widehat{S''}\} \vert
  G_{{\rm {\bf I}}_{l\backslash r} }^\medstar.
 \end{flalign}

\textbf{Step 4 (Separation along $H_I^V$).} Apply  the  separation algorithm along $H_I^V$  to
 $ G_{r'''}  $  and denote the resulting derivation by  $ \tau _{{\rm {\bf I}}_r
: G_{q + 1}^{ \circ \circ } (4)}^{\medstar (q)}  $  whose root is
labeled by  $ G_{q + 1}^{ \cdot \cdot }$.

By Claim (iii),  $G_{H_I^V:G_{r'}}^{\medstar(J)}=G_{q + 1}^ {\circ\circ}$.
\begin{flalign}
G_{H_I^V:G_{r''}}^{\medstar(J)}&=G_{q + 1}^ {\circ\circ}\backslash
\{ G_{H_I^V:G'}^{\medstar (J)} \vert  \widehat{S'}\}^{m'}\vert
\{G_{{\rm {\bf I}}_{l\backslash r} }^\medstar\}^{m'},\\
G_{H_I^V:H_1}^{\medstar(J)}&=G_{H_I^V:\left\langle {G''} \right\rangle
_{\mathcal{I}_{j_r } } }^{ \medstar(J)} \backslash  G_{H_I^V:H_3 }^{\medstar(J)} \vert
 G_{H_I^V:G'\vert H'}^{ \medstar(J)},\\
 G_{H_I^V:H_2}^{ \medstar(J)}&= \widehat{S''}   \vert
G_{^{H_I^V:\left\langle {G''} \right\rangle
_{\mathcal{I}_{{\rm {\bf j}}_r }} }}^{\medstar(J)} \backslash G_{H_I^V:H_3 }^{ \medstar (J)}\vert G_{H_I^V:H'}^{\medstar (J)}\backslash
\{ \widehat{S'} \vert  \widehat{S''}\} \vert G_{{\rm {\bf I}}_{l\backslash r} }^ \medstar.
 \end{flalign}Then
 \begin{flalign}
 G_{H_I^V:G_{r'''}}^{ \medstar(J)}&=G_{H_I^V:G_{r''}}^{\medstar(J)}\backslash
\{G_{H_I^V:G'\vert H'}^{\medstar(J)}\}^{m''}\vert\{ \widehat{S''}\vert G_{H_I^V:H'}^{\medstar(J)}\backslash
\{ \widehat{S'} \vert  \widehat{S''}\} \vert G_{{\rm {\bf I}}_{l\backslash r} }^\medstar\}^{m''}\\
&=G_{H_I^V:G_{r''}}^{\medstar(J)}\backslash
\{G_{H_I^V:G'}^{ \medstar (J)}\vert  \widehat{S'}\}^{m''}
\vert \{G_{{\rm {\bf I}}_{l\backslash r} }^\medstar\}^{m''}.
 \end{flalign}
Then
$$G_{q + 1}^{ \cdot \cdot }=G_{H_I^V:G_{r'''}}^{\medstar(J)}=
G_{q + 1}^ {\circ\circ}\backslash
\{\widehat{S'} \vert G_{H_I^V:G'}^{\medstar (J)}  \}^{m_{q+1}}\vert \{G_{\mathbf{I}_{l \backslash r}}^ \medstar\}^{m_{q+1}}$$
where $m_{q+1} \coloneq m'+m''$.

\textbf{Step 5 (Put back).} Replace  $ \tau _{{\rm {\bf I}}_r:  G_{q + 1}^{\circ \circ } }^{\medstar (q)}  $  in  $ \tau _{{\rm {\bf I}}_r }^{\medstar (q)}  $  with  $ \tau _{{\rm {\bf I}}_r :  G_{q + 1}^{ \circ \circ }
(4)}^{\medstar (q)}  $  and    revise  $ \cfrac{\underline{G_{q + 1}^{ \cdot \cdot }
 }}{G_{q + 1}^\circ }\left\langle {EC_\Omega ^ * } \right\rangle
_{q + 1}^{\circ}  $  as  $ \cfrac{\underline{G_{q + 1}^{ \cdot \cdot }
 }}{G_{q + 1}^\circ }\left\langle {EC_\Omega ^ * } \right\rangle
_{q + 1}^{\cdot} $.
Define the resulting tree from Step 5 to be
 $ \tau _{{\rm {\bf I}}_r }^{\medstar (q + 1)}$ then Claims (a), (b) and (c) hold
for  $ q + 1 $  by the above construction.

Finally,  we construct a derivation of
$G_{{\rm {\bf I}}_r }^{\medstar}\backslash \{ \widehat{S'}\vert  G_{H_I^V:G'}^{\medstar(J)}  \}
\vert G_{{\rm {\bf I}}_{l\backslash r}  }^\medstar$
from  $ \lceil {S_{i_1
}^c } \rceil_{I}  $,  \ldots,  $ \lceil {S_{i_m }^c }
\rceil_{I}  $  in  $ {\rm {\bf GL}}_{\rm {\bf \Omega }}  $.
Since the major operation of Stage 2 is to replace  $ \tau _{{\rm {\bf I}}_{{\rm
{\bf j}}_r }}^{ * }  $  with  $ \tau
_{{\rm {\bf I}}_l }^\medstar(\tau _{{\rm {\bf I}}_{{\rm {\bf j}}_r }^{(
{{\rm {\bf t}}_r } )} }^{ * } ) $  for all  $ \tau _{{\rm {\bf
I}}_{{\rm {\bf j}}_r }}^{ * } \in
\tau _{{\rm {\bf I}}_r }^\medstar$ satisfying  $ S'' \in  \left\langle
{G''\vert S''} \right\rangle _{\mathcal{I}_{{\rm {\bf j}}_r }^{( {{\rm
{\bf t}}_r } )} }  $,   then we denote the resulting derivation from Stage 2 by
 $ \tau _{{\rm {\bf I}}_r }^\medstar(\tau _{{\rm {\bf
I}}_l }^\medstar(\tau _{{\rm {\bf I}}_{{\rm {\bf j}}_r } }^{ * } ))$.

In the following, we prove that the claims from (i) to (iv) hold if $ \tau _{\rm {\bf I}}^\medstar\coloneq\tau _{{\rm {\bf I}}_r }^\medstar(\tau _{{\rm {\bf
I}}_l }^\medstar(\tau _{{\rm {\bf I}}_{{\rm {\bf j}}_r } }^{ * } ))$ and
$G_{\rm {\bf I}}^{\medstar}\coloneq G_{{\rm {\bf I}}_r }^{\medstar}\backslash \{ \widehat{S'}\vert  G_{H_I^V:G'}^{\medstar(J)}  \}\vert G_{{\rm {\bf I}}_{l\backslash r}  }^\medstar$.

$\bullet$ For Claim (i), (ii): Let $ \cfrac{\underline{H_1 \,\,\,    \cdots  \,\,\,  H_w }}{H_0 }\left\langle {\tau _{{\rm {\bf I}}_{{\rm{\bf j}} } }^{ * } } \right\rangle \in
\tau _{{\rm {\bf I}}}^{\medstar} $ and  $ S_j^c \in G_{\mathcal{I}_{{\rm {\bf j}}}}^{ \ast}$.
Then  $\partial _{\tau _{\rm {\bf I}}^\medstar}( H_{k}) \nleqslant H_j^c  $  for all $1\leqslant k\leqslant w$ by Lemma 6.13(iv).

If  $ \partial _{\tau _{\rm {\bf I}}^\medstar}
( H_{k'}) \leqslant H_I^V $ for some $1\leqslant k'\leqslant w$,
then $ H_i^c \nleqslant H_j^c  $  for all $ H_i^c \in I$  by $ \partial _{\tau _{\rm {\bf I}}^\medstar}( H_{k'}) \leqslant H_I^V\leqslant H_i^c$.  Thus Claim (i) holds and Claim (ii) holds by Lemma 8.5(5)  and Lemma 7.6(i).  Note that Lemma 8.5(5)  is independent of Claims from  (ii) to (iv).

Otherwise  $\tau _{{\rm {\bf I}}_{{\rm{\bf j}} } }^{ * }$  is
built up  from  $ \tau _{{\rm {\bf I}}_{{\rm {\bf j}}_r }}^{ * } \in \tau _{{\rm {\bf I}}_r }^\medstar$, $ \tau _{{\rm {\bf I}}_{{\rm {\bf j}}_l }}^{ * } $ or $ \tau _{{\rm {\bf I}}_{{\rm {\bf j}}_{_l } } \cup {\rm {\bf I}}_{{\rm {\bf j}}_{_r }
} }^{ * }\in \tau _{{\rm {\bf I}}_l
}^\medstar(\tau _{{\rm {\bf I}}_{{\rm {\bf j}}_r }}^{ * } ) $ by keeping their focus and principal sequents unchanged and making their side-hypersequents possibly  to be modified,  but which has no effect on discussing Claim (ii) and then Claim (ii) holds for $\tau _{{\rm {\bf I}}}^\medstar$ by the induction hypothesis on Claim (ii) of  $\tau _{{\rm {\bf I}}_l
}^\medstar$ or $\tau _{{\rm {\bf I}}_r
}^\medstar$.

If $\tau _{{\rm {\bf I}}_{{\rm{\bf j}} } }^{ * }$   is from  $ \tau _{{\rm {\bf I}}_{{\rm {\bf
j}}_{_l } } \cup {\rm {\bf I}}_{{\rm {\bf j}}_{_r }
} }^{ * }$   then $ S' \in
\left\langle {G'\vert S'} \right\rangle _{\mathcal{I}_{{\rm {\bf j}}_l
} }  $  and
$ S'' \in \left\langle {G''\vert
S''} \right\rangle _{\mathcal{I}_{{\rm {\bf j}}_r }}  $ by the choice of  $ \tau _{{\rm {\bf I}}_{{\rm {\bf j}}_l
} }^{ * }  $  and  $ \tau _{{\rm {\bf I}}_{{\rm {\bf j}}_r
}}^{ * }  $  at Stage 1. By the induction hypothesis,
 $ H_i^c \nleqslant H_j^c  $  for all  $ S_j^c \in G_{{\rm
{\mathcal{I}}}_{{\rm {\bf j}}_l } }^{\ast }, $
$ H_i^c \in I_l  $ and  $ H_i^c \nleqslant H_j^c  $  for all  $ S_j^c \in G_{{\rm
{\mathcal{I}}}_{{\rm {\bf j}}_r}}^{\ast }, $
$ H_i^c \in I_r  $.  Then $ H_i^c \nleqslant H_j^c  $  for all  $ S_j^c
\in G_{\mathcal{I}_{{\rm {\bf j}}}}^{ \ast}=G_{{\rm {\mathcal{I}}}_{{\rm {\bf j}}_{_l } } \cup{\rm {\mathcal{I}}}_{{\rm {\bf j}}_{_r } } }^{ \ast }$,
 $ H_i^c \in I$ by  $ G_{{\rm {\mathcal{I}}}_{{\rm {\bf j}}_{_l }
} \cup {\rm {\mathcal{I}}}_{{\rm {\bf j}}_{_r } }}^{\ast} = G_{{\rm {\mathcal{I}}}_{{\rm {\bf j}}_{_l } }}^{\ast } \bigcap G_{{\rm {\mathcal{I}}}_{{\rm {\bf j}}_{_r }} }^{\ast}  $,    $ I = I_l \cup I_r  $.

If $\tau _{{\rm {\bf I}}_{{\rm{\bf j}} } }^{ * }$  is from  $ \tau _{{\rm {\bf I}}_{{\rm {\bf j}}_l } }^{ * }$   then  $ S'
\notin \left\langle {G'\vert S'} \right\rangle _{\mathcal{I}_{{\rm {\bf
j}}_l } }  $  by  Step 3 at Stage 1.  Then
$ \left\langle {G'\vert
G''\vert H'} \right\rangle _{\mathcal{I}_{{\rm {\bf j}}_l } } \bigcap ( {G''\vert H'} ) = \emptyset  $.  Thus  $ S_j^c \notin G_{H_{I}^{V}:G'' \vert H'}^{ \ast}$.  Hence $ G''\vert S''\nleqslant H_j^c  $.  Therefore  $H_i^c \nleqslant H_j^c$ for all  $ H_i^c \in I_r$ by $ G''\vert S''\leqslant H_i^c  $.  Thus  $H_i^c \nleqslant H_j^c$ for all  $ H_i^c \in I$ by $ S_j^c \in G_{\mathcal{I}_{{\rm {\bf j}}}}^{ \ast}=G_{{\rm {\mathcal{I}}}_{{\rm {\bf j}}_{_l } } }^{\ast } $ and the induction hypothesis from  $ \tau _{{\rm {\bf I}}_{{\rm {\bf j}}_l } }^{ * }\in \tau _{{\rm {\bf I}}_l}^\medstar$.
The case of $\tau _{{\rm {\bf I}}_{{\rm{\bf j}} } }^{ * }$   built up from  $ \tau _{{\rm {\bf I}}_{{\rm {\bf j}}_r } }^{ * }$ is proved by a procedure similar to above and omitted.

$\bullet$  Claim (iii) holds by Step 4  at Stage 1 and 2.  Note that  in the whole of Stage 1,  we treat
 $\{ {G_{b_{rk} } } \} _{k = 1}^{v}$ as a side-hypersequent. But it is possible that there exists  $S_{j}^{c}\in\{ {G_{b_{rk} } } \} _{k = 1}^{v}$ such that  $H_{j}^{c}\leqslant H_{I}^{V}$.  Since
 we haven't  applied  the  separation algorithm to  $\{ {G_{b_{rk} } } \} _{k = 1}^{v}$  in Step 4  at Stage 1, then  it  could make Claim (iii) invalid.  But it is not difficult to find that we
just move  the separation of such $S_{j}^{c}$ to Step 4  at Stage 2. Of course, we can move it to
Step 4  at Stage 1, but which make the discussion complicated.

$\bullet$  For Claim (iv),  we prove (1) $ H_i^c  \| H_j^c  $  for all  $ S_j^c \in G_{{\rm {\bf I}}_{l \backslash r} }^ \medstar$ and $H_i^c \in I $,   (2) $ H_i^c  \| H_j^c  $  for all  $ S_j^c \in G_{{\rm {\bf I}}_r }^{\medstar}\backslash \{ \widehat{S'}\vert  G_{H_I^V:G'}^{\medstar(J)}  \}$ and $ H_i^c \in I $.  Only (1)  is proved as follows and (2) by a similar procedure and omitted.

Let  $S_j^c \in G_{{\rm {\bf I}}_{l \backslash r} }^ \medstar$.   Then  $S_j^c \in G_{{\rm {\bf I}}_{l } }^ \medstar$
and $S_j^c \notin \widehat{S''} \vert G_{H_I^V:G''}^{ \medstar(J)}\vert G_{H_I^V:H'}^{\medstar (J)}\backslash\{ \widehat{S'} \vert  \widehat{S''} \}$ by the definition of $ G_{{\rm {\bf I}}_{l \backslash r} }^ \medstar$.
By a procedure similar to that of  Claim (iv) in Case 1,  we get  $H_j^c\nleqslant  H_I^V$  and  assume that $ S_j^c \in G_{\mathcal{I}_{{\rm {\bf j}}_{l}} }^{\ast} $ for some
$ \tau _{{\rm {\bf{I}}}_{{\rm {\bf j}}_{l}} }^{ * }\in \tau _{{\rm {\bf I}}_{l}}^\medstar$ and  let  $G'\vert S'\nleqslant H_j^c$  in the following.

Suppose that  $G''\vert S''\leqslant H_j^c$.  Then $ S_j^c \in   G_{H_I^V:G''}^{ \ast}$ and $ S' \in \left\langle {G'\vert S' } \right\rangle _{\mathcal{I}_{{\rm {\bf j}}_{l}} }  $ by  $ S_j^c \in G_{\mathcal{I}_{{\rm {\bf j}}_{l}} }^{\ast} $.   Hence $ S_j^c \in  G_{H_I^V:G''}^{ \medstar(J)}$ by $H_j^c \geqslant G''\vert S''>H_I^V $. Therefore
$S_j^c \in \widehat{S''} \vert G_{H_I^V:G''}^{ \medstar(J)}\vert G_{H_I^V:H'}^{\medstar (J)}\backslash\{ \widehat{S'} \vert  \widehat{S''} \}$, a contradiction thus $G''\vert S''\nleqslant H_j^c$.  Then
   $H_I^V \nless  H_j^c $ by  $G'\vert S'\nleqslant H_j^c$  and  $G''\vert S''\nleqslant H_j^c$.  Thus  $ H_j^c  \| H_I^V $.  Hence  $ H_j^c  \| H_i^c  $  for all  $H_i^c \in I$. This completes  the proof of Theorem 8.2.

\end{proof}
\begin{definition}
The manipulation described in Theorem 8.2 is called  derivation-grafting operation.
\end{definition}

\section{The proof of Main  theorem }
Recall that in Main  theorem ${G_0 \equiv G'\vert \{ {\Gamma _i,p
\Rightarrow \Delta _i } \} _{i = 1 \cdots n} \vert \{ {\Pi _j
\Rightarrow p,\Sigma _j  } \} _{j = 1 \cdots m}}.$

\begin{lemma}
(i) If  $ G_2= G_0 \backslash \{\Gamma _1,p
\Rightarrow \Delta _1 \} $
 and $  \vdash _{{\rm {\bf GL}}} \mathcal{D}_0( {G_2 } ) $
 then  $  \vdash _{{\rm {\bf GL}}} \mathcal{D}_0 (
{G_0 })$;\\
$(i') $ If  $ G_2= G_0 \backslash \{\Pi _1 \Rightarrow p,\Sigma _1  \} $  and
 $\vdash _{{\rm {\bf GL}}} \mathcal{D}_0( {G_2} ) $
 then  $  \vdash _{{\rm {\bf GL}}} \mathcal{D}_0 ({G_0 })$;\\
(ii) If  $ G_2 = G_0 \vert \{\Gamma _1,p \Rightarrow \Delta _1 \} $  and  $
\vdash _{{\rm {\bf GL}}} \mathcal{D}_0 ( {G_2} ) $  then  $  \vdash
_{{\rm {\bf GL}}} \mathcal{D}_0 ( {G_0 } ) $;\\
$(ii')$  If  $ G_2 = G_0 \vert \{\Pi _1 \Rightarrow p,\Sigma _1 \} $  and  $  \vdash
_{{\rm {\bf GL}}} \mathcal{D}_0 ( {G_2} ) $
then  $  \vdash _{{\rm{\bf GL}}} \mathcal{D}_0 ( {G_0 } ) $;\\
(iii) If  $ G_2 = G_0 \backslash \{\Gamma _1,p \Rightarrow \Delta _1 \}\vert
\{\Gamma _1, \top \Rightarrow \Delta _1 \} $
and  $  \vdash _{{\rm {\bf GL}}}
\mathcal{D}_0 ( {G_2 } ) $
 then $  \vdash _{{\rm {\bf GL}}}
\mathcal{D}_0 ( {G_0 } ) $;\\
$(iii') $  If  $ G_2 = G_0 \backslash \Pi _1 \Rightarrow p,\Sigma _1\vert
\Pi _1 \Rightarrow \bot,\Sigma _1 $
and  $  \vdash _{{\rm {\bf GL}}}\mathcal{D}_0 ( {G_2} ) $
then $  \vdash _{{\rm {\bf GL}}}\mathcal{D}_0 ( {G_0 } )$.
\end{lemma}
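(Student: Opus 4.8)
The plan is to prove each of the eight items by exhibiting an explicit derivation in $\mathbf{GL}$ that manufactures a proof of $\mathcal{D}_0(G_0)$ from a proof of $\mathcal{D}_0(G_2)$. All eight are of the same flavour: we are modifying a single sequent of the premise $G_0$ of the strong density rule — either deleting it, duplicating it, or replacing $p$ in it by $\top$ (resp.\ $\bot$) — and we must track the corresponding effect on the conclusion $\mathcal{D}_0(G_0)\equiv G'\mid\{\Gamma_i,\Pi_j\Rightarrow\Delta_i,\Sigma_j\}_{i\le n;j\le m}$. So first I would write out, in each case, exactly what $\mathcal{D}_0(G_2)$ is as a hypersequent, side by side with $\mathcal{D}_0(G_0)$, so the combinatorial difference is visible.

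For the deletion items (i) and (i'): removing $\Gamma_1,p\Rightarrow\Delta_1$ from $G_0$ drops from $\mathcal{D}_0(G_0)$ exactly the $m$ components $\{\Gamma_1,\Pi_j\Rightarrow\Delta_1,\Sigma_j\}_{j=1\cdots m}$, leaving $\mathcal{D}_0(G_2)$. Hence $\mathcal{D}_0(G_0)=\mathcal{D}_0(G_2)\mid\{\Gamma_1,\Pi_j\Rightarrow\Delta_1,\Sigma_j\}_{j\le m}$, and the plan is to recover each missing component $\Gamma_1,\Pi_j\Rightarrow\Delta_1,\Sigma_j$ by a single $(COM)$ applied to the two components $\Gamma_1\Rightarrow\Delta_1$ (wait — $p$ must not be there; but in $G_2$ there is no longer any component carrying $p$ on the correct side, so instead) I would use $(COM)$ between some component of $\mathcal{D}_0(G_2)$ built from $\Gamma_i,p\Rightarrow\Delta_i$ together with $\Pi_j\Rightarrow p,\Sigma_j$; since $n,m\ge 1$ there is at least one such pair available in $G_2$, and iterating $(COM)$ and $(EW)$ one assembles the needed extra components. (Case (i') is symmetric, deleting a $\Pi$-component and the associated $\Delta_i,\Sigma_1$-columns.) For the duplication items (ii), (ii'): adding a redundant copy of $\Gamma_1,p\Rightarrow\Delta_1$ to $G_0$ only adds to $\mathcal{D}_0(G_0)$ copies of components already present in $\mathcal{D}_0(G_2)$, so $\mathcal{D}_0(G_0)$ is obtained from $\mathcal{D}_0(G_2)$ by $(EC)$ applied finitely often — this is essentially immediate. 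For the substitution items (iii), (iii'): replacing $p$ by $\top$ in $\Gamma_1,p\Rightarrow\Delta_1$ yields $\mathcal{D}_0(G_2)$ whose components corresponding to that sequent are $\Gamma_1,\Pi_j\Rightarrow\Delta_1,\Sigma_j$ with $\top$ absorbed (since $\Gamma_1,\top\Rightarrow\Delta_1$ is treated as a sequent without $p$, its image under $\mathcal{D}_0$ keeps it essentially intact) — and one checks that weakening/$(\top_r)$, together with $(t_l)$ to discharge the spurious $t$ introduced by the density-rule bookkeeping, converts between the two. Here I would lean on the fact, recorded just before Lemma 5.6 and used throughout Section 5, that $\mathcal{D}_{G}(S)=S$ when $p$ does not occur in $S$, so $\top$-components behave transparently.

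The main obstacle I expect is item (iii)/(iii'): unlike deletion and duplication, the substitution changes the \emph{shape} of the affected components rather than their multiplicity, and one has to verify that the count $|v_l([S]_G)|-|[S]_G|+1$ of $t$'s that the generalized density rule inserts is unaffected by the replacement, i.e.\ that deleting the eigenvariable occurrence does not alter the closed-subhypersequent structure in a way that changes $\mathcal{D}_0$ on the \emph{other} components. Concretely, I would argue that $\Gamma_1,\top\Rightarrow\Delta_1$ is a leaf-level fact independent of the eigenvariable machinery, handle the $\top$ (resp.\ $\bot$) by $(\top_r)$/$(\bot_l)$ combined with $(\wedge_{ll})$ or $(\wedge_{lr})$ to inject it where needed, and then fold the resulting $t$-discrepancy away using $(t_l)$ together with $(CUT)$ against $\Rightarrow t$, exactly as in the Case 1 argument inside the proof of Lemma 5.6. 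Once the $\top/\bot$ case is pinned down, the cut-elimination Lemma 2.15 guarantees we may assume all these auxiliary derivations cut-free, and the eight claims follow. I would present (i), (ii), (iii) in full and note that (i'), (ii'), (iii') are proved by the evident left/right dual, swapping the roles of antecedent and succedent and of $\top$ and $\bot$.
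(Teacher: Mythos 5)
Your reading of the combinatorics is right for the duplication items: for (ii) and (ii$'$) the extra copy of $\Gamma_1,p\Rightarrow\Delta_1$ only produces duplicate components in $\mathcal{D}_0(G_2)$, and $(EC^*)$ finishes the job exactly as in the paper. The other four items have genuine gaps. For (i): once you have observed that $\mathcal{D}_0(G_2)\subseteq\mathcal{D}_0(G_0)$, the missing components $\{\Gamma_1,\Pi_j\Rightarrow\Delta_1,\Sigma_j\}_{j\le m}$ are simply adjoined by $(EW)$, which in this calculus adds an arbitrary new component; your $(COM)$-based assembly is both unnecessary and unlikely to work, since no component of $\mathcal{D}_0(G_2)$ contains $p$ any longer (you appear to be conflating $G_2$ with $\mathcal{D}_0(G_2)$ in the parenthetical). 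More seriously, you do not treat the degenerate case $n=1$, where deleting $\Gamma_1,p\Rightarrow\Delta_1$ removes the \emph{only} antecedent occurrence of $p$ and the subset argument collapses; the paper handles this separately by replacing $p$ with $\bot$ in the $\Pi_j\Rightarrow p,\Sigma_j$ components and applying $(CUT)$ against the initial sequent $\Gamma_1,\bot\Rightarrow\Delta_1$.

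For (iii) and (iii$'$) your description of $\mathcal{D}_0(G_2)$ is incorrect: since $\Gamma_1,\top\Rightarrow\Delta_1$ contains no occurrence of $p$, it is \emph{not} combined with any $\Pi_j\Rightarrow p,\Sigma_j$ by $\mathcal{D}_0$ but survives as a standalone component of $\mathcal{D}_0(G_2)$. The correct (and short) argument is a single $(CUT)$ of that component against the initial sequent $\Pi_1\Rightarrow\top,\Sigma_1$, yielding $\Gamma_1,\Pi_1\Rightarrow\Delta_1,\Sigma_1$, followed by $(EW)$ to add the remaining $\Gamma_1,\Pi_j\Rightarrow\Delta_1,\Sigma_j$ for $j\ge 2$. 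Your proposed machinery of $(\wedge_{ll})$/$(\wedge_{lr})$ injections and the discussion of the count $|v_l([S]_G)|-|[S]_G|+1$ of inserted $t$'s is misplaced here: that bookkeeping belongs to the rule $\mathcal{D}$ of Section~5 acting on labelled hypersequents, whereas Lemma~9.1 concerns only $\mathcal{D}_0$, whose conclusion $G'\vert\{\Gamma_i,\Pi_j\Rightarrow\Delta_i,\Sigma_j\}$ contains no $t$'s at all. Finally, the appeal to cut-elimination at the end is idle, since the claim is derivability in $\mathbf{GL}$, which already contains $(CUT)$.
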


\begin{proof}
(i) Since  $ \mathcal{D}_0 ( {G_2} ) = G'\vert
\{\Gamma _i,\Pi _j \Rightarrow \Delta _i,\Sigma _j \} _{i = 2
\cdots n;j = 1 \cdots m}\,\subseteq\, G'\vert
\{\Gamma _1,\Pi _j \Rightarrow
\Delta _1,\Sigma _j \}_{j = 1 \cdots m} \vert\\
 \{\Gamma _i,\Pi
_j \Rightarrow \Delta _i,\Sigma _j \} _{i = 2 \cdots n;j = 1
\cdots m} = \mathcal{D}_0 ( {G_0} ) $  then
 $  \vdash _{{\rm {\bf
GL}}} \mathcal{D}_0 ( {G_0 } ) $  holds.
If $n=1$,   we replace
all $p$  in $\Pi _j \Rightarrow p,\Sigma _j $  with $\perp$.
Then
 $  \vdash _{{\rm {\bf
GL}}} \mathcal{D}_0 ( {G_0 } ) $  holds by applying  $(CUT)$ to
 $\Gamma _1,\perp \Rightarrow \Delta _1 $  and
 $ G' \vert \{ {\Pi _j\Rightarrow \perp,\Sigma _j  } \} _{j = 1 \cdots m}.$

(ii) Since  $ \mathcal{D}_0 ( {G_2 }
) = G'\vert \{\Gamma _1,\Pi _j \Rightarrow \Delta _1,\Sigma _j
\} _{j = 1 \cdots m} \vert
\{\Gamma _i,\Pi _j \Rightarrow \Delta
_i,\Sigma _j \} _{i = 1 \cdots n;j = 1 \cdots m}  $  then\\
  $  \vdash
_{{\rm {\bf GL}}} \mathcal{D}_0 ( {G_0 } ) $  holds by applying
 $ (EC^*) $  to  $ \mathcal{D}_0 ( {G_2 } )$.

(iii) Since  $ \mathcal{D}_0
( {G_2 } ) = G'\vert \Gamma _1, \top \Rightarrow \Delta _1 \vert
\{\Gamma _i,\Pi _j \Rightarrow \Delta _i,\Sigma _j \} _{i = 2
\cdots n;j = 1 \cdots m}  $  then  $  \vdash _{{\rm {\bf GL}}} G'' \equiv
G'\vert \Gamma _1,\Pi _1 \Rightarrow \Delta _1,\Sigma _1 \vert \{\Gamma _i
,\Pi _j \Rightarrow \Delta _i,\Sigma _j \} _{i = 2 \cdots n;j =
1 \cdots m}  $  holds by applying  $ (CUT) $  to  $ \Gamma _1, \top \Rightarrow
\Delta _1  $  in  $ \mathcal{D}_0 ( {G_2 } ) $  and  $ \Pi _1 \Rightarrow
\top,\Sigma _1  $.   Thus  $  \vdash _{{\rm {\bf GL}}} \mathcal{D}_0 ( {G_0
} ) $  holds by applying  $ (EW) $  to  $ G''$.

$ (\mathrm{i}'), (\mathrm{ii}') $ and $(\mathrm{iii}') $  are proved by a
procedure respectively  similar to those of (i), (ii) and (iii) and omitted.
\end{proof}

Let  $ I =  \{ {H_{i_1 }^c, \cdots,H_{i_m }^c } \}  \subseteq
\{ {H_1^c, \cdots,H_N^c } \} $,    $ G_I  $  denote a closed hypersequent
such that  $ G_I \subseteq _c G\vert G^ *  $  and  $ H_j^c  \| H_i^c  $  for all  $ S_j^c
\in G_I  $  and  $ H_{i }^c \in I$.

\begin{lemma}
There exists  $ G_I  $  such that  $  \vdash _{{\rm {\bf
GL_{\Omega}}}} G_I $  for all
$ I \subseteq \{ {H_1^c, \cdots,H_N^c }\}$.
\end{lemma}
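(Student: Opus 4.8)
The plan is to prove Lemma 9.3 by induction on $|I|$, using the separation algorithm of multiple branches (Lemma 8.2) to do the real work and reducing the claim to the construction of suitable branches $\lceil S_{i_k}^c\rceil_I$ together with a verification that they are derivable in $\mathbf{GL}_\Omega$. The target hypersequent $G_I$ will be built from $\tau^*$: intuitively, $G_I$ should collect, for each $H_{i_k}^c\in I$, a closed sub-collection of sequents of $G\vert G^*$ that ``belongs to'' $H_{i_k}^c$ in the sense that every $S_j^c$ occurring in it satisfies $H_j^c\|H_i^c$ for all $H_i^c\in I$. The natural candidate is to take $G_I$ to be the hypersequent $G_{\mathbf I}^\medstar$ produced by Lemma 8.2, where $\mathbf I=\{\lceil S_{i_1}^c\rceil_I,\dots,\lceil S_{i_m}^c\rceil_I\}$; Lemma 8.2(iv) gives exactly the required property that $S_j^c\in G_{\mathbf I}^\medstar$ implies $H_j^c\|H_i^c$ for all $H_i^c\in I$, and Lemma 8.2 also furnishes a derivation $\tau_{\mathbf I}^\Omega$ of $G_{\mathbf I}^\medstar$ from the branches $\lceil S_{i_k}^c\rceil_I$ in $\mathbf{GL}_\Omega$.

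So the heart of the argument is a downward induction on $|I|$ (or on a well-chosen measure such as $\sum_k \mathcal P(H_{i_k}^c)$) together with a construction of the branches. First I would handle the base case where $I$ is the full set $\{H_1^c,\dots,H_N^c\}$ or, more conveniently, where $I$ is such that each $H_{i_k}^c$ is $\leq$-maximal; then a branch $\lceil S_{i_k}^c\rceil_I$ can be taken to be $[S_{i_k}^c]_{G\vert G^*}$-style closed hypersequent extracted directly from $G\vert G^*$, and its derivability follows from $\vdash_{\mathbf{GL}_\Omega}G\vert G^*$ (Lemma 4.17(v)) by extracting an elimination derivation via Construction 4.7 or by Lemma 7.6(i). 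For the inductive step, given $I$, I would pick $H_{i_k}^c\in I$ that is not $\leq$-maximal among $\{H_1^c,\dots,H_N^c\}$, look at the set $I'=\{H_j^c : H_j^c\leq H_{i_k}^c\}\cup(I\setminus\{H_{i_k}^c\})$ or a similar enlargement, apply the induction hypothesis to get $G_{I'}$ with $\vdash_{\mathbf{GL}_\Omega}G_{I'}$, and then use $G_{I'}$ to supply the branch $\lceil S_{i_k}^c\rceil_I$: the branch is assembled from pieces of $G_{I'}$ that sit below $H_{i_k}^c$, closed up so that $v_l=v_r$. Feeding $\mathbf I=\{\lceil S_{i_1}^c\rceil_I,\dots,\lceil S_{i_m}^c\rceil_I\}$ into Lemma 8.2 then produces $\tau_{\mathbf I}^\Omega$ and $G_{\mathbf I}^\medstar=:G_I$, and composing $\tau_{\mathbf I}^\Omega$ on top of the derivations of the branches gives $\vdash_{\mathbf{GL}_\Omega}G_I$. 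The closedness of $G_I$ and $G_I\subseteq_c G\vert G^*$ are recorded in Lemma 8.2, and the orthogonality condition $H_j^c\|H_i^c$ is Lemma 8.2(iv).

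The main obstacle, I expect, is the bookkeeping needed to define the branches $\lceil S_{i_k}^c\rceil_I$ so that they simultaneously (a) are closed hypersequents with $\lceil S_{i_k}^c\rceil_I\subseteq_c G\vert G^*$, (b) contain $S_{i_k}^c$, (c) satisfy the ``downward'' condition of Definition 7.2 ($S_j^c\in\lceil S_{i_k}^c\rceil_I$ implies $H_j^c\leq H_{i_k}^c$ or $H_j^c\|H_i^c$ for all $H_i^c\in I$), and (d) are themselves derivable in $\mathbf{GL}_\Omega$ — all while the recursion on $I'$ feeds back consistently. Getting the measure to decrease strictly is delicate because enlarging $I$ to $I'$ adds the nodes strictly below $H_{i_k}^c$, and one must check these are pairwise parallel (they need not be in general), so the enlargement has to be chosen antichain-by-antichain rather than all at once. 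A secondary subtlety is the reassignment of identification numbers when the branch derivations are composed with $\tau_{\mathbf I}^\Omega$, so that the whole glued derivation remains a legal $\mathbf{GL}_\Omega$-proof in the sense of Definition 4.16; this is routine given Lemma 7.6(iv) and the ``distinguishability'' discussion, but it must be stated. Once the branches exist, the existence of $G_I$ and of its $\mathbf{GL}_\Omega$-proof is immediate from Lemma 8.2, so the real content of Lemma 9.3 is precisely this recursive construction of well-formed branches.
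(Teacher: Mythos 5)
Your high-level skeleton --- induction on $|I|$, with Lemma 8.2 supplying $G_I:=G_{\mathbf I}^\medstar$ and the whole difficulty concentrated in producing legitimate, derivable branches --- matches the paper, and you correctly locate where the content lies. But the mechanism you propose for producing the branches is not the paper's and, as you half-concede yourself, does not close. You run the induction ``downward'' from the $\leqslant$-maximal nodes and, at the step, enlarge $I$ to $I'=\{H_j^c:H_j^c\leqslant H_{i_k}^c\}\cup(I\setminus\{H_{i_k}^c\})$; this enlargement need not be an antichain (so Lemma 8.2 does not even apply to it), and no measure is exhibited that decreases. Worse, you then propose to assemble $\lceil S_{i_k}^c\rceil_I$ ``from pieces of $G_{I'}$ that sit below $H_{i_k}^c$, closed up so that $v_l=v_r$'': a sub-hypersequent of a derivable hypersequent is not in general derivable in ${\rm {\bf GL}}_\Omega$, and derivability of each branch is exactly what is needed to turn the derivation $\tau_{\mathbf I}^\Omega$ \emph{from} the branches into a proof of $G_{\mathbf I}^\medstar$.

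The missing idea is that the branches are not carved out of anything: the induction runs upward on $|I|$ starting from $I=\emptyset$ (where $G_\emptyset:=G\vert G^*$ is derivable by Lemma 4.17(v)), and for $|I|=m$ the branch attached to the $k$-th element is the \emph{entire} hypersequent $G_{I\setminus\{H_{i_k}^c\}}$ delivered by the induction hypothesis for the $(m-1)$-element subset. That hypersequent is already closed, already $\subseteq_c G\vert G^*$, already derivable, and already satisfies $H_j^c\|H_i^c$ for all $H_i^c\in I\setminus\{H_{i_k}^c\}$; one only has to re-choose the reference node, i.e.\ pick $H_{i_k'}^c$ with $S_{i_k'}^c\in G_{I\setminus\{H_{i_k}^c\}}$ such that every $S_j^c\in G_{I\setminus\{H_{i_k}^c\}}$ satisfies $H_j^c\leqslant H_{i_k'}^c$ or is parallel to all members of $\{H_{i_k'}^c\}\cup I\setminus\{H_{i_k}^c\}$, so that $G_{I\setminus\{H_{i_k}^c\}}=\lceil S_{i_k'}^c\rceil_{I'}$ with $I'=\{H_{i_1'}^c,\dots,H_{i_m'}^c\}$ still of size $m$ --- a relabelling, not an enlargement. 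Feeding these $m$ branches into Lemma 8.2 (or Lemma 7.8(i) when $m=1$) gives $G_I:=G_{I'}$ with $\vdash_{{\rm {\bf GL}}_\Omega}G_I$; the degenerate case where some $G_{I\setminus\{H_{i_k}^c\}}$ already satisfies the parallelism condition for all of $I$ is handled by taking it as $G_I$ outright. Without this step your argument has no source of derivable branches and the induction does not get off the ground.
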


\begin{proof}
 The proof is by induction on  $m$.   For the base step, let  $m=0$, then $I = \emptyset  $  and  $ G_I \coloneq G\vert G^ *  $  and  $  \vdash_{{\rm {\bf GL_{\Omega}}}} G_I  $  by Lemma 4.17(v).

For the induction step, suppose that  $m \geqslant 1 $  and there exists  $ G_I  $
such that  $  \vdash _{{\rm {\bf GL_{\Omega}}}} G_I $
 for all  $ \left| I \right| \leqslant m - 1$.
 Then there exist
 $ G_{I\backslash \{H_{i_k }^c \}}  $  for all  $ 1 \leqslant k \leqslant m$ such that  $  \vdash _{{\rm {\bf GL_{\Omega}}} }G_{I\backslash \{H_{i_k }^c \}}  $  and  $ H_j^c  \| H_i^c  $  for
all  $ S_j^c \in G_{I\backslash \{H_{i_k }^c \}}  $  and  $ H_{i}^c\in I\backslash \{H_{i_k }^c \}$.

If  $ H_j^c  \| H_{i_k }^c  $  for all  $ S_j^c \in G_{I\backslash \{H_{i_k }^c \}}  $  then $ G_I \coloneq G_{I\backslash \{H_{i_k }^c \}}$  and the claim holds clearly.
Otherwise there
exists  $ S_j^c \in G_{I\backslash \{H_{i_k }^c \}}  $  such that  $ H_j^c \leqslant
H_{i_k }^c  $  or  $ H_j^c > H_{i_k }^c  $  then we  rewrite  $ G_{I\backslash \{H_{i_k }^c \}}  $  as  $ \lceil {S_{i_k '}^c } \rceil_{\{H_{i_k' }^c \}\cup I\backslash \{H_{i_k }^c \}} $,   where we define  $ H_{i_k '}^c$ such that $ S_{i_k '}^c \in
G_{I\backslash \{H_{i_k }^c \}}  $ and,  $ S_j^c\in G_{I\backslash \{H_{i_k}^c \}}$ implies  $ H_j^c \leqslant H_{i_k '}^c  $ or $ H_j^c  \| H_{i }^c  $ for all  $ H_{i}^c\in \{H_{i_k' }^c \}\cup I\backslash \{H_{i_k }^c \}$.   If we can't define $ G_I $ to be $ G_{I\backslash \{H_{i_k }^c \}}  $  for  each  $ 1 \leqslant k \leqslant m$,
let $ I '\coloneq  \{ {H_{i_1' }^c, \cdots,H_{i_m' }^c } \} $.
Then  $ G_{I'} $  is
constructed by applying the separation algorithm of multiple branches (or one  branch if $m=1$) to
 $ \lceil {S_{i_1 '}^c } \rceil_{I'}, \cdots,\lceil {S_{i_m '}^c } \rceil_{I'}  $
 Then  $  \vdash _{{\rm {\bf GL}_{\Omega}}}G_{I'} $ by $ \vdash _{{\rm {\bf GL}_{\Omega}}} \lceil {S_{i_1 '}^c } \rceil_{I'}, \cdots, \vdash _{{\rm {\bf GL}_{\Omega}}}\lceil {S_{i_m '}^c } \rceil_{I'}  $,  Theorem 8.2 (or Lemma 7.8 (i) for one branch).  Let  $ G_I \coloneq G_{I'}$ then $  \vdash _{{\rm {\bf GL}_{\Omega}}}G_{I} $ clearly.
\end{proof}

\textbf{The proof of Main  theorem:} Let  $ I = \{ {H_1^c, \cdots,H_N^c }\} $  in Lemma 9.2.  Then there exists  $ G_I  $  such that  $  \vdash _{{\rm {\bf GL_{\Omega}}} } G_I $,    $ G_I \subseteq _c G\vert G^ *  $  and  $ H_j^c  \| H_i^c  $  for all  $ S_j^c \in G_I  $  and  $H_i^c\in I$.  Then $  \vdash _{{\rm {\bf
GL}}} \mathcal{D}(G_I ) $ by Lemma 5.6.

Suppose that  $ S_j^c \in G_I  $.  Then
$ H_j^c  \| H_i^c  $  for all  $H_i^c\in I$.  Thus $ H_j^c  \| H_j^c  $ by $H_j^c\in I$,  a contradiction with  $ H_j^c \leqslant H_j^c  $ and hence there doesn't exist  $ S_j^c \in G_I  $.  Therefore $ G_I \subseteq _c G$ by $ G_I \subseteq _c G\vert G^ *  $.

By removing
the identification number of each occurrence of  $ p $  in  $G$,   we obtain the
sub-hypersequent $ G_2$ of $G_2\vert G_2^ * $, which is the root of $\tau^4$ resulting from Step 4 in Section 4.  Then $  \vdash _{{\rm {\bf GL}}}
\mathcal{D}_0 (G_2) $ by  $  \vdash _{{\rm {\bf GL}} }
\mathcal{D}(G_I ) $ and $ G_I \subseteq _c G$. Since $G_2$  is constructed by adding or removing some $\Gamma _i,p\Rightarrow \Delta _i $ or $\Pi _j\Rightarrow p,\Sigma _j $ from $G_0$,  or  replacing  $\Gamma _i,p\Rightarrow \Delta _i $ with $\Gamma _i,\top\Rightarrow \Delta _i $, or   $\Pi _j\Rightarrow p,\Sigma _j $ with $\Pi _j\Rightarrow \perp,\Sigma _j $,  then  $  \vdash _{{\rm {\bf GL}}} \mathcal{D}_0 ({G_0 } ) $  by Lemma 9.1.  This completes the proof of Main  theorem.$
\largesquare
$

\begin{theorem}
Density elimination holds for all ${\rm {\bf GL}}$ in
 $\{{\rm {\bf GUL}},{\rm {\bf
GIUL}},{\rm {\bf GMTL}},{\rm {\bf GIMTL}}\}.$
\end{theorem}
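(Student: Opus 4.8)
The plan is to reduce density elimination for each calculus $\mathbf{GL}\in\{\mathbf{GUL},\mathbf{GIUL},\mathbf{GMTL},\mathbf{GIMTL}\}$ to the Main Lemma (Lemma~1.1) and then invoke it. Recall that density elimination means: if $\mathbf{GL}^{\mathrm D}\vdash G$ then $\mathbf{GL}\vdash G$, i.e.\ every application of the density rule $(D)$ can be removed. First I would argue by induction on the number of applications of $(D)$ in a proof $\pi$ of $G$ in $\mathbf{GL}^{\mathrm D}$; the base case is trivial since a proof with no $(D)$ is already a $\mathbf{GL}$-proof. For the inductive step, pick a topmost application of $(D)$ in $\pi$, say with premise $H_0\equiv G^{\sharp}\mid \Gamma_1,p\Rightarrow\Delta_1\mid\Gamma_2\Rightarrow p,\Delta_2$ and conclusion $G^{\sharp}\mid\Gamma_1,\Gamma_2\Rightarrow\Delta_1,\Delta_2$, where $p$ is fresh. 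By the choice of a topmost occurrence, the subderivation of $\pi$ above this $(D)$ is a genuine cut-free $\mathbf{GL}$-proof of $H_0$ (using Lemma~2.15 to assume cut-freeness).

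The key step is to massage $H_0$ into the shape demanded by the Main Lemma. The Main Lemma is stated for a premise $G_0\equiv G'\mid\{\Gamma_i,p\Rightarrow\Delta_i\}_{i=1}^n\mid\{\Pi_j\Rightarrow p,\Sigma_j\}_{j=1}^m$ with $n,m\geqslant 1$ and $p$ not occurring elsewhere; its conclusion $\mathcal D_0(G_0)\equiv G'\mid\{\Gamma_i,\Pi_j\Rightarrow\Delta_i,\Sigma_j\}_{i=1\cdots n;j=1\cdots m}$ is exactly what a family of $(D)$-applications to all the $p$-left and $p$-right sequents would produce. In our situation $n=m=1$, $\Gamma_1,p\Rightarrow\Delta_1$ is the single $p$-left sequent, $\Pi_1\Rightarrow p,\Sigma_1$ with $\Pi_1=\Gamma_2$, $\Sigma_1=\Delta_2$ is the single $p$-right sequent, and $G'=G^{\sharp}$; the freshness hypothesis on $p$ is precisely the side condition of $(D)$. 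So the Main Lemma gives $\mathbf{GL}\vdash\mathcal D_0(H_0)\equiv G^{\sharp}\mid\Gamma_1,\Gamma_2\Rightarrow\Delta_1,\Delta_2$, which is exactly the conclusion of the chosen $(D)$-application. Splicing this $\mathbf{GL}$-derivation into $\pi$ in place of the subderivation ending in that $(D)$ yields a proof of $G$ in $\mathbf{GL}^{\mathrm D}$ with one fewer application of $(D)$, and the induction hypothesis finishes the argument.

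I would then note explicitly that the Main Lemma is proved uniformly for all four systems $\mathbf{GL}$ (its statement quantifies over $\mathbf{GL}\in\{\mathbf{GUL},\mathbf{GIUL},\mathbf{GMTL},\mathbf{GIMTL}\}$, and the whole machinery of Sections~4--9 --- preprocessing into $\mathbf{GL}_\Omega$, the generalized density rule $(\mathcal D)$, the elimination rules $\tau^{*}_{\mathbf I'}$, and the separation algorithms of one and of multiple branches --- is developed without assuming more than the shared rules plus, where relevant, $(WL),(WR)$). Hence the reduction above applies to each of the four calculi, giving the theorem. One should also confirm that the single-premise/multiple-premise distinction causes no trouble: $(D)$ as stated in Definition~2.8 already covers both the single-conclusion ($\mathbf{GUL},\mathbf{GMTL}$) and multiple-conclusion ($\mathbf{GIUL},\mathbf{GIMTL}$) versions, and the Main Lemma is stated for the same four systems, so no extra case analysis is needed here.

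The genuinely hard part is not this reduction but the Main Lemma itself, which is the technical heart of the paper; the obstacle there --- as the worked example in Section~3 shows --- is that, unlike the $\mathbf{GUL}/\mathbf{GMTL}/\mathbf{GIMTL}$ cases handled by Metcalfe--Montagna, one cannot define a suitable conclusion $\mathcal D_x(H)$ for an arbitrary node $H$ of the given proof $\tau$ (the nodes $H_{\times\times},H_{\times}$ obstruct this), so the proof instead preprocesses $\tau$ into a proof $\tau^{*}$ in the restricted system $\mathbf{GL}_\Omega$ in which every occurrence of $p$ carries a distinct identification number, extracts elimination rules, and then runs the multi-branch separation algorithm (Lemma~8.2) to eliminate all copies of the auxiliary sequents $S^c_j$ in $G^{*}$; that separation algorithm, with its nested Stage~1/Stage~2 grafting construction, is where essentially all the difficulty lies. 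At the level of Theorem~9.3 itself, however, everything is a direct corollary of Lemma~1.1, and for the standard-completeness consequence one simply cites Theorem~62 of~[16] together with density elimination for $\mathbf{GIUL}$.
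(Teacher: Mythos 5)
Your proposal is correct and takes essentially the same route as the paper, which simply declares Theorem 9.3 an immediate consequence of the Main Lemma; you merely spell out the routine reduction (induction on the number of $(D)$-applications, eliminating a topmost one via the Main Lemma instantiated with $n=m=1$) that the paper leaves implicit. No gap.
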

\begin{proof}
It follows immediately from Main  theorem.
\end{proof}

\section {Final remarks and open problems}
Recently, we  have generalized our method described in this paper to the non-commutative substructural logic $\mathbf{GpsUL}^{\ast}$  in [24]. This result shows that $\mathbf{GpsUL}^{\ast}$  is the logic of pseudo-uninorms and their residua and answered the question posed by Prof. Metcalfe, Olivetti, Gabbay and Tsinakis in [17, 18].

It has often been the case in the past that metamathematical proofs of the standard completeness have the corresponding algebraic ones, and vise verse. In particular,  Baldi and Terui [3] had given an algebraic proof of the standard completeness of {\bf UL} and their method had also been extended by Galatos  and Horcik [11].
A natural problem is whether there is an algebraic proof corresponding to
 our proof-theoretic one. It seems difficult to obtain it  by using the insights gained from the approach described in this paper  because ideas and syntactic manipulations introduced here are complicated
 and specialized.  In addition, Baldi and Terui [3] also mentioned some open problems.  Whether our method could be applied to their problems is another research direction.

\section*{Acknowledgements}
I am  grateful to Prof. Lluis Godo,   Prof. Arnon Avron,  Prof. Jean-Yves Girard and  Prof.  George Metcalfe for valuable discussions.  I would like to thank two anonymous reviewers for carefully reading the first version of this article and many instructive suggestions.
\addcontentsline{toc}{section}{References}
\section*{References}
\bibliographystyle{elsarticle-harv}

\addcontentsline{toc}{section}{Appendices}

\begin{appendices}
 \section*{Appendices}
 \renewcommand{\appendixname}{Appendix~\Alph{section}}
\normalsize\rm

\section* {A.1 Why do we adopt Avron-style hypersequent calculi?}

A hypersequent calculus is called Pottinger-style if its two-premise
rules are in the form of $\cfrac{G\vert S'{\kern 1pt}{\kern
1pt}{\kern 1pt}{\kern 1pt}{\kern 1pt}{\kern 1pt}{\kern 1pt}G\vert
S''}{G\vert H'}(II)$ and, Avron-style if in the form of $\cfrac{G'\vert
S'{\kern 1pt}{\kern 1pt}{\kern 1pt}{\kern 1pt}{\kern 1pt}{\kern 1pt}{\kern
1pt}{\kern 1pt}G''\vert S''}{G'\vert G''\vert H'}(II)$.
In the viewpoint of  Avron-style systems,  each application of  two-premise
rules contains implicitly applications of $(EC)$ in Pottinger-style systems, as shown in the following.

\[
\boxed{\cfrac{{G|S'{\kern 1pt}  {\kern 1pt} {\kern 1pt} {\kern 1pt} {\kern 1pt} {\kern 1pt} {\kern 1pt} G|S''}}
{{G|H'}}(II)\xrightarrow[{in{\kern 1pt} {\kern 1pt} Avron-style {\kern 1pt} {\kern 1pt} system}]{{corresponds{\kern 1pt} {\kern 1pt} to{\kern 1pt} }}\cfrac{{\cfrac{{G|S' {\kern 1pt} {\kern 1pt} {\kern 1pt} {\kern 1pt} {\kern 1pt} G|S''}}
{{G|G|H'}}(II)}}
{{G|H'}}(EC^ *  )}
\]

 The choice of the underlying
system of hypersequent calculus is vital to our purpose and it gives the
background or arena.  In Pottinger-style system, $G_0$ in Section 3  is proved without application of $(EC)$ as follows.
But it seems helpless to prove that
$H_0$ is a theorem of \textbf{IUL}.\\
\scriptsize
$\boxed {\cfrac{\cfrac{C\Rightarrow C}{C\Rightarrow
C\vert \Rightarrow p,B\vert B\Rightarrow p,\neg A\odot \neg
A}\cfrac{\cfrac{B\Rightarrow B}{B\Rightarrow B{\kern 1pt}\vert
p,p\Rightarrow A\odot A}\cfrac{{\kern 1pt}{\kern 1pt}{\kern 1pt}{\kern
1pt}\cfrac{\cfrac{\cfrac{p\Rightarrow p{\kern 1pt}{\kern 1pt}{\kern 1pt}{\kern
1pt}{\kern 1pt}{\kern 1pt}A\Rightarrow A}{A\Rightarrow
p{\kern 1pt}\vert p\Rightarrow A}{\kern 1pt}{\kern 1pt}{\kern
1pt}\cfrac{p\Rightarrow p{\kern 1pt}\,\,\,A\Rightarrow A}{A\Rightarrow p{\kern
1pt}\vert p\Rightarrow A}}{A\Rightarrow p{\kern 1pt}\vert p,p\Rightarrow
A\odot A}}{\Rightarrow p,\neg A{\kern 1pt}\vert p,p\Rightarrow A\odot
A}\cfrac{\cfrac{\cfrac{p\Rightarrow p{\kern 1pt}{\kern 1pt}{\kern 1pt}{\kern
1pt}{\kern 1pt}{\kern 1pt}A\Rightarrow A}{A\Rightarrow
p{\kern 1pt}\vert p\Rightarrow A}{\kern 1pt}{\kern 1pt}{\kern
1pt}\cfrac{p\Rightarrow p{\kern 1pt}{\kern 1pt}{\kern 1pt}{\kern 1pt}{\kern
1pt}{\kern 1pt}A\Rightarrow A}{A\Rightarrow p{\kern
1pt}\vert p\Rightarrow A}}{A\Rightarrow p{\kern 1pt}\vert p,p\Rightarrow
A\odot A}}{\Rightarrow p,\neg A{\kern 1pt}\vert p,p\Rightarrow A\odot
A}{\kern 1pt}{\kern 1pt}{\kern 1pt}{\kern 1pt}}{{\kern 1pt}\Rightarrow
p,p,\neg A\odot \neg A{\kern 1pt}\vert p,p\Rightarrow A\odot A}}{\Rightarrow
p,B\vert B\Rightarrow p,\neg A\odot \neg A{\kern 1pt}\vert p,p\Rightarrow
A\odot A}{\kern 1pt}{\kern 1pt}{\kern 1pt}}{\Rightarrow p,B\vert
B\Rightarrow p,\neg A\odot \neg A{\kern 1pt}\vert p\Rightarrow C\vert
C,p\Rightarrow A\odot A}}$
\normalsize

The peculiarity of our method is
not only to focus on controlling the role of the external contraction rule
in the hypersequent calculus but also  introduce other  syntactic manipulations. For example, we label occurrences of the eigenvariable $p$  introduced by an
application of the density rule in order to be able to trace these
occurrences from the leaves (axioms) of the derivation to the root (the
derived hypersequent).

\section *{A.2 Why do we need the constrained external contraction rule?}

We use the example in Section 3 to answer this question.   Firstly, we illustrate Notation 4.14 as follows. In Figure 4,  let
$S_{11}^c =A\Rightarrow p_2 ;
S_{12}^c =A\Rightarrow p_1 ;
S_{21}^c =A\Rightarrow p_4 ;
S_{22}^c =A\Rightarrow p_3 ;
S_{31}^c =p_1 ,p_2 \Rightarrow A\odot A;
$
$
S_{32}^c =p_3 ,p_4 \Rightarrow A\odot A;
G_1' =p_1 ,p_2 \Rightarrow A\odot A;
G_2' =p_3 ,p_4 \Rightarrow A\odot A;
$
$
G_3' =A\Rightarrow p_1 {\kern 1pt}\vert \Rightarrow p_2 ,B\vert
B\Rightarrow p_4 ,\neg A\odot \neg A{\kern 1pt}\vert A\Rightarrow p_3 .
$
Then $H_i^c =G_i' \vert S_{i1}^c \vert S_{i2}^c $ for $i=1,2,3$. $H_i^c $
are   $(pEC)$-nodes and, $S_{i1}^c $ and $S_{i2}^c $ are
$(pEC)$-sequents.

Let $G_{H_1^c :A\Rightarrow p_2}^\ast =\Rightarrow p_2 ,B\vert B\Rightarrow
p_4 ,\neg A\odot \neg A\vert A\Rightarrow p_3\vert p_3 ,p_4 \Rightarrow A\odot A$.
We denote the derivation $\tau _{H_1^c :A\Rightarrow p_2 }^\ast $ of
$G_{H_1^c :A\Rightarrow p_2 }^\ast $ from $A\Rightarrow p_2 $ by
$\cfrac{\underline{{\kern 1pt}{\kern 1pt}{\kern 1pt}{\kern
1pt}A\Rightarrow p_2 {\kern 1pt}{\kern 1pt}{\kern 1pt}{\kern 1pt}{\kern
1pt}{\kern 1pt}{\kern 1pt}{\kern 1pt}{\kern 1pt}}}{G_{H_1^c :A\Rightarrow p_2 }^\ast }\left\langle {\tau
_{H_1^c :A\Rightarrow p_2}^\ast } \right\rangle $.
Since we focus on sequents in $G^\ast $ in the separation algorithm, we
abbreviate  $\cfrac{\underline{{\kern 1pt}{\kern 1pt}{\kern 1pt}{\kern
1pt}A\Rightarrow p_2 {\kern 1pt}{\kern 1pt}{\kern 1pt}{\kern
1pt}{\kern 1pt}}}{G_{H_1^c :A\Rightarrow p_2}^\ast }\left\langle {\tau
_{H_1^c :A\Rightarrow p_2 }^\ast } \right\rangle $ to
$\cfrac{\underline{{\kern 1pt}{\kern 1pt}{\kern 1pt}{\kern 1pt}{\kern
1pt}{\kern 1pt}{\kern 1pt}{\kern 1pt}\,S_{11}^c \,{\kern 1pt}{\kern 1pt}{\kern 1pt}{\kern1pt}{\kern 1pt}{\kern 1pt}{\kern 1pt}}}{S_{22}^c \vert S_{32}^c
}\left\langle {\tau _{S_{11}^c }^\ast } \right\rangle $ and further to
$\cfrac{1}{2\vert 3}\left\langle {\tau _1^\ast } \right\rangle $. Then the
separation algorithm $\tau_{H_1^c :G|G^* }^{\medstar}$ is abbreviated as
$$\boxed {\cfrac{\cfrac{\cfrac{1\vert 2\vert 3}{2'\vert 3'\vert 2\vert
3}\left\langle {\tau _1^\ast } \right\rangle }{2'\vert 2}\left\langle {\tau
_3^\ast ,\tau _{3'}^\ast } \right\rangle }{2}\left\langle {EC_\Omega }
\right\rangle }$$
where $2'$ and $3'$ are abbreviations of $A\Rightarrow p_5
$ and $p_5 ,p_6 \Rightarrow A\odot A$, respectively. We also write $2'$ and
$3'$ respectively as $2$ and $3$ for simplicity. Then the whole separation
derivation is given as follows.
\[
\boxed {\cfrac{\cfrac{\cfrac{\cfrac{1\vert 2\vert 3}{2\vert 3\vert 2\vert
3}\left\langle {\tau _1^\ast } \right\rangle }{2\vert 2}\left\langle {\tau
_3^\ast ,\tau _3^\ast } \right\rangle }{2}\left\langle {EC_\Omega }
\right\rangle {\kern 1pt}{\kern 1pt}{\kern 1pt}{\kern 1pt}{\kern 1pt}{\kern
1pt}{\kern 1pt}{\kern 1pt}{\kern 1pt}{\kern 1pt}{\kern 1pt}{\kern 1pt}{\kern
1pt}{\kern 1pt}{\kern 1pt}{\kern 1pt}{\kern 1pt}{\kern 1pt}{\kern 1pt}{\kern
1pt}{\kern 1pt}{\kern 1pt}{\kern 1pt}{\kern 1pt}{\kern 1pt}{\kern 1pt}{\kern
1pt}{\kern 1pt}{\kern 1pt}{\kern 1pt}{\kern 1pt}{\kern 1pt}{\kern 1pt}{\kern
1pt}{\kern 1pt}\cfrac{\cfrac{\cfrac{1\vert 2\vert 3}{1\vert 1\vert
3}\left\langle {\tau _2^\ast } \right\rangle {\kern 1pt}}{1\vert
1}\left\langle {\tau _3^\ast } \right\rangle {\kern 1pt}}{1}\left\langle
{EC_\Omega } \right\rangle }{\emptyset }\left\langle {\tau _{\{1,2\}}^\ast }
\right\rangle {\kern 1pt}}
\]
 where $\emptyset $  is  an abbreviation of    $G''$  in Page 14  and means that all sequents in it are copies of sequents in $G_0$.  Note that the simplified notations become intractable when we  decide  whether  $\left\langle {EC_\Omega } \right\rangle $  is applicable  to resulting hypersequents.
If no application of $\left\langle {EC_\Omega } \right\rangle $ is used in it,
all resulting hypersequents fall into the set $\{1\vert 2\vert \underbrace
{3\vert \cdots \vert 3}_l,\,\,2\vert 2\vert \underbrace {3\vert \cdots \vert
3}_m,\,\,1\vert 1\vert \underbrace {3\vert \cdots \vert 3}_n: l\geq0, m\geq0,n\geq0\}$ and $\emptyset $
is never obtained.

\section*{ A.3 Why do we need the separation of branches?}

In Figure 11,   $p_1 $ and $p_2 $ in the premise of $\cfrac{\underline{\quad{\kern 1pt}{\kern 1pt}{\kern 1pt}{\kern 1pt}{\kern
1pt}{\kern 1pt}{\kern 1pt}{\kern 1pt}{\kern 1pt}{\kern 1pt}{\kern 1pt}p_1
,p_2 \Rightarrow A\odot A\quad{\kern 1pt}{\kern 1pt}{\kern 1pt}}}{p_1
\Rightarrow C\vert C,p_2 \Rightarrow A\odot A}\left\langle {\tau _{S_{31}^c
}^\ast } \right\rangle $  could be viewed as being tangled
in one sequent $p_1 ,p_2 \Rightarrow A\odot A$ but in the conclusion of
$\left\langle {\tau _{S_{31}^c }^\ast } \right\rangle $ they are separated
into two sequents $p_1 \Rightarrow C$ and $C,p_2 \Rightarrow A\odot A$,
which are copies of sequents in $G_0$.    In Figure 5,  $p_2 $ in $A\Rightarrow p_2 $ falls into $\Rightarrow p_2 ,B$ in the root of $\tau _{H_1^c :A\Rightarrow p_2 }^\ast $
and $\Rightarrow p_2 ,B$ is a copy of a sequent in $G_0$. The same is true for
 $p_4 $ in $A\Rightarrow p_4 $ in Figure 8. But it's not the case.

Lemma 6.6 (vi) shows that in the elimination rule $\cfrac{\underline{{\kern
1pt}{\kern 1pt}{\kern 1pt}{\kern 1pt}S_{11}^c {\kern 1pt}{\kern 1pt}{\kern
1pt}}}{G_{S_{11}^c }^\ast }\left\langle {\tau _{S_{11}^c }^\ast }
\right\rangle $, $S_j^c\in G_{S_{11}^c }^\ast $ implies $H_j^c <H_i^c $ or $H_j^c \parallel H_i^c $.  If there exists no
$S_j^c\in G_{S_{11}^c }^\ast $ such that $H_j^c <H_i^c $,  then $S_j^c\in G_{S_{11}^c }^\ast
$ implies $H_j^c \parallel H_i^c $ and,  thus each occurrence of $p's$ in
$S_{11}^c$ is fell into a unique sequent which is a copy of a sequent in
$G_0$.  Otherwise there exists $S_j^c\in G_{S_{11}^c }^\ast $ such that $H_j^c <H_i^c $, then
we apply $\left\langle {\tau _{S_j^c }^\ast } \right\rangle $ to $S_j^c$ in $G_{S_{11}^c
}^\ast $ and the whole operations can be written as
$$\cfrac{\underline{\cfrac{\underline{\quad\qquad{\kern 1pt}{\kern 1pt}{\kern 1pt}{\kern
1pt}{\kern 1pt}{\kern 1pt}{\kern 1pt}{\kern 1pt}{\kern 1pt}{\kern 1pt}{\kern
1pt}{\kern 1pt}{\kern 1pt}{\kern 1pt}S_{11}^c \qquad\quad{\kern 1pt}{\kern 1pt}{\kern
1pt}{\kern 1pt}{\kern 1pt}{\kern 1pt}{\kern 1pt}{\kern 1pt}{\kern 1pt}{\kern
1pt}{\kern 1pt}}}{G_{S_{11}^c }^{\medstar(0)} \equiv G_{S_{11}^c }^\ast \backslash
\{S_j^c \}\vert S_j^c }\left\langle {\tau _{S_{11}^c }^\ast } \right\rangle
}}{G_{S_{11}^c }^{\medstar(1)} \equiv G_{S_{11}^c }^\ast \backslash \{S_j^c \}\vert
G_{S_j^c }^\ast }\left\langle {\tau _{S_j^c }^\ast } \right\rangle .$$
Repeatedly we can get $G_{S_{11}^c }^{\medstar(J)} $ such that $S_j^c\in
G_{S_{11}^c }^{\medstar(J)} $ implies $H_j^c \parallel H_1^c $.  Then each occurrence
of $p's$ in $S_{11}^c$ is fell into a unique sequent in $G_{S_{11}^c }^{\medstar(J)} $ which is a copy of
a sequent in $G_0$.  In such case, we call occurrences of $p's$ in $S_{11}^c$ are separated in $ G_{S_{11}^c }^{\medstar(J)} $ and call such a procedure the separation algorithm. It is the starting point of the separation algorithm. We
introduce branches in order to tackle the case of multiple-premise separation derivations for which it is necessary to apply $(EC_\Omega )$ to the resulting hypersequents.

\section* {A.4 Some questions about Theorem 8.2}

In Theorem 8.2, $\tau _{\rm {\bf I}}^\medstar$  is constructed by induction
on the number $\left| I \right|$ of branches.  As usual, we take the algorithm
of $\left| I \right|-1$ branches as the induction hypothesis. Why do we take
$\tau _{{\rm {\bf I}}_l }^\medstar$ and $\tau _{{\rm {\bf I}}_r }^\medstar$ as
the induction hypothesises?

Roughly speaking, it degenerates the case of $\left| I \right|$ branches
into the case of two branches in the following sense. The subtree $\tau
^\ast (G''\vert S'')$ of  $\tau ^\ast $ is as a whole contained in $\tau
_{\mathcal{I}_{{\rm {\bf j}}_l }}^\ast $ or not in it.
Similarly, $\tau ^\ast (G'\vert S')$ of $\tau ^\ast $ is as a whole
contained in $\tau _{\mathcal{I}_{{\rm {\bf j}}_r }
}^\ast $ or not in it.  It is such a division of $I$ into $I_l $ and $I_r $
that makes the whole algorithm possible.

Claim (i) of Theorem 8.2 asserts that $H_i^c \not {\leqslant }H_j^c $ for
all $S_j^c \in G_{\mathcal{I}_{\rm {\bf j}} }^\ast $ and
$H_i^c \in I$. It guarantees that $\tau _{\mathcal{I}_{\rm {\bf j}}}^\ast $ is not far from the final aim of Theorem 8.2 but
roughly close to it if we define some complexity to calculate it. If $H_i^c
\leqslant H_j^c $, the complexity of $G_{{\rm {\bf I}}_{\rm {\bf j}}}^\ast $ is more than or equal to that of $\left\lceil {S_i^c }
\right\rceil _I $ under such a definition of complexity and thus such an
application of $\tau _{\mathcal{I}_{\rm {\bf j}} }^\ast $
is redundant at least.  Claim (iii) of Theorem 8.2 guarantees the validity of the step 4 of Stage 1 and 2.

The tree structure of the skeleton of $\tau _{{\rm {\bf I}}_l }^\medstar(\tau
_{{\rm {\bf I}}_{{\rm {\bf j}}_r } }^\ast )$  can be
obtained by deleting some node $H\in \bar {\tau }_{{\rm {\bf I}}_l }^\medstar
$ satisfying  $\partial _{\tau _{{\rm {\bf I}}_l }^\medstar} (H)\leqslant H_{\rm {\bf I}}^V $.
The same is true for $\tau _{{\rm {\bf I}} }^\medstar$ if $\tau
_{{\rm {\bf I}}_l }^\medstar(\tau _{{\rm {\bf I}}_{{\rm {\bf j}}_r }}^\ast )$ is treated as a rule or a subroutine whose premises
are same as ones of $\tau _{{\rm {\bf I}}_{{\rm {\bf j}}_r }}^\ast $.  However,  it is incredibly difficult to imagine
or describe the structure of  $\tau _{{\rm {\bf I}} }^\medstar$  if you want to expand it as a normal derivation,  a binary tree.

All syntactic manipulations  in constructing
$\tau _{{\rm {\bf I}} }^\medstar$  are performed on the skeletons
of  $\tau _{{\rm {\bf I_l}} }^\medstar$ or $\tau _{{\rm {\bf I_r}} }^\medstar$.
  The structure of the proof of Theorem 8.2  is depicted in the following figure.

\scriptsize
\[
\boxed {\begin{array}{l}
 {\kern 1pt}{\kern 1pt}{\kern 1pt}{\kern 1pt}{\kern
1pt}{\kern 1pt}{\kern 1pt}{\kern 1pt}{\kern 1pt}G_{\rm {\bf I}}^{\medstar}\\
 \mbox{(Target)} \\
 {\kern 1pt}{\kern 1pt}{\kern 1pt}{\kern 1pt}{\kern 1pt}{\kern 1pt}{\kern 1pt}{\kern 1pt}{\kern
1pt}{\kern 1pt} {\kern 1pt}{\kern 1pt}\vdots \\
 {\kern 1pt}{\kern 1pt}{\kern 1pt}{\kern 1pt}{\kern 1pt}{\kern 1pt}link \\
 {\kern 1pt}{\kern 1pt}{\kern 1pt}{\kern 1pt}{\kern 1pt}{\kern 1pt}{\kern
1pt}{\kern 1pt}{\kern 1pt}\,\,\vdots \\
 {\kern 1pt}{\kern 1pt}{\kern 1pt}{\kern 1pt}{\kern 1pt}{\kern
1pt}\,\,\tau _{\rm {\bf I}}^\medstar\\
 (\mbox{Route}) \\
 \end{array}}\xrightarrow[{{\kern 1pt}{\kern 1pt}{\kern 1pt}{\kern
1pt}{\kern 1pt}{\kern 1pt}{\kern 1pt}\mbox{gifts}{\kern 1pt}{\kern
1pt}{\kern 1pt}{\kern 1pt}{\kern 1pt}\mbox{us}}]{{\kern 1pt}{\kern
1pt}\mbox{Induction}{\kern 1pt}{\kern 1pt}{\kern 1pt}{\kern 1pt}{\kern
1pt}{\kern 1pt}{\kern 1pt}{\kern 1pt}\mbox{hypothesis}}
\boxed {\begin{array}{l}
 \boxed {\begin{array}{l}
 G_{{\rm {\bf I}}_l }^{\medstar}\\
 \,\,\,\vdots \\
\tau _{{\rm {\bf
I}}_l }^\medstar\\
 \end{array}} \\
 {\kern 1pt}{\kern 1pt}{\kern 1pt}{\kern 1pt}{\kern 1pt}{\kern 1pt}\,\,\uparrow
\downarrow {\kern 1pt} \\
 \boxed {\begin{array}{l}
 G_{{\rm {\bf I}}_r }^{\medstar}\\
 {\kern 1pt}{\kern 1pt}{\kern 1pt}{\kern 1pt}\vdots \\
\tau _{{\rm {\bf
I}}_r }^\medstar\\
 \end{array}} \\
 \end{array}}{\begin{array}{*{20}c}
 {\begin{array}{l}
 \xrightarrow[{\mbox{into}\,\, \tau _{{\rm {\bf I}}_l }^\medstar{\kern 1pt}{\kern
1pt}}]{\mbox{grafting}{\kern 1pt}{\kern 1pt}{\kern 1pt}{\kern 1pt}{\kern
1pt}{\kern 1pt}\tau _{{\rm {\bf I}}_{{\rm {\bf j}}_r }
}^\ast }
\boxed {\begin{array}{l}
 \widehat{S''}\vert \{G_{b_{rk} } \}_{k=1}^v \vert G_{H_I^V :\left\langle
{G''} \right\rangle _{\mathcal{I}_{{\rm {\bf j}}_r } }
}^{\medstar(J)} \vert \\
 G_{H_I^V :H}^{\medstar(J)} \backslash \{\widehat{S'}\vert \widehat{S''}\}\vert
G_{{\rm {\bf I}}_{l\backslash r} }^{\medstar}\\
 \qquad\qquad\qquad\vdots \\
 \qquad\qquad\tau _{{\rm {\bf I}}_l }^\medstar(\tau _{{\rm {\bf
I}}_{{\rm {\bf j}}_r } }^\ast ) \\
 \end{array}} \\
 {\kern 1pt}{\kern 1pt}{\kern 1pt}{\kern 1pt}{\kern 1pt}{\kern 1pt}{\kern
1pt}{\kern 1pt}{\kern 1pt}{\kern 1pt}{\kern 1pt}{\kern 1pt}{\kern 1pt}{\kern
1pt}{\kern 1pt}{\kern 1pt}{\kern 1pt}{\kern 1pt}{\kern 1pt}{\kern 1pt}{\kern
1pt}{\kern 1pt}{\kern 1pt}{\kern 1pt}{\kern 1pt}{\kern 1pt}{\kern 1pt}{\kern
1pt}{\kern 1pt}{\kern 1pt}{\kern 1pt}\qquad\qquad\qquad\qquad\quad\uparrow \mbox{call}{\kern
1pt} \\
 \end{array}} \dotfill \\
 {\xrightarrow[{\mbox{into}{\kern 1pt}{\kern 1pt}{\kern 1pt}{\kern
1pt}{\kern 1pt}{\kern 1pt}{\kern 1pt}{\kern 1pt}{\kern 1pt}{\kern 1pt}{\kern
1pt}\tau _{{\rm {\bf I}}_r }^\medstar}]{\mbox{grafting}{\kern 1pt}{\kern
1pt}{\kern 1pt}{\kern 1pt}{\kern 1pt}{\kern 1pt}\tau _{{\rm {\bf I}}_l
}^\medstar(\tau _{{\rm {\bf I}}_{{\rm {\bf j}}_r }
}^\ast )}{\kern 1pt}{\kern 1pt}{\kern 1pt}{\kern 1pt}{\kern 1pt}
\boxed
{\begin{array}{l}
 G_{{\rm {\bf I}}_r }^{\medstar}\backslash
\{\widehat{S'}\vert G_{H_I^V :G'}^{\medstar(J)} \}\vert G_{{\rm {\bf
I}}_{l\backslash r} }^{\medstar}\\
 {\kern 1pt}{\kern 1pt}{\kern 1pt}{\kern 1pt}{\kern 1pt}{\kern 1pt}{\kern
1pt}{\kern 1pt}{\kern 1pt}{\kern 1pt}{\kern 1pt}{\kern 1pt}{\kern 1pt}{\kern
1pt}{\kern 1pt}{\kern 1pt}{\kern 1pt}{\kern 1pt}\,\,\qquad\vdots \\
\qquad\tau _{{\rm {\bf
I}}_r }^\medstar(\tau _{{\rm {\bf I}}_l }^\medstar(\tau _{{\rm {\bf I}}_{{\rm{\bf j}}_r }}^\ast )) \\
 \end{array}}} \dotfill \\
\end{array} }
\]
\normalsize
\section *{A.5 Illustrations  of   notations and algorithms}

We use the example in Section 3  to illustrate some notations and algorithms in this paper.

\subsection* { A.5.1 Illustration of  two cases of  (COM) in  the proof of  Lemma 5.6}

 Let $\cfrac{G'\quad G''}{G'''}(COM)$ be $\cfrac{p_1 \Rightarrow p_1 \quad A\Rightarrow A}{A\Rightarrow p_1 \vert p_1 \Rightarrow A}(COM)$, where $G'=S_1 =p_1 \Rightarrow p_1 $; $G''=S_2 =A\Rightarrow A$;  $S_3 =A\Rightarrow p_1 $; $S_4 =p_1 \Rightarrow A$ and $G'''=S_3 \vert S_4 $. Then $\left[ {S_3 } \right]_{G'''} =\left[ {S_4 } \right]_{G'''} $; $\mathcal{D}_{G'} (S_1 )=\Rightarrow t$; $\mathcal{D}_{G''} (S_2 )=A\Rightarrow A$;  $\mathcal{D}_{G'''} (S_3 \vert S_4 )=A\Rightarrow A$. Thus the proof of $\cfrac{\underline{\mathcal{D}_{G'} (S_1 )\quad\mathcal{D}_{G'} (S_2 )}}{\mathcal{D}_{G'} (S_3 \vert S_4 )}$\quad  is constructed by $\cfrac{\Rightarrow t\quad\cfrac{A\Rightarrow A}{A,t\Rightarrow A}(t_l )}{A\Rightarrow A}(CUT)$.

 Let $\cfrac{G'\quad G''}{G'''}(COM)$ be $
\cfrac{{B \Rightarrow B                      \left( \begin{gathered}
  \Rightarrow p_2 ,p_4 ,\neg A \odot \neg A|p_1 ,p_2  \Rightarrow
  A \odot A  |\\
    A \Rightarrow p_1  | A \Rightarrow p_3  |p_3 ,p_4  \Rightarrow A \odot A\\
\end{gathered}  \right)}}
{\left(\begin{gathered}
  \Rightarrow p_2 ,B|B \Rightarrow p_4 ,\neg A \odot \neg A |  A \Rightarrow p_1  | \\
  p_1 ,p_2  \Rightarrow A \odot A |A \Rightarrow p_3  |p_3 ,p_4  \Rightarrow A \odot A \\
\end{gathered} \right) }(COM),\\
$
 where $G'=S_1 =B\Rightarrow B$;  $G_2 = p_1 ,p_2 \Rightarrow A\odot A\vert A\Rightarrow p_1 \vert A\Rightarrow p_3 \vert p_3 ,p_4 \Rightarrow A\odot A$;\\
 $S_2 =\Rightarrow p_2 ,p_4 ,\neg A\odot \neg A$;  $G''=G_2 \vert S_2 $;  $S_3 =\Rightarrow p_2 ,B$;  $S_4 =B\Rightarrow p_4 ,\neg A\odot \neg A$ and $G'''=G_2 \vert S_3 \vert S_4 $. Then $\mathcal{D}_{G'} (S_1 )=B\Rightarrow B$; $\mathcal{D}_{G''} (S_2 )=A,A\Rightarrow A\odot A,\neg A\odot \neg A,A\odot A$;  $\mathcal{D}_{G'''} (S_3 ) =A\Rightarrow B,A\odot A$; $\mathcal{D}_{G'''} (S_4 ) =A,B\Rightarrow A\odot A,\neg A\odot \neg A; \mathcal{D}_{G'''} (S_3 \vert S_4 )=\mathcal{D}_{G'''} (S_4 )\vert \mathcal{D}_{G'''} (S_4 ) $.

 Thus the proof of $\cfrac{\underline{\mathcal{D}_{G'} (S_1 )\quad \mathcal{D}_{G'} (S_2 )}}{\mathcal{D}_{G'} (S_3 \vert S_4 )}$ is constructed by
 $$\cfrac{B\Rightarrow B\quad A,A\Rightarrow A\odot A,\neg A\odot \neg A,A\odot A}{A\Rightarrow B,A\odot A\vert A,B\Rightarrow A\odot A,\neg A\odot \neg A}(COM).$$

\subsection* { A.5.2 Illustration of  Construction 6.1}
Let $\tau ^\ast $ be
\[
\cfrac{\cfrac{\cfrac{H_8 \equiv B \Rightarrow B \,\, H_9 \equiv A\Rightarrow A}{H_4 \equiv A\Rightarrow B \vert B \Rightarrow A}(COM)\cfrac{H_{10} \equiv B \Rightarrow B \,\, H_{11} \equiv A\Rightarrow A}{H_5 \equiv
A\Rightarrow B \vert B \Rightarrow A}(COM)}{H_2 \equiv A\Rightarrow B \vert A\Rightarrow B \vert B ,B \Rightarrow A\odot A}(\odot _r )}{H_1 \equiv
A\Rightarrow B \vert \Rightarrow B ,\neg A\vert
B ,B \Rightarrow A\odot A}(\neg _r ).
\]
By Construction 6.1, $\tau ^{\ast \ast }$ is then given as follows.
$$\cfrac{\cfrac{\cfrac{(B \Rightarrow B; 8, 0)\,\, (A\Rightarrow A; 9, 0)}{(A\Rightarrow B; 4, 1)\vert (B \Rightarrow A; 4, 2)}(COM)\,\,\cfrac{(B \Rightarrow B; 10, 0)\,\,(A\Rightarrow A; 11, 0)}{(A\Rightarrow B;  5, 1)\vert (B
\Rightarrow A; 5, 2)}(COM)}{(A\Rightarrow B; 4, 1)\vert (A\Rightarrow B; 5, 1)\vert (B ,B\Rightarrow A\odot A; 2, 0)}(\odot _r )}{(A\Rightarrow
B; 4, 1)\vert (\Rightarrow B ,\neg A; 1, 0)\vert (B ,B \Rightarrow A\odot A; 2, 0)}(\neg _r ).$$

As an example,  we calculate $\wp (H_8 )$.  Since $Th(H_8 )=(H_8 ,H_4 ,H_2 ,H_1 )$,  then $b_3 =1$, $b_2 =b_1 =b_0 =0$ by Definition 2.13.  Thus $
\wp (H_8 )=b_0 2^0+b_1 2^1+b_2 2^2+b_3 2^3=8.
$

Note that we can't distinguish the one from the other for  two $A\Rightarrow B's$ in $H_2\in\tau$. If we divide $H_2$ into $H'\vert H''$, where $H'\equiv A\Rightarrow B$  and   $H''\equiv A\Rightarrow B \vert B ,B \Rightarrow A\odot A$,  then  $H'\bigcap H''=\{A\Rightarrow B\}$ in the conventional meaning of  hypersequents.  Thus only in the sense that we treat  $\tau^{*}$ as  $\tau^{**}$, the assertion that $H'\bigcap H''=\emptyset$ for any $H'\vert H''\subseteq H$  in Proposition 6.2  holds.

\subsection* {A.5.3 Illustration of  Notation 6.10 and Construction 6.11}

Let $I=\{H_1^c ,H_2^c \}, I_l =\{H_1^c \},I_r =\{H_2^c \},  \mathcal{I}= \{ S_{1 1}^c, S_{21}^c  \},   \mathcal{I}_l= \{ S_{11}^c \},
\mathcal{I}_r= \{ S_{21}^c\},$
$$\cfrac{G'\vert S'\,\,\, G''\vert S''}{G'\vert G''\vert H'}(\odot _r )\in
\tau ^\ast, $$ where
 $ G'\vert G''\vert H'=H_I^V;  G'\equiv  A\Rightarrow p_1 \vert p_1 ,p_2
\Rightarrow A\odot A;  S'\equiv \Rightarrow p_2 ,\neg A;\\
G''\equiv A\Rightarrow p_3 \vert p_3 ,p_4 \Rightarrow A\odot A;
S''\equiv \Rightarrow p_4 ,\neg A;  H'\equiv \Rightarrow p_2 ,p_4 ,\neg A\odot \neg A $\\(See Figure 4).

$\left\langle {G'\vert S'} \right\rangle _{ \mathcal{I}_l }
=\Rightarrow p_2 ,\neg A;
\left\langle {G'} \right\rangle _{ \mathcal{I}_l } =\emptyset;
\left\langle {G'\vert G''\vert H'{\kern
1pt}} \right\rangle _{ \mathcal{I}_l } =A\Rightarrow p_3 \vert
\Rightarrow p_2 ,p_4 ,\neg A\odot \neg A\vert {\kern
1pt}p_3 ,p_4 \Rightarrow A\odot A;
\left\langle {G\vert G^\ast } \right\rangle _{ \mathcal{I}_l }
=G_{ \mathcal{I}_l}^\ast =G_{S_{11}^c}^\ast =\Rightarrow p_2
,B\vert B\Rightarrow p_4 ,\neg A\odot \neg A\vert {\kern
1pt}A\Rightarrow p_3 \vert p_3 ,p_4 \Rightarrow A\odot A$ (See
Figure 5).

$\left\langle {G''\vert S''} \right\rangle _{\mathcal{I}_r}
=\Rightarrow p_4 ,\neg A;
\left\langle {G'\vert G''\vert H'} \right\rangle _{\mathcal{I}_r} =A\Rightarrow p_1 \vert
\Rightarrow p_2 ,p_4 ,\neg A\odot \neg A\vert p_1 ,p_2 \Rightarrow A\odot A;$\\
$
\left\langle {G\vert G^\ast } \right\rangle _{\mathcal{I}_r }
=G_{\mathcal{I}_r}^\ast =G_{S_{21}^c}^\ast =A\Rightarrow p_1 \vert
\Rightarrow p_2 ,B\vert B\Rightarrow p_4 ,\neg A\odot \neg A\vert
p_1 \Rightarrow C\vert C,p_2 \Rightarrow A\odot A$ (See
Figure 8).

$\left\langle {G'\vert G''\vert H'} \right\rangle _{\mathcal{I}} =\Rightarrow
p_2 ,p_4 ,\neg A\odot \neg A;
\left\langle {G\vert G^\ast } \right\rangle _{\mathcal{I}} =G_{\mathcal{I}}^\ast
=G_{^{\{S_{11}^c ,S_{21}^c \}}}^\ast =G_{\mathcal{I}_l}^\ast\bigcap G_{\mathcal{I}_r}^\ast=\Rightarrow p_2 ,B\vert B\Rightarrow
p_4 ,\neg A\odot \neg A$ (See Figure 10).

\normalsize

\subsection *{A.5.4 Illustration of  Theorem 8.2}

Note that  sequents in $\left[ { } \right]$ are principal  sequents of elimination rules in the following.
Let $I, I_r, I_l$ be the same as in A.5.3 and,  $
{\rm {\bf I}}=\{\left\lceil {S_{1}^c } \right\rceil _{I } ,\left\lceil
{S_{2}^c } \right\rceil _{I } \},{\rm {\bf I}}_l =\{\left\lceil {S_{1}^c } \right\rceil _{I } \},
{\rm {\bf I}}_r =\{\left\lceil {S_{2}^c } \right\rceil _{I } \},$

$\left\lceil {S_1^c } \right\rceil _I =G_{H_2^c :G|G^* }^{\medstar}
=A\Rightarrow p_5 \vert \Rightarrow p_6 ,B\vert {\kern
1pt}B\Rightarrow p_8 ,\neg A\odot \neg A\vert {\kern
1pt}p_5\Rightarrow C\vert $

$\qquad\qquad C,p_6 \Rightarrow A\odot A\vert {\kern
1pt}B\Rightarrow p_7 ,\neg A\odot \neg A\vert p_7\Rightarrow C{\kern
1pt}\vert C,p_8 \Rightarrow A\odot A,$

$\left\lceil {S_2^c } \right\rceil _I
=G_{H_1^c :G|G^* }^{\medstar}
=\Rightarrow p_2 ,B\vert B\Rightarrow p_4 ,\neg A\odot \neg
A\vert p_1 \Rightarrow C\vert C,p_2 \Rightarrow A\odot A\vert $

$\qquad\qquad A\Rightarrow p_3 \vert \Rightarrow p_1
,B\vert p_3 \Rightarrow C\vert C,p_4 \Rightarrow A\odot A$.

$$\tau _{{\rm {\bf I}}_l }^\medstar=\cfrac{\underline{\cfrac{\underline{\cfrac{\underline{
\quad\left\lceil {S_1^c } \right\rceil _I \quad}}{G_{{{\rm {\bf I}}_l }}^{\medstar(1)} }{\kern
1pt}\left\langle {\tau _{H_1^c :A\Rightarrow p_5}^\ast } \right\rangle}}{G_{{{\rm {\bf I}}_l }}^{\medstar(2)} }{\kern1pt}\left\langle {\tau _{H_3^c :p_{9} ,p_{10} \Rightarrow
A\odot A}^\ast } \right\rangle }}{G_{{{\rm {\bf I}}_l } }^\medstar}\left\langle {EC_\Omega ^\ast } \right\rangle, $$
where
$ G_{{{\rm {\bf I}}_l }}^{\medstar(1)}  =
\left[ { \Rightarrow p_5,B\vert {\kern
1pt}B\Rightarrow p_{10} ,\neg A\odot \neg A\vert A\Rightarrow p_{9} \vert p_{10} ,p_{9} \Rightarrow A\odot A } \right]\vert
\Rightarrow p_6 ,B\vert $

$\qquad \qquad B\Rightarrow p_8 ,\neg A\odot
\neg A\vert   p_5 \Rightarrow C\vert C,p_6 \Rightarrow A\odot A\vert {\kern
1pt}B\Rightarrow p_7 ,\neg A\odot \neg A\vert $

$\qquad \qquad p_7 \Rightarrow C\vert C,p_8 \Rightarrow A\odot A,$

$ G_{{{\rm {\bf I}}_l }}^{\medstar(2)}  =\Rightarrow p_5 ,B\vert {\kern
1pt}B\Rightarrow p_{10} ,\neg A\odot \neg A\vert A\Rightarrow
p_{9} \vert[ p_{9}\Rightarrow C\vert C,p_{10} \Rightarrow
A\odot A]\vert $

$\qquad \qquad\Rightarrow p_6 ,B\vert
B\Rightarrow p_8 ,\neg A\odot \neg A\vert
 p_5\Rightarrow C\vert C,p_6 \Rightarrow A\odot A\vert$

$\qquad \qquad B\Rightarrow p_7 ,\neg A\odot \neg A\vert p_7 \Rightarrow C{\kern
1pt}\vert C,p_8 \Rightarrow A\odot A,
$

$ G_{{{\rm {\bf I}}_l } }^\medstar=\Rightarrow p_5 ,B\vert {\kern
1pt}A\Rightarrow p_{9} \vert p_{9} \Rightarrow C{\kern
1pt}\vert \Rightarrow p_6,B\vert B\Rightarrow p_8
,\neg A\odot \neg A\vert$

 $\qquad \qquad p_5\Rightarrow C\vert C,p_6 \Rightarrow A\odot A\vert {\kern
1pt}B\Rightarrow p_7 ,\neg A\odot \neg A\vert p_7\Rightarrow C{\kern
1pt}\vert C,p_8 \Rightarrow A\odot A,
$

$G_{H_I^V :G''}^{\medstar(J)} = A\Rightarrow p_{9} \vert p_{9} \Rightarrow C\vert C,p_{10} \Rightarrow
A\odot A $;   $\widehat{S''}=B\Rightarrow p_{10},\neg A\odot \neg A$;
$\widehat{S'}=\Rightarrow p_5,B$;
$G_{H_I^V:H'}^{\medstar(J)} =G_{H_I^V :H'}^\ast =\widehat{S'}\vert \widehat{S''}$;
$G_\dag =A\Rightarrow p_{9} \vert p_{9} \Rightarrow C\vert C,p_{10} \Rightarrow
A\odot A\vert B\Rightarrow p_{10} ,\neg A\odot \neg A$.

$$\tau _{{\rm {\bf I}}_r }^\medstar=\cfrac{\underline{\cfrac{\underline{
\quad\left\lceil {S_2^c } \right\rceil _I \quad}}{G_{{\rm {\bf I}}_r}^{\medstar(1)} }{\kern
1pt}\left\langle {\tau _{H_2^c :A\Rightarrow p_3}^\ast } \right\rangle}}{G_{{\rm {\bf I}}_r }^\medstar}\left\langle {EC_\Omega ^\ast } \right\rangle, $$
where   $G_{{\rm {\bf I}}_r }^{\medstar(1)} =\Rightarrow p_2 ,B\vert {\kern
1pt}B\Rightarrow p_4 ,\neg A\odot \neg A\vert p_1
\Rightarrow C\vert C,p_2 \Rightarrow A\odot A\vert $

 $\qquad \qquad\Rightarrow p_1 ,B\vert
 p_3 \Rightarrow C\vert C,p_4 \Rightarrow A\odot A\vert [A\Rightarrow p_{11}
\vert \Rightarrow p_{12} ,B\vert $

$\qquad \qquad B\Rightarrow p_3,\neg A\odot \neg A\vert p_{11} \Rightarrow C\vert C,p_{12} \Rightarrow A\odot A],
$

 $G_{{\rm {\bf I}}_r}^\medstar=\Rightarrow p_2 ,B\vert {\kern
1pt}B\Rightarrow p_4 ,\neg A\odot \neg A\vert p_1
\Rightarrow C\vert C,p_2\Rightarrow A\odot A\vert $

$\qquad \qquad \Rightarrow p_1 ,B\vert
 p_3 \Rightarrow C\vert C,p_4 \Rightarrow A\odot A\vert A\Rightarrow p_{11}\vert
B\Rightarrow p_3 ,\neg A\odot \neg A\vert p_{11} \Rightarrow C.
$

Since there is only one elimination rule in $\tau _{{\rm {\bf I}}_r }^\medstar$,
the case we need to process is $\tau _{H_2^c :A\Rightarrow p_3}^\ast
$, i.e., $$\tau _{{\rm {\bf I}}_{{\rm {\bf j}}_r }}^\ast =\cfrac{\underline{\quad\left\lceil {S_2^c } \right\rceil _I\quad}}{G_{H_2^c :\left\lceil {S_2^c } \right\rceil _I }^{\medstar(1)} }\left\langle {\tau _{H_2^c :A\Rightarrow p_3}^\ast } \right\rangle .$$  Then  $v=1$,
$S_{j_{r1} }^c
=A\Rightarrow p_3 $; $G_{b_{r1} } =\Rightarrow p_2 ,B\vert B\Rightarrow p_4 ,\neg A\odot \neg A\vert p_1
\Rightarrow C\vert \\
C, p_2 \Rightarrow A\odot A\vert \Rightarrow
p_1 ,B\vert p_3 \Rightarrow C\vert C,p_4 \Rightarrow A\odot A$
 in $\tau
_{{\rm {\bf I}}_{{\rm {\bf j}}_r }}^\ast $.

$$\tau _{{\rm {\bf I}}_l }^{\medstar (0)}=\cfrac{\underline{\cfrac{\underline{\cfrac{\underline{
\quad\left\lceil {S_1^c } \right\rceil _I \quad}}{G_{{\rm {\bf I}}_l}^{\medstar(1)} }{\kern
1pt}\left\langle {\tau _{H_1^c :A\Rightarrow p_5}^\ast } \right\rangle}}{G_{{\rm {\bf I}}_l}^{\medstar(2)} }{\kern1pt}\left\langle {\tau _{H_3^c :p_{9} ,p_{10} \Rightarrow
A\odot A}^\ast } \right\rangle }}{G_{{\rm {\bf I}}_l }^\medstar}\left\langle {EC_\Omega ^\ast } \right\rangle^{\circ}_{1}, $$
where   $\partial _{\tau _{{\rm {\bf I}}_l }^\medstar} (\left\lceil {S_1^c } \right\rceil _I)=H_1^c $, $\partial _{\tau _{{\rm
{\bf I}}_l }^\medstar} (G_{{\rm {\bf I}}_l  }^{\medstar(1)} )=H_3^c
<H_I^V $,$\partial _{\tau _{{\rm {\bf I}}_l }^\medstar} (G_{{\rm {\bf I}}_l  }^{\medstar(2)} )=\partial _{\tau _{{\rm {\bf I}}_l }^\medstar}
(G_{{\rm {\bf I}}_l }^\medstar)=G\vert G^\ast $,
$G_1^{\circ \circ } =G_{{\rm {\bf I}}_l }^{\medstar(2)} $,
$G_1^\circ =G_{{\rm {\bf I}}_l }^{\medstar}$.

$$\tau _{{\rm {\bf I}}_l :G_1^{\circ \circ } }^{\medstar (0)}
=\cfrac{\underline{\cfrac{\underline{\quad\left\lceil {S_1^c } \right\rceil _I \quad}}{G_{{\rm {\bf I}}_l }^{\medstar(1)}}\left\langle {\tau _{H_1^c :A\Rightarrow p_5}^\ast } \right\rangle }}{G_{{\rm {\bf I}}_l }^{\medstar(2)} }\left\langle {\tau _{H_3^c :p_{9} ,p_{10} \Rightarrow
A\odot A}^\ast } \right\rangle ,\\
$$$$\tau _{{\rm {\bf I}}_l :G_1^{\circ \circ } (1)}^{\medstar (0)} =\tau _{{\rm{\bf I}}_l :G_1^{\circ \circ } (2)}^{\medstar (0)} =\cfrac{\underline{\,\,\left\lceil {S_1^c } \right\rceil _I\,\,}}{G_{{\rm {\bf I}}_l }^{\medstar(1)} }\left\langle {\tau _{H_1^c :A\Rightarrow p_5}^\ast } \right\rangle.$$

Since there is only one elimination rule in $\tau _{{\rm {\bf I}}_l
:G_1^{\circ \circ } (2)}^{\medstar (0)} $,  the case we need to process is
$\tau _{H_1^c :A\Rightarrow p_5 }^\ast $, i.e.,
$$\tau _{{\rm {\bf
I}}_{{\rm {\bf j}}_l } }^\ast
=\cfrac{\underline{\,\, \left\lceil {S_1^c } \right\rceil _I\,\,}}{G_{{\rm {\bf I}}_l }^{\medstar (1)}}
{\kern1pt}\left\langle {\tau _{H_1^c :A\Rightarrow p_5}^\ast } \right\rangle. $$

Then $u=1$, $S_{j_{l1}^{(t_{l1} )} }^c
=A\Rightarrow p_5 $; $G_{b_{l1} } =\Rightarrow p_6 ,B\vert {\kern
1pt}B\Rightarrow p_8 ,\neg A\odot \neg A\vert\\
p_5\Rightarrow C\vert C,p_6 \Rightarrow A\odot A\vert B\Rightarrow
p_7 ,\neg A\odot \neg A\vert p_7 \Rightarrow C\vert C,p_8
\Rightarrow A\odot A$ in
$\tau _{{\rm {\bf I}}_{{\rm {\bf j}}_l }}^\ast $.

$\tau _{{\rm {\bf I}}_{{\rm {\bf j}}_l }
}^\ast $ is replaced with
$\tau _{{\rm {\bf I}}_{{\rm {\bf j}}_l }\cup
 {\rm {\bf I}}_{{\rm {\bf j}}_r }}^\ast $ in Step 3 of Stage 1, i.e.,

$\cfrac{\underline{{\kern
1pt}{\kern
1pt}\left\lceil {S_1^c } \right\rceil _I {\kern
1pt}{\kern
1pt}\left\lceil {S_2^c }
\right\rceil _I {\kern
1pt}}}{G_{l, r}
}\left\langle {\tau
_{\{H_1^c :A\Rightarrow p_5 ,H_2^c :A\Rightarrow p_3 \}}^\ast }
\right\rangle =\tau _{{\rm {\bf I}}_l :G_1^{\circ \circ } (3)}^{\medstar (0)}
=\tau _{{\rm {\bf I}}_l :G_1^{\circ \circ } (4)}^{\medstar (0)} ,$
where

$G_{l, r}=\Rightarrow p_5 ,B\vert B\Rightarrow p_3 ,\neg
A\odot \neg A\vert G_{b_{r1} } \vert G_{b_{l1}}=$

$\qquad \qquad\Rightarrow p_2 ,B\vert B\Rightarrow p_4 ,\neg A\odot \neg A\vert p_1
\Rightarrow C\vert C,p_2 \Rightarrow A\odot A\vert \Rightarrow p_1 ,B\vert  $

$\qquad \qquad p_3 \Rightarrow C\vert C,p_4
\Rightarrow A\odot A \vert \Rightarrow p_6 ,B\vert B\Rightarrow p_8,\neg A\odot \neg A\vert$

$\qquad \qquad p_5 \Rightarrow C\vert C,p_6 \Rightarrow A\odot A\vert B\Rightarrow p_7 ,\neg A\odot \neg A\vert p_7 \Rightarrow C\vert $

$\qquad \qquad\Rightarrow C,p_8 \Rightarrow A\odot A\vert p_5 ,B\vert B\Rightarrow p_3 ,\neg A\odot \neg A$.

Replacing  $\tau _{{\rm {\bf I}}_l :G_1^{\circ \circ }}^{\medstar (0)}$  in $\tau _{{\rm {\bf I}}_l }^{\medstar (0)} $ with $ \tau _{{\rm {\bf I}}_l :G_1^{\circ \circ } (4)}^{\medstar (0)}$,   then deleting    $G_{{{\rm {\bf I}}_l } }^\medstar$  and after that applying   $\left\langle
{EC_\Omega ^\ast } \right\rangle $  to  $G_{l, r}$  and keeping $G_{b_{r1}}$ unchanged, we get

$$\tau _{{\rm {\bf I}}_l }^\medstar(\tau _{{\rm {\bf I}}_{{\rm {\bf j}}_r
} }^\ast )=\cfrac{\underline{\qquad\quad\cfrac{\underline{\left\lceil {S_1^c }
\right\rceil _I\quad \left\lceil {S_2^c } \right\rceil _I}}{G_{l, r} }\left\langle {\tau _{\{H_1^c :A\Rightarrow p_5 ,H_2^c
:A\Rightarrow p_3\}}^\ast } \right\rangle\qquad}}{\widehat{S''}\vert G_{b_{r1} } \vert G_{H_I^V
:\left\langle {G''}
\right\rangle _{\mathcal{I}_{{\rm {\bf j}}_r }}}^{\medstar(J)} \vert G_{H_I^V :H'}^{\medstar(J)} \backslash
\{\widehat{S'}\vert \widehat{S''}\}\vert G_{{\rm {\bf I}}_{l\backslash r}
}^\medstar}\left\langle
{EC_\Omega ^\ast } \right\rangle, $$

where  $G_{H_I^V :\left\langle {G''}
\right\rangle _{\mathcal{I}_{{\rm {\bf j}}_r }} }^{\medstar(J)} =G_{H_I^V
:\left\langle {G''} \right\rangle _{\mathcal{I}_{{\rm {\bf j}}_r } }}^\ast =\emptyset $;  $\widehat{S'}=\Rightarrow p_5, B;\\  \widehat{S''}=B\Rightarrow p_{3},\neg A\odot \neg A$;   $G_\ddagger =G_{b_{r1} }|\widehat{S''}$;
$G_{H_I^V
:H'}^{\medstar(J)} =G_{H_I^V :H'}^\ast =\widehat{S'}\vert \widehat{S''}$;\\
$G_{{\rm{\bf I}}_{l\backslash r} }^{\medstar}=\Rightarrow p_5 ,B\vert \Rightarrow
p_6 ,B\vert B\Rightarrow p_7 ,\neg A\odot \neg A\vert
p_5 \Rightarrow C\vert C,p_6 \Rightarrow A\odot A\vert \\
p_7 \Rightarrow C\vert C,p_{8} \Rightarrow A\odot A\vert   B\Rightarrow p_8 ,\neg A\odot \neg A$.

\textbf{Stage 2}
$\tau _{{\rm {\bf I}}_r :G_1^{\circ \circ } }^{\medstar (0)} =\tau _{{\rm {\bf
I}}_r :G_1^{\circ \circ } (1)}^{\medstar (0)} =\tau _{{\rm {\bf I}}_r
:G_1^{\circ \circ } (2)}^{\medstar (0)} =\cfrac{\underline{\,\,\left\lceil {S_2^c } \right\rceil _I\,\,}}{G_{{\rm {\bf I}}_r }^{\medstar(1)} }{\kern1pt}\left\langle {\tau _{H_2^c
:A\Rightarrow p_3}^\ast } \right\rangle,$

$\tau _{{\rm {\bf I}}_r :G_1^{\circ \circ } (3)}^{\medstar (0)} =\tau _{{\rm
{\bf I}}_r :G_1^{\circ \circ } (4)}^{\medstar (0)} =\cfrac{\underline{\quad\qquad\qquad\left\lceil {S_1^c }
\right\rceil _I \,\,\,\quad\qquad\qquad\left\lceil {S_2^c }
\right\rceil _I {\quad}}}{\widehat{S''}\vert G_{b_{r1} } \vert G_{H_I^V
:\left\langle {G''}
\right\rangle _{\mathcal{I}_{{\rm {\bf j}}_r }} }^{\medstar(J)} \vert G_{H_I^V :H'}^{\medstar(J)} \backslash
\{\widehat{S'}\vert \widehat{S''}\}\vert G_{{\rm {\bf I}}_{l\backslash r}
}^\medstar}\left\langle
{\tau _{{\rm {\bf I}}_l }^\medstar(\tau _{{\rm {\bf I}}_{{\rm {\bf j}}_r
} }^\ast )} \right\rangle .
$

Replacing  $\tau _{{\rm {\bf I}}_r :G_1^{\circ \circ }}^{\medstar (0)}$  in $\tau _{{\rm {\bf I}}_r }^{\medstar (0)} $ with $ \tau _{{\rm {\bf I}}_r :G_1^{\circ \circ } (4)}^{\medstar (0)}$,   then deleting    $G_{{{\rm {\bf I}}_r} }^\medstar$  and after that applying   $\left\langle
{EC_\Omega ^\ast } \right\rangle $  to
$\widehat{S''}\vert G_{b_{r1} } \vert
G_{H_I^V :\left\langle {G''} \right\rangle _{\mathcal{I}_{{\rm {\bf j}}_r
} } }^{\medstar(J)} \vert G_{H_I^V :H'}^{\medstar(J)}
\backslash \{\widehat{S'}\vert \widehat{S''}\}\vert G_{{\rm {\bf
I}}_{l\backslash r} }^\medstar$,  we get $\tau _{{\rm {\bf I}}}^{\medstar}$.
 \end{appendices}

\end{document}